\newcommand{\RR}[0]{\mathbb{R}}
\newcommand{\A}[0]{\mathcal{A}}
\newcommand{\rank}[0]{\text{rank}}
\newcommand{\ZZ}[0]{\mathbb{Z}}
\newcommand{\NN}[0]{\mathbb{N}}
\newcommand{\sing}{\mathrm{sing}}
\newcommand{\diam}{\mathrm{diam}}
\newcommand{\Fill}{\operatorname{Fill}}
\newcommand{\s}{\mathbf{s}}
\newcommand{\wt}{\widetilde}
\newcommand{\mc}{\mathcal}
\newcommand{\ol}{\overline}
\newcommand{\mr}{\mathring}
\newcommand{\bdy}{\partial} 
\newcommand{\teich}{\mathcal T}
\newcommand{\tet}{\mathfrak{t}}
\newcommand{\HFK}{\operatorname{HFK}}
\newcommand{\HF}{\operatorname{HF}}
\newcommand{\Scirc}{{\mathring{S}}}
\newcommand{\fol}[0]{\mathcal{F}}
\newcommand{\tri}{\tau}
\newcommand{\psub}{\mr W}
\newcommand{\Fix}{\operatorname{Fix}}
\newcommand{\quotient}[2]{{\raisebox{0.2em}{$#1$}
           \!\!\left/\raisebox{-0.2em}{\!$#2$}\right.}}
\numberwithin{equation}{section}
\newtheorem{theorem}{Theorem}[section]
\newtheorem{lemma}[theorem]{Lemma}
\newtheorem{proposition}[theorem]{Proposition}
\newtheorem{corollary}[theorem]{Corollary}
\newtheorem{claim}[theorem]{Claim}
\theoremstyle{definition}
\newtheorem{question}[theorem]{Question}
\newtheorem{definition}[theorem]{Definition}
\newtheorem{remark}[theorem]{Remark}
\newtheorem{example}[theorem]{Example}
\newtheorem{construction}[theorem]{Construction}
\newcommand{\FF}{\mathcal{F}}
\newcommand{\T}{\mathcal{T}}
\newcommand{\C}{\mathcal{C}}
\newcommand{\Mod}{\mathrm{Mod}}
\newcommand{\define}[1]{\textbf{#1}}
\newcommand\tsim{\kern-.4em\sim}
\renewcommand{\setminus}{\smallsetminus}
\newcommand{\AC}{\mathcal{AC}}
\newcommand{\from}{\colon} 
\newcommand{\p}{{\bf p}}
\renewcommand{\r}{{\bf r}}
\renewcommand{\s}{{\bf s}}
\renewcommand{\int}{\mathrm{int}}
\renewcommand{\phi}{\varphi}
\renewcommand{\epsilon}{\varepsilon}
\DeclareMathOperator{\vol}{Vol}
\definecolor{mutedgreen}{rgb}{.1,.75,0.15}
\definecolor{darkgreen}{rgb}{0,0.6,0.4}
\definecolor{purple}{rgb}{0.4,0,0.6}
\begin{document}

\title[On fixed points of pseudo-Anosov maps]{On fixed points of pseudo-Anosov maps}

\author{Tarik Aougab}
\address{Department of Mathematics\\ 
Haverford College}
\email{\href{mailto:taougab@haverford.edu}{taougab@haverford.edu}}

\author{David Futer}
\address{Department of Mathematics\\ 
Temple University}
\email{\href{mailto:dfuter@temple.edu}{dfuter@temple.edu}}

\author{Samuel J. Taylor}
\address{Department of Mathematics\\ 
Temple University}
\email{\href{mailto:samuel.taylor@temple.edu}{samuel.taylor@temple.edu}}

\begin{abstract}
We give a formula to estimate the number of fixed points of a pseudo-Anosov homeomorphism of a surface. When the homeomorphism satisfies a mild property called \emph{strong irreducibility}, the log of the number of fixed points is coarsely equal to the Teichm\"uller translation length. We also discuss several applications, including an inequality relating the hyperbolic volume of a mapping torus to the rank of its Heegaard Floer homology.
\end{abstract}

\date{\today}
\maketitle

\section{Introduction}
Let $S$ be an orientable finite-type surface without boundary, and with $\chi(S) < 0$. Consider a pseudo-Anosov homeomorphism $f \colon S \to S$ (see \Cref{Sec:pA} for the definition).  In this paper, we address the question of how combinatorial and geometric data associated to $f$ determines its number of fixed points. 

We begin by stating a simplified version of our main theorem in the presence of a mild condition on $f$, before giving some additional context to the problem. This is followed by our general result and applications to a question of Wright and to invariants of knots and $3$--manifolds.

\subsection*{Main results}

Say a homeomorphism $f \colon S \to S$ is \define{strongly irreducible} if for every essential simple curve $\alpha$ on $S$, the curves $\alpha$ and $f(\alpha)$ intersect essentially. (The terminology originated in~\cite{Schleimer:StronglyIrreducible}.) For a surface of sufficient complexity, this means the displacement of $f$ on the curve graph of $S$ is at least $2$. Note that this condition depends only on the isotopy class of $f$. 
We denote the set of fixed points of $f$ by $\Fix(f)$ and the Teichm\"uller translation length of $f$ by $\ell_{\T}(f)$. To simplify notation, we also declare $\log(0) = 0$.

For strongly irreducible maps, our main theorem is the following:

\begin{theorem} \label{th:intro_main}
If $f \colon S \to S$ is a strongly irreducible pseudo-Anosov, then
\[
 \log \# \Fix(f) \asymp  \ell_{\T}(f).
\]
\end{theorem}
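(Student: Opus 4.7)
The estimate is a coarse equality with constants depending only on $\chi(S)$, so I would prove it via matching upper and lower bounds. The upper bound $\log\#\Fix(f) \lesssim \ell_\T(f)$ does not require strong irreducibility. My plan here is to count fixed points using the action of $f$ on an invariant (split) train track $\tau$ carrying the unstable foliation, with associated non-negative integer transition matrix $M_f$ whose Perron--Frobenius eigenvalue is $\lambda_f = e^{\ell_\T(f)}$. Since the number of branches of $\tau$ is bounded in terms of $|\chi(S)|$ and every eigenvalue of $M_f$ has modulus at most $\lambda_f$, one gets $\#\Fix(f) \leq C(S)\,\lambda_f$ either by Lefschetz trace arguments or by directly counting how branches are carried over themselves.

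The reverse bound $\ell_\T(f) \lesssim \log \#\Fix(f)$ is the substantive direction, and is where strong irreducibility is essential. Without it, a pseudo-Anosov can have $\#\Fix(f)$ much smaller than $\lambda_f$ -- for example, when $f$ cyclically permutes its singularities without producing enough regular fixed points to compensate. My plan is to build a Markov partition $\mathcal{R} = \{R_1, \ldots, R_N\}$ for $f$ with $N$ bounded by $|\chi(S)|$, and with transition matrix $T$ whose Perron--Frobenius eigenvalue is again $\lambda_f$. Standard symbolic dynamics identifies $\#\Fix(f)$ with $\mathrm{tr}(T)$, up to a bounded contribution from singular fixed points, so the task reduces to proving $\mathrm{tr}(T) \gtrsim \lambda_f$ under strong irreducibility.

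The hardest step, which I expect to be the main obstacle, is translating the curve-graph condition of strong irreducibility into a matrix-theoretic lower bound on the trace of $T$. I would argue by contrapositive: if $\mathrm{tr}(T) \ll \lambda_f$, then the diagonal entries are mostly zero, meaning the collection $\{R_i\}$ is nearly permuted by $f$. From such a permutation structure I would extract an essential simple closed curve $\alpha$ -- for instance, a component of the boundary of a union of rectangles invariant under the permutation -- whose isotopy class is almost fixed by $f$, producing a curve with $i(\alpha, f(\alpha)) = 0$ and contradicting strong irreducibility. Making this extraction quantitative, and ensuring $\alpha$ descends to an essential simple curve on $S$ rather than a complicated multicurve, is the delicate analytic core; I would expect to invoke the general fixed-point formula advertised in the abstract as an intermediate tool here, applied in the special case where strong irreducibility forces all relevant ``subsurface correction'' terms to be uniformly bounded.
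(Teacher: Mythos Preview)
Your upper bound via a bounded-size Markov partition is correct and matches what the paper records in a remark.

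The lower bound has a genuine gap at the step ``if $\mathrm{tr}(T) \ll \lambda_f$ then the $R_i$ are nearly permuted by $f$.'' Small trace does not imply any permutation structure: the matrix $\left(\begin{smallmatrix} 0 & n \\ 1 & 0 \end{smallmatrix}\right)$ has trace zero and Perron eigenvalue $\sqrt{n}$, with one rectangle crossing the other $n$ times. Even granted a permutation-like subcollection, the boundary of a union of Markov rectangles consists of horizontal and vertical leaf segments, not an essential simple closed curve, so there is no extraction of an $\alpha$ with $i(\alpha,f(\alpha))=0$. Your fallback of invoking the general fixed-point formula is circular here: that formula is the harder of the two main theorems, and while the paper does note that it implies the strongly irreducible case, it is proved by independent (and more technical) means.

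The paper's actual route for the lower bound is entirely different. Working in the singular flat metric, a \emph{fundamental lemma} shows that for any singularity-free saddle connection $\sigma$ (a veering edge), each point of $\sigma\cap f(\sigma)$ produces a fixed point of $f$ in the spanning rectangle $R_\sigma$, with overcounting bounded by the degree of $R_\sigma$. One then chooses an annular-avoiding section $T$ of the veering triangulation (so all spanning rectangles have uniformly bounded degree), picks the edge $\sigma\subset T$ maximizing $i(\sigma,f(\sigma))$, and chains together
\[
\log \#\Fix(f) \;\asymp\; \log i(\sigma,f(\sigma)) \;\asymp\; \log i(T,f(T)) \;\asymp\; \ell_{\teich}(f).
\]
Strong irreducibility enters only in the middle step: it guarantees every subsurface is overlapped by $f$, so each subsurface contributing to the Choi--Rafi intersection formula for $i(T,f(T))$ is cut by both $\sigma'$ and $f(\sigma')$ for some edge $\sigma'\subset T$, allowing the hierarchy-like sum to be partitioned over edges and dominated by the $\sigma$-term. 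The rightmost step combines Choi--Rafi with a translation-length version of Rafi's Teichm\"uller distance formula.
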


Since the usual definition of a pseudo-Anosov map (\Cref{Sec:pA}) assumes that the domain of $f$ can have punctures but no boundary, this is our assumption throughout the paper.
The notation $a \asymp b$ means that the two quantities are equal up to additive and multiplicative error depending only on $S$. See \Cref{Sec:Coarse} for more details.

\Cref{th:intro_main} fits into the following context. 
First, if we denote the stretch factor of $f$ by $\lambda$ 
and note that $\ell_{\T}(f) = \log \lambda$, then a theorem 
of Thurston \cite{Thurston:GeometryDynamics} says that 
$\# {\rm Fix}(f^n) \sim \lambda^n$. So the novelty of the 
theorem is that it counts fixed points of $f$ rather than 
fixed points of sufficiently large powers. Second,  
\Cref{th:intro_main} asserts an upper and a lower bound on 
$\# \Fix(f)$. The upper bound is not new; indeed, the stronger 
bound $\log \# \Fix(f) \leq \ell_\teich(f) + O(\log |\chi(S)|)$ 
follows by considering a bounded-size Markov partition of $f$. 
See \Cref{Rem:UpperBoundMarkov} for  details. So the real content of \Cref{th:intro_main} is the lower bound on $\# \Fix(f)$, which is entirely new. 

It is also classically known that a pseudo-Anosov homeomorphism $f$ minimizes the number of fixed points among all maps homotopic to $f$. 
(See Thurston~\cite{Thurston:GeometryDynamics}, as well as Birman and Kidwell~\cite{BirmanKidwell}.) Thus, if $f$ is a strongly irreducible pseudo-Anosov,  \Cref{th:intro_main}
produces a lower bound on $\#\Fix(f')$ for any $f'  \sim f$. We may thus write
$\#\Fix(\varphi) = \#\Fix(f)$ for the mapping class $\varphi = [f]$ of a pseudo-Anosov $f$.

Finally, we note that the conclusion of \Cref{th:intro_main} is false without the hypothesis that $f$ is strongly irreducible. Indeed, Los produces infinitely many pseudo-Anosov homeomorphisms on the same surface without nonsingular fixed points \cite{Los:Infinite}. The following proposition is a mild improvement using a top-level construction.


\begin{proposition}\label{prop:FPF}
For $S$ a closed surface of sufficiently large genus $g$, there are fixed-point-free pseudo-Anosov homeomorphisms $f_n \colon S \to S$ such that $\ell_{\T}(f_n) \to \infty$ as $n \to \infty$.
\end{proposition}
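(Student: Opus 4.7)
The plan is to upgrade Los's construction~\cite{Los:Infinite} of pseudo-Anosovs without \emph{nonsingular} fixed points to pseudo-Anosovs with no fixed points at all, by lifting through a suitable characteristic cover and adjusting by a deck transformation.

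First I would fix a closed orientable base surface $\Sigma$ of genus $g' \geq 2$ and take a sequence of pseudo-Anosovs $\phi_n \colon \Sigma \to \Sigma$ provided by Los's theorem, with $\ell_\T(\phi_n) \to \infty$ and no nonsingular fixed points. Every point of $\Fix(\phi_n)$ then lies at a singularity of the invariant foliation, and the number of such singularities is bounded by $4g'-4$ (the prong counts satisfy $\sum(p_i - 2) = 4g'-4$ with each $p_i \geq 3$), so one obtains the uniform bound $\#\Fix(\phi_n) \leq 4g'-4$.

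Next I would fix an integer $m \geq 2$ with $m^{2g'} > 4g'-4$ and pass to the regular cover $\pi \colon S \to \Sigma$ corresponding to the kernel of $\pi_1(\Sigma) \twoheadrightarrow H_1(\Sigma;\ZZ/m)$. This kernel is a characteristic subgroup of $\pi_1(\Sigma)$, so every homeomorphism of $\Sigma$ lifts to a homeomorphism of $S$; in particular each $\phi_n$ admits a lift $\wt\phi_n$, and the full family of lifts is $\{h\wt\phi_n : h \in G\}$, where $G \cong (\ZZ/m)^{2g'}$ is the deck group, acting freely on the closed surface $S$ of genus $g = m^{2g'}(g'-1)+1$. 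Each such lift is a pseudo-Anosov on $S$ with $\ell_\T(h\wt\phi_n) = \ell_\T(\phi_n)$.

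The heart of the argument is a pigeonhole count over lifts. For each $y \in \Fix(\phi_n)$ the fiber $\pi^{-1}(y) = G \cdot x$ is a free $G$-orbit, and $G$-equivariance of $\wt\phi_n$ forces $\wt\phi_n(g \cdot x) = g g_y \cdot x$ for a single element $g_y \in G$ depending only on $y$. Hence $h\wt\phi_n$ has a fixed point above $y$ if and only if $h = g_y^{-1}$, in which case the entire fiber is fixed. The set of ``bad'' deck transformations is therefore $B_n := \{g_y^{-1} : y \in \Fix(\phi_n)\} \subset G$ with $|B_n| \leq 4g'-4 < |G|$, so I can pick $h_n \in G \setminus B_n$ and set $f_n := h_n \wt\phi_n$; this is a fixed-point-free pseudo-Anosov on $S$ with $\ell_\T(f_n) = \ell_\T(\phi_n) \to \infty$. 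Varying $g'$ and $m$ makes $g = m^{2g'}(g'-1)+1$ arbitrarily large.

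The step I expect to be most delicate is verifying that Los's theorem furnishes a sequence $\phi_n$ with $\ell_\T(\phi_n) \to \infty$ rather than merely infinitely many elements within finitely many conjugacy classes. If this is not immediate from Los's construction, it can be recovered by combining his infinite supply with the classical finiteness of pseudo-Anosov conjugacy classes of bounded Teichm\"uller translation length.
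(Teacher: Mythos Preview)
Your argument has a genuine gap at the ``$G$-equivariance'' step. A lift $\wt\phi_n$ of $\phi_n$ through a characteristic cover is \emph{not} $G$-equivariant in general: it satisfies $\wt\phi_n \circ g = \alpha(g) \circ \wt\phi_n$, where $\alpha = (\phi_n)_*$ is the automorphism of $G \cong H_1(\Sigma;\ZZ/m)$ induced by $\phi_n$. So for $x \in \pi^{-1}(y)$ with $\wt\phi_n(x) = g_y \cdot x$, one has $\wt\phi_n(g\cdot x) = \alpha(g) g_y \cdot x$, and $h\wt\phi_n$ fixes $g\cdot x$ iff (additively) $h = (I-\alpha)(g) - g_y$. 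The set of bad $h$ above a single fixed point $y$ is therefore the entire coset $\operatorname{Im}(I-\alpha) - g_y$, not a singleton. If $I - \alpha$ happens to be invertible on $(\ZZ/m)^{2g'}$ --- which is the generic situation for a pseudo-Anosov acting on homology --- then \emph{every} lift $h\wt\phi_n$ has a fixed point over $y$, and the pigeonhole count collapses entirely. Your argument only goes through when $\phi_n$ acts trivially on $H_1(\Sigma;\ZZ/m)$, and nothing in Los's construction guarantees this; arranging it (say by finding such sequences inside the Torelli group) would be substantial additional content.

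For comparison, the paper avoids covers and Los altogether. It first builds a single fixed-point-free pseudo-Anosov $f_3$ on some closed surface via a fibered-cone trick: take two transverse fibers $S_1, S_2$ of a hyperbolic fibered $3$-manifold with $b_1 \ge 2$, smooth their intersection to get a cross-section $S_3$ in class $[S_1]+[S_2]$, and observe that every closed orbit of the pseudo-Anosov flow meets $S_3$ at least twice, so the first-return map $f_3$ has no fixed points. Crucially, $S_3$ inherits a curve $\alpha$ with $f_3(\alpha)\cap\alpha=\emptyset$, so one can then compose with powers of the Dehn twist $t_\alpha$: the maps $f_3 \circ t_\alpha^n$ still have no fixed points (the twist is supported on an annulus disjoint from its $f_3$-image), are pseudo-Anosov for large $n$, and have $\ell_\T \to \infty$ since the annular projection blows up. This is more elementary and sidesteps the equivariance obstruction.
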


One could also produce examples where the $f_n$ have orientable foliations (as in Los) or have the property that $\ell_{\mc P}(f_n) \to \infty$, where $\mc P = \mc P(S)$ is the pants graph. 
The point is that given any pseudo-Anosov $f$ that sends a curve to a disjoint curve, one can compose $f$ with Dehn twists about that curve to increase Teichm\"uller translation length without increasing the number of fixed points. Also, one can produce examples without fixed points using an easy construction within a fibered cone.
See \Cref{Sec:Examples} for details.

Recall that every mapping class $\varphi$ acts on the complex of curves $\C(S)$, with stable translation length $\ell_{\C(S)}(\varphi)$. 
See \eqref{Eqn:StableLength} for the precise definition. 
\Cref{th:intro_main} has the following corollary:

\begin{corollary}\label{Cor:CurveGraphFixedPoints}
For any mapping class $\varphi$,
we have
\[
\ell_{\C(S)}(\varphi) \prec \log \# \Fix(\varphi).
\]
\end{corollary}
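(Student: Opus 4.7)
My plan is to case-split on the Nielsen--Thurston type of $\varphi$ and, within the pseudo-Anosov case, on whether $\varphi$ is strongly irreducible; only the strongly irreducible pseudo-Anosov case has substantive content, and there \Cref{th:intro_main} applies directly. First, if $\varphi$ is periodic or reducible, then by the classical Masur--Minsky result the stable translation length on the curve graph vanishes, i.e.\ $\ell_{\C(S)}(\varphi) = 0$, and the inequality holds trivially (the convention $\log 0 = 0$ makes the right-hand side non-negative).

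Next I would handle the case where $\varphi$ is pseudo-Anosov but not strongly irreducible. Pick any essential simple curve $\alpha$ witnessing the failure, so that $\alpha$ and $\varphi(\alpha)$ do not intersect essentially and are therefore at distance at most $1$ in $\C(S)$. Iterating the triangle inequality bounds the distance between $\alpha$ and $\varphi^n(\alpha)$ by $n$, and hence $\ell_{\C(S)}(\varphi) \leq 1$. The left-hand side is then bounded by a universal constant, and the coarse inequality is satisfied after absorbing that constant into the additive error.

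In the remaining case, $\varphi$ is a strongly irreducible pseudo-Anosov, and the plan is to combine \Cref{th:intro_main} with the Masur--Minsky fact that the systole (shortest-curve) map $\T(S) \to \C(S)$ is coarsely Lipschitz. The latter gives $\ell_{\C(S)}(\varphi) \prec \ell_\T(\varphi)$ for any pseudo-Anosov (apply the Lipschitz property along a Teichm\"uller axis for $\varphi$), while \Cref{th:intro_main} yields $\ell_\T(\varphi) \prec \log \#\Fix(\varphi)$; transitivity closes the argument. Because \Cref{th:intro_main} is already in hand, no real obstacle remains here; the actual work of the corollary has been absorbed into the main theorem, and the only thing to verify is the case split itself, arranged so that \Cref{th:intro_main} is invoked precisely where its hypotheses hold.
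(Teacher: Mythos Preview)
Your proof is correct and follows essentially the same approach as the paper: both reduce to the strongly irreducible pseudo-Anosov case by observing that otherwise $\ell_{\C(S)}(\varphi)$ is bounded (the paper phrases this as ``it suffices to assume $\ell_{\C(S)}(\varphi) \geq 2$''), and then combine \Cref{th:intro_main} with the coarse Lipschitz property of the systole map $\teich(S) \to \C(S)$.
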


\begin{proof}
To prove the claimed coarse inequality,  it suffices to assume $\ell_{\C(S)}(\varphi) > 1$. This implies $\varphi = [f]$ is pseudo-Anosov, as reducible and periodic mapping classes have stable translation length $\ell_{\C(S)}(\varphi) = 0$. It also implies $f$ is strongly irreducible.
Since the systole map $\teich(S) \to \C(S)$ is coarsely Lipschitz \cite{MasurMinsky1}, the result follows from \Cref{th:intro_main}.
\end{proof}

When $f$ is not strongly irreducible, we still obtain a coarse formula for its number of fixed points, but this is more involved. The formula is in terms of 
certain subsurface distances between the invariant laminations $\lambda^+, \lambda^-$ of $f$ (see \Cref{Sec:Projection} for definitions). 

We say that a subsurface $Y \subset S$ is \define{overlapped by $f$}  if $Y$ and $f(Y)$ intersect essentially. In particular, the full surface $S$ is overlapped by every $f$.
We define $\mc D_f$ to be the set of (isotopy classes of) overlapped essential subsurfaces of $S$. 

\begin{theorem}\label{th:main_2}
Let $f \colon S \to S$ be a pseudo-Anosov homeomorphism. 
Then 
\[
\log \# \Fix(f) \asymp \ell_{\AC(S)}(f) +  \sum_{\langle f \rangle Y \subset \mc D_f \setminus \{ S \}} [d_Y(\lambda^+,\lambda^-)] + \sum_{\langle f \rangle A \subset  \mc D_f} [\log d_A(\lambda^+,\lambda^-)],
\]
where the first sum is over $f$-orbits of proper non-annular subsurfaces and the second is over $f$-orbits of annuli. 
\end{theorem}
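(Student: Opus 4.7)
The strategy is to bound $\log \#\Fix(f)$ both above and below by the right-hand side. Both halves build on the strongly irreducible case \Cref{th:intro_main} and a Rafi-style decomposition of the Teichm\"uller translation length as $\ell_\teich(f) \asymp \ell_{\AC(S)}(f) + \sum_{\langle f\rangle Y}[d_Y(\lambda^+,\lambda^-)] + \sum_{\langle f\rangle A}[\log d_A(\lambda^+,\lambda^-)]$, where the sums now run over \emph{all} $f$-orbits of subsurfaces.

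For the upper bound, the plan is to use a Markov partition $\mc R$ for $f$ constructed from train tracks carrying $\lambda^\pm$, as in \Cref{Rem:UpperBoundMarkov}. Every fixed point lies in a rectangle $R \in \mc R$ for which $f(R)$ crosses $R$. The cardinality $|\mc R|$ is controlled by $\ell_\teich(f)$, which decomposes into the Rafi-style sum above. The essential new input is to show that rectangles whose active subsurface $Y$ lies outside $\mc D_f$ cannot host a fixed point: if $Y \notin \mc D_f$, then $f(Y) \cap Y$ is inessential, and any rectangle supported in a regular neighborhood of $Y$ is thereby mapped off itself, so contributes no fixed point. Restricting the Rafi sum to $f$-orbits in $\mc D_f$ gives the desired upper bound.

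For the lower bound, the plan is to exhibit the required fixed points one orbit at a time. For each $f$-orbit of a non-annular $Y \in \mc D_f \setminus \{S\}$, I would produce $\asymp [d_Y(\lambda^+,\lambda^-)]$ distinct fixed points of $f$ supported in $Y$, using Rafi's thick-thin decomposition of the Teichm\"uller axis of $f$. On the axis intervals where the geometry is dominated by $Y$, a suitable power of $f$ restricts to an action on $Y$ that is coarsely a pseudo-Anosov on $Y$; applying \Cref{th:intro_main} to this restricted action (or else a direct intersection count of $\lambda^\pm$ inside $Y$) supplies the fixed points. For each orbit of $A \in \mc D_f$, the parallel construction produces $\asymp [\log d_A(\lambda^+,\lambda^-)]$ fixed points; the logarithm reflects the exponential shearing across an annulus of large relative twist, which only admits geometrically spaced distinct fixed points. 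The $\ell_{\AC(S)}(f)$ contribution is provided by the strongly irreducible case together with \Cref{Cor:CurveGraphFixedPoints} and the coarse equivalence of $\AC(S)$ with $\C(S)$.

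The main obstacle will be showing that the fixed points produced for distinct $f$-orbits of subsurfaces are genuinely distinct, and in particular that a single fixed point is not double-counted by several nested or simultaneously active subsurfaces. This requires a hierarchy-style ordering in the sense of Masur--Minsky and Rafi, assigning each fixed point to a unique host subsurface. The dual issue in the upper bound, namely unambiguously stratifying Markov rectangles by their active subsurface, should be handled by the same machinery of subsurface projection thresholds, so that the bookkeeping used to restrict the sum to $\mc D_f$ in the upper bound matches the host assignment used for distinctness in the lower bound.
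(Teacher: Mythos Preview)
Your proposal diverges substantially from the paper's argument and contains a genuine gap in the lower bound.

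The central problem is your mechanism for producing fixed points in an overlapped subsurface $Y$. You write that on the relevant Teichm\"uller interval ``a suitable power of $f$ restricts to an action on $Y$ that is coarsely a pseudo-Anosov on $Y$'' and then invoke \Cref{th:intro_main}. But $Y\in\mc D_f$ only means $f(Y)$ \emph{overlaps} $Y$; in general $f$ does not preserve $Y$, so there is no restriction of $f$ to $Y$ to speak of. If instead you pass to the smallest power $f^k$ with $f^k(Y)=Y$, you are now producing fixed points of $f^k$, not of $f$, and there is no way to descend these back to fixed points of $f$. \Cref{th:intro_main} is a statement about honest pseudo-Anosovs of a whole surface, not about ``coarse'' partial actions, so it cannot be applied to whatever $f$ is doing near $Y$. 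The annular case has the same defect. In short, your lower bound never actually manufactures a fixed point of $f$ itself.

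The upper bound plan is also underspecified: associating an ``active subsurface'' to each Markov rectangle, and having the Rafi sum decompose along that stratification, is not something that comes for free. Rectangles of a Markov partition are not localized in subsurfaces in any obvious way.

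The paper's route is quite different and sidesteps both issues simultaneously. It never produces fixed points subsurface-by-subsurface. Instead, via the Fundamental \Cref{Lem:FundamentalLemma}, \emph{all} fixed points are counted at once by the single intersection number $i(\sigma,f(\sigma))$ for a well-chosen veering edge $\sigma$ in an annular-avoiding section (\Cref{Prop:EdgeToFixedPoints}). The Choi--Rafi formula then decomposes $\log i(\sigma,f(\sigma))$ as a sum over subsurfaces of $\mr S$, and the key \Cref{Lem:LargeSigmaDistMeansOverlap} shows that any $\psub$ contributing a large term to that sum is automatically overlapped; thus the full sum $\mc S(\sigma,f(\sigma))$ coincides coarsely with the restricted sum $\mc S_{\mr{\mc D}_f}(\sigma,f(\sigma))$. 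Finally, the distance-shuffling \Cref{Prop:Shuffling} translates between subsurfaces of the punctured $\mr S$ and subsurfaces of $S$. So the ``distinctness'' obstacle you flag never arises: the subsurface decomposition is of an intersection number, not of a set of fixed points.
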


See \Cref{Def:HierarchyLikeSum} for background on sums with cutoffs, such as the two sums appearing in the above statement.

Of course, when $f$ is strongly irreducible, every subsurface is overlapped. The way in which \Cref{th:main_2} reduces to \Cref{th:intro_main} uses a new combinatorial formula for Teichm\"uller space translation length 
that we prove in \Cref{Sec:DistanceFormulas}.

\subsection*{Fixed points and surface subgroups of mapping class groups}

Besides being of  intrinsic interest, the search for estimates on fixed points of pseudo-Anosov homeomorphisms is motivated by the following question
 of Alex Wright:

\begin{question}[Wright; cf Question 5 of \cite{KentLeininger:AtoroidalBundles}]
\label{Ques:Wright}
Does there exists a fixed-point-free homeomorphism $f \colon S \to S$ of a closed, hyperbolic surface, with the property that every essential simple closed curve $\alpha$ on $S$ fills with its image $f(\alpha)$?
\end{question}

Futer \cite{Futer:Periodic}, and independently Wright, observed that any map $f$ as in the question must belong to a pseudo-Anosov mapping class. By Thurston's theorem \cite{BirmanKidwell}, the pseudo-Anosov homeomorphism isotopic to $f$ must itself have no fixed points.
Hence, we can assume that $f$ is itself pseudo-Anosov.

Wright showed that if such a pseudo-Anosov $f \colon S \to S$ exists, then the map $S \hookrightarrow S \times S \setminus \Delta$ given by $x \mapsto (x,f(x))$ induces an embedding from $\pi_1(S)$ into $\pi_1(S \times S \setminus \Delta)$, a surface braid group, with purely pseudo--Anosov image. 
See \cite[Lemma 9]{KentLeininger:AtoroidalBundles} for a proof. Subsequently, Kent and Leininger showed that the image of $\pi_1(S)$ must be distorted in the surface braid group \cite[Lemma 13]{KentLeininger:AtoroidalBundles}.

\Cref{th:intro_main} and \Cref{Cor:CurveGraphFixedPoints} illustrate the tension between the hypotheses of  \Cref{Ques:Wright}.
Furthermore, one consequence of \Cref{th:intro_main} is that on a fixed surface $S$ there are at most finitely many conjugacy classes of pseudo-Anosov maps satisfying \Cref{Ques:Wright}. Indeed, any $f$ satisfying Wright's question would have to be strongly irreducible, hence \Cref{th:intro_main} provides an upper bound on the translation length $\ell_\teich(f)$. (The reduction to finitely many conjugacy classes also follows from a theorem of Bowditch~\cite{Bowditch:AtoroidalBundles}.) We remark that an \emph{effective} version of 
\Cref{th:intro_main} would imply an explicit upper bound on $\ell_\teich(f)$, 
hence an explicit bound on the number of tetrahedra in the veering triangulation of the mapping torus $M_f$ \cite[Theorem 4.1]{AgolTsang:Dynamics}.
This would make it 
theoretically feasible to find all candidate mapping classes for any given surface $S$.

In very recent work, Kent and Leininger constructed the first examples of purely pseudo-Anosov surface subgroups of mapping class groups  \cite{KentLeininger:AtoroidalBundles} using a related approach. However, Wright's \Cref{Ques:Wright} is still open. 

\subsection*{Connections between $3$--manifold invariants}

Every homeomorphism $f \from S \to S$ defines a \define{mapping torus}, namely the $3$--manifold
\[
M_f = \,\quotient{S \times [0,1]}{(x, 1) \sim (f(x), 0)}.
\]
It is well-known that $M_f$ depends only on the mapping class $\varphi = [f] \in \Mod(S)$, hence we may write $M_\varphi$. By a theorem of Thurston (see Otal \cite{Otal:Fibered}), $M_\varphi$ admits a hyperbolic metric if and only if $\varphi$ is a pseudo-Anosov class.

The most natural geometric invariant of $M_f$ is its hyperbolic volume, denoted $\vol(M_f)$.  There is now an extensive literature relating $\vol(M_f)$ to the dynamical properties of the map $f$. Notably, Brock \cite{Brock:fibered} gave a two-sided estimate on $\vol(M_f)$ in terms of the 
pants graph translation length $\ell_{\mc P}(f)$ and the Weil--Petersson translation length $\ell_{WP}(f)$. Kojima and McShane \cite{KojimaMcShane} gave an explicit upper bound on $\vol(M_f)$ in terms of the Teichm\"uller translation length $\ell_\teich(f)$:
\begin{equation}\label{Eqn:KojimaMcShane}
\vol(M_f) \leq 3\pi | \chi(S) | \cdot \ell_\teich(f).
\end{equation}
In very recent work, Liu \cite{Liu:EntropyVolume} proved an upper bound on $\ell_\teich(f)$ in terms of $\vol(M_f)$ and the length of the shortest closed geodesic in $M_f$, which is independent of the topology of $S$.

From a very different perspective, Heegaard Floer theory is a powerful package of homological invariants associated to closed oriented $3$--manifolds and knots in $3$--manifolds. To a closed oriented $3$--manifold $Y$, the theory associates a finitely generated abelian group $\widehat{\HF}(Y)$ that splits as a direct sum over classes in $H^2(Y)$. (There are also more refined invariants $\HF^+(Y)$, $\HF^-(Y)$, and $\HF^\infty(Y)$, which are beyond the scope of our discussion.) To a null-homologous knot $K \subset Y$, it associates a finitely generated abelian group $\widehat{\HFK}(Y, K)$ that splits as a direct sum over gradings valued in $\ZZ$. See Ozsv\'ath and Szab\'o \cite{OS1, OS2, OS:Knot} and Rasmussen \cite{Rasmussen:Thesis} for the original papers defining these invariants, and \cite{OS:Introduction} for an excellent survey.

While hyperbolic geometry and Heegaard Floer homology have both had immense applications to the study of $3$--manifolds (and especially to Dehn surgery problems), there are very few known connections between the two sets of invariants. Given that the two theories have reached maturity, Lin and Lipnowski asked

\begin{question}[Lin--Lipnowski \cite{LinLipnowski:SWLength}]\label{Quest:LL}
 For a hyperbolic three-manifold $Y$, is there any relationship between the topological
invariants arising from the hyperbolic geometry of $Y$ (e.g.\ the volume, injectivity radius,
lengths of geodesics, etc.) and the invariants arising from Floer homology?
\end{question}

One promising avenue for building a bridge between hyperbolic and Floer-theoretic invariants comes from recently discovered connections between the Heegaard Floer invariants of a mapping torus and the fixed points of its monodromy. The connection originated in the work of Cotton-Clay on symplectic Floer homology \cite{CottonClay:Symplectic}.
Very recently, Liu \cite{Liu:EntropyVolume} used the work of Cotton-Clay and the relationship between several different Floer theories to show that if $S$ is a closed surface surface of genus $g(S) \geq 3$ and $f \from S \to S$ is a pseudo-Anosov map, 
\begin{equation}\label{Eqn:Liu}
\#\Fix(f) \leq 2 \cdot \rank_{\ZZ} \, \widehat{\HF}(M_f).
\end{equation}
See \cite[Proposition 4.3]{Liu:EntropyVolume} and \cite[computation on page 29]{Liu:EntropyVolume}.

For knots in $3$--manifolds, the connection between Heegaard Floer invariants and fixed points is even more precise. 
Indeed, if $K \subset Y$ is a hyperbolic knot with Seifert surface $S$ of genus $g$, and $M = Y \setminus K$ fibers over $S^1$ with fiber $S$ and monodromy $f$, Ni \cite{Ni:FixedPointNote, Ni:FixedPointsHFK} and Ghiggini and Spano \cite{GhigginiSpano} showed that
\begin{equation}\label{Eqn:NiGS}
\#\Fix(f) + 1 =   \rank \, \widehat{\HFK}(Y, K; g(S)-1).
\end{equation}
This result also uses Cotton-Clay \cite{CottonClay:Symplectic} in an essential way.

Combining the above results with \Cref{th:intro_main} gives a partial answer to  \Cref{Quest:LL}.

\begin{corollary}\label{Cor:ClosedHF}
Let $S$ be a closed surface of genus $g(S) \geq 3$. Let $f \from S \to S$ be a strongly irreducible pseudo-Anosov. Then the mapping torus $M_f$ satisfies
\[
\vol(M_f) \prec \log \operatorname{rank}_{\ZZ} \widehat{\HF}(M_f),
\]
where the coarse constants depend only on the genus $g(S)$.
\end{corollary}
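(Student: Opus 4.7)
The plan is to chain together three coarse inequalities: the Kojima--McShane bound \eqref{Eqn:KojimaMcShane}, the main result \Cref{th:intro_main}, and Liu's fixed-point inequality \eqref{Eqn:Liu}. Concretely, the chain runs
\[
\vol(M_f) \; \leq \; 3\pi |\chi(S)| \cdot \ell_{\teich}(f) \; \prec \; \log \#\Fix(f) \; \leq \; \log\bigl(2 \cdot \rank_{\ZZ} \widehat{HF}(M_f)\bigr),
\]
where the first step is \eqref{Eqn:KojimaMcShane}, the second applies \Cref{th:intro_main} (using the hypothesis that $f$ is strongly irreducible), and the third is \eqref{Eqn:Liu}, which is where the assumption $g(S) \geq 3$ gets used.

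To conclude, one rewrites $\log(2 \cdot \rank_{\ZZ} \widehat{HF}(M_f)) = \log 2 + \log \rank_{\ZZ} \widehat{HF}(M_f)$ and absorbs $\log 2$ into the additive coarse constant. Since $|\chi(S)| = 2g(S)-2$ depends only on the genus, and the coarse constants from \Cref{th:intro_main} depend only on $S$ (which, being closed, is determined up to homeomorphism by its genus), all implicit constants depend only on $g(S)$, as claimed.

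The degenerate case $\#\Fix(f) = 0$ should be handled separately under the convention $\log 0 = 0$: in that case \Cref{th:intro_main} bounds $\ell_{\teich}(f)$ by a constant depending only on $g(S)$, hence \eqref{Eqn:KojimaMcShane} bounds $\vol(M_f)$ by such a constant, and the inequality holds trivially since $\rank_{\ZZ} \widehat{HF}(M_f) \geq 1$. There is no substantive obstacle in this argument; the real work is contained in \Cref{th:intro_main} itself, together with the deep work of Cotton-Clay and Liu feeding into \eqref{Eqn:Liu}. The only thing to check carefully is genus-dependence of the implicit constants, and this is immediate.
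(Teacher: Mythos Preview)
Your proof is correct and follows essentially the same three-step chain as the paper: Kojima--McShane \eqref{Eqn:KojimaMcShane}, then \Cref{th:intro_main}, then Liu's inequality \eqref{Eqn:Liu}. Your added remarks on the degenerate case $\#\Fix(f)=0$ and the genus-dependence of the constants are reasonable elaborations but not strictly needed, since the paper's convention $\log 0 = 0$ and the coarse framework already absorb these.
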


\begin{proof}
We compute as follows:
\begin{align*}
\vol(M_f) & \leq 3\pi | \chi(S) | \cdot \ell_\teich(f) \\
& \prec \log \#\Fix(f) \\
& \leq \log \rank_{\ZZ} \, \widehat{\HF}(M_f) + \log 2.
\end{align*}
The fist inequality is \eqref{Eqn:KojimaMcShane}, the second is \Cref{th:intro_main}, and the third is \eqref{Eqn:Liu}.
\end{proof}

\begin{corollary}\label{Cor:KnotHFK}
Let $Y$ be a closed orientable $3$--manifold. Let $K \subset Y$ be a knot whose complement $M = Y \setminus K$ is hyperbolic and fibered over $S^1$, with fiber a Seifert surface $S$ and strongly irreducible monodromy $f$. Then
\[
\vol(Y \setminus K) \prec \log \operatorname{rank} \, \widehat{\HFK}(Y, K),
\]
where the coarse constants depend only on the genus $g(S)$.
\end{corollary}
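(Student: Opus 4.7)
The plan is to establish \Cref{Cor:KnotHFK} by the same three-step chain used for \Cref{Cor:ClosedHF}, but with \eqref{Eqn:Liu} replaced by the sharper knot-Floer identity \eqref{Eqn:NiGS} of Ni and Ghiggini--Spano. Note that the fiber $S$ is a Seifert surface with a single boundary component, so its homeomorphism type---and in particular $|\chi(S)| = 2g(S) - 1$---is determined by $g(S)$; every coarse constant that appears below therefore depends only on the genus.

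First I would apply the Kojima--McShane bound \eqref{Eqn:KojimaMcShane} to the hyperbolic mapping torus $M_f = Y \setminus K$, obtaining $\vol(Y \setminus K) \leq 3\pi |\chi(S)| \cdot \ell_\teich(f)$. Next, because the monodromy $f$ is strongly irreducible by hypothesis, \Cref{th:intro_main} yields $\ell_\teich(f) \prec \log \#\Fix(f)$ with coarse constants depending only on $S$. The identity \eqref{Eqn:NiGS} then converts this fixed-point count into the rank of a single Alexander-graded summand:
\[
\#\Fix(f) + 1 = \rank \widehat{HFK}(Y, K; g(S)-1).
\]
Since $\widehat{HFK}(Y, K)$ decomposes as a direct sum over its $\ZZ$-valued Alexander grading, the rank in any single grading is at most the total rank, and hence $\log \#\Fix(f) \leq \log \rank \widehat{HFK}(Y, K)$. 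Concatenating these three estimates gives the desired coarse inequality $\vol(Y \setminus K) \prec \log \rank \widehat{HFK}(Y, K)$.

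There is no serious obstacle: the argument is a direct assembly of \eqref{Eqn:KojimaMcShane}, \Cref{th:intro_main}, and \eqref{Eqn:NiGS}, with a one-line bookkeeping step to pass from a single Alexander-graded summand to the full knot Floer homology. This mirrors the proof of \Cref{Cor:ClosedHF} almost verbatim, the only difference being that the knot-complement setting enjoys the sharp equality \eqref{Eqn:NiGS} in place of the inequality \eqref{Eqn:Liu}.
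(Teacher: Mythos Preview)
Your proposal is correct and follows essentially the same approach as the paper: chain together \eqref{Eqn:KojimaMcShane}, \Cref{th:intro_main}, and \eqref{Eqn:NiGS}, exactly as in the proof of \Cref{Cor:ClosedHF} with \eqref{Eqn:NiGS} substituted for \eqref{Eqn:Liu}. Your extra sentence noting that the rank in a single Alexander grading is bounded by the total rank of $\widehat{HFK}(Y,K)$ is a welcome clarification that the paper leaves implicit.
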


\begin{proof}
This is identical to the proof of \Cref{Cor:ClosedHF}, using \eqref{Eqn:NiGS} in place of \eqref{Eqn:Liu}.
\end{proof}

\begin{remark}[Weakening strong irreducibility]
By using \Cref{th:main_2} in place of \Cref{th:intro_main}, one can obtain many variations of \Cref{Cor:ClosedHF,Cor:KnotHFK}. For example, for both corollaries it suffices to assume that $f$ satisfies the weaker property that it overlaps all subsurfaces that are \emph{not} annuli or pants. In this case, \Cref{th:main_2} implies that 
\[
\log \# \Fix(f) \: \succ \:  \ell_{\AC(S)}(f) +  \sum_{\langle f \rangle Y} [d_Y(\lambda^+,\lambda^-) ]
\: \asymp \: \ell_{\mc P}(f).
\]
Here, the sum ranges over all $f$--orbits of non-annular subsurfaces, and $\ell_{\mc P}(f)$ is the translation length of $f$ in the pants graph. The coarse equality between the sum and $\ell_{\mc P}(f)$ is proved by 
essentially the same argument as \Cref{Prop:RafiInterpreted}.
Since $\ell_{\mc P}(f) \asymp \vol(M_f)$ by a result of Brock \cite{Brock:fibered}, the conclusion of the corollaries follows.
\end{remark}

\subsection*{Proof strategy and organization}
The central objects in the proof of \Cref{th:intro_main,th:main_2} are veering triangulations, singular flat metrics on surfaces, and distance formulas involving subsurface projections. All of these topics are reviewed in \Cref{Sec:background}. At a very top level, estimates on fixed points come from intersections between a veering edge and its image; intersection numbers can be estimated in terms of subsurface distances; and translation lengths in Teichm\"uller space can likewise be estimated in terms of subsurface distances.

We work in a singular flat metric on $S$ whose horizontal and vertical foliations are the stable/unstable foliations of the map $f$. In such a metric (given by a quadratic differential $q$), a special role is played by  the saddle connections  that span singularity-free rectangles with sides along the horizontal and vertical leaves. A key observation (recorded as the Fundamental Lemma in \Cref{Sec:FundamentalLemma}) is that for every singularity-free saddle connection $\sigma$, every intersection point in $\sigma \cap f(\sigma)$ gives rise to a fixed point of $f$. The correspondence between intersection points and fixed points is not quite one-to-one, but the degree of overcounting can be controlled.

Singularity-free saddle connections also arise as edges in the \define{veering triangulation}; see \Cref{maximal_R}. Letting $\mr S$ denote the complement of the singularities of $S$, this is an ideal triangulation of $\mr S \times \RR$ that enjoys very strong geometric and combinatorial properties. By a theorem of Minsky and Taylor \cite[Theorem 1.4]{MinskyTaylor:FiberedFaces}, veering edges form a totally geodesic subset of $\AC(\mr S)$. In \Cref{Sec:Tools}, we review and build on their work \cite{MinskyTaylor:FiberedFaces,MinskyTaylor:SubsurfaceDist} to show that veering triangulations are also nearly geodesic in every essential subsurface of $\mr S$. This allows us to use sections of the veering triangulation to estimate the distances between the stable and unstable laminations $\lambda^\pm$ in every essential surface (\Cref{sec:aas}).

As mentioned above, there are combinatorial estimates for distance in $\teich(S)$ and for intersection numbers between triangulations that package together a sum of distances in subsurfaces. We review these distance formulas (due to Rafi \cite{Rafi:Combinatorial} and Choi--Rafi \cite{choi2007comparison}) in \Cref{Sec:DistanceFormulas}, while adapting Rafi's formula to give an estimate for the translation length $\ell_\teich(S)$.

In \Cref{Sec:StronglyIrreducibleProof}, we assemble the above ingredients to prove \Cref{th:intro_main}. In \Cref{Sec:WithoutStronglyIrreducible}, we prove \Cref{th:main_2}, which requires all of the above ingredients in addition to some technical results from Vokes \cite{Vokes} and Kopreski \cite{Kopreski}. In \Cref{Sec:Examples}, we discuss examples and prove \Cref{prop:FPF}.

 \vspace{-1ex}
\subsection*{Acknowledgements}
We thank Braeden Reinoso for a beautiful talk that inspired this project.
We thank Matthew Durham for a helpful discussion about hierarchy-like sums (\Cref{Def:HierarchyLikeSum})
and Kasra Rafi for helpful discussions regarding the results in \Cref{Sec:DistanceFormulas}.
We thank Chi Cheuk Tsang for his comments on an early draft.

We thank Francesco Lin for several useful pointers on the connections between Heegaard Floer theory and fixed points. John Baldwin and Robert Lipshitz deserve particular gratitude for fielding many questions on about the same topic, and for clarifying  the proof of \eqref{Eqn:Liu}. 

During this project,  Futer was partially supported by NSF grant DMS--2405046. Taylor was partially supported by NSF grants DMS--2102018, DMS--2503113 and the Simons Foundation.


\section{Background} \label{Sec:background} 
Here we collect some basic background, mostly related to tools from surface topology and the mapping class group. The standard reference for most of this material is Masur and Minsky \cite{MasurMinsky2}.

\subsection{Coarse equalities, distance formulas, and cutoffs}\label{Sec:Coarse} 
Fix a surface $S$.
For quantities $a$ and $b$ and a constant $C\ge1$, we write $a \prec_C b$ to mean ``$a \leq C \cdot b + C$''. The equivalence relation  $a \asymp_C b$ means ``$a \prec_C b$ and $b \prec_C a$''.
As a common shorthand, we will write $a \prec b$ or $a \asymp b$ to mean that the implicit constant $C$ depends only on the surface $S$, whereas $a,b$ depend on other parameters. The relation $\prec$ is called \emph{coarse inequality}, and $\asymp$ is called \define{coarse equality}.

For a constant $K \geq 0$, the cutoff function $[ \cdot ]_K \from \RR \to \RR$ is defined so that 
\[
[a]_K =
\begin{cases}
a & a \geq K, \\
0 & a < K.
\end{cases}
\]
Cutoffs often appear in distance formulas, which are of the form
\[
A \asymp_C \sum_{Y \in \mathcal Y} [a_Y]_K
\]
where $a_Y$ is a nonnegative quantity associated to some (equivalence class of) essential subsurfaces $Y \subset S$ drawn from a collection $\mathcal Y$. A paradigmatic example is Masur and Minsky's distance formula in $\Mod (S)$
\cite[Theorem 6.12]{MasurMinsky2}.

We wish to remove the explicit mention of the constants $C$ and $K$ from distance formulas. This can be done as follows.

\begin{definition}\label{Def:HierarchyLikeSum}
Let $\mathcal Y$ be a collection of (equivalence classes of) essential subsurfaces of $S$.
The expression
\[
A \asymp \sum_{Y \in \mathcal Y} [a_Y]
\]
means that there is a constant $K_0 \geq 0$ such that for any $K \geq K_0$ there exists a constant $C>1$ (depending only $K$ and the surface $S$), so that 
\[
A \asymp_C \sum_{Y \in \mathcal Y} [a_Y]_K.
\]
If this holds, then the expression $A \asymp \sum [a_Y]$ is called a \define{hierarchy-like sum}.
\end{definition}

For example, the Masur--Minsky distance formula in $\Mod (S)$ \cite[Theorem 6.12]{MasurMinsky2} is a hierarchy-like sum. This is proved in Minsky~\cite[Section 9.4]{Minsky:ModelsBounds}; see \cite[Lemma 9.6]{Minsky:ModelsBounds} in particular. Similarly, Rafi's combinatorial distance formula in $\teich(S)$ \cite[Theorem 6.1]{Rafi:Combinatorial} is a hierarchy-like sum. See \cite[Remark 2.5]{Rafi:Hyperbolicity}, and compare \Cref{Prop:RafiInterpreted}.

The following elementary lemma underlies many later computations involving hierarchy-like sums.

\begin{lemma}
\label{Lem:HierarchyLikeSum}
Let $\mathcal Y$ be a collection of subsurfaces of $S$. Suppose $a_Y$ and $b_Y$ are quantities associated to the subsurfaces $Y \in \mathcal Y$, and $c \geq 1$ is a constant depending only on $S$ such that $a_Y \asymp_c b_Y$ for each $Y$. If $ \sum_{\mathcal Y}  [a_Y]$ is a hierarchy-like sum, then $\sum_{\mathcal Y} [b_Y]$ is also a hierarchy-like sum, and $\sum_{\mathcal Y}  [a_Y] \asymp \sum_{\mathcal Y} [b_Y]$.
\end{lemma}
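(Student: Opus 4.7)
My plan is to show that $\sum_{\mathcal{Y}}[b_Y]$ is a hierarchy-like sum equal to $A$; once this is established, the coarse equality $\sum[a_Y]\asymp\sum[b_Y]$ is immediate, since both sums are coarsely equal to the common quantity $A$ in the sense of \Cref{Def:HierarchyLikeSum}.

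By hypothesis, there is a threshold $K_0\geq 0$ such that for every $K\geq K_0$ some constant $C(K,S)$ realizes $A\asymp_{C(K,S)}\sum_{\mathcal{Y}}[a_Y]_K$. I will choose a new threshold $K_0'=K_0'(c,K_0)$ and show that for every $K\geq K_0'$ there is a constant $C'(K,S)$ with $A\asymp_{C'(K,S)}\sum_{\mathcal{Y}}[b_Y]_K$. The idea is to sandwich $\sum[b_Y]_K$ between constant multiples (plus additive error) of two hierarchy-like sums in the $a_Y$'s, namely $\sum[a_Y]_L$ and $\sum[a_Y]_M$, where $L=L(K)$ and $M=M(K)$ are auxiliary thresholds chosen so that $K_0\leq L<K<M$.

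The natural choices are $L=(K-c)/c$ and $M=cK+c$. The coarse comparabilities $a_Y\leq cb_Y+c$ and $b_Y\leq ca_Y+c$ then supply two monotonicity facts, provided $K_0'$ is large enough in terms of $c$. First, whenever $b_Y\geq K$ one has $a_Y\geq L$, so every nonzero term of $\sum[b_Y]_K$ corresponds to a nonzero term of $\sum[a_Y]_L$; moreover, $b_Y\leq ca_Y+c$ specializes to $b_Y\leq 2c\,a_Y$ once $a_Y\geq 1$ (which is automatic for $K$ large, since $a_Y\geq L$), yielding $\sum[b_Y]_K\leq 2c\sum[a_Y]_L$. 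A symmetric argument with the roles of $a$ and $b$ reversed gives $\sum[a_Y]_M\leq 2c\sum[b_Y]_K$.

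Feeding these two inequalities into the hierarchy-like hypothesis applied to the thresholds $L$ and $M$ sandwiches $\sum[b_Y]_K$ between affine functions of $A$ whose coefficients depend only on $K$, $c$, and $S$, producing the desired $C'(K,S)$. No step is conceptually deep; the only care needed is threading the thresholds so that $L(K)\geq K_0$ and the proportionality reductions $b_Y\leq 2c\,a_Y$ and $a_Y\leq 2c\,b_Y$ are valid beyond the corresponding cutoffs. This is arranged by taking, for instance, $K_0'\geq \max(cK_0+c,\,c^2+c)$. I expect this bookkeeping of thresholds to be the only "obstacle," and it is a routine one.
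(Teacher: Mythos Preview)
Your proposal is correct and follows essentially the same approach as the paper: sandwich $\sum[b_Y]_K$ between sums $\sum[a_Y]_L$ and $\sum[a_Y]_M$ at shifted thresholds $L,M$ related to $K$ by affine functions of $c$, using the termwise comparison $a_Y\asymp_c b_Y$ to control which terms survive each cutoff and to bound surviving terms by a constant multiple. The paper makes the specific choice $M=2cK$ (obtaining the constant $c+1$ in place of your $2c$) and states the reverse inequality by symmetry, but the mechanism is identical to yours.
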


\begin{proof}
 For each $Y$, we have that $a_{Y} \leq c \cdot b_{Y} + c$.  For any $K \geq c$, we will show that
 \begin{equation} \label{comparable}
 \sum_{Y} [a_Y]_{2cK} \leq (c+1) \sum_{Y} [b_Y]_{K}. 
 \end{equation}
 Now, suppose that $[a_Y]_{2cK}$ is a nonzero summand on the left side of \eqref{comparable}, meaning $a_{Y} \geq 2cK$. 
 Then
 \[ b_{Y} \geq \frac{a_{Y}- c}{c} \geq \frac{2cK -c}{c} = 2K-1 \geq K, \]
where the last inequality holds because $K \geq c \geq 1$. Therefore $b_Y = [b_Y]_K$ appears as a nonzero summand on the right side of \eqref{comparable}. 

To complete the proof, it remains to show that whenever $[b_Y]_K > 0$, we have $a_Y \leq (c+1) b_Y$. Since $b_Y \geq K \geq c$, we have
\[
a_Y \leq c \cdot b_Y + c \leq c \cdot b_Y + b_Y = (c+1) b_Y,
\]
as desired. This proves \eqref{comparable}.
The reverse inequality is proved in a totally analogous way by swapping the roles of $a_Y$ and $b_Y$. 
\end{proof}

\subsection{Arc and curve graphs}
Let $S$ be an orientable surface of finite type, potentially with non-empty boundary or punctures. 
An \define{arc} in $S$ is a proper embedding of an interval (possibly open, closed, or neither), where ``proper'' means that an endpoint of the interval is mapped into $\partial S$ and the preimage of a compact set is compact. Less formally, this latter condition means that each open end of the interval is mapped to an end of $S$. By an isotopy of an arc, we mean an isotopy through such proper embeddings.

The \define{arc and curve graph} $\AC(S)$ is the graph whose vertices are isotopy classes of essential simple closed curves and essential arcs in $S$. Here, \define{essential} means that the curve or arc is not isotopic into a small neighborhood of a point, a boundary component, or a puncture. 
Two vertices are joined by an edge in $\AC(S)$ if they have disjoint representatives.
If we follow the same construction with vertices restricted to be closed curves on $S$, we obtain the \define{curve graph} $\C(S)\subset \AC(S)$, and similarly restricting to arcs yields the \define{arc graph} $\A(S)\subset \AC(S)$.

When $S = A$ is an annulus, the definition of $\AC(A)$ is modified so that isotopies of arcs are required to fix endpoints pointwise, so that $\AC(A)$ becomes quasi-isometric to $\ZZ$; see Masur and Minsky~\cite{MasurMinsky2} for details. 
When $S$ is a pair of pants, $\C(S) = \emptyset$ and $\AC(S)$ is finite, hence pants are generally ignored in hierarchy-like sums. 
In all other cases, the inclusion $\C(S) \hookrightarrow \AC(S)$ is a $2$--quasi-isometry \cite[Lemma 2.2]{MasurMinsky2}.  We will primarily work in $\AC(S)$.

When equipped with the standard path metric, the graphs $\C(S)$ and $\AC(S)$ are $\delta$--hyperbolic. See Masur and Minsky \cite{MasurMinsky1} for the original proof. Aougab \cite{Aougab:UniformHyperbolicity}, Bowditch \cite{Bowditch:UniformHyperbolicity}, Clay--Rafi--Schleimer \cite{CRS:UniformHyperbolicity}, and Hensel--Przytycki--Webb \cite{HPW:SlimUnicorns} proved that the hyperbolicity constant $\delta$ is independent of $S$.

The \define{mapping class group} of $S$, denoted $\Mod (S)$, is the group of isotopy classes of orientation preserving homeomorphisms of $S$. Note that $\Mod(S)$ acts on $\AC(S)$ by graph automorphisms. For a mapping class $\varphi \in \Mod(S)$, we define the \define{stable translation length}
\begin{equation}\label{Eqn:StableLength}
\ell_S(\varphi) = \lim_{n \to \infty} \frac {d_{\AC(S)}(\alpha, \varphi^n(\alpha))}{n},
\end{equation}
where $\alpha \in \AC(S)$ is an arbitrary reference vertex. It follows from triangle inequalities that the limit exists and does not depend on $\alpha$.

\subsection{Subsurface projection}\label{Sec:Projection}
 A subsurface $Y \subset S$ is called \define{essential} if the inclusion map $Y \hookrightarrow S$ is $\pi_1$-injective, and $Y$ is neither a pair of pants nor a peripheral annulus.
We will often refer to $Y$ as simply a subsurface of $S$. 
 
There is a partially defined projection operator $\pi_{Y}$ from $\AC(S)$ to $\AC (Y)$, defined as follows (again, we refer to \cite{MasurMinsky2} for additional details).
Let $S_Y$ the the cover of $S$ corresponding to the subgroup $\pi_{1}(Y) < \pi_{1}(S)$. Fix an arbitrary complete hyperbolic metric on $S$, and consider the Gromov compactification $\overline S_Y$ of $S_Y$.
Then, given any essential simple closed curve or arc $\alpha$ on $S$, its unique geodesic representative admits a lift $\tilde{\alpha}$ to $\overline S_Y$. We define $\pi_{Y}(\alpha)$ to be the subset of (isotopy classes of) curves and arcs in this lift that are essential in $\overline S_Y$; this can be either a curve, an arc, a collection of pairwise disjoint curves or arcs, or empty in the event that $\alpha \cap Y = \emptyset$. As such, $\pi_{Y}$ is technically a map from $\AC(S)$ to the power set of $\AC (Y)$, but when nonempty the diameter of the image of $\alpha$ is always at most one. 

Given two curves or arcs $\alpha, \beta \in \AC(S)$, we define their $Y$-\define{subsurface distance} to be the diameter of the projection $\pi_{Y}(\alpha) \cup \pi_{Y}(\beta)$ in $\AC(Y)$:
\[
d_Y(\alpha, \beta) = \diam_{\AC(Y)} (\pi_{Y}(\alpha) \cup \pi_{Y}(\beta)).
\]
The definition extends immediately to other $1$--dimensional objects, such as multi-arcs and laminations whose leaves essentially intersect $\partial Y$.
 We adopt the convention that $\diam(\emptyset) = 0$. In particular, if $\pi_{Y}(\alpha) = \pi_{Y}(\beta) = \emptyset$, then $d_Y(\alpha, \beta) = 0$. 

The \define{bounded geodesic image theorem}, proved by Masur and Minsky \cite[Theorem 3.1]{MasurMinsky2}, will serve as a key tool in controlling projections of geodesics to subsurfaces: 

\begin{theorem} \label{Thm:BGIT} 
There is a constant $M$ depending only on $S$ such that if $Y \subset S$ is a proper essential subsurface and $g \subset \AC(S)$ 
is a geodesic that does not enter the $1$--neighborhood of $\bdy Y$ in $\AC(S)$, then 
\[ \mathrm{diam}(\pi_{Y}(g)) < M.\]
\end{theorem}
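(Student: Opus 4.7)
The plan is to prove Theorem~\ref{Thm:BGIT} by contrapositive, using the unicorn-path technology of Hensel, Przytycki, and Webb \cite{HPW:SlimUnicorns}. Assume $g = (v_0, \dots, v_n)$ is a geodesic in $\AC(S)$ and that $d_Y(v_0, v_n) \geq M$ for a large constant $M$; the goal is to force some vertex $v_i$ to lie within $\AC(S)$-distance $1$ of $\partial Y$.

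The first input is the elementary Lipschitz property of subsurface projection: if $\alpha, \beta \in \AC(S)$ are disjoint and both project nontrivially to $Y$, then $d_Y(\alpha, \beta) \leq 2$. This follows by lifting geodesic representatives to the cover $\overline S_Y$ and observing that disjointness is preserved under lifts.

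The central tool is the unicorn path $U(v_0, v_n)$, a path in $\AC(S)$ whose intermediate vertices are obtained by concatenating subsegments of $v_0$ and $v_n$ at their intersection points. Two properties are needed: (i) $U(v_0, v_n)$ is a uniform quasi-geodesic in $\AC(S)$, and (ii) when $d_Y(v_0, v_n)$ is sufficiently large, at least one unicorn vertex is disjoint from a component of $\partial Y$. Property~(i), combined with the uniform $\delta$-hyperbolicity of $\AC(S)$ cited above and the Morse/stability lemma for quasi-geodesics, yields a uniform Hausdorff-distance bound $D = D(\delta)$ between $U(v_0, v_n)$ and $g$. Combining with (ii), a unicorn vertex $w$ disjoint from $\partial Y$ lies within $D$ of some $v_i \in g$, so $v_i$ lies within $\AC(S)$-distance $D + 1$ of $\partial Y$. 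Choosing $M$ larger than the threshold from~(ii), plus slack from $D$, contradicts the hypothesis that $g$ avoids the $1$-neighborhood of $\partial Y$.

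The main obstacle is establishing (ii): that a large subsurface projection distance forces a unicorn vertex to be disjoint from $\partial Y$. The argument is a combinatorial case analysis of how the switching construction producing unicorn vertices interacts with the multicurve $\partial Y$. Heuristically, if every unicorn vertex crosses $\partial Y$ essentially, then by iterating the Lipschitz input along $U(v_0, v_n)$ one can show that $\pi_Y(v_0)$ and $\pi_Y(v_n)$ differ by at most a uniform amount, contrary to the hypothesis. Making this rigorous requires tracking how the $\pi_Y$-image evolves along $U(v_0, v_n)$ and is the technical heart of the approach; the constant $M$ in the conclusion is determined by the resulting effective bounds together with $D$.
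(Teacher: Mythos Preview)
The paper does not supply its own proof of this theorem; it is quoted as the Masur--Minsky bounded geodesic image theorem \cite[Theorem 3.1]{MasurMinsky2}, with a pointer to Webb \cite{Webb:BGIT} for a combinatorial reproof yielding a uniform constant.

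Your outline is in the spirit of Webb's approach, but the final step does not close. Having located a unicorn vertex $w$ disjoint from $\partial Y$, the Morse lemma places some geodesic vertex $v_i$ within $\AC(S)$--distance $D+1$ of $\partial Y$, where $D$ is the Hausdorff constant between unicorn paths and geodesics. This does \emph{not} contradict the hypothesis that $g$ avoids the $1$--neighborhood of $\partial Y$, and enlarging $M$ cannot shrink $D$; as written you have only proved the weaker statement with the $1$--neighborhood replaced by a $(D+1)$--neighborhood. Webb avoids this trap by never invoking the Morse lemma: he works directly under the contrapositive hypothesis that every $v_i$ cuts $Y$ (which is precisely what ``$g$ avoids the $1$--neighborhood of $\partial Y$'' says) and proves combinatorially that the projections $\pi_Y(v_i)$ have bounded diameter. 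The key lemma is your (ii) in its proper form---along a unicorn path all of whose vertices cut $Y$, the endpoints have uniformly bounded $d_Y$--distance---combined with an induction that exploits how unicorn paths from $v_0$ to $v_n$ pass through neighborhoods of the intermediate $v_i$, rather than the reverse. A smaller issue: unicorn paths are defined between arcs, so on a closed surface you must first pass to arcs (or use bicorn curves) before the construction applies.
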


Webb~\cite{Webb:BGIT} gave a more combinatorial proof of Theorem \ref{Thm:BGIT}, showing that $M$ can be taken to depend only on the hyperbolicity constant of $\mathcal{AC}(S)$. Since there is a uniform hyperbolicity constant $\delta$ that does not depend on $S$ 
\cite{Aougab:UniformHyperbolicity, Bowditch:UniformHyperbolicity, CRS:UniformHyperbolicity, HPW:SlimUnicorns},
 it follows that $M$ can be taken to be a single universal constant. 

\subsection{Pseudo-Anosov homeomorphisms}\label{Sec:pA}
Let $S$ be a surface without boundary, possibly with punctures.
A homeomorphism $f \colon S \to S$ is called \define{pseudo-Anosov} if there is a pair of transverse singular measured foliations $\mc F^s, \mc F^u$ on $S$ and a constant $\lambda >1$, called its \define{stretch factor}, so that $f(\mc F^u) = \lambda \cdot \mc F^u$ and $f(\mc F^s) = \frac{1}{\lambda} \cdot \mc F^s$. The measured foliations $\mc F^{s/u}$ are called the \define{stable/unstable} foliations of $f$. 

Given the singular measured foliations $\mc F^s$ and $\mc F^u$, there is a unique singular flat structure $X$ on $S$ 
whose singularities are precisely those of $\mc F^{s/u}$, and whose whose vertical and horizontal foliations are $\mc F^s$ and $\mc F^u$, respectively. Since the measures on $\mc F^{s/u}$ are unique up to scaling, the singular flat structure $X$ is also unique up to horizontal and vertical rescaling.
In particular, our convention is that $f$ stretches along the horizontal leaves and contracts along the vertical leaves of $X$. More formally, $X$ can be described as a marked complex structure on $S$ together with a quadratic differential $q$ so that the image of $X$ under $f$ is equal to the image of $q$ under the time $\log(\lambda)$ Teichm\"uller flow. 

Throughout the paper, it will often be convenient to puncture $S$ at the singularities of $f$. The resulting surface is denoted $\mr S$ and there is an induced pseudo-Anosov homeomorphism $\mr f \colon \mr S \to \mr S$. 

\subsection{Veering triangulations}\label{Sec:Veering}

Veering triangulations were first defined and studied by Agol \cite{Agol:Veering}, with a later description by Gu\'eritaud \cite{Gueritaud:Veering} in terms of the singular flat structure on a surface. Our description follows Gu\'eritaud's definition, with notation and conventions following Minsky and Taylor \cite{MinskyTaylor:FiberedFaces, MinskyTaylor:SubsurfaceDist}. We refer to those sources for additional details.

We continue to use the notation of \Cref{Sec:pA}, where $X$ is a singular flat structure associated to the pseudo-Anosov map $f$.
An (immersed) \define{rectangle} in $X$ is the image of an embedded rectangle in the universal cover $\wt X$, with its induced singular flat structure. By a rectangle in $\wt X$ we mean the embedded image of a (necessarily singularity-free) Euclidean rectangle whose vertical sides map to segments in $\wt \fol^s$ and whose horizontal sides map to segments of $\wt \fol^u$. A \define{saddle connection} in $X$ or $\wt X$ is a straight segment between two singularities, i.e. a geodesic in the flat metric that meets the singularities exactly at its endpoints. Particularly important to this paper are the saddle connections that span rectangles; these will be called either \define{singularity-free saddle connections} or \define{veering edges}, depending on the perspective. In slightly more detail, these are the saddle connections that arise by taking a rectangle $\wt R$ in $\wt X$ that contains two singularities in its boundary and connecting them with the straight line segment to produce a saddle connection $\sigma$. The induced (sub)-rectangle of $\wt R$ whose opposite (singular) corners lie at the endpoints $\sigma$ is called the \define{spanning rectangle} $R_\sigma$ of $\sigma$. The same terminology applies in $X$ after applying the covering projection (although spanning rectangles in $X$  are only immersed; see \Cref{Lem:FundamentalLemma,Lem:LowDegreeAwayFromPocket}). 

Besides spanning rectangles,
we also define \define{maximal rectangles} to be rectangles that are not contained in a larger rectangle. In $\wt X$, these are the embedded rectangles that have exactly one singularity in the interior of each of its four sides. 



Every maximal (singularity-free) rectangle $R$ defines an oriented ideal tetrahedron $\tet \subset \mr S \times \RR$ with a map $\tet \to R$, as follows. The vertices of $\tet$ map to the four singularities in the boundary of $R$. The edges of $\tet$ map to the six saddle connections spanned by these four vertices. The orientation of $\tet$ is determined by the convention that the more-vertical edge (whose endpoints are on the horizontal edges of $R$) lies above the more-horizontal edge. See \Cref{maximal_R}.

\begin{figure}[htbp]
\begin{center}
\includegraphics{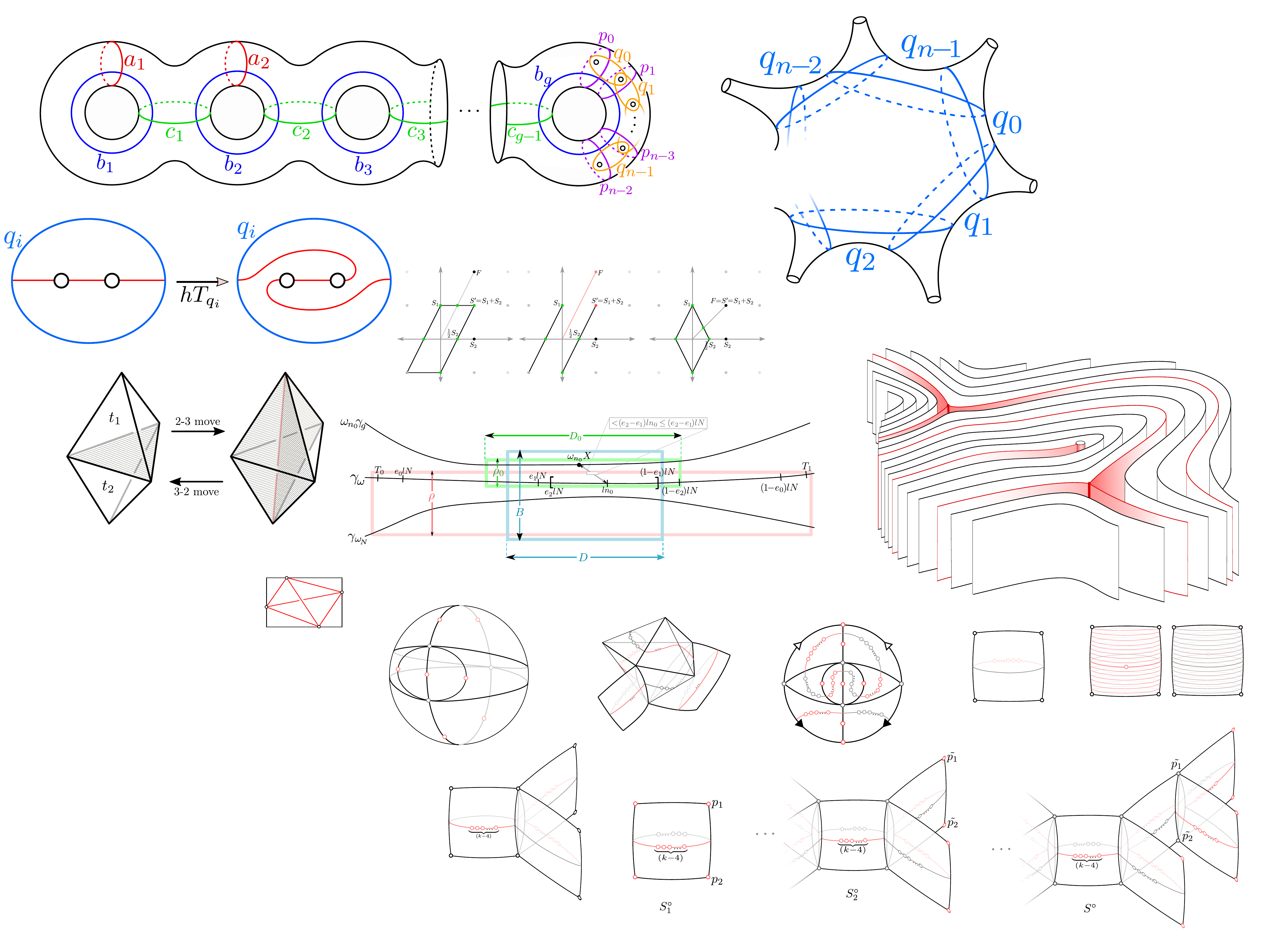}
\caption{A maximal rectangle and its associated ideal tetrahedron}
\label{maximal_R}
\end{center}
\end{figure}

Performing this construction for all maximal rectangles
gives a countable collection of tetrahedra whose ideal vertices map to singularities of $X$. If tetrahedra $\tet$ and $\tet'$ contain the same triple of saddle connections (equivalently, if maximal rectangles ${R}$ and ${R}'$ intersect along a sub-rectangle that meets three punctures), we glue $\tet$ to $\tet'$ along their shared face. By a theorem of Gu\'eritaud \cite{Gueritaud:Veering} (see also \cite[Theorem 2.1]{MinskyTaylor:FiberedFaces}), the resulting $3$--complex is an ideal triangulation $\tri_X$ of $\Scirc \times \RR$. Moreover, the maps of tetrahedra to their defining rectangles piece together to form a fibration $\Pi \from \Scirc \times \RR \to \Scirc$. We call $\tri_X$ the \define{veering triangulation associated to $X$}.


 The map $f \colon X \to X$ preserves the foliations $\FF^s, \FF^u$ and
permutes the maximal rectangles, inducing  a simplicial map $F \colon \tau_X \to \tau_X$. Accordingly, $\tau_X$ projects to an ideal triangulation of the punctured mapping torus $\mr M_f = (\mr S \times \RR) / F$.

Observe that a saddle connection of $X$ corresponds to an edge of $\tri_X$ if and only if it spans a singularity-free rectangle. In other words, edges of $\tri_X$ are in natural bijection with veering edges; hence the name.


A \define{section} of $\tau_X$ is simply a triangulation of $X$ by singularity-free saddle connections. We also think of a section as an ideal triangulation of $\wt X$.
The name comes from the fact that sections are in natural bijective correspondence with simplicial sections of the fibration $\Pi \from \Scirc \times \RR \to \Scirc$. 

Given a section $T$, an upward diagonal exchange is defined as follows: locate an edge $e$ that is wider (i.e. has greater mass with respect to $\mc F^s$) than the other edges in either of the two triangles $t_1,t_2$ containing $e$. Then there is a maximal rectangle $R$ containing $t_1,t_2$ such that $e$ joins the singularities in the vertical edges of $R$. The upward diagonal exchange replaces $e$ with the saddle connection that joins the horizontal edges of $R$ to give a new section $T'$. See 
\Cref{maximal_R}, where the reader can also deduce the $3$-dimensional interpretation of an upward diagonal exchange in $\tau_X$ (or they can consult   \cite[Section 3]{MinskyTaylor:FiberedFaces}). The reverse of an upward diagonal exchange is called a downward diagonal exchange.

According to Minsky and Taylor, any collection of noncrossing singularity-free saddle connections can be completed to a section \cite[Lemma 3.2]{MinskyTaylor:FiberedFaces} and any two such sections differ by a sequence of diagonal exchanges leaving the initial set of saddle connections untouched \cite[Proposition 3.3]{MinskyTaylor:FiberedFaces}. 

\begin{remark}\label{Rem:AboveBelow}
Thinking of a section $T$ as an ideal triangulation of $\mr S$ that simplicially embeds in the ideal triangulation $\tau_X$ of $\mr S \times \RR$, there is a natural sense in which each veering edge $\sigma$ either lies in $T$, below $T$, or above $T$. In terms of the triangulation $T$, this directly translates as follows: $\sigma$ lies below $T$ if whenever an edge $e$ of $T$ crosses $\sigma$ it crosses its spanning rectangle $R_\sigma$ from bottom to top (i.e. $e$ crosses the horizontal sides of $R_\sigma$). Equivalently, whenever $\sigma$ crosses an edge $e$ of $T$, it crosses $R_e$ from side to side (i.e. $\sigma$ crosses the vertical sides of $R_e$). The analogous statement can be made when $\sigma$ lies above $T$. Several variants of this ordering will be useful below.
\end{remark}

Conflating the distinction between foliations and laminations, we will refer to the vertical foliation $\fol^s$ as $\lambda^+$ and the horizontal foliation $\fol^u$ as $\lambda^-$ in the remainder of the paper. This matches the notation in the Introduction.

\section{The fundamental lemma}\label{Sec:FundamentalLemma}
This section describes a straightforward but crucial result that enables us to estimate the number of fixed points of $f$ by counting the intersections between a veering edge $\sigma$ and its image. This result is so fundamental to our estimates that we call it the \define{fundamental lemma}.

Here and throughout, we fix a singular flat structure $X$ on $S$ associated to $f$, whose vertical and horizontal foliations are denoted $\lambda^+$ and $\lambda^-$, respectively. 

Let $R = R_\sigma \subset S$ be an (immersed) spanning rectangle, as in \Cref{Sec:Veering}, with 
an embedded lift $\wt R  \subset \wt S$ and a projection map $\pi \from \wt R \to R$. The \define{degree} of $R$ is the maximal cardinality of a preimage $\pi^{-1}(z)$ for an interior point $z \in \int(R)$.


The following straightforward lemma relating fixed points to intersections is the primary way in which we find and count fixed points of a pseudo-Anosov map $f$.

\begin{lemma}[Fundamental lemma]\label{Lem:FundamentalLemma}
Let $\sigma \subset S$ be a 
singularity-free saddle connection, 
with immersed spanning rectangle $R = R_\sigma$ of degree $k$. Then the number of nonsingular fixed points of $f$ contained in $R$ is between 
$\frac{i(\sigma, f(\sigma))}{k}$ and $i(\sigma, f(\sigma))$.
\end{lemma}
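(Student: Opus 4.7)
The plan is to lift everything to the universal cover $\wt S$ and index both sides of the inequality by deck transformations. Fix an embedded lift $\wt R \subset \wt S$ of $R$, with $\wt \sigma$ its diagonal. Each intersection $z \in \sigma \cap f(\sigma)$ lifts to a unique $\wt z \in \wt \sigma$, and this $\wt z$ lies in $\wt f_\gamma(\wt \sigma)$ for exactly one $\gamma \in \pi_1(S)$, where $\wt f_\gamma := \gamma \wt f$. On the other side, each nonsingular $x \in \Fix(f) \cap R$ has between $1$ and $k$ lifts in $\wt R$ (by definition of the degree), and each such lift is the unique fixed point of a unique $\wt f_\gamma$.

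The central geometric claim is the following equivalence: \emph{for each $\gamma$, $\wt f_\gamma$ has a fixed point in the interior of $\wt R$ if and only if the line segments $\wt \sigma$ and $\wt f_\gamma(\wt \sigma)$ intersect transversely inside $\wt R$}. In coordinates centered at the fixed point, where $\wt f_\gamma$ acts as $(u,v) \mapsto (\lambda u, v/\lambda)$ and $\wt R = [u_1, u_2] \times [v_1, v_2]$ with $u_1 < 0 < u_2$ and $v_1 < 0 < v_2$, the two diagonals of $\wt R$ and $\wt f_\gamma(\wt R)$ have distinct positive slopes differing by a factor of $\lambda^2$, so they meet at a single point; a brief expansion of the intersection formula reveals that this point lies in the open rectangle $(u_1, u_2) \times (v_1, v_2)$ precisely because each cross-term $u_i v_j$ has the sign dictated by the fixed point's position relative to $\wt R$. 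Granted the equivalence, the set $\Gamma = \{\gamma \in \pi_1(S) : \wt f_\gamma \text{ has a fixed point in } \wt R\}$ is in natural bijection with the intersection points, giving $\#\Gamma = i(\sigma, f(\sigma))$; and grouping the lifted fixed points by their projections to $R$ gives $\#\Fix(f) \cap R \le \#\Gamma \le k \cdot (\#\Fix(f) \cap R)$. Rearranging yields both bounds in the lemma.

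The main obstacle is the converse direction of the geometric claim: a crossing of the two diagonals inside $\wt R$ must force the fixed point of $\wt f_\gamma$ into $\wt R$. For a pair of arbitrary axis-aligned rectangles related by a hyperbolic linear map, the analogous statement is false — the diagonals can cross in a corner of the overlap region while the fixed point of the expansion-contraction sits far outside. The proof must therefore use the specific structure of $\wt R$ as the spanning rectangle of a veering edge (with its two singular corners being the endpoints of $\wt \sigma$), together with the rigidity of $\wt f_\gamma$ as a genuine lift of a pseudo-Anosov (constraining the lattice of deck translates and the positions of singularities determining $\wt f_\gamma(\wt R)$), to rule out the pathological configurations. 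A secondary and more elementary issue is handling degeneracies where the intersection point or fixed point lies on $\partial \wt R$ or at a singular corner, which can be avoided by a generic perturbation since only nonsingular fixed points are counted.
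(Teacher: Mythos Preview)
Your overall strategy---lift to the universal cover, establish a bijection between $\Gamma=\{\gamma:\wt f_\gamma\text{ fixes a point in }\mathrm{int}(\wt R)\}$ and the intersection points $\wt\sigma\cap\wt f_\gamma(\wt\sigma)$, then use the degree $k$ to compare $\#\Gamma$ with downstairs fixed points---is exactly the paper's approach. The gap you identify in the converse direction is real, but the resolution is much simpler than the lattice-rigidity mechanism you anticipate, and you should close it rather than leave it open.

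Here is the missing argument. Suppose $\wt\sigma$ and $\wt f_\gamma(\wt\sigma)$ cross at an interior point $p$. Because $\wt f_\gamma$ has linear part $\mathrm{diag}(\lambda,\lambda^{-1})$, the segment $\wt f_\gamma(\wt\sigma)$ has strictly smaller absolute slope than the diagonal $\wt\sigma$ of $\wt R$; comparing slopes at $p$ shows that the \emph{line} through $\wt f_\gamma(\wt\sigma)$, restricted to the horizontal extent $[u_1,u_2]$ of $\wt R$, stays strictly between heights $v_1$ and $v_2$. Now invoke singularity-freeness: the endpoints of $\wt f_\gamma(\wt\sigma)$ are singularities, hence cannot lie in $\mathrm{int}(\wt R)$, and they cannot coincide with the two singular corners of $\wt R$ (the endpoints of $\wt\sigma$), since two distinct straight segments sharing an endpoint cannot also cross in their interiors. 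These two facts together force the endpoints of $\wt f_\gamma(\wt\sigma)$ to have horizontal coordinates outside $[u_1,u_2]$, so the horizontal extent of $\wt f_\gamma(\wt R)$ contains $[u_1,u_2]$. The symmetric argument (applied to $\wt\sigma$ relative to the singularity-free rectangle $\wt f_\gamma(\wt R)$) shows that the vertical extent of $\wt f_\gamma(\wt R)$ is contained in $[v_1,v_2]$. Now the contraction mapping theorem applies directly: the vertical map $v\mapsto v/\lambda+b$ sends $[v_1,v_2]$ into itself, and the horizontal inverse $u\mapsto(u-a)/\lambda$ sends the horizontal extent of $\wt f_\gamma(\wt R)$ into $[u_1,u_2]$, pinning the fixed point inside $\wt R$. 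No appeal to the global deck-group structure is needed---only the local fact that both $\wt R$ and $\wt f_\gamma(\wt R)$ are singularity-free spanning rectangles.
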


Here and below, we emphasize that quantities such as $i(\sigma, f(\sigma))$ and $i(T, f(T))$ count interior, transverse intersection points contained in $\mr S$.

\begin{proof}
Let $\wt \sigma$ be a lift of $\sigma$ to $\wt S$, and let $\wt R \subset \wt S$ be the singularity-free rectangle spanned by $\wt \sigma$.
For every (nonsingular, interior) point of intersection $x_i \in \sigma \cap f(\sigma)$, we choose a lift $\wt f_i$ such that $\wt x_i \in \wt \sigma \cap \wt f_i(\wt \sigma)$ is a lift of $x_i$. Then 
the interiors of $\wt R$ and $\wt f_i(\wt R)$ have nonempty intersection.
We emphasize that distinct intersection points in $\sigma \cap f(\sigma)$ lift to distinct points in $\wt \sigma$ and lead to distinct lifts of $f$, and drop the subscripts going forward.

Since $f(\sigma)$ is more horizontal than $\sigma$ (see \Cref{Rem:AboveBelow}), it follows that $\wt f(\wt \sigma)$ crosses the singularity-free rectangle $\wt R$ from left to right. The endpoints of $\wt f(\wt \sigma)$ lie outside the closure of $\wt R$, because the intersection point $\wt \sigma \cap f(\wt \sigma)$ is in the interior of $\wt \sigma$. Similarly, the endpoints of $\wt \sigma$ lie strictly above and below the closure of $\wt f(
\wt R)$.

The map $\wt f$ acts affinely on $\wt R$, contracting the vertical direction and stretching the horizontal direction. By the contraction mapping theorem, there is a unique vertical coordinate in $\wt R$ that is fixed by $\wt f$. By the same theorem applied to $\wt f^{-1}$, there is a unique horizontal coordinate in $\wt f(\wt R)$ that is fixed by $\wt f^{-1}$. At the intersection of these coordinates lies the  unique fixed point of $\wt f$ in $\wt R$. This projects to a fixed point of $f$ in the interior of $R$. 

Since $R$ has degree $k$, and since distinct points of $\sigma \cap f(\sigma)$ define distinct lifts of $f$, the above construction produces at least $\frac{i(\sigma, f(\sigma))}{k}$ distinct fixed points in $\int(R)$. 

In the other direction, observe that every nonsingular fixed point of $f$ in $R$ must belong to $\int(R)$: for, if a point $x \in \bdy R$ is fixed, then the vertical or horizontal segment from $x$ to a singular corner of $R$ would also be fixed, contradicting the stretching behavior of $f$.
Thus every nonsingular fixed point  $x \in R$ lifts to a fixed point  of some lift $\wt f$ contained in $\int(\wt R)$. This lift  $\wt x$ must correspond to an intersection point $\wt \sigma \cap \wt f(\sigma)$ because $\wt R$ is a singularity-free rectangle. Hence the fixed point  $x \in R$ must be the projection of one of the $ i(\sigma, f(\sigma))$ fixed points in $\wt R$ constructed above.
\end{proof}



To apply the Fundamental Lemma for bounding fixed points from below, we will need an upper bound on the degrees of singularity-free rectangles. The next lemma characterizes the only case where high degree rectangles can occur.

\begin{lemma}\label{Lem:LowDegreeAwayFromPocket}
Let $\sigma$ be a singularity-free saddle connection whose immersed spanning rectangle $R$ has degree $k \geq 2$. Then there is a flat annulus $A$ in $S$ such that $\sigma$ joins singularities on either side of $A$. Furthermore, we have the following distance estimates:
\[
d_A(\lambda^+,\lambda^-) \geq 2k \quad \text{ and } \quad
\min \{ d_A(\sigma, \lambda^-), \, d_A(\sigma, \lambda^+)\} \geq k+1.
\]
\end{lemma}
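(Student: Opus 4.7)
The plan is to extract a cyclic group of deck translations from the $k$-fold covering of $R$, identify the required flat cylinder from the axis of this group, and convert the resulting width constraints into twist estimates in the annular cover.

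First, pick an interior point $p \in R$ realizing the degree $k$; let $q_0,\dots,q_{k-1} \in \wt R$ be its $k$ preimages and let $g_i \in \pi_1(S)$ be the deck transformation with $g_i(q_0) = q_i$. Each $g_i(\wt R) \cap \wt R$ contains a positive-area open sub-rectangle, on which $g_i$ preserves the two foliations (they descend to $X$) and hence acts as a Euclidean translation by some vector $v_i = (a_i, b_i)$ in coordinates aligned with $\lambda^\pm$. Place $\wt R = [0,w] \times [0,h]$ with its two singular corners (the endpoints of $\wt\sigma$) at $(0,0)$ and $(w,h)$; the interior and other two corners are singularity-free. Because the translated singularities $(a_i,b_i)$ and $(a_i + w, b_i + h)$ must avoid the open interior of $\wt R$ while $|a_i| < w$ and $|b_i| < h$ for the overlap to have nonempty interior, a short case analysis forces each $v_i$ into the open second or fourth quadrant. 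All the $v_i$ are therefore collinear and generate a cyclic subgroup $\langle g_0 \rangle$ with primitive vector $v_0 = (a_0, b_0)$; realizing $k$ preimages requires $(k-1)|a_0| < w$ and $(k-1)|b_0| < h$.

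Next, the axis of $g_0$ in $\wt X$ is a flat line in direction $v_0$, projecting to a closed flat geodesic $\gamma \subset X$. Let $A \subset S$ be the maximal flat cylinder with core $\gamma$. The strip containing $\wt R$ in $\wt X$ is bounded by two lifts of $\partial A$, and the singular corners of $\wt R$ lie on these two distinct boundary lifts since any singularity near $\wt R$ that avoids its interior must lie on a boundary saddle connection of the strip. Projecting, the endpoints of $\sigma$ lie on opposite boundary components of $A$, giving the first claim of the lemma.

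Finally, in the annular cover of $\mr S$ corresponding to $A$, subsurface projection $\pi_A$ is computed via signed winding numbers around the core $\bar\gamma$. Since this core has slope $b_0/a_0 < 0$ while $\lambda^-$ is horizontal and $\lambda^+$ is vertical, a direct geometric computation shows that $\lambda^-$ and $\lambda^+$ each wind around $\bar\gamma$ at least $k$ times but with opposite signs, so their contributions add in $\AC(A) \cong \ZZ$ to yield $d_A(\lambda^+, \lambda^-) \ge 2k$. For the saddle connection $\sigma$, the $k$ translates $g_0^j(\wt R)$ ($0 \le j \le k-1$) exhibit $\pi_A(\sigma)$ as winding at least $k$ times around $\bar\gamma$ with the sign of the diagonal $(w,h)$; this agrees with exactly one of the signs of $\lambda^\pm$ and is opposite to the other, so careful accounting yields $\min\{d_A(\sigma, \lambda^+), d_A(\sigma, \lambda^-)\} \ge k + 1$.

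The main obstacle will be the final step: converting the width inequalities into the precise constants $2k$ and $k+1$ (rather than weaker bounds like $2(k-1)$ or $k-1$) requires a carefully chosen twist convention in $\AC(A) \cong \ZZ$ and absorbing bounded additive errors when passing from raw winding to subsurface diameter. The factor of $2$ in the first estimate is not coincidental: it relies essentially on the horizontal and vertical windings occurring with \emph{opposite} signs, which is forced by the second-or-fourth quadrant constraint on $v_0$ established in the first step.
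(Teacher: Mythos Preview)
Your approach is essentially the paper's: lift to $\wt X$, identify the deck translations that overlap $\wt R$, show they form a cyclic group of small translation length, build the flat strip/annulus, and then count crossings. Two points deserve comment.

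First, the sentence ``All the $v_i$ are therefore collinear'' does not follow from the quadrant constraint you established; vectors in quadrant II or IV need not be parallel. The paper closes this gap by a counting argument: after ordering the overlapping translates $\wt R_0,\dots,\wt R_{k-1}$ and setting $h = h_1$, any $h_i$ that is not a power of $h$ would produce an additional translate of $\wt R_0$ overlapping the common point, contradicting that the degree is \emph{exactly} $k$. (Relatedly, your claim that each $g_i$ ``preserves the two foliations \ldots\ and hence acts as a Euclidean translation'' silently excludes the $-1$ holonomy possible for a quadratic differential; the paper rules this out with a one-line M\"obius-band argument using orientability of $S$.)

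Second, the distance step is easier than you anticipate and does not require delicate twist conventions. The paper simply chooses a nonsingular horizontal leaf $\ell_-$ passing through the common intersection $\wt R_0\cap\cdots\cap\wt R_{k-1}$; this leaf crosses each of the $k$ translates of $\wt\sigma$, so $i(\ell_-,\sigma)\ge k$ in $A$ and hence $d_A(\sigma,\lambda^-)\ge k+1$. The same works for $\lambda^+$. For $d_A(\lambda^+,\lambda^-)$, the paper observes that $\sigma$ meets $\ell_-$ and $\ell_+$ with opposite signs (this is exactly your second/fourth-quadrant observation), so the two counts add: $d_A(\lambda^+,\lambda^-)\ge i(\ell_+,\sigma)+i(\ell_-,\sigma)\ge 2k$. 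No additive error needs to be absorbed.
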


\begin{figure}
\begin{tikzpicture}[scale=2]

\def\n{5}               
\def\x{0.23}             
\def\y{0.2}            
\def\k{4}               

\pgfmathsetmacro{\dxL}{\x - 0}     
\pgfmathsetmacro{\dyL}{0 - \y}     

\pgfmathsetmacro{\dxLp}{1 - (1 - \x)}   
\pgfmathsetmacro{\dyLp}{(1 - \y) - 1}   

\foreach \i in {0,...,4} {
  \begin{scope}[shift={(\i*\x, -\i*\y)}]
    \draw[thick] (0,0) rectangle (1,1);
    
    \ifnum\i=0
      \node at (0.3,1.2) {$\tilde{R}_0$};
    \fi
    \ifnum\i=4
      \node at (0.7,-.2) {$\tilde{R}_{k-1}$};
    \fi
  \end{scope}
}

\coordinate (p) at (0,\y);           
\coordinate (q) at (\x,0);           

\coordinate (p') at ({1 - \x}, 1);   
\coordinate (q') at (1, {1 - \y});   

\coordinate (Lstart) at (p);
\coordinate (LprimeStart) at (p');

\pgfmathsetmacro{\LendX}{0 + \k * (\x + \dxL)}
\pgfmathsetmacro{\LendY}{\y + \k * (-\y + \dyL)}

\pgfmathsetmacro{\LprimeEndX}{(1 - \x) + \k * (\x + \dxLp)}
\pgfmathsetmacro{\LprimeEndY}{1 + \k * (-\y + \dyLp)}

\coordinate (Lend) at (\LendX,\LendY);
\coordinate (LprimeEnd) at (\LprimeEndX,\LprimeEndY);

\draw[orange, thick, dashed] (Lstart) 
  -- node[pos=0.95, below right] {$L$} (Lend);

\draw[orange, thick, dashed] (LprimeStart) 
  -- node[pos=0.95, above right] {$L'$} (LprimeEnd);

\foreach \pt in {p, q, p', q'} {
  \fill[orange] (\pt) circle (0.6pt);
}

\node[orange] at (-0.25,1.1) {$\tilde{A}$};

\node at ({0.5*\x}, -0.12) {$x$};

\node at (1.1, {1 - 0.5*\y}) {$y$};

\node at (1.85, -0.65) {$\ddots$};

\end{tikzpicture}
\caption{Overlapping rectangles in the proof of \Cref{Lem:LowDegreeAwayFromPocket}.}
\label{Fig:OverlappingRectangles}
\end{figure}
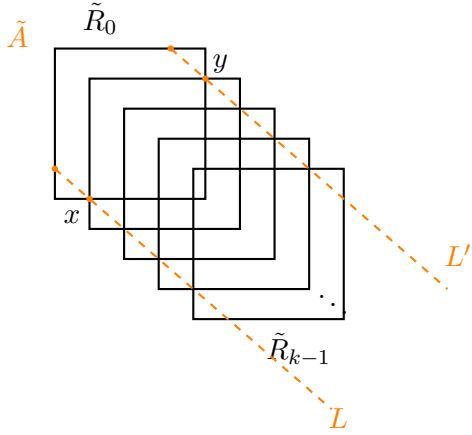

\begin{proof}
We begin by making some simplifying assumptions. Let $\wt R_0$ be a preimage of $R$ in $\widetilde S$. 
Assume without loss of generality that a lift $\wt \sigma$ spans $\wt R_0$ bottom-left to top-right. We rescale the metric $q_t$ (along the Teichm\"uller geodesic) so that $\wt R_0$ is a square, and assume without loss of generality that the sidelength is $1$. Via the developing map $\widetilde S \to \RR^2$, we identify $\wt R_0$ with the unit square of $\RR^2$.
Since the degree is $k$, there are deck transformation images $\wt R_1 = h_1(\wt R_0), \ldots, \wt R_{k-1} = h_{k-1}(\wt R_0)$ such that $\wt R_0 \cap \dots \cap \wt R_{k-1} \neq \emptyset$.
Observe that the orientation-preserving isometry $h_i$ must act on $\wt R_0$ by pure translation: 
 otherwise, the region between two innermost diagonals would descend to an immersed M\"obius band in $S$.
 Thus we may arrange $\wt R_0, \ldots, \wt R_{k-1}$ in order, top-left to bottom-right, with $h_i$ translating downward and to the right. 

Let $h = h_1$, and observe that any subsequent $h_i$ must be a power of $h$: otherwise there would be another image of $\wt R_0$ that is not accounted for above, and the degree would be higher than $k$. Thus we may set $h_i = h^i$ for each $i$.

Let $x,y$ be the horizontal and vertical translation lengths of $h$. Since $\wt R_0 \cap \wt R_{k-1} \neq \emptyset$, we have $\max(x,y) < \frac{1}{k-1}$. Let $\widetilde A$ be the region bounded by lines
\[
L \text{ through $(x,0)$ and $(0,y)$}  \quad \text{ and } \quad L' \text{ through $(1-x,1)$ and $(1,1-y)$};
\]
see \Cref{Fig:OverlappingRectangles}.
Then
\[
\widetilde A \subset \bigcup_{n \in \ZZ} h^n (\wt R_0).
\]
In particular, $\widetilde A$ has no singularities in its interior, hence it is a bi-infinite strip embedded in $\widetilde S$.
Since $\widetilde A$ is an $h$--invariant strip, it projects to a flat annulus $A \subset S$.

To compute distances in $A$, it suffices to count intersection points between $q_t$--geodesics. Let $\ell_-$ be a nonsingular leaf of $\lambda^-$ that runs through $\wt R_0 \cap \ldots \cap \wt R_{k-1}$. Then $\ell_-$ crosses each $\wt R_i$ left to right, hence it intersects the diagonals $\sigma_0, \sigma_1 = h(\sigma_0), \ldots, \sigma_{k-1} = h^{k-1}(\sigma_0)$. Downstairs in the annulus $A$, we have
\[
i(\ell_-, \sigma) \geq k \quad \text{hence} \quad d_A(\lambda^-, \sigma) \geq k+1.
\]
By the same argument, $d_A(\lambda^+, \sigma) \geq k+1$.

Finally, if we orient $\ell_+$ upward and $\ell_-$ (defined in the same manner) rightward, then $\sigma$ intersects $\ell_-$ and $\ell_+$ with opposite signs. Hence, $d_A(\lambda^+, \lambda^-) \ge i(\ell_+, \sigma) + i(\ell_-, \sigma) \ge 2k$. See e.g. \cite[Equation 2.5]{MasurMinsky2}.
\end{proof}

\begin{remark}
If the quadratic differential $q_t$ is chosen so that $R$ is a square, meaning $\sigma$ makes angle $\pi/4$ with both foliations, one can show that the modulus of $A$ satisfies
\[
\frac{k-2}{\sqrt{2}} \leq M(A, q_t) \leq \sqrt{2} k.
\]
Recall the modulus of a flat annulus is height over the circumference.
See also Leininger--Reid \cite[Proposition 3.1]{leininger2020pseudo} for a similar statement.
\end{remark}

\medskip

\section{Tools from veering triangulations}\label{Sec:Tools}
This section contains a number of useful technical results about subsurface projections and veering triangulations. Taken together, these results will enable us to use veering triangulations to estimate the sort of subsurface distances that appear in the statement of \Cref{th:main_2}.
Most of the lemmas of this section rely on results proved in Minsky and Taylor \cite{MinskyTaylor:FiberedFaces, MinskyTaylor:SubsurfaceDist}.

Throughout this section, we assume that $f \from S \to S$ is a pseudo-Anosov map and $q$ is a quadratic differential along the invariant axis of $f$. Let $\mr S$ denote the result of puncturing $S$ at the singularities of $q$.

\subsection{Compatibility and projections}\label{Sec:Compatibility}

We begin by reviewing two maps that are defined and studied in \cite{MinskyTaylor:FiberedFaces}. Our emphasis will be on subsurfaces of the punctured surface $\mr S$, although many constructions also work for subsurfaces of $S$. To streamline the discussion, let $X$ denote either $S$ or $\mr S$, together with the fixed flat structure $q$.

To begin, define $\A(q)$ to be the graph whose vertices are saddle connections of $q$ and whose edges correspond to saddle connections with nonintersecting interiors. Define $\mc A(\tau)$ to be the subgraph of $\A(q)$ spanned by the singularity-free saddle connections, i.e. the veering edges. By puncturing at the singularities of $q$, we have natural inclusions $\A(\tau) \subset \mc A(q) \subset \AC(\mr S)$.

Let 
$\s \from \AC(X) \to \A(q)$
 be the set-valued map that sends every 
essential curve or arc  $\alpha \subset X$
to the set of noncrossing saddle connections of $q$ that appear in some flat geodesic representative(s) of $\alpha$. (This geodesic representative is unique, except for the case where $\alpha$ is the core curve of a flat cylinder.)
See \cite[Section 4.3]{MinskyTaylor:FiberedFaces} for details. By \cite[Lemma 4.4]{MinskyTaylor:FiberedFaces}, $\s$ is a \define{coarse $1$--Lipschitz retraction}. That is, $\s$  sends sets of diameter $\leq 1$ to sets of diameter $\leq 1$, and restricts to the identity on its codomain.

Let $\r \colon \A(q) \to \A(\tau)$ be the set-valued rectangle hull map defined in \cite[Section 4.1]{MinskyTaylor:FiberedFaces}. Working in the universal cover, this map sends any saddle connection $\sigma$ to the set of veering edges whose endpoints are singularities occurring on the boundary of maximal rectangles whose diagonals lie along $\sigma$. 
By \cite[Lemma 4.1]{MinskyTaylor:FiberedFaces}, $\r$ is also a coarse $1$--Lipschitz retraction. Consequently, the composition
$\p = \r \circ \s \from \AC(X) \to \A(\tau)$  is also a coarse $1$--Lipschitz retraction. 
It follows that any two vertices in $\A(\tau)$ are joined by a geodesic of $\AC(\mr S)$ that lies in $\A(\tau)$ \cite[Theorem 1.4]{MinskyTaylor:FiberedFaces}.

\begin{figure}
\includegraphics{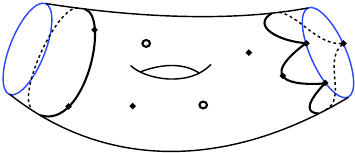}
\caption{
The subsurface $W \subset X$ is $q$--compatible and the image shows $W_q \subset \wt X_W$.
Open circles are punctures of $S$, while closed dots are singularities of $S$ that are punctured in $\mr S$. The solid black intervals are saddle connections whose concatenation forms $\bdy_q W$. 
This figure is a reproduction of \cite[Figure 2]{MinskyTaylor:SubsurfaceDist}.}
\label{Fig:YqCartoon}
\end{figure}

Let $W \subset X$ be an essential subsurface. If $\bdy W \neq \emptyset$, the \define{$q$--representative} of $\bdy W$ is $\bdy_q W = \s(\bdy W)$. 
In the case where $X = \mr S$, we also define the  \define{$\tau$--representative} of $\bdy W$ as 
$\bdy_\tau W = \p(\bdy W) = \r(\bdy_q W)$. There is a definition in the general case, but we will not need it here.

Next, we construct a closed set $W_q$, 
which can be thought of as the convex core of $W$ in the $q$--metric. See \cite[Section 2.4]{MinskyTaylor:SubsurfaceDist} for details. First suppose that $W$ is not an annulus. 
The set $W_q$ naturally lives in $\wt X_W$, which is the cover of $X$ associated to $W$. 
We lift the inclusion $\iota \from W \to X$ to an inclusion $\hat \iota \from W \to \wt X_W$,
and co-orient each component of $\bdy \hat \iota(W)$ outward, so that the co-orientation points to the annular components of the complement. If some component $\gamma$ of $\bdy \hat \iota(W)$ is a cylinder curve of $q$, as on the left side of \Cref{Fig:YqCartoon}, we isotope the flat geodesic representative inward with respect to the co-orientation, away from the ideal boundary of $\wt X_W \setminus \bdy_q \hat \iota(W)$. As a result, each flat geodesic representative $\gamma_q = \s(\gamma)$ of $\bdy \hat \iota(W)$ is chosen so that the outward side (pointed to by the co-orientation) is maximal with respect to inclusion.
Each outward side is either an annulus or a finite number of open disks; see 
\Cref{Fig:YqCartoon}. Let $W_q$ be the closed subset of $\wt X_W$ obtained by removing these (open) outward sides.

When $W$ is an annulus, $W_q$ is the maximal flat cylinder in $\wt X_W$, when it exists, and is otherwise the unique geodesic representative of its core. 

%

By  \cite[Lemma 2.6]{MinskyTaylor:FiberedFaces}, the subset $W_q \subset \wt X_W$ is $q$--convex. Furthermore, there is a deformation retraction $\wt X_W \to W_q$, which collapses each outward side to the flat geodesic $\gamma_q$. Thus $W_q$ is a $q$--convex homotopy representative of $W$. However, $W_q$ might fail to be a surface, and might even have empty interior. 

For $x\in \{q, \tau \}$ 
we say that $W$ is \define{$x$--compatible} if there is a component $\int_x W$ of $X \setminus \bdy_x W$ that is an (open) representative of the homotopy class of the subsurface $W$. In particular, if $W$ is $q$--compatible, then $\int_q W $ can be obtained by projecting 
 $\int(W_q) \subset \wt X_W$ down to $X$.
See \cite[Section 2.4]{MinskyTaylor:SubsurfaceDist}  for more details.

\begin{lemma}\label{Lem:CompatibleProj}
Let $W \subset X$ be a proper subsurface. If $d_W(\lambda^-,\lambda^+)\ge 4$, then $W$ is $q$--compatible and $\tau$--compatible.
\end{lemma}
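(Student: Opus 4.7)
The plan is to argue by contradiction for both statements, exploiting the construction of $W_q$ in $\wt X_W$ as a $q$-convex homotopy representative of $W$ whose boundary components are $\bdy_q W = \s(\bdy W)$.

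Suppose first that $W$ fails to be $q$-compatible. Since the outward (annulus and disk) sides of $\wt X_W$ along $\bdy_q\hat\iota(W)$ have already been deleted from $W_q$, non-compatibility must arise from the covering projection $\wt X_W \to X$ failing to be an embedding on $W_q$. Equivalently, after lifting $W_q$ to a component $\wt W_q \subset \wt X$, there exists $g \in \pi_1(X)$ not in the stabilizer $\pi_1(W)$ of $\wt W_q$ such that $g\cdot \wt W_q \cap \wt W_q \neq \emptyset$. Because $\wt W_q$ is $q$-convex, such a $g$ witnesses either a saddle connection in $\bdy_q W$ shared by two distinct components of $\bdy W$, or a single boundary geodesic of $W_q$ that crosses itself when pushed to $X$.

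From this identification I would extract an essential arc or curve $\alpha_g \subset W$ whose $q$-geodesic representative runs along the identified portion of $\bdy_q W$. First I would show that $\pi_W(\alpha_g)$ is nonempty and lies within distance $1$ of $\pi_W(\bdy W)$ in $\AC(W)$, since the flat representative of $\alpha_g$ sits against $\bdy_q W$ and can be pushed off $\bdy W$ by a single disjointness move. Next I would show that both $\pi_W(\lambda^+)$ and $\pi_W(\lambda^-)$ lie within distance $1$ of $\pi_W(\alpha_g)$: flat leaves of $\lambda^\pm$ that enter $W_q$ near the identified boundary yield arcs of $W$ whose $q$-representatives travel in a neighborhood of $\bdy_q W \cup \alpha_g$, constraining their subsurface projections. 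Combining these two estimates via the triangle inequality forces $d_W(\lambda^+,\lambda^-) \leq 1 + 1 + 1 = 3$, contradicting the hypothesis $d_W(\lambda^-,\lambda^+) \geq 4$. For $\tau$-compatibility, I would run the analogous argument using $\bdy_\tau W = \r(\bdy_q W)$; since $\r \from \A(q) \to \A(\tau)$ is a coarse $1$-Lipschitz retraction, the rectangle hull sits within uniformly bounded distance of $\bdy_q W$ in $\AC(\mr S)$, so the same projection bounds absorb the change of base and the contradiction persists at the threshold $4$.

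The main obstacle will be the geometric step that turns a non-trivial identification of $\wt W_q$ by some $g \notin \pi_1(W)$ into an explicit $\alpha_g$ whose $W$-subsurface projection is simultaneously close to those of $\lambda^+$ and $\lambda^-$. This requires a careful analysis of how leaves of the stable and unstable foliations enter and exit along the maximal geodesic boundary of $W_q$, exactly the sort of flat-geometric bookkeeping developed in \cite{MinskyTaylor:SubsurfaceDist}; getting the sharp constant $4$ is sensitive to how one quantifies ``along $\bdy_q W$'' in terms of $\AC(W)$-distance, and may require the bounded geodesic image theorem (\Cref{Thm:BGIT}) to control any remaining slack.
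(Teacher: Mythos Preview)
The paper's own proof is simply a citation to \cite[Proposition 2.4 and Theorem 2.5]{MinskyTaylor:SubsurfaceDist}, so there is no self-contained argument here to compare against. Your overall shape --- assume non-compatibility, produce something in $W$ witnessing it, and bound $d_W(\lambda^-,\lambda^+)$ by a small constant --- is indeed the structure of the Minsky--Taylor argument. But several steps in your sketch are either mis-stated or genuinely missing.

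First, your characterization of non-$q$-compatibility (``the covering projection fails to embed $W_q$ in $X$'') is not the operative one. The correct characterization, recorded immediately after this lemma as \Cref{lem:stuck}, is that $\bdy_q W$ contains a saddle connection $\sigma$ with $\pi_W(\sigma)\neq\emptyset$; this is what the Minsky--Taylor argument actually uses. Relatedly, your step about ``$\pi_W(\alpha_g)$ within distance $1$ of $\pi_W(\bdy W)$'' does not parse: $\pi_W(\bdy W)=\emptyset$ by definition, so presumably you mean $\pi_W(\bdy_q W)$, but then the whole step collapses into the statement that $\sigma$ itself projects to $W$, and no auxiliary $\alpha_g$ is needed.

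Second, the step you flag as the ``main obstacle'' is exactly where all the content lies, and your proposal does not indicate the mechanism. The actual argument is flat-geometric, not coarse-geometric: once you have $\sigma\subset\bdy_q W$ essential in $W$, you lift to $\wt X_W$ and look at the half-leaves of $\lambda^+$ and $\lambda^-$ leaving a singular endpoint of $\wt\sigma$; using $q$-convexity of $W_q$ and the fact that $\sigma$ lies in the geodesic representative of $\bdy W$, one shows these half-leaves yield essential arcs in $W$ disjoint from $\wt\sigma$. This is what forces $d_W(\lambda^\pm,\sigma)$ to be small. The bounded geodesic image theorem plays no role here and would not recover the threshold $4$; invoking it is a red herring. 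For $\tau$-compatibility, saying $\r$ is coarse $1$-Lipschitz is not by itself enough to keep the constant at $4$ --- the Minsky--Taylor argument for this part again uses the rectangle geometry directly.
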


\begin{proof}
See \cite[Proposition 2.4 and Theorem 2.5]{MinskyTaylor:SubsurfaceDist}. 
\end{proof}

Note that it follows from the definitions that $\tau$--compatibility implies $q$--compatibility, since `pinching' cannot be undone by applying $\r$.

\begin{lemma}\label{lem:stuck}
A subsurface $W \subset \mr S$ fails to be $x$--compatible if and only if $\bdy_x W$ meets $W$ essentially (meaning that $\pi_{W}(\bdy_x W) \neq \emptyset$). 
\end{lemma}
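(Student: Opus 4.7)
The plan is to verify both implications directly from the definition of subsurface projection, via the standard characterization that $\pi_W(\beta) = \emptyset$ for a $1$--dimensional object $\beta$ in $X$ if and only if $\beta$ has a representative, in its free homotopy class, that is disjoint from $\int W$ up to peripheral arcs. The reverse implication is then essentially immediate: if $W$ is $x$--compatible, let $\int_x W$ be a component of $X \setminus \bdy_x W$ realizing the homotopy class of $W$. By construction $\bdy_x W$ is disjoint from $\int_x W$, and since $\int_x W$ represents the class of $W$ this gives a representative of $\bdy_x W$ disjoint from $\int W$ up to peripheral arcs, so $\pi_W(\bdy_x W) = \emptyset$.

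For the forward implication I argue the contrapositive: assuming $\pi_W(\bdy_x W) = \emptyset$, I construct the desired component $\int_x W$. The key input is that $\bdy_x W$ is freely homotopic to $\bdy W$ in $X$, since $\bdy_x W = \s(\bdy W)$ (respectively $\p(\bdy W)$ when $x = \tau$) is a geodesic representative of $\bdy W$. Combined with $\pi_W(\bdy_x W) = \emptyset$, I can match each component $\gamma$ of $\bdy W$ with a corresponding sub-system $\gamma_x$ of $\bdy_x W$ that is homotopic to $\gamma$ and lies outside $\int W$ up to peripheral arcs. Each pair $\gamma, \gamma_x$ cobounds an annulus $A_\gamma$ in $X$, which lies either in a boundary collar of $W$ or entirely outside $W$. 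The subsurface $\int_x W$ is then obtained by modifying $\int W$ along these annuli, adjoining or removing as appropriate. The result is an open subsurface homotopic to $W$ whose closure has boundary $\bdy_x W$, so $\int_x W$ is a component of $X \setminus \bdy_x W$, establishing $x$--compatibility.

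The main technical issue is to ensure that the $A_\gamma$--modifications really produce a single component of $X \setminus \bdy_x W$ rather than cutting it into pieces; this rests on the fact that every component of $\bdy_x W$ appears as some $\gamma_x$, so no stray saddle connections or veering edges of $\bdy_x W$ can invade the candidate $\int_x W$. The annular case needs a brief separate check, since when $W$ is an annulus with core $\alpha$, $\bdy_x W$ is essentially the geodesic representative of $\alpha$ and $\pi_W(\bdy_x W)$ is automatically empty, consistent with such an annulus always being $x$--compatible when $W_q$ is nondegenerate.
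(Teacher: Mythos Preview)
Your reverse implication (compatible $\Rightarrow$ empty projection) is the same as the paper's and is fine.

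The contrapositive direction has a genuine gap. Your argument hinges on the assertion that $\bdy_x W$ is ``a geodesic representative of $\bdy W$'', and that therefore every saddle connection or veering edge of $\bdy_x W$ sits in some subsystem $\gamma_x$ freely homotopic to a boundary component $\gamma$ of $W$. For $x=q$ this is correct: $\bdy_q W = \s(\bdy W)$ is precisely the union of saddle connections in the $q$--geodesic representatives of the components of $\bdy W$. But for $x=\tau$ it is false. By definition $\bdy_\tau W = \r(\bdy_q W)$ is the rectangle hull, which throws in many additional veering edges that do \emph{not} lie on (and are not homotopic into) the geodesic representative of $\bdy W$. Consequently your claimed ``fact that every component of $\bdy_x W$ appears as some $\gamma_x$'' fails in the $\tau$--case, and the stray veering edges you worried about can indeed exist. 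Your annulus-surgery construction of $\int_\tau W$ therefore does not go through as written.

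Even in the $q$--case your sketch leaves the crucial step unproved. You note that ``the main technical issue is to ensure that the $A_\gamma$--modifications really produce a single component,'' but you do not explain why the hypothesis $\pi_W(\bdy_q W)=\emptyset$ rules out the two concrete obstructions: distinct boundary components whose $q$--representatives share a saddle connection (pinching), and a $q$--representative that passes through a singularity lying in the interior of $W$. The paper's proof takes the opposite tack and handles exactly these two cases directly: it argues that if $W$ is not $x$--compatible then one of these two phenomena occurs, and in either case the offending saddle connection is an essential arc of $W$, so $\pi_W(\bdy_x W)\neq\emptyset$. That argument is short precisely because it exploits the structure of the convex representative $W_q$ built just before the lemma, rather than trying to reconstruct $\int_x W$ from scratch.
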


\begin{proof}
If $W$ is $x$--compatible, then every component of $\bdy_x W$ is disjoint from the interior $\int_x W$, hence $\pi_{W}(\bdy_x W) = \emptyset$.

Conversely, if $W$ is not $x$--compatible, then either distinct boundary components of $\bdy W$ have $x$--representatives that meet `pinching' $W$ or the $x$--representative of some component of $\bdy W$ meets a puncture interior to $W$. In both cases, there is a saddle connection $\sigma \subset \bdy_x W$ that has essential intersection with $W$. 
\end{proof}

\subsection{Filling subsurfaces}\label{Sec:FillingSubsurfaces}

For any essential subsurface $\psub$ of $\mr S$, let $Y = \Fill_S(\psub)$ be the associated subsurface of $S$ obtained by capping off any components of $\bdy \psub$ that are inessential in $S$, as well as any punctures of $\psub$ that are filled in $S$. 

In general, $\Fill_S(\psub)$ might be inessential in $S$, meaning that it is a disk or boundary parallel annulus. However, the situation is better when $\psub$ is $q$--compatible.
 
 {
 \begin{lemma}[Essential filling]\label{Lem:EssentialFill} \label{Lem:QCompatibleFill}
Let $\psub$ be a proper essential subsurface of $\mr S$ such that $d_{\psub}(\lambda^-,\lambda^+)\ge 4$. Let $Y = \Fill_S(\psub)$. 
Then $Y$ is an essential and $q$--compatible (not necessarily proper) subsurface of $S$, satisfying 
$\int_q \psub \subset \int_q Y$. 
Furthermore, if $Y$ is an annulus, then $\int_q \psub = \int_q Y$.
\end{lemma}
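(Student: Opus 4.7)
The plan is to combine the $q$-compatibility of $\psub$ in $\mr S$ provided by \Cref{Lem:CompatibleProj} with an explicit construction of $\int_q Y$ obtained by filling in singularities of $q$ and disks bounded by inessential-in-$S$ boundary components, and then to argue separately that $Y$ is essential in $S$.

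First I apply \Cref{Lem:CompatibleProj} to conclude that $\psub$ is both $q$- and $\tau$-compatible in $\mr S$, so $\int_q \psub \subset \mr S$ is a well-defined $q$-convex open region whose closure is bounded by the saddle-connection cycles comprising $\bdy_q \psub$. To build a candidate for $\int_q Y \subset S$, I view $\int_q \psub$ as a subset of $S$ and adjoin each singularity of $q$ lying in $\psub$, together with the flat disk $D_c \subset S$ bounded by each boundary component $c$ of $\psub$ that is inessential in $S$ (automatically $D_c$ contains only singularities of $q$). The resulting open set $U \subset S$ is $q$-convex and is a homotopy representative of $Y$, so setting $\int_q Y := U$ establishes $q$-compatibility of $Y$ in $S$, with $\bdy_q Y$ equal to those saddle-connection cycles in $\bdy_q \psub$ that represent essential curves of $S$. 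The inclusion $\int_q \psub \subset \int_q Y$ is then immediate from the construction. If $Y$ is annular, then $\psub$ must itself be annular, since neither filling a singularity nor capping an inessential boundary disk can reduce a non-annulus to an annulus; in this case $\psub$ has no inessential boundary component (capping one would produce a disk, not an annulus) and the flat cylinder or core geodesic representing $\psub$ is already the flat cylinder or core representing $Y$, giving $\int_q \psub = \int_q Y$.

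The main obstacle is showing that $Y$ is essential in $S$. Suppose for contradiction that $Y$ is inessential, so $Y$ is a disk or peripheral annulus in $S$. Then every boundary component of $\psub$ lies in a small topological disk or peripheral annular neighborhood of $S$, and the saddle-connection cycles forming $\bdy_q \psub$ terminate entirely at singularities enclosed by this same small region. My plan is to exhibit from this constrained geometric picture an essential simple curve or arc in $\psub$ at bounded distance in $\AC(\psub)$ from both $\pi_\psub(\lambda^+)$ and $\pi_\psub(\lambda^-)$, thereby bounding $d_\psub(\lambda^+,\lambda^-)$ strictly below $4$ and contradicting the hypothesis. The subtle point is that a naive diameter estimate on $\AC(\psub)$ fails when the enclosed region contains many singularities of $q$; instead one must use that leaves of $\lambda^\pm$ enter the enclosed region only through $\bdy_q \psub$ and are therefore tightly constrained by the global foliation structure. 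I expect the cleanest implementation to combine \Cref{lem:stuck}, applied to the boundary components of $\psub$ that are inessential in $S$, with the saddle-connection geometry of $\bdy_q \psub$ inside the would-be disk or peripheral annulus to produce the required bounded-distance witness.
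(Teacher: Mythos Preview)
Your proposal has a genuine gap in the essentiality argument, and a false step in the annulus case.

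For essentiality, your contradiction plan---produce an arc close to both $\pi_{\psub}(\lambda^+)$ and $\pi_{\psub}(\lambda^-)$ from the hypothetical disk/peripheral-annulus picture---is vague, and you yourself flag that a naive diameter estimate fails. The paper's argument avoids this entirely by extracting one geometric fact from $q$-compatibility of $\psub$: because $\partial_q \psub$ cannot pass through any singularity that is interior to $\psub$ (else $\psub$ would be ``pinched'' and not $q$-compatible), every interior angle along $\partial_q \psub$, measured in $S$, is at least $\pi$. One then lifts $\int_q \psub$ to the cover $\widetilde S_Y$ associated to $Y$ and lets $\overline W$ be its closure. The angle condition means $\overline W$ has locally convex boundary, and the combinatorial Gauss--Bonnet formula (e.g.\ \cite[Theorem 3.3]{Rafi:ShortCurves}) immediately forces $\widetilde S_Y$ not to be a disk; this is what gives essentiality. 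The same convexity shows that each boundary component of $\overline W$ either bounds a disk to its outside or is homotopic to a flat geodesic missing the interior of $\overline W$, so $\overline W \subset Y_q$, which yields $q$-compatibility of $Y$ and $\int_q \psub \subset \int_q Y$ in one stroke. Your direct construction of $U$ never isolates this angle condition, and without it the asserted $q$-convexity of $U$ is unjustified.

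For the annulus case, your claim that ``neither filling a singularity nor capping an inessential boundary disk can reduce a non-annulus to an annulus'' is false: a four-holed sphere with two boundary components inessential in $S$ fills to an annulus, and a four-holed sphere is an essential subsurface of $\mr S$. The paper's argument instead observes that when $Y$ is an annulus, $Y_q$ is a flat annulus with no interior singularities; since $\overline W \subset Y_q$ and $\overline W$ has singularities on its boundary, $\overline W$ must coincide with the maximal flat annulus $Y_q$. This rules out exactly the configurations your topological claim misses.
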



\begin{proof}
By \Cref{Lem:CompatibleProj}, $\psub$ is $q$--compatible. That is, $\int_q \psub$ is 
 an embedded (open) representative of $\psub$ in $\mr S$. Since $\bdy_q \psub$ does not pass through any punctures interior to $\psub$, every interior angle along $\bdy_q \psub$ (as measured in $S$) is at least $\pi$. This is the key fact we will use.
 
Now lift $\int_q \psub$ to the cover $\wt S_Y$ of $S$ associated to $Y = \Fill_S(\psub)$ and let $\ol W$ denote its closure. As above, each interior angle of $\ol W$ in $\wt S_Y$ is at least $\pi$ and so $\wt S_Y$ cannot be a disk. This is a consequence of the combinatorial Gauss--Bonnet formula (see e.g.~\cite[Theorem 3.3]{Rafi:ShortCurves}).
Moreover, for essentially the same reason, each boundary component of $\ol W$ either bounds a disk to the outside of $\ol W$ or is homotopic to a flat geodesic not meeting the interior of $\ol W$. 
In particular, $\ol W \subset Y_q$, proving that $Y$ is essential and $q$-compatible. By taking interiors and projecting to $S$, we also see that $\int_q \psub \subset \int_q Y$.

Moreover, if $Y$ is an annulus, then $Y_q$ is a flat annulus. Since $\ol W$ has singularities (or punctures) in its boundary, we must have that both $\ol W$ and $Y_q$ are equal to the maximal flat annulus of $\wt S_Y$. 
\qedhere
\end{proof}
}

We remark that when $Y = \Fill_S(\psub)$, pulling curves tight to saddle connections induces a $1$--Lipschitz map $\AC(Y) \to \AC(\psub)$. 
Compare \Cref{Def:QProjection}.

\begin{lemma}[Cutting subsurfaces]\label{Lem:CuttingSubsurfaces}
Let $\psub$ be a proper essential subsurface of $\mr S$ with $d_{\psub} (\lambda^+, \lambda^-) \geq 4$. 
Let $Y = \Fill_S(\psub)$, and suppose that $f$ overlaps $Y$. Then, for any section $T$, there is an edge $\sigma$ such that both $\sigma$ and $f(\sigma)$ cut $\psub$. 
\end{lemma}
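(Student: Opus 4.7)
The plan is to rewrite the desired conclusion as the existence of an edge $\sigma \in T$ cutting both $\psub$ and $f^{-1}(\psub)$ in $\mr S$ (using that $f(\sigma)$ cuts $\psub$ if and only if $\sigma$ cuts $f^{-1}(\psub)$), and then to use that any section-triangulation of $\mr S$ must detect the essential overlap of two subsurfaces. For the setup: by \Cref{Lem:CompatibleProj}, the hypothesis $d_{\psub}(\lambda^+,\lambda^-) \geq 4$ gives that $\psub$ is both $q$-- and $\tau$--compatible, so $\int_q \psub$ is an open, embedded representative of $\psub$ in $\mr S$ with geodesic boundary $\bdy_q \psub$. By \Cref{Lem:EssentialFill}, $Y = \Fill_S(\psub)$ is essential and $q$--compatible in $S$ with $\int_q \psub \subset \int_q Y$. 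Since $f$ preserves $\lambda^\pm$ (up to scaling) and subsurface projection is equivariant, $d_{f^{-1}(\psub)}(\lambda^+,\lambda^-) = d_{\psub}(\lambda^+,\lambda^-) \geq 4$, so the same conclusions apply to $f^{-1}(\psub) \subset f^{-1}(Y)$.

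Next I would promote the overlap hypothesis from $S$ to $\mr S$. The condition that $f$ overlaps $Y$ gives essential intersection of $Y$ and $f^{-1}(Y)$ in $S$. I plan to argue this forces essential overlap of $\psub$ and $f^{-1}(\psub)$ already in $\mr S$, as follows: the pieces of $Y \setminus \psub$ (and of $f^{-1}(Y) \setminus f^{-1}(\psub)$) consist of disks and $S$--trivial annular regions, by definition of $\Fill_S$; such disk-like pieces are always isotopable and therefore cannot support an essential intersection in $S$. Consequently, the essential intersection of $Y$ and $f^{-1}(Y)$ must be sourced from essential intersection of the ``genuine'' parts $\psub$ and $f^{-1}(\psub)$ in $\mr S$.

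Finally, with $\psub$ and $f^{-1}(\psub)$ overlapping essentially in $\mr S$, I would produce the edge $\sigma \in T$. The idea is that the edges of $T$, being singularity-free saddle connections that triangulate $\mr S$, must fill $\mr S$ and hence cannot avoid any essential subsurface. In the favorable case where $\psub \cap f^{-1}(\psub)$ has an essential subsurface component $C$, some edge of $T$ must essentially cut $C$ (otherwise $C$ would embed into the union of open triangles of $T$, impossible for an essential subsurface), and any such edge cuts both $\psub$ and $f^{-1}(\psub)$. The hardest part will be the ``thin overlap'' case in which $\psub \cap f^{-1}(\psub)$ has no essential subsurface component---for instance, when $\psub$ and $f^{-1}(\psub)$ are annular neighborhoods of essentially intersecting curves. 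In that case one must exhibit an edge of $T$ crossing both $\bdy \psub$ and $\bdy f^{-1}(\psub)$ essentially, which does not follow from area considerations alone; I expect this requires exploiting the $\tau$--compatibility of both subsurfaces together with either a flip argument relating $T$ to an auxiliary section containing $\bdy_\tau \psub$ (via \cite[Lemma 3.2 and Proposition 3.3]{MinskyTaylor:FiberedFaces}) or an application of the coarse retraction $\p \colon \AC(\mr S) \to \mc A(\tau)$ to pull a suitable veering edge back into $T$.
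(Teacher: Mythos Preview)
Your setup is fine and matches the paper: you correctly use $q$--compatibility of $\psub$ via \Cref{Lem:CompatibleProj}, and you correctly promote the overlap from $Y$ to $\psub$ (the paper does exactly this, noting that $\psub$ is recovered from $Y$ by removing disks and peripheral annuli, so $f(Y)$ overlapping $Y$ forces $f(\psub)$ to overlap $\psub$). The reformulation in terms of $f^{-1}(\psub)$ is harmless but not needed.

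The genuine gap is in your last step. By retreating to the isotopy category and splitting into ``thick'' versus ``thin'' overlap, you create a hard case that you do not resolve---and the suggested fixes via flip sequences or the retraction $\p$ are not obviously going to produce an edge of the \emph{given} section $T$. The paper sidesteps this entirely by staying in the $q$--geometry. The point is that $\int_q \psub$ is an honest open subset of the flat surface, and $f$ is an honest homeomorphism, so $f(\int_q \psub) \cap \int_q \psub$ is a nonempty open set (not merely an essential intersection of isotopy classes). Since $T$ is a triangulation of $\mr S$ by straight-sided triangles, its restriction to $\int_q \psub$ is a cellulation by convex polygons. One then simply picks a triangle $t$ of $T$ with $f(t \cap \int_q \psub) \cap \int_q \psub \neq \emptyset$; both $t \cap \int_q \psub$ and $f(t) \cap \int_q \psub$ are nonempty polygons whose boundaries must include segments of $\partial t$ (since the ideal vertices of $t$ lie outside the open set $\int_q \psub$, and $\psub$ is essential so cannot sit inside a triangle). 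This yields an edge $\sigma \subset t$ with $\sigma \cap \int_q \psub \neq \emptyset$ and $f(\sigma) \cap \int_q \psub \neq \emptyset$. There is no case split: whether the overlap is a whole subsurface or a thin rectangle is irrelevant once you are working with the actual $q$--convex representatives rather than isotopy classes.
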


In \Cref{Lem:CuttingSubsurfaces}, the hypothesis that $f(Y) \cap Y \neq \emptyset$ is absolutely crucial. By contrast, the hypothesis on distance in $\psub$ is mainly included for convenience. Observe also that the conclusion of the lemma holds trivially if $\psub = \mr S$.

\begin{proof}
By \Cref{Lem:EssentialFill}, $Y$ is an essential (not necessarily proper) subsurface of $S$. 
By construction, $\psub$ can be recovered from $Y$ by removing some inessential disks and peripheral annuli. Thus the hypothesis that 
$f(Y)$ overlaps $Y$ implies that $f(\psub)$ overlaps $\psub$. 

Since $\psub $ is $q$--compatible by  \Cref{Lem:CompatibleProj}, the interior $\int_q \psub $ is a copy of the interior of $\psub $, with boundary consisting of the saddle connections of $\partial_q \psub$. 

Now, let $T$ be a section, let $t$ be a triangle of $T$, and consider the intersection $t \cap \int_q \psub$. Since $\bdy t$ consists of veering edges, and $\bdy_q \psub $ consists of saddle connections, every component of $t \cap \int_q \psub$ is a $q$--convex polygon. Hence, the intersection of $T$ with $\int_q \psub$ is a cellulation of $\int_q \psub$.

Since $f(\int_q \psub) \cap \int_q \psub \neq \emptyset$, there must be at least one triangle $t \subset T$ such that $f(t \cap \int_q \psub) \cap \int_q \psub \neq \emptyset$. Thus $t \cap \int_q \psub$ and $f(t) \cap \int_q \psub$ are both non-empty polygons, hence there is a veering edge $\sigma \subset t$ such that $\sigma \cap \int_q \psub$ and $f(\sigma) \cap \int_q \psub$ are both non-empty. This edge $\sigma$ has the required property.
\end{proof}

\subsection{Constructing geodesic paths in a subsurface}
The point of this section is to make rigorous the following heuristic: subsurfaces with large projection distance are represented by embedded simplicial `pockets' of the veering triangulation; furthermore, sections sweeping through these pockets closely track geodesics in the curve and arc graph of the subsurface. Since the details will be crucial here, we explain how these properties can be obtained from those in \cite{MinskyTaylor:FiberedFaces}.

For the next statement, if $a$ and $b$ are veering edges, then we say that $a$ \define{lies below} $b$ and write $a<b$ if $a$ crosses the spanning rectangle $R_b$ from left to right (or equivalently, $b$ crosses the spanning rectangle $R_a$ from top to bottom). We will also sometimes say that $b$ is \define{more vertical} than $a$ or that $a$ is \define{more horizontal} than $b$. Compare \Cref{Rem:AboveBelow}.

\begin{lemma}\label{lem:above/below}
Let $\psub$ be a proper essential subsurface of $\mr S$, let $h$ be a veering edge, and let $R_h$ be the singularity-free rectangle spanned by $h$.
\begin{enumerate}
\item
\label{Itm:qAbove} If there is a saddle connection $\sigma \subset \bdy_q \psub$ such that $\sigma < h$, 
then $d_{\psub}(h,\lambda^+) \leq 3$. If $h < \sigma$, then $d_{\psub}(h,\lambda^-) \leq 3$.
\item
\label{Itm:tauAbove} If there is a veering edge $e \subset \bdy_\tau \psub$ such that $e < h$, then $d_{\psub}(h,\lambda^+) \leq 3$. Similarly, if $h < e$, then $d_{\psub}(h,\lambda^-) \leq 3$.
\end{enumerate}
In all cases, there exists a lift of $h$ to $\wt S_{\psub}$ that is disjoint from some  leaf $\ell^\pm \subset \lambda^\pm$. 
\end{lemma}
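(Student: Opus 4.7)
In each case the plan is to exhibit a lift $\wt h$ of $h$ to $\wt S_\psub$ together with a leaf of the corresponding $\wt \lambda^\pm$ that is disjoint from $\wt h$. This yields the final disjointness claim of the lemma, and also the numerical bound: since each of $\pi_\psub(h)$ and $\pi_\psub(\ell^\pm)$ has diameter at most one in $\AC(\psub)$, and disjointness in the cover contributes one more unit, we get $d_\psub(h, \ell^\pm) \le 3$. The bound $d_\psub(h, \lambda^\pm) \le 3$ then follows because the projection of the minimal filling lamination $\lambda^\pm$ is uniformly close to that of any of its leaves.

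By the symmetry swapping $\lambda^+$ with $\lambda^-$ (and $e$ with $\sigma$), it suffices to treat part (1) with $\sigma < h$. The first step would be to choose the lift $\wt \sigma$ of $\sigma$ lying in $\bdy \wt \psub$, where $\wt \psub \subset \wt S_\psub$ is the preferred lift of $\psub$; recall that $\wt \psub$ is $q$--convex, by Minsky--Taylor (see \Cref{Sec:Compatibility}). The ordering $\sigma < h$ means that $\sigma$ crosses $R_h$ from its left vertical side to its right vertical side, so there is a matching lift $\wt R_h$ of $R_h$ through which $\wt \sigma$ crosses in the same way, and the associated lift $\wt h$ of $h$ is the diagonal of $\wt R_h$, contained in $\int(\wt R_h)$.

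The second step is to locate the disjoint vertical leaf. The two endpoints of $\wt \sigma$ are singularities, so neither can lie in $\int(\wt R_h)$; moreover they cannot both coincide with corners of $\wt R_h$, as this would force $\sigma = h$ (there is a unique $q$--geodesic in $\wt S_\psub$ between any two singularities). Hence $\wt \sigma$ extends strictly past at least one of the vertical sides of $\wt R_h$, say the left. I would then pick any vertical leaf $\wt \ell^+ \subset \wt \lambda^+$ through a point $p$ of $\wt \sigma$ strictly to the left of $\wt R_h$. Since $\wt \ell^+$ lies on a vertical line disjoint from $\wt R_h$, it is in particular disjoint from $\wt h$.

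The hardest step will be to verify that $\wt \ell^+$ projects essentially to $\wt \psub$ (otherwise $\pi_\psub(\ell^+) = \emptyset$ and the distance bound via disjointness is vacuous). For this I would use that $\wt \psub$ is $q$--convex and $\wt \ell^+$ is a $q$--geodesic, so $\wt \ell^+ \cap \wt \psub$ is connected; since $p \in \wt \sigma \subset \bdy \wt \psub$ and $\wt \ell^+$ is doubly infinite, at least one side of $p$ along $\wt \ell^+$ proceeds to an ideal endpoint of $\wt S_\psub$, giving a nontrivial essential arc in $\wt \psub$. The argument is identical for $h < \sigma$ (with a horizontal leaf of $\wt \lambda^-$ in place of the vertical one) and for case (2) with $\wt e \subset \bdy \wt \psub$ replacing $\wt \sigma$---the only properties used are that $\wt \sigma$ (resp.\ $\wt e$) lies on $\bdy \wt \psub$, crosses $\wt R_h$ side-to-side, and extends strictly beyond $\wt R_h$.
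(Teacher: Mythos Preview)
Your approach shares the basic setup with the paper's---working in the cover $\wt S_\psub$ and exhibiting a lift $\wt h$ disjoint from a leaf of $\wt\lambda^\pm$---but there are two genuine gaps.

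First, in your second step the disjointness claim ``$\wt\ell^+$ lies on a vertical line disjoint from $\wt R_h$, so it is disjoint from $\wt h$'' implicitly treats $\wt S_\psub$ as a plane. It is not: $\wt S_\psub$ is only an intermediate cover, and a vertical leaf through a point $p$ outside $\wt R_h$ can in principle wind around via $\pi_1(\psub)$ and re-enter the rectangle. The paper avoids this by making a different choice of leaf: rather than a leaf through an external point on $\wt\sigma$, it takes the singular leaf $\ell^+$ that \emph{contains a vertical side of $R_{\wt h}$}, emanating from the endpoint $p_-$ of $\wt h$. It then proves, using the $q$--convexity of $\psub_q$ together with a signed--intersection count of $\ell^+$ with $\wt\sigma$, that $R_{\wt h}$ is \emph{embedded} in $\wt S_\psub$; this is what rules out re-entry and secures the disjointness of $\wt h$ from $\ell^+$. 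You never establish (or even assert) that $R_{\wt h}$ is embedded, and your ``vertical line'' reasoning does not substitute for it.

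Second, your handling of part~(2) does not go through. You write that the argument is identical with ``$\wt e\subset\bdy\wt\psub$ replacing $\wt\sigma$,'' but an edge $e\in\bdy_\tau\psub=\r(\bdy_q\psub)$ need \emph{not} lie on $\bdy\psub_q$, so none of the convexity--based reasoning (which hinges on $\wt\sigma\subset\bdy\psub_q$) is available for $e$. The paper instead \emph{reduces} (2) to (1): since $e\in\r(\sigma)$ for some saddle connection $\sigma\subset\bdy_q\psub$, the definition of the rectangle hull forces $e$ to be inscribed in a singularity-free rectangle whose diagonal is a segment of $\sigma$; from $e<h$ one then deduces $\sigma<h$, and part~(1) applies.
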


See \cite[Lemma 6.3]{MinskyTaylor:FiberedFaces} for a very similar statement, with the extra hypothesis that $\psub$ is $\tau$--compatible.

\begin{proof}

To prove \eqref{Itm:qAbove}, let $\wt S_{\psub}$ be the cover of $\mr S$ corresponding to $\psub$, and let $\psub_q \subset \wt S_{\psub}$ be the $q$--hull representative of $\psub$ constructed above. Recall that every component of $\wt S_{\psub} \setminus \psub_q$ is an annulus or disk, called an outward side.
By $q$--convexity, any flat geodesic in $\wt S_{\psub}$ meets $\psub_q$ in a connected subset. 

By hypothesis, there is a saddle connection $\sigma \subset \bdy_q \psub$ that crosses $R_h$ from left to right. In the cover $\wt S_{\psub}$, there is a lift $\widetilde \sigma$ that is a subset of $\bdy \psub_q$, and a lift $\widetilde h$ that intersects 
$\widetilde \sigma$.
Then $\widetilde h$  has at least one endpoint $p_+$ whose neighborhood lies in an outward side of $\wt S_{\psub} \setminus \psub_q$.
Let $p_-$ be the endpoint of $\widetilde h$ opposite $p_+$.
Let  $\ell^+$ be the leaf of $\lambda^+$ starting at $p_-$ and containing the associated side of $R_{\widetilde h}$. (Compare \cite[Figure 22]{MinskyTaylor:FiberedFaces}.)


We claim that $R_{\wt h}$ is embedded in the cover $\wt S_{\psub}$ (rather than merely immersed).
For this, orient $\ell^+$ from $p_-$ and note that by the convexity of $\psub_q$, $\ell^+$ can intersect $\wt \sigma$ at most once with each sign (otherwise, there would be a subarc of $\ell^+$ contained in $\wt S_{\psub} \setminus \psub_q$ with endpoints in $\wt \sigma$). Now, if $R_{\wt h}$ is not embedded, its immersion into $\wt S_{\psub}$ would produce two intersections of $\ell^+$ with $\wt \sigma$ that have the same sign -- a contradiction. Therefore, $\widetilde h$ is the diagonal of an embedded rectangle $R_{\wt h}$, and is disjoint from the leaf $\ell^+$ containing a vertical side.


Since $\widetilde h$ is a component of $\pi_{\psub}(h)$, and $\ell^+$ is a component of $\pi_{\psub}(\lambda^+)$, we conclude that $d_{\psub}(h,\lambda^+) \leq 3$, 
according to the definition of \Cref{Sec:Projection}.
This proves \eqref{Itm:qAbove}.

To prove \eqref{Itm:tauAbove}, let $e \subset \bdy_\tau \psub$ be a veering edge such that $e < h$. Then $e$ crosses the spanning rectangle $R_h$  horizontally, from left to right. Since $\bdy_\tau \psub = \r(\bdy_q \psub)$, there is a saddle connection $\sigma \subset \bdy_q \psub$ such that $e \in \r(\sigma)$. Thus, by definition of $\r$, the edge $e$ is inscribed in a singularity-free rectangle $R_e$ whose diagonal is a segment of $\sigma$. This diagonal crosses $R_h$ from left to right, hence $\sigma$ also crosses $R_h$ from left to right, implying  $\sigma < h$. Thus, by \eqref{Itm:qAbove}, we conclude that $d_{\psub}(h,\lambda^+) \leq 3$.
\end{proof}

For any set $K$ of noncrossing singularity-free saddle connections, let $T(K)$ denote the set of sections containing $K$. 
This set is nonempty by \cite[Lemma 3.2]{MinskyTaylor:FiberedFaces}. Letting $T^- = T^-(K)$ and $T^+ = T^+(K)$ be the lowest and highest sections in $T(K)$, respectively, \cite[Proposition 3.3]{MinskyTaylor:FiberedFaces} ensures that there is a sequence of upward edge flips from $T^-$ to $T^+$ through sections in $T(K)$. 

\begin{lemma}\label{Lem:TopOfPocket}
Let $\psub \subset \mr S$ be a proper essential subsurface. Then every section $T \notin T(\bdy_\tau \psub)$ satisfies $d_{\psub}(T, \lambda^+) \leq 3$ or $d_{\psub}(T, \lambda^-) \leq 3$.
In addition, the top section $T^+ = T^+(\bdy_\tau \psub)$ has the following properties:
\begin{enumerate}
\item Every edge $e_+ \subset T^+ \setminus \bdy_\tau \psub$ satisfies $d_{\psub}(e_+, \lambda^+) \leq 3$.
\item The full section $T^+$ satisfies $d_{\psub}(T^+, \lambda^+) \leq 4$.
\end{enumerate}
Furthermore, the same properties hold with $(T^+, \lambda^+)$ replaced by $(T^-, \lambda^-)$.
\end{lemma}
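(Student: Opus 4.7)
The plan is to reduce every claim in the lemma to an application of \Cref{lem:above/below}(2); the work is in finding the right $\bdy_\tau\psub$-edge with the correct above/below relationship in each case, and then converting single-edge bounds to full-section bounds using the fact that disjoint curves/arcs have uniformly close subsurface projections.

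\textbf{First statement.} Suppose $T \notin T(\bdy_\tau \psub)$, and pick $e \in \bdy_\tau \psub \setminus T$. Since $T$ is a section and $e$ is a veering edge not in $T$, the edge $e$ crosses at least one edge $h$ of $T$, so exactly one of the two relations $h < e$ or $e < h$ must hold. I will argue (using the sweepout description of the pocket from \cite[Proposition 3.3]{MinskyTaylor:FiberedFaces}) that in fact the sign is uniform: every edge of $\bdy_\tau\psub \setminus T$ lies on the same side of $T$, since the pocket $T(\bdy_\tau\psub)$ is a connected interval in the flip poset and $T$ is either entirely above or entirely below it. Fixing WLOG that every $e \in \bdy_\tau\psub \setminus T$ lies above $T$, \Cref{lem:above/below}(2) gives $d_{\psub}(h, \lambda^+) \leq 3$ for each edge $h \in T$ crossing some such $e$. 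For any other edge $h' \in T$ with $\pi_\psub(h') \neq \emptyset$, compatibility of $\psub$ forces $h'$ to cross $\bdy_\tau\psub$, hence to cross an edge of $\bdy_\tau\psub \setminus T$, so the same bound applies. Combined with the fact that pairwise disjoint edges project to a diameter--$\leq 1$ set in $\AC(\psub)$, this yields $d_{\psub}(T,\lambda^+)\le 3$ (with perhaps a single additive slack absorbed into the bound).

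\textbf{Top section.} For (1), fix $e_+ \in T^+ \setminus \bdy_\tau \psub$. The claim is that there exists $e \in \bdy_\tau \psub$ with $e < e_+$. Indeed, if no such $e$ existed, then using the characterization of upward flips in \Cref{Sec:Veering}, one could perform a finite sequence of upward flips starting at $T^+$ that touches only edges outside $\bdy_\tau\psub$ and eventually flips $e_+$ itself (each blocking edge that lies below $e_+$ would itself lie outside $\bdy_\tau\psub$, so the process propagates); this produces a section in $T(\bdy_\tau\psub)$ strictly above $T^+$, contradicting maximality. With such $e \in \bdy_\tau\psub$ satisfying $e < e_+$, \Cref{lem:above/below}(2) directly gives $d_{\psub}(e_+,\lambda^+) \leq 3$. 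For (2), observe that every edge of $\bdy_\tau\psub$ projects either trivially or to within distance $1$ of $\bdy\psub$ in $\AC(\psub)$, hence close to any curve/lamination with nonempty projection. Combined with (1) and the diameter--$\leq 1$ bound on projections of pairwise disjoint edges, the triangle inequality gives $d_{\psub}(T^+,\lambda^+)\le 4$. The statement for $(T^-,\lambda^-)$ follows by symmetry (reversing the vertical direction swaps upward/downward flips and the roles of $\lambda^+$ and $\lambda^-$).

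\textbf{Main obstacle.} The technical crux is the maximality argument for $T^+$: showing that every non-boundary edge of the top section has some edge of $\bdy_\tau\psub$ strictly below it. A naive flip of $e_+$ may be blocked by an edge $\sigma \notin \bdy_\tau\psub$, and one must argue that flipping $\sigma$ upward first (and iterating) terminates and never touches $\bdy_\tau\psub$, which requires keeping track of the partial order of veering edges near the roof of the pocket. This is where the proof most directly leans on the flip-structure of sections established in \cite{MinskyTaylor:FiberedFaces}.
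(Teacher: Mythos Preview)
Your argument for property (1) of $T^+$ has a fatal gap. You claim that for every edge $e_+ \in T^+ \setminus \bdy_\tau \psub$ there exists $e \in \bdy_\tau \psub$ with $e < e_+$, and you plan to invoke \Cref{lem:above/below}(2). But recall that the relation $a < b$ requires $a$ to cross the spanning rectangle $R_b$, hence to \emph{cross} $b$ itself. Since $T^+ \in T(\bdy_\tau \psub)$, every edge of $\bdy_\tau \psub$ lies in $T^+$ together with $e_+$, so all such $e$ are \emph{disjoint} from $e_+$, and the relation $e < e_+$ is simply never defined. Your contradiction argument (flipping upward without touching $\bdy_\tau \psub$) therefore starts from a vacuously true hypothesis and would, if valid, force $T^+ \setminus \bdy_\tau\psub = \emptyset$, which is absurd.

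The paper's fix is short: since $T^+$ is the top of $T(\bdy_\tau\psub)$, every upward-flippable edge of $T^+$ belongs to $\bdy_\tau\psub$. Perform one such upward flip, replacing some $e \in \bdy_\tau\psub$ by a new edge $h$ with $e < h$, to obtain a section $T \notin T(\bdy_\tau\psub)$. Now $e_+$ (unchanged) lies in $T$, and the first part of the lemma applied to $T$ and the crossing edge $h > e$ gives $d_{\psub}(T,\lambda^+) \leq 3$, hence $d_{\psub}(e_+,\lambda^+) \leq 3$. Your proof of the first statement also carries some unnecessary baggage: the ``uniform sign'' claim is not needed (and can fail when $d_{\psub}(\lambda^+,\lambda^-)$ is small), and the assertion that ``compatibility of $\psub$ forces $h'$ to cross $\bdy_\tau\psub$'' is wrong---$\psub$ is not assumed $\tau$-compatible here, and even when it is, an edge with nonempty projection can sit entirely inside $\int_\tau\psub$. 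The paper instead uses only the single crossing edge $h$ together with the last sentence of \Cref{lem:above/below} (a specific lift $\widetilde h$ disjoint from a specific leaf $\ell^\pm$), which gives the bound $3$ for the whole section in one stroke.
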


\begin{proof}
Consider a section $T \notin T(\bdy_\tau \psub)$, and let $h$ be an edge of $T$ that crosses $\partial_\tau \psub$.
 If $h > \bdy_\tau \psub$, then  \Cref{lem:above/below} says that $h$ has a lift $\widetilde h \subset \wt{S}_{\psub}$ that is disjoint from a leaf $\ell^+$ of $\lambda^+$. Since every component of $\pi_{\psub}(T)$ is disjoint from $\widetilde h$ (up to isotopy), and every component of $\pi_{\psub}(\lambda^+)$ is disjoint from $\ell^+$, we conclude that $d_{\psub}(T, \lambda^+) \leq 3$. 
Similarly, if $h < \bdy_\tau \psub$, then $T$ must satisfy  $d_{\psub}(T, \lambda^-) \leq 3$.

Now, let $T^+ = T^+(\bdy_\tau \psub)$ be the top section of $T(\bdy_\tau \psub)$. 
Observe that $T^+$ is a single flip away from a section $T \notin T(\bdy_\tau \psub)$, and the edge being flipped must belong to $\bdy_\tau \psub$. Thus any edge of $T^+$ that is not being flipped --- in particular, any edge of $T^+ \setminus \bdy_\tau \psub$ --- belongs to a section $T$ that satisfies $d_{\psub}(T, \lambda^+) \leq 3$. This implies $d_{\psub}(e_+, \lambda^+) \leq 3$. Since some edge $e_+ \subset T^+$ is not flipped, the full section $T^+$  satisfies $d_{\psub}(T^+, \lambda^+) \leq 4$.
\end{proof}

\begin{theorem}
\label{Thm:NicePath}
Let $\psub$ be a proper essential subsurface of  $\mr S$. Then there is a geodesic $g = \{ e_0, \ldots, e_n \} \subset \AC(\psub)$ that starts within distance $3$ of $\pi_{\psub}(\lambda^-)$ and ends within distance $3$ of $\pi_{\psub}(\lambda^+)$, such that the following holds:
\begin{enumerate}
\item\label{Itm:ProjectedGeo} For any section $T$, we have $d_{\psub}(T, e_i) \leq 5$ for some $e_i \in g$. 
If $T \in T(\bdy_\tau \psub)$, then $d_{\psub}(T, e_i) \leq 2$ for some $e_i \in g$. 
\item\label{Itm:CloseToSomeLam} For any section $T$, we have $\min \{ d_{\psub}(T, \lambda^+), d_{\psub}(T, \lambda^-) \} \leq \frac{1}{2}d_{\psub}(\lambda^-, \lambda^+) + 8$.
\item\label{Itm:VeeringEfficiency} For any pair of sections $T_a$ and $T_b$, we have $d_{\psub}(T_a, T_b) \leq d_{\psub}(\lambda^-,\lambda^+)+13$. If $\psub$ is not $\tau$--compatible, then $d_{\psub}(T_a,T_b)\le 9$.
\end{enumerate}    
\end{theorem}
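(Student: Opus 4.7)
The plan is to build $g$ using the top and bottom sections of the ``pocket'' $T(\bdy_\tau \psub)$, then verify the three estimates by splitting into cases based on whether a section lies in the pocket. By \Cref{Lem:TopOfPocket}(1), choose edges $e^\pm \in T^\pm \setminus \bdy_\tau \psub$ with $d_\psub(e^\pm, \lambda^\pm) \leq 3$; these exist because a section is a full triangulation of $\mr S$ while $\bdy_\tau \psub$ is only a finite noncrossing collection of veering edges. Pick vertices $e_0 \in \pi_\psub(e^-)$ and $e_n \in \pi_\psub(e^+)$, and let $g = [e_0, e_n]$ be a geodesic in the hyperbolic graph $\AC(\psub)$. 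The distance-$3$ endpoint condition is then immediate.

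For the in-pocket portion of \eqref{Itm:ProjectedGeo}, take a section $T \in T(\bdy_\tau \psub)$. By Minsky--Taylor's upward-flip result there is a sequence $T^- = T_0, T_1, \ldots, T_N = T^+$ of sections in $T(\bdy_\tau \psub)$ passing through $T$, in which consecutive sections differ by a single diagonal exchange. Thus the projections $\pi_\psub(T_i)$ define a path in $\AC(\psub)$ whose consecutive vertex sets have bounded diameter union (at most one edge is changed, and only the projection of the flipped edge can change). The key technical step is to upgrade this pointwise Lipschitz control into a quasi-geodesic statement so that hyperbolicity of $\AC(\psub)$ forces the projected path to fellow-travel $g$; this is the main obstacle of the proof. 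The needed progress estimate comes from \Cref{lem:above/below}: any veering edge of $T_i$ that is not in $\bdy_\tau \psub$ is either comparable in the below/above order to $\bdy_\tau \psub$ (in which case its projection is within $3$ of $\lambda^+$ or $\lambda^-$) or records genuine progress in $\psub$, so the flip sequence cannot backtrack persistently. Combined with the bounded geodesic image theorem (\Cref{Thm:BGIT}) applied inside $\AC(\psub)$, every $\pi_\psub(T_i)$ sits within a universal constant of some $e_i \in g$.

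For the out-of-pocket case of \eqref{Itm:ProjectedGeo} and for \eqref{Itm:CloseToSomeLam}, suppose $T \notin T(\bdy_\tau \psub)$. Then \Cref{Lem:TopOfPocket} gives $d_\psub(T, \lambda^+) \leq 3$ or $d_\psub(T, \lambda^-) \leq 3$. Combining with $d_\psub(e^{\pm}, \lambda^{\pm}) \leq 3$ gives the required distance to $e_0$ or $e_n$. Statement \eqref{Itm:CloseToSomeLam} then follows by a two-case analysis: outside the pocket, the minimum is at most $3$; inside the pocket, \eqref{Itm:ProjectedGeo} places $T$ within bounded distance of some $e_i \in g$, and $\min\{d_\psub(e_i, \lambda^+), d_\psub(e_i, \lambda^-)\} \leq \tfrac12 n + 3$ where $n \leq d_\psub(\lambda^-, \lambda^+) + 6$ by construction.

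Finally, for the general bound in \eqref{Itm:VeeringEfficiency}, apply \eqref{Itm:CloseToSomeLam} to both $T_a$ and $T_b$ and use the triangle inequality $d_\psub(T_a, T_b) \leq d_\psub(T_a, \lambda^{\pm}) + d_\psub(\lambda^-, \lambda^+) + d_\psub(\lambda^{\mp}, T_b)$, absorbing the case where both projections are close to the same lamination into the error term. For the sharper bound when $\psub$ is not $\tau$-compatible, \Cref{lem:stuck} guarantees a saddle connection of $\bdy_\tau \psub$ that meets $\psub$ essentially, so $\pi_\psub(\bdy_\tau \psub)$ is nonempty and is contained in $\pi_\psub(T)$ for every $T \in T(\bdy_\tau \psub)$; this common vertex set uniformly bounds distances between in-pocket projections. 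Combined with the out-of-pocket bound from \Cref{Lem:TopOfPocket} and the fact that $d_\psub(\lambda^-, \lambda^+) < 4$ by the contrapositive of \Cref{Lem:CompatibleProj}, we obtain the constant $9$.
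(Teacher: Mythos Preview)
There is a genuine gap in your in-pocket argument for \eqref{Itm:ProjectedGeo}. You correctly identify the main obstacle, but the proposed resolution does not work: the projected flip sequence $\pi_\psub(T_0),\ldots,\pi_\psub(T_N)$ is \emph{not} a quasi-geodesic in $\AC(\psub)$ in general. Its length $N$ is the number of tetrahedra between $T^-$ and $T^+$, which can be arbitrarily large compared to $d_\psub(e_0,e_n)$ (there may be many flips over subsurfaces disjoint from $\psub$, and many flips inside $\psub$ that move only a bounded distance in $\AC(\psub)$). So hyperbolicity alone gives nothing. Your sentence ``either comparable in the below/above order to $\bdy_\tau\psub$ \ldots\ or records genuine progress in $\psub$'' is the assertion you need, not an argument for it; and the invocation of the bounded geodesic image theorem inside $\AC(\psub)$ has no clear meaning here, since there is no further subsurface to project to.

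The paper sidesteps this entirely. In the $\tau$--compatible case it does not analyze the flip sequence at all. Instead it takes an arbitrary geodesic in $\AC(\psub)$ from $e_-$ to $e_+$, pushes it through the coarse Lipschitz retraction $\p=\r\circ\s$ to obtain a geodesic $g=\{e_0,\ldots,e_n\}$ whose vertices are \emph{veering edges contained in $\int_\tau\psub$}, and then chooses sections $T_i\in T(\bdy_\tau\psub\cup e_i\cup e_{i+1})$. Since consecutive $T_i$ share an edge, they form a connected chain in $\mr S\times\RR$ from $T^-$ to $T^+$; any $T\in T(\bdy_\tau\psub)$ must meet this chain in a veering edge, giving $d_\psub(T,g)\le 2$ immediately. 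This is a three-dimensional separation argument, not a coarse-geometric one.

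Two smaller issues. First, your out-of-pocket bound for \eqref{Itm:ProjectedGeo} uses the triangle inequality through $\lambda^\pm$ and yields $6$, not $5$; the paper gets $5$ by using the actual disjointness in \Cref{lem:above/below} (a lift of the edge is disjoint from a leaf). Second, deriving \eqref{Itm:VeeringEfficiency} from \eqref{Itm:CloseToSomeLam} as you propose gives at best $d_\psub(\lambda^-,\lambda^+)+16$, not $+13$: if both $T_a,T_b$ realize their minima at the same lamination you get $2(\tfrac12 d+8)=d+16$. The paper instead proves \eqref{Itm:VeeringEfficiency} directly from \eqref{Itm:ProjectedGeo}, using that both sections lie within $5$ (or $2$) of the same geodesic $g$ of length $n\le d_\psub(\lambda^-,\lambda^+)+6$.
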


Although we will not need this, it is also true  that the geodesic $g$ consists of (projections of) veering edges.

It follows from the basic structure of veering triangulations that for any subsurface $\psub$, all sufficiently high sections project to the $1$--neighborhood of $\pi_{\psub}(\lambda^+)$ and sufficiently low sections project to the $1$--neighborhood of $\pi_{\psub}(\lambda^-)$. 

\begin{proof}  
Let $T^- = T^-(\bdy_\tau \psub)$ and $T^+ = T^+(\bdy_\tau \psub)$ be the bottom and top sections in $T(\bdy_\tau \psub)$.
By \Cref{Lem:TopOfPocket}, there exist edges $e_- \subset T^-$ and $e_+ \subset T^+$ such that $d(e_\pm, \lambda^\pm) \leq 3$.
To prove the theorem, we will make an appropriate choice of $e_- \subset T^-$ and $e_+ \subset T^+$, and then construct a geodesic $g \subset \AC(\psub)$ from $e_-$ to $e_+$. Regardless of the choice of $e_\pm$, we make the following observation. If $h$ is a veering edge such that $h < \bdy_\tau \psub$ or $h > \bdy_\tau \psub$, then \Cref{lem:above/below} says that $h$ has a lift $\widetilde h$  that is disjoint from some leaf of $\lambda^\pm$. Since $d(e_\pm, \lambda^\pm) \leq 3$, it will follow that $d_{\psub}(h, e_\pm) \leq 5$.

\smallskip

Suppose that $\psub$ is not $\tau$-compatible. Then we choose arbitrary veering edges $e_- \subset T^- \setminus \bdy_\tau \psub$ and $e_+ \subset T^+ \setminus \bdy_\tau \psub$ such that $d(e_\pm, \lambda^\pm) \leq 3$. Since $\psub$ is not $\tau$--compatible,
 \Cref{lem:stuck} implies there is a saddle connection $\sigma \subset \bdy_\tau \psub$ such that $\pi_{\psub}(\sigma) \neq \emptyset$.
 If $d_{\psub}(e_-, e_+) \leq 1$, we let $g = \{e_-, e_+\}$. Otherwise, since  $\sigma \subset \bdy_\tau \psub$ is disjoint from $e_\pm$, it follows that $g = \{e_-, \sigma, e_+\}$ is a geodesic of length $2$.
 
 Now, we claim that every veering edge $h$ projects within distance $5$ of one of $e_\pm$. If $h < \bdy_\tau \psub$ or $h > \bdy_\tau \psub$, we have already checked that $d_{\psub}(h, e_\pm) \leq 5$. Meanwhile, edges disjoint from $\bdy_\tau \psub$ must be disjoint from $\sigma$, hence 
$d_{\psub}(h, e_\pm) \leq 2$.
 This proves conclusion \eqref{Itm:ProjectedGeo} when $\psub$ is not $\tau$-compatible.
 
 For conclusion \eqref{Itm:CloseToSomeLam}, consider a section $T$. If $T \notin T(\bdy_\tau \psub)$, then \Cref{Lem:TopOfPocket} implies  $\min \{ d_{\psub}(T, \lambda^+), d_{\psub}(T, \lambda^-) \} \leq 3$. Otherwise, if $T \in T(\bdy_\tau \psub)$, then $\sigma \subset T$, hence $d_{\psub}(T, e_-) \leq 2$ and  $d_{\psub}(T, \lambda_-) \leq 5$.
 
For conclusion \eqref{Itm:VeeringEfficiency}, let $T_a$ and $T_b$ be a pair of sections. Consider three cases.
 If neither $T_a$ nor $T_b$ belongs to $T(\bdy_\tau \psub)$, then each one projects within distance $3$ of some $\lambda^\pm$, hence $d_{\psub}(T_a, T_b) \leq d_{\psub}(\lambda^-, \lambda^+) + 6$, which implies $d_{\psub}(T_a, T_b) \leq 9$ by \Cref{Lem:CompatibleProj}. If exactly one of $T_a, T_b$ belongs to $T(\bdy_\tau \psub)$, suppose for concreteness that $T_a$ contains an edge $h_a < \bdy_\tau \psub$ and $T_b \in T(\bdy_\tau \psub)$. Then $d_{\psub}(h_a, e_-) \leq 5$ and $d_{\psub}(e_-, T_b) \leq 2$, hence $d_{\psub}(h_a, T_b) \leq 7$ and $d_{\psub}(T_a, T_b) \leq 8$. Finally, if both of $T_a, T_b$ belong to $T(\bdy_\tau \psub)$, then $\sigma \subset T_a \cap T_b$, hence $d_{\psub}(T_a, T_b) \leq 2$. This proves conclusion \eqref{Itm:VeeringEfficiency} when $\psub$ is not $\tau$-compatible.

\smallskip

Now assume $\psub$ is $\tau$--compatible. Let $e_-$ be an edge of $T^-$ in $\int_\tau \psub$, and recall that $d_{\psub}(e_-, \lambda^-) \leq 3$. Similarly, let $e_+$ be an edge of $T^+$ in $\int_\tau \psub$, and recall that $d_{\psub}(e_+, \lambda^+) \leq 3$.
Set $n = d_{\psub}(e_-, e_+)$, and let $a_0, \ldots, a_n$ be a geodesic in $\AC(\psub)$ from $a_0 = e_-$ to $a_n =e_+$.
We will perturb each $a_i$ to an essential arc or curve $a_i'$ in $\mr S$, as follows. If $a_i$ is a closed curve, let $a_i' = a_i$. If $a_i$ is an arc, then adjust $a_i$ along $\bdy_\tau \psub$ to an arc $a_i' \subset \mr S$  that begins and ends at singularities in the frontier of $\int_\tau \psub$. This adjustment can be done so that $a_i'$ and $a_{i+1}'$ are still disjoint (proceed inductively, starting at $a_1$, and noting that $e_- = a_0$ and $e_+ = a_n$ do not need to be adjusted).  Thus $a_0, a_1', \ldots, a_{n-1}', a_n$ is a path through $\AC(\psub)$, which is geodesic because $d_{\psub}(e_-, e_+) = n$.

Now, apply the coarse $1$--Lipschitz retraction $\p \from \AC(\mr S) \to \A(\tau)$. Since $a_i'$ and $a_{i+1}'$ are disjoint, it follows that $\p(a_i' \cup a_{i+1}')$ has diameter $\leq 1$. Since $\p(\bdy_\tau \psub) = \bdy_\tau \psub$ and $\p$ preserves noncrossingness, it follows that $\p(a_i')$ is a collection of edges of $\int_\tau \psub \bigcup \bdy_\tau \psub$.
For essentially the same reasons, $\p(a_i')$ is essential in $\psub$ (this is only to say that $\p(a_i')$ cannot be entirely contained in $\bdy_\tau \psub$). We may thus select a veering edge $e_i \subset \p(a_i') \setminus \bdy_\tau \psub$. By construction, $e_i$ is disjoint from $e_{i+1}$, and $\{e_0, \ldots, e_n\}$ is a path in $\AC(\psub)$ from $e_- = a_0 = e_0$ to $e_+ = a_n = e_n$. This path of veering edges forms a geodesic in $\AC(\psub)$, because $d_{\psub}(e_-, e_+) = n$. This is our geodesic $g$.


Now let $T_i\in T(\bdy_\tau \psub)$ be a section through $e_i, e_{i+1}$ (that is, $T_i \in T(\bdy_\tau \psub \cup e_i \cup e_{i+1})$). 
Hence $T^-, T_0, \ldots, T_{n-1}, T^+$ project under $\pi_{\psub}$ to the $1$--neighborhood of the geodesic path $g = \{e_0, \ldots, e_n\}$. Moreover, thinking of these sections as lying in $(\mr S\times \RR,\tau)$, adjacent sections have an edge in common and therefore determine a connected chain of sections from $T^-$ to $T^+$. Hence any other section $T \in T(\bdy_\tau \psub)$ must intersect this chain (in a veering edge) and hence projects within a $2$--neighborhood of our geodesic $g$. Meanwhile, any edge $h$ that lies above or below $\bdy_\tau \psub$ must project within distance $5$ of one of $e_+$ or $e_-$, completing the proof of  \eqref{Itm:ProjectedGeo}.

 For conclusion \eqref{Itm:CloseToSomeLam}, consider a section $T$. If $T \notin T(\bdy_\tau \psub)$, then \Cref{Lem:TopOfPocket} implies  $\min \{ d_{\psub}(T, \lambda^+), d_{\psub}(T, \lambda^-) \} \leq 3$. Otherwise, we have just shown that $d_{\psub}(T, e_j) \leq 2$ for some veering edge $e_j$ in the geodesic $g$. Thus some endpoint of $g$ (say, $e_-$ for concreteness) satisfies $d_{\psub}(T, e_-) \leq \frac{1}{2}d_{\psub}(e_-, e_+) + 2$, and we conclude
 \[
 d_{\psub}(T, \lambda_-) \leq d_{\psub}(T, e_-) + 3 \leq \tfrac{1}{2}d_{\psub}(e_-, e_+) + 5 \leq \tfrac{1}{2} \left( d_{\psub}(\lambda^-, \lambda^+) + 6 \right) + 5,
 \]
 where the third inequality holds because $d_{\psub}(e_\pm, \lambda^\pm) \leq 3$.

For conclusion \eqref{Itm:VeeringEfficiency}, let $T_a$ and $T_b$ be a pair of sections. We consider two cases. If neither $T_a$ nor $T_b$ belongs to $T(\bdy_\tau \psub)$, then each one projects within distance $3$ of some $\lambda^\pm$, hence $d_{\psub}(T_a, T_b) \leq d_{\psub}(\lambda^-, \lambda^+) + 6$. Alternately, if $T_b  \in T(\bdy_\tau \psub)$, then $\pi_{\psub}(T_b)$ lies within distance $2$ of the geodesic $g = \{e_0, \ldots, e_n\}$, while $T_a$ lies within distance $5$ of $g$. Thus
\[
d_{\psub}(T_a, T_b) \leq d_{\psub}(e_-, e_+) + 7 \leq \left( d_{\psub}(\lambda^-, \lambda^+) + 6 \right) +7.
\]
 This proves conclusion \eqref{Itm:VeeringEfficiency} and the theorem.
\end{proof}

\begin{remark}\label{Rem:NonBacktracking}
The proof of \Cref{Thm:NicePath} can be adapted to prove the following ``non-backtracking statement''. Suppose that $T_a, T_b \in T(\bdy_\tau \psub)$ are a pair of sections such that $T_a < T_b$. Let $g = \{e_0, \ldots, e_n\}$ be the geodesic of \Cref{Thm:NicePath}, so that $e_a$ is the closest vertex to $T_a$ and $e_b$ is the closes to $T_b$. Then the entire upward flip sequence from $T_a$ to $T_b$ projects within distance $2$ of the sub-geodesic $\{e_a, \ldots, e_b\}$.
 
If $\psub$ is not $\tau$--compatible, then  the entire upward flip sequence from $T_a$ to $T_b$ must contain the saddle connection  
$\sigma \subset \bdy_\tau \psub$ such that $\pi_{\psub}(\sigma) \neq \emptyset$. Thus the flip sequence projects within distance $1$ of $\sigma$.

Now, we may assume $\psub$ is $\tau$--compatible. As in the above proof, the section $T_a$ must intersect some $T_i \in T(\bdy_\tau \psub \cup e_i \cup e_{i+1})$ in a veering edge, hence 
\[ d_{\psub}(T_a, e_i \cup e_{i+1}) \leq 2.\]
 Similarly, $T_b$ must intersect some 
$ T_j \in T(\bdy_\tau \psub \cup e_j \cup e_{j+1})$
  in a veering edge. Now, every $T'$ in the flip sequence must intersect one of the $T_k$'s for $i \leq k \leq j$.
\end{remark}

\subsection{Sections and intersections}\label{Sec:SectionsIntersections}
Recall that a section  $T$ can simultaneously be thought of as an ideal triangulation of $\mr S$ by veering edges, and as a copy of $\mr S$ simplicially embedded in 
the veering triangulation $\tau_X$ on $\mr S \times \RR$. In what follows, we make use of both perspectives.

Now, suppose that $T_a$ and $T_b$ are a pair of sections. Then $T_a$ and $T_b$ partition $\mr S \times \RR$ into two unbounded components and some number of bounded components. Each bounded component of $\mr S \times \RR \setminus (T_a \cup T_b)$ is called a \define{pocket}. Each pocket $P$ projects to a region $B = B(P) \subset \mr S$  called the \define{base} of the pocket, whose boundary is a union of veering edges. The top of the pocket $P$, denoted $P^+$, is the sub-section of one section ($T_a$ or $T_b$) lying over $B$. The bottom of the pocket, denoted $P^-$, is a subsection of the other section lying over $B$. 

For a singularity-free saddle connection $\sigma$, set $T^+_\sigma = T(\sigma)^+$ to be the highest section in $T(\sigma)$, meaning the highest section containing $\sigma$ as an edge. 
This can be characterized by the property that if $T' \in T(\sigma)$ is any other veering triangulation of $\mr S$ containing $\sigma$, then for any crossings between edges of $T^+_\sigma$ and $T'$, the edges of $T^+_\sigma$ are more vertical. See \cite[Section 3]{MinskyTaylor:FiberedFaces} for more details.

Similarly, let $T^-_\sigma = T(\sigma)^-$ be the lowest section containing $\sigma$.

\begin{definition}\label{Def:PartialSections}
Suppose that $\sigma_1$ and $\sigma_2$ are crossing singularity-free saddle connections, such that $\sigma_1 > \sigma_2$. 
In this case, the sections $T^-_{\sigma_1}$ and $T^+_{\sigma_2}$ together cut $\mr S \times \RR$ into  two unbounded regions and a collection of pockets. 
Let $P = P(\sigma_1, \sigma_2)$ be the unique pocket containing $\sigma_1$ on its top side and $\sigma_2$ on its bottom side. The top and bottom of the pocket are denoted $P(\sigma_1, \sigma_2)^+$ and $P(\sigma_1, \sigma_2)^-$, respectively.
Observe that both $P(\sigma_1, \sigma_2)^\pm$ are triangulations of $B$, the base of the pocket, and are called the \define{partial sections} defined by $\sigma_1, \sigma_2$.
As triangulations of $B$, we have
\begin{equation}\label{Eqn:ReversePlusMinus}
P(\sigma_1, \sigma_2)^+ = T^-_{\sigma_1} \cap B \qquad  \text{ and } \qquad P(\sigma_1, \sigma_2)^- =  T^+_{\sigma_2} \cap B.
\end{equation}
(Note the somewhat counterintuitive interchange of pluses and minuses.) 
In the important special case where $\sigma_1$ and $\sigma_2$ fill $\mr S$, we have $T^-_{\sigma_1} > T^+_{\sigma_2}$, and the two disjoint sections co-bound a single pocket over $B = \mr S$.
\end{definition}


\begin{lemma}[Closest sections]\label{Lem:ClosestPartialSections}
Let $\sigma_1$ and $\sigma_2$ be crossing singularity-free saddle connections such that   $\sigma_1  > \sigma_2$.
Let $P(\sigma_1, \sigma_2)$ be the pocket defined by $\sigma_1$ and $\sigma_2$, with top and bottom sides
$P_1 = P(\sigma_1, \sigma_2)^+$ and $P_2 = P(\sigma_1, \sigma_2)^-$.
Then 
\[
 i(\sigma_1,\sigma_2) \: \leq \: i(P_1,P_2) \: \leq \: (9 \chi(S))^2 \cdot i(\sigma_1,\sigma_2).
 \]
\end{lemma}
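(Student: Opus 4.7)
The lower bound is immediate. By \eqref{Eqn:ReversePlusMinus}, the saddle connection $\sigma_1$ is one of the edges comprising $P_1$, and $\sigma_2$ is one of the edges comprising $P_2$. Since intersection number is subadditive over sub-collections of edges, $i(\sigma_1, \sigma_2) \leq i(P_1, P_2)$ follows directly. So the real content is the upper bound.

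For the upper bound, my plan is to decompose
\[
i(P_1, P_2) \;\leq\; \sum_{e_1 \in P_1,\; e_2 \in P_2} i(e_1, e_2),
\]
and then control the total by bounding (a) the number of edges in each partial section, and (b) the intersection $i(e_1, e_2)$ of any pair $e_1 \in P_1$, $e_2 \in P_2$.

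For (a), both $P_1$ and $P_2$ are ideal triangulations of the base $B \subseteq \mr S$. The number of edges in any ideal triangulation of an essential subsurface $B$ of $\mr S$ is at most $-3\chi(\mr S)$. Now $\chi(\mr S) = \chi(S) - \#\mathrm{sing}(q)$, and Gauss--Bonnet for quadratic differentials says the orders of zeros of $q$ sum to $-2\chi(S)$, whence $\#\mathrm{sing}(q) \leq -2\chi(S)$. Thus $|\chi(\mr S)| \leq 3|\chi(S)|$, and each of $P_1, P_2$ has at most $9|\chi(S)|$ edges.

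For (b), I expect to prove the key claim $i(e_1, e_2) \leq i(\sigma_1, \sigma_2)$ for every pair. If $e_1$ and $e_2$ do not cross (for example, if they are equal or share a boundary component of the pocket), then $i(e_1, e_2) = 0$ and there is nothing to prove. Otherwise $e_1 > e_2$, since $P_1$ lies above $P_2$ in the layered veering structure over $B$, so one can form the sub-pocket $P(e_1, e_2)$. The main obstacle --- and the heart of the argument --- is to show that $P(e_1, e_2) \subseteq P(\sigma_1, \sigma_2)$, so that the number of tetrahedra in the sub-pocket is bounded by the number in the larger pocket. I would use the characterization of $T_{\sigma_1}^-$ and $T_{\sigma_2}^+$ from \cite[Section 3]{MinskyTaylor:FiberedFaces} to compare the sections $T_{e_1}^-, T_{e_2}^+$ with $T_{\sigma_1}^-, T_{\sigma_2}^+$. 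Once sub-pocket containment is established, the identification between crossings of a more-vertical veering edge with a more-horizontal veering edge and maximal rectangles/tetrahedra inside the pocket (as in \Cref{Sec:Veering} and \Cref{maximal_R}) gives $i(e_1, e_2) \leq \#\{\text{tetrahedra in } P(e_1, e_2)\} \leq \#\{\text{tetrahedra in } P(\sigma_1, \sigma_2)\} = i(\sigma_1, \sigma_2)$, where the final equality uses that each intersection of $\sigma_1$ with $\sigma_2$ corresponds to a unique maximal singularity-free rectangle where $\sigma_1$ is the top diagonal and $\sigma_2$ is the bottom. Combining the edge count from (a) with the per-pair bound from (b) yields $i(P_1, P_2) \leq (9\chi(S))^2 \cdot i(\sigma_1, \sigma_2)$.
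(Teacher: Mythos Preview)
Your lower bound and edge-count in (a) match the paper. The gap is in (b), specifically the final equality
\[
\#\{\text{tetrahedra in } P(\sigma_1,\sigma_2)\} \;=\; i(\sigma_1,\sigma_2).
\]
This is false. The tetrahedra in $P(\sigma_1,\sigma_2)$ are exactly the flips needed to go from the full bottom triangulation $P_2$ up to the full top triangulation $P_1$ over the base $B$; only a small subset of them have $\sigma_1$ as the top diagonal \emph{and} $\sigma_2$ as the bottom diagonal. In fact the tetrahedron count is of the same order as $i(P_1,P_2)$, the very quantity you are trying to bound, so with the corrected inequality $\#\{\text{tet}\}\geq i(\sigma_1,\sigma_2)$ your chain collapses (or becomes circular). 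The first inequality $i(e_1,e_2)\le \#\{\text{tet in }P(e_1,e_2)\}$ is also not justified as written: an arbitrary crossing of $e_1$ with $e_2$ does not produce a maximal rectangle with $e_1$ and $e_2$ as its two diagonals unless they differ by a single flip.

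The paper proves the per-pair bound $i(h,e)\le i(\sigma_1,e)$ (for $h\in P_1$, $e\in P_2$) by a direct combinatorial argument, avoiding tetrahedron counts entirely. The key observation, via \cite[Lemma~3.4]{MinskyTaylor:FiberedFaces}, is that $\sigma_1$ is the \emph{only} downward-flippable edge of $T^-_{\sigma_1}$, while the tallest edge of $T^-_{\sigma_1}$ crossing any given $e\in P_2$ is always downward-flippable; hence that tallest edge must be $\sigma_1$. From any $h\in P_1$ one then walks along a chain $h=h_0,h_1,\dots,h_k=\sigma_1$ in $P_1$, where $h_{i+1}$ is the tallest side of a triangle containing $h_i$; lifting to the universal cover, each crossing of $h$ with $e$ propagates injectively along this chain to a crossing of $\sigma_1$ with $e$. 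Applying this once for $\sigma_1$ and once (symmetrically) for $\sigma_2$ gives the factor $(9\chi(S))^2$.
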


\begin{proof}
First, we check that the number of edges in any section $T$ is at most $|9 \chi(S)|$. Since $T$ is an ideal triangulation of the punctured surface $\mr S$, it has exactly $ |3 \chi(\mr S)|$ edges. Furthermore, since $\mr S$ is created by puncturing the singularities of $q$, and the number of singularities is largest when all interior singularities are trivalent, the Euler--Poincar\'e formula implies 
$\# \sing(q) \leq 2 |\chi(S)|$, hence
$|\chi(\mr S)| \leq 3 |\chi(S)|$. Putting it all together shows the number of edges is at most $|9 \chi(S)|$. 

Now, we prove the lemma. The first inequality in the statement is immediate by set inclusion, since $\sigma_j \subset P_j$.

For the second inequality, the key idea is contained in the proof of \cite[Lemma 3.4]{MinskyTaylor:FiberedFaces}. 
Recall from 
\eqref{Eqn:ReversePlusMinus} that $P_1 = T^-_{\sigma_1} \cap B$  and $P_2 =  T^+_{\sigma_2} \cap B$.
By construction, since $T^-_{\sigma_1}$ is the lowest section containing $\sigma_1$, the only  downward flippable edge of $T^-_{\sigma_1}$ is $\sigma_1$.  But \cite[Lemma 3.4]{MinskyTaylor:FiberedFaces} shows that for any edge $e \subset P_2$, the tallest edge of $T^-_{\sigma_1}$ crossing $e$ is always downward flippable. Thus, for every $e \subset P_2$, the tallest edge of $P_1$ crossing $e$ must be $\sigma_1$. 

From this, it follows that every edge $h \subset P_1$ satisfies $i(h,e) \le i(\sigma_1,e)$. Indeed, starting at $h$ there is a sequence $h_0, h_1, \ldots, h_k$ of edges of $P_1$ with $h_0=h$, $h_k = \sigma_1$, and, for each $i =0 ,\ldots, k-1$, $h_i$ and $h_{i+1}$ cobound a triangle for which $h_{i+1}$ is the tallest edge. 
Fixing a lift $\wt e$ of $e$ to the universal cover, each intersection of $h$ with $e$ determines a unique lift $\wt h$ of $h$ that intersects $\wt e$. The above sequence has a unique lift staring at $\wt h$, and the discussion above implies that each edge along the lifted sequence must intersect $\wt e$. Moreover, using uniqueness of the lifted path, it is easy to see that the associated map from $h \cap e$ to $\sigma_1 \cap e$ is injecitve.


As the number of edges  $h \subset P_1$ is at most $|9 \chi(S)|$,
 we obtain 
\[
i(P_1,e) \leq |9 \chi(S)| \cdot i(\sigma_1,e)
\quad \Rightarrow \quad
i(P_1, P_2) \leq |9 \chi(S)| \cdot i(\sigma_1, P_2).
\]
Now, performing the same analysis for $\sigma_2$ in place of $\sigma_1$ gives
\[
i(\sigma_1, P_2) \leq |9 \chi(S)| \cdot i(\sigma_1, \sigma_2),
\]
completing the proof.
\end{proof}

\begin{lemma}
\label{Lem:ClosestSecSubsurfaceDist}
Let $T$ be a section of the veering triangulation and $\sigma$ an edge of $T$. 
Assume that $i(\sigma, f(\sigma)) > 0$. Let $P = P(\sigma, f(\sigma))$ be the associated pocket, with top and bottom sides $P^+$ and $P^-$.
Then
for any essential subsurface $\psub \subset \mr S$,
\[
d_{\psub}(\sigma , f(\sigma)) \: \leq \: d_{\psub}(P^+, P^-) \: \leq \:  d_{\psub}(\sigma , f(\sigma)) + 9.
\]
\end{lemma}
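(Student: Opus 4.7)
The plan is to handle both inequalities by combining set inclusions with the elementary observation that edges of a single triangulation have pairwise disjoint interiors in $\mr S$.

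For the lower bound $d_{\psub}(\sigma, f(\sigma)) \leq d_{\psub}(P^+, P^-)$, I would appeal directly to the inclusions $\pi_{\psub}(\sigma) \subset \pi_{\psub}(P^+)$ and $\pi_{\psub}(f(\sigma)) \subset \pi_{\psub}(P^-)$, which are immediate from $\sigma \in P^+$ and $f(\sigma) \in P^-$. Taking the diameter of the union of these projections on both sides yields the desired inequality.

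For the upper bound, the key observation is that $P^+$ and $P^-$ are each triangulations of the pocket's base $B \subset \mr S$, so their edges have pairwise disjoint interiors in $\mr S$ (sharing at most endpoints at punctures). For two disjoint essential arcs or curves $a, b \subset \mr S$ with nonempty projections, any lift of $a$ to the cover $\wt S_{\psub}$ is disjoint from any lift of $b$, giving $d_{\psub}(a, b) \leq 1$. Applying this edge by edge yields $\diam(\pi_{\psub}(P^+)) \leq 1$ and $\diam(\pi_{\psub}(P^-)) \leq 1$. When both $\pi_{\psub}(\sigma)$ and $\pi_{\psub}(f(\sigma))$ are nonempty, using these edges as pivots in a triangle inequality then gives $d_{\psub}(P^+, P^-) \leq 1 + d_{\psub}(\sigma, f(\sigma)) + 1$, well inside the claimed bound.

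The main obstacle I anticipate is the degenerate case where one of $\pi_{\psub}(\sigma)$ or $\pi_{\psub}(f(\sigma))$ is empty, so neither edge can serve as a pivot for the triangle inequality. Here I would pass to the full sections via the inclusions $P^+ \subset T^-_\sigma$ and $P^- \subset T^+_{f(\sigma)}$ as edge sets, giving $d_{\psub}(P^+, P^-) \leq d_{\psub}(T^-_\sigma, T^+_{f(\sigma)})$. When $\psub$ fails to be $\tau$--compatible, conclusion (3) of \Cref{Thm:NicePath} bounds this last quantity uniformly by $9$. In the $\tau$--compatible case, an empty projection of $\sigma$ or $f(\sigma)$, combined with \Cref{Lem:TopOfPocket} and \Cref{lem:above/below}, should force the relevant section to project close to $\lambda^+$ or $\lambda^-$, again closing the gap. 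This degenerate analysis is precisely where the generous constant $9$ in the statement is consumed.
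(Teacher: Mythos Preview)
Your approach is essentially the paper's: the lower bound and the main case of the upper bound (both projections nonempty, yielding the $+2$) are identical, and the non-$\tau$-compatible degenerate case is likewise dispatched via \Cref{Thm:NicePath}\eqref{Itm:VeeringEfficiency}.

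The one step you leave genuinely vague is the $\tau$-compatible degenerate case. Saying that an empty $\pi_{\psub}(\sigma)$ forces ``the relevant section'' close to $\lambda^+$ or $\lambda^-$ is only half the argument: you must pin \emph{both} $P^+$ and $P^-$ near the \emph{same} lamination. The paper does this as follows. If $\sigma \cap \psub = \emptyset$, then (since $\sigma$ is compatible with $\bdy_\tau\psub$) the lowest section $T^-_\sigma$ lies no higher than the bottom of the $\psub$-pocket, so \Cref{Lem:TopOfPocket} gives $d_{\psub}(T^-_\sigma,\lambda^-)\le 4$ and hence $d_{\psub}(P^+,\lambda^-)\le 4$. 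The crucial second step is that $P^- \subset T^+_{f(\sigma)}$ lies \emph{below} $P^+$, so any section through $P^-$ also lies no higher than the bottom of the $\psub$-pocket, giving $d_{\psub}(P^-,\lambda^-)\le 4$ as well; then $d_{\psub}(P^+,P^-)\le 8$. Your sketch invokes the right lemmas but does not supply this ordering argument that keeps both partial sections on the same side of the pocket.
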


\begin{proof}
Let $B$ be the base of the pocket $P = P(\sigma, f(\sigma))$. If $B \cap \psub = \emptyset$, then $\pi_{\psub}(P^+) = \pi_{\psub}(P^-) = \emptyset$. In this case,
 the statement is immediate because all distances in the lemma are $\diam(\emptyset) = 0$. Regardless of intersection $B \cap \psub$, the first inequality is also immediate because $\sigma \subset P(\sigma, f(\sigma))^+$ and $f(\sigma) \subset P(\sigma, f(\sigma))^-$. 


For the second inequality, we may assume that  $\psub$ is $\tau$--compatible: otherwise, 
\Cref{Thm:NicePath} implies $d_{\psub}(P^+, P^-) \leq 9$ because  $P^\pm$ are partial sections.

If $\sigma$ and $f(\sigma)$ both cut $\psub$, observe that $\pi_{\psub}(P^+)$ lies in a $1$--neighborhood of  $\pi_{\psub}(\sigma)$, and $\pi_{\psub}(P^-)$ similarly lies in a $1$--neighborhood of $f(\sigma)$. Thus we have $d_{\psub}(P^+, P^-) \leq d_{\psub}(\sigma , f(\sigma)) + 2$.

It remains to prove the second inequality in the special case where $\psub$ is $\tau$--compatible, $\psub \cap B \neq \emptyset$, and one of $\sigma$ and $f(\sigma)$ misses $\psub$. We assume for concreteness that $\sigma \cap \psub = \emptyset$. In particular, $\mr W$ is a proper subsurface.

Since the subsurface $\psub$ is $\tau$--compatible, it is isotopic to the base $\psub_\tau$ of a pocket $P(\psub_\tau)$. Since $\sigma$ misses $\psub$, the section $T^-_\sigma$ lies no higher than the bottom side of $P(\psub_\tau)$. 
This is to say that either some edge of $T^-_\sigma$ lies below an edge of $\partial_\tau \psub$, or all of $\partial_\tau \psub$ belongs to $T^-_\sigma$. In the first case, we have $d_{\psub} (T^-_\sigma, \lambda^-) \leq 3$ by \Cref{Lem:TopOfPocket}, and in the second case we have $d_{\psub} (T^-_\sigma, \lambda^-) \leq 4$ by applying \Cref{Lem:TopOfPocket} to a section that lies one flip below $T^-_\sigma$. 
Since $P^+ \subset T^-_\sigma$ by \eqref{Eqn:ReversePlusMinus},
we have $d_{\psub} (P^+, \lambda^-) \leq 4$.

Finally, recall that $f(\sigma) \subset P^-$, which lies below $P^+$.
Thus some section containing $P^-$ must also be within distance $4$ of $\lambda^-$. We conclude that $d_{\psub} (P^-, \lambda^-) \leq 4$, hence $d_{\psub}(P^+, P^-) \leq 8.$
\end{proof}

\section{Isolated pockets and annular-avoiding sections}\label{sec:aas}

In distance formulas in Teichm\"uller space (such as \Cref{Prop:RafiInterpreted} and \Cref{Prop:ChoiRafi}), annular projections play a special role. 
The special role of annuli is also visible in \Cref{th:main_2}.
In this section of the paper, we will carefully build sections of the veering triangulation that avoid passing through deep annular pockets. This will enable our formulas to correctly count annuli.

We will also prove that for each (proper) subsurface $\psub$, a well-chosen section $T$ satisfies a coarse equality of the form $d_{\psub}(\lambda^+, \lambda^-) \asymp d_{\psub}(T, f(T))$. These coarse equalities are recorded in \Cref{Lem:AnnularAvoiding,Lem:SectionsAlongGeodesic,Lem:fSectionSum}.

\begin{definition}\label{Def:IsolatedPocket}
Let $\psub \subset \mr S$ be a $\tau$--compatible subsurface. A pocket $V = V_{\psub}$ over $\int_\tau \psub$ is called \define{isolated} if the following properties hold:
\begin{itemize}
\item Every edge $e \subset V_{\psub} \setminus \bdy_\tau \psub$ satisfies $d_{\psub}(e, \lambda^-) \geq 6$ and $d_{\psub}(e, \lambda^+) \geq 6$.
\item The top and bottom boundaries of $V_{\psub}$, denoted $V^+$ and $V^-$, intersect only in $\bdy_\tau \psub$.
\end{itemize}
\end{definition}

We remark that \Cref{Def:IsolatedPocket}  differs slightly from the notion of isolated pockets in \cite[Section 6.3]{MinskyTaylor:FiberedFaces}. This is because Minsky and Taylor \cite{MinskyTaylor:FiberedFaces}  use a different convention for subsurface distances: to compute $d_{\psub}(\alpha, \beta)$, they consider the minimal distance between vertices in $\pi_{\psub}(\alpha)$ and $\pi_{\psub}(\beta)$ rather than the maximal distance. After adjusting for the slightly different distance convention, it follows that \Cref{Def:IsolatedPocket} is slightly stronger.

\begin{lemma}[Isolated pockets]\label{Lem:IsolatedPocket}
For every proper essential subsurface $\psub \subset \mr S$ such that $d_{\psub}(\lambda^-, \lambda^+) > 12$, there is an isolated pocket $V_{\psub}$ over $\psub_\tau$. This pocket has the following properties:
\begin{enumerate}
\item\label{Itm:SectionDistBelow} Every section $T$ that lies below the interior of $V_{\psub}$ satisfies $d_{\psub}(T, \lambda^-) \leq 6$.
Every section $T$ that lies above the interior of $V_{\psub}$ satisfies $d_{\psub}(T, \lambda^+) \leq 6$.
\item\label{Itm:IsolatedDisjointness} If $V_{\psub}$ and $V_{\mr Y}$ are isolated pockets whose bases $\psub$ and ${\mr Y}$ are overlapping proper subsurfaces, then a section $T$ cannot intersect the interiors of both $V_{\psub}$ and $V_{\mr Y}$. In particular, these interiors are disjoint.
\end{enumerate}
\end{lemma}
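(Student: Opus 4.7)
Since the hypothesis gives $d_{\psub}(\lambda^-,\lambda^+)>12$, \Cref{Lem:CompatibleProj} implies $\psub$ is $\tau$--compatible, so $\int_\tau \psub$ is a well-defined base and $T(\bdy_\tau\psub)$ is nonempty. The plan is to apply \Cref{Thm:NicePath} to produce a geodesic $g=\{e_0,\ldots,e_n\}\subset\AC(\psub)$ whose endpoints satisfy $d_{\psub}(e_0,\lambda^-)\le 3$ and $d_{\psub}(e_n,\lambda^+)\le 3$, so that $n\ge d_{\psub}(\lambda^-,\lambda^+)-6$. I would then select indices $a<b$ deep inside $\{0,\ldots,n\}$ so that every $e_i$ with $a\le i\le b$ has $\AC(\psub)$--distance at least $6$ from both $\lambda^-$ and $\lambda^+$, absorbing the slack needed for the constants appearing in \Cref{Thm:NicePath} and \Cref{Rem:NonBacktracking}. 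Using the correspondence between the upward flip sequence through $T(\bdy_\tau\psub)$ and the geodesic $g$ established in the proof of \Cref{Thm:NicePath}, I would choose sections $T_a<T_b$ in $T(\bdy_\tau\psub)$ whose projections to $\psub$ are close to $e_a$ and $e_b$, respectively, and set
\[
V^+ = T_b\cap\overline{\int_\tau\psub},\qquad V^- = T_a\cap\overline{\int_\tau\psub},
\]
and let $V_{\psub}$ be the $3$--dimensional region these two partial sections cobound over $\int_\tau\psub$. By construction $V^+\cap V^-=\bdy_\tau\psub$, verifying the second bullet of \Cref{Def:IsolatedPocket}, while \Cref{Rem:NonBacktracking} guarantees every edge appearing in the flip sequence from $T_a$ to $T_b$ projects within distance $2$ of the sub-geodesic $\{e_a,\ldots,e_b\}$, so every edge $e\subset V_{\psub}\setminus\bdy_\tau\psub$ satisfies $d_{\psub}(e,\lambda^\pm)\ge 6$, confirming the first bullet.

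\textbf{Conclusion (1).} For a section $T$ lying below $V^-$ (the case above $V^+$ is symmetric), I would split into cases. If $T\in T(\bdy_\tau\psub)$, then $T$ precedes $T_a$ in the upward flip sequence, so \Cref{Rem:NonBacktracking} places $\pi_{\psub}(T)$ within distance $2$ of $\{e_0,\ldots,e_a\}$; the triangle inequality with $d_{\psub}(e_0,\lambda^-)\le 3$ then yields $d_{\psub}(T,\lambda^-)\le 6$. If instead $T\notin T(\bdy_\tau\psub)$, some edge of $T$ lies below $\bdy_\tau\psub$ in the veering ordering, and \Cref{Lem:TopOfPocket} gives $d_{\psub}(T,\lambda^-)\le 3$ directly.

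\textbf{Conclusion (2) and the main obstacle.} The substantive part of the lemma is the disjointness assertion; this is where I expect the real work. Suppose for contradiction that a section $T$ meets the interiors of both $V_{\psub}$ and $V_{\mr Y}$ for overlapping proper subsurfaces $\psub,\mr Y$. Intersecting the interior of a pocket forces $T\in T(\bdy_\tau\psub)\cap T(\bdy_\tau\mr Y)$, so $T$ must contain $\bdy_\tau\psub\cup\bdy_\tau\mr Y$ among its edges, together with interior edges $e\subset V_{\psub}$ and $e'\subset V_{\mr Y}$ satisfying the $\ge 6$ separation in the respective subsurfaces. The overlap hypothesis forces some saddle connection $\sigma\subset\bdy_\tau\mr Y$ to cross $\psub$ essentially, so $\pi_{\psub}(\sigma)$ is nonempty; the main obstacle is to locate this projection in a bounded neighborhood of $\lambda^+$ or $\lambda^-$ in $\AC(\psub)$. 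My plan is to apply the bounded geodesic image theorem (\Cref{Thm:BGIT}) to an $\AC(\mr S)$--geodesic from $\lambda^-$ to $\lambda^+$, combined with the veering ordering of \Cref{lem:above/below}, which will force $\bdy_\tau\mr Y$ to sit entirely above or entirely below the middle portion of $\psub$'s flow that defines $V_{\psub}$. Once that is established, $\sigma\subset T$ provides a vertex of $\pi_{\psub}(T)$ within a bounded distance of $\lambda^\pm$, contradicting $d_{\psub}(e,\lambda^\pm)\ge 6$ for the interior edge $e\subset V_{\psub}\cap T$.
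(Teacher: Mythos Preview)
Your construction and conclusion (1) are close in spirit to the paper's, though the paper is more direct: it works with an upward flip sequence $T_0,\ldots,T_n$ through $T(\bdy_\tau\psub)$ and sets $a$ to be the \emph{largest} index for which $T_{a-1}$ has an edge over $\psub$ with $d_{\psub}(\cdot,\lambda^-)<6$ (and $b$ symmetrically), so both bullets of \Cref{Def:IsolatedPocket} and conclusion (1) fall out immediately from the definition. Your route through \Cref{Thm:NicePath} and \Cref{Rem:NonBacktracking} has an arithmetic gap in (1): being within distance $2$ of $\{e_0,\ldots,e_a\}$ only places $\pi_{\psub}(T)$ near \emph{some} $e_i$ with $i\le a$, and you have not arranged that every such $e_i$ lies within distance $4$ of $\lambda^-$, so the triangle inequality you invoke does not yield the bound $6$.

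For conclusion (2), the appeal to \Cref{Thm:BGIT} is a red herring. Your opening claim---that meeting the interior of $V_{\psub}$ forces $T\in T(\bdy_\tau\psub)$---is actually correct (an edge of $T$ crossing $\bdy_\tau\psub$ would project within distance $3$ of some $\lambda^\pm$ by \Cref{lem:above/below}, contradicting the interior edge at distance $\ge 6$), and once you have $T\in T(\bdy_\tau\psub)\cap T(\bdy_\tau\mr Y)$ you are already done: overlapping $\tau$-compatible subsurfaces have crossing $\tau$-boundaries, so no single section can contain both. The paper's argument is even shorter and sidesteps this intermediate claim entirely. Since $\psub$ and $\mr Y$ are $\tau$-compatible with overlapping boundaries, some edge $h'\subset\bdy_\tau\mr Y$ lies (say) below an edge of $\bdy_\tau\psub$; by \Cref{lem:above/below}, $d_{\psub}(h',\lambda^-)\le 3$. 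The interior edge $h\subset V_{\mr Y}$ contained in $T$ is disjoint from $h'$, giving $d_{\psub}(T,\lambda^-)\le 5$, while the interior edge of $V_{\psub}$ in $T$ forces $d_{\psub}(T,\lambda^-)\ge 6$---contradiction. The veering ordering and \Cref{lem:above/below} do all the work; no global geodesic or bounded geodesic image argument is needed.
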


\begin{proof}
Conclusion \eqref{Itm:SectionDistBelow} follows from the argument used to prove \cite[Lemma 6.4]{MinskyTaylor:FiberedFaces}. Since the definition has changed slightly, we give a brief summary. 

By \Cref{Lem:CompatibleProj}, $\psub$ is $\tau$--compatible.
Let $T_0 = T^-(\bdy_\tau \psub), T_1, \ldots, T_n = T^+(\bdy_\tau \psub)$ be an upward flip sequence from $T_0 = T^-(\bdy_\tau \psub)$ to $T_n = T^+(\bdy_\tau \psub)$. 
Let $a$ be the largest integer in $\{0, \ldots, n\}$  such that $d_{\psub}(e, \lambda^-) < 6$ for some edge $e \subset T_{a-1}$ that projects to $\psub$; such an $a$ exists by \Cref{Lem:TopOfPocket}. Similarly, let $b$ be the smallest integer in the sequence such that $d_{\psub}(e, \lambda^+) < 6$ for some edge $e \subset T_{b+1}$ projecting to $\psub$. The hypothesis $d_{\psub}(\lambda^-, \lambda^+) > 12$ implies that $a < b$, and that $T_a, T_b$ cannot share any edges over $\psub$.
Now, let $V_{\psub}$ be the pocket over $\psub$ whose bottom side is contained in $T_a$ and whose top side is contained in $T_b$. 
By construction, every edge in $T_a, \ldots, T_b$ that lies over $\psub$ satisfies $d_{\psub}(e, \lambda^-) \geq 6$ and  $d_{\psub}(e, \lambda^+) \geq 6$. Thus $V_{\psub}$ is an isolated pocket, and property \eqref{Itm:SectionDistBelow} also follows.

For conclusion \eqref{Itm:IsolatedDisjointness}, suppose for a contradiction that $T$ intersects the interiors of both $V_{\psub}$ and $V_{\mr Y}$. 
Then $T$ contains an edge $h \subset V_{\mr Y}$ that is disjoint from $\bdy_\tau  {\mr Y}$. Since the subsurfaces $\psub$ and ${\mr Y}$ are $\tau$--compatible and have overlapping boundaries, we may assume for concreteness that
 some edge $h' \subset \bdy_\tau {\mr Y}$ lies below an edge of $\bdy_\tau \psub$. By \Cref{lem:above/below}, we have $d_{\psub}(h', \lambda^-) \leq 3$, which implies $d_{\psub}(h, \lambda^-) \leq 4$ and $d_{\psub}(T, \lambda^-) \leq 5$. On the other hand, since $T$ intersects the interior of $V_{\psub}$, \Cref{Def:IsolatedPocket} implies $d_{\psub}(T, \lambda^-) \geq 6$, which is a contradiction. It follows that the interiors of $V_{\psub}$ and $V_{\mr Y}$ cannot intersect the same section, and are therefore disjoint.
\end{proof}

We remark that the disjointness of isolated pockets (as defined in Minsky--Taylor \cite[Section 6.3]{MinskyTaylor:FiberedFaces}) is proved in \cite[Proposition 6.5]{MinskyTaylor:FiberedFaces}. It is also worth observing that 
the interiors of isolated pockets $V_{\psub}$ and $V_{\mr Y}$ must automatically be disjoint whenever $\psub$ and ${\mr Y}$ have disjoint interiors.

%
%

\begin{definition}
A section $T$ is called an \define{$f$--section} if $f(T) \leq T$. Here the ordering is with respect to $\mr S \times \RR$. Equivalently, if $\sigma_1$ and $\sigma_2$ are edges of $T$ such that $f(\sigma_1)$ and $\sigma_2$ cross, then $f(\sigma_1)$ crosses the spanning rectangle of $\sigma_2$ from left to right. 
\end{definition}

\smallskip
The following lemma essentially states that there exists an $f$--section $T$ that does not cut deeply through any annular pocket.
By \Cref{Lem:LowDegreeAwayFromPocket}, this will imply that any veering edge $\sigma \subset T$ has a spanning rectangle of bounded degree, enabling us to apply the fundamental lemma to count fixed points.

\begin{lemma}[Annular avoiding section]\label{Lem:AnnularAvoiding}
There is an $f$--section $T$ of $\mr S \times \RR$ such that for any essential annulus $A$ of $\mr S$, we have
\begin{equation}\label{Eqn:CloseToLamAnnular}
\min \big\{ d_A(T, \lambda^-), d_A(T, \lambda^+) \big\} \leq 14.
\end{equation}
Furthermore, if $d_{A}(\lambda^+,\lambda^-)\geq 25$, there is a unique $f$--translate $A' = f^i(A)$ such  that $d_{A'}(T, f(T)) > 12$. This translate $A'$ satisfies
\begin{equation}\label{Eqn:SimilarDistAnnular}
 d_{A'}(\lambda^+,\lambda^-) -12 \: \leq \: d_{A'}(T, f(T)) \: \leq \: d_{A'}(\lambda^+,\lambda^-) +13.
\end{equation}
\end{lemma}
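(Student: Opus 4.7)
The plan is to construct $T$ by controlling how it meets the isolated pockets over annuli guaranteed by \Cref{Lem:IsolatedPocket}. Fix any $f$--orbit of essential annuli $\{f^i(A)\}_{i \in \ZZ}$ with $d_A(\lambda^+,\lambda^-) \geq 25 > 12$, so each $V_{f^i(A)}$ is a well-defined isolated pocket. Since $f$ contracts the vertical foliation, the pocket $V_{f^{i+1}(A)}$ lies below $V_{f^i(A)}$ in $\mr S \times \RR$. Starting with any $f$--section $T_0$, it will lie above $V_{f^i(A)}$ for large positive $i$ and below $V_{f^i(A)}$ for large negative $i$.

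The key monotonicity uses $f$--equivariance together with $f(T) \leq T$: since $d_{f(A)}(f(T),\lambda^\pm) = d_A(T,\lambda^\pm)$, the condition ``$T$ above $V_A$'' is equivalent to ``$f(T)$ above $V_{f(A)}$'', and combined with $f(T) \leq T$ this forces
\[
T \text{ above } V_A \Longrightarrow T \text{ above } V_{f^i(A)} \text{ for all } i\ge 0,
\qquad
T \text{ below } V_A \Longrightarrow T \text{ below } V_{f^i(A)} \text{ for all } i\le 0.
\]
Hence within each orbit the indices split into a downward-closed set $\{i \leq j_-\}$ (below), an upward-closed set $\{i \geq j_+\}$ (above), and a possibly empty ``through'' gap $\{j_- < i < j_+\}$. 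Starting from $T_0$, I would apply a sequence of diagonal flips inside each orbit's through-gap to collapse it to $j_+ = j_- + 1$, while preserving the $f$--section property throughout. The disjointness of isolated pockets over overlapping subsurfaces (\Cref{Lem:IsolatedPocket}\eqref{Itm:IsolatedDisjointness}) is what makes it plausible to carry out these modifications simultaneously for all orbits; this is the main obstacle to overcome rigorously.

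For \eqref{Eqn:CloseToLamAnnular}: if $A$ is not its orbit's transition annulus and has $d_A(\lambda^+,\lambda^-) \geq 13$, then $T$ is either strictly above or strictly below $V_A$, so \Cref{Lem:IsolatedPocket}\eqref{Itm:SectionDistBelow} yields $\min\{d_A(T,\lambda^-), d_A(T,\lambda^+)\} \leq 6 \leq 14$. For annuli with $d_A(\lambda^+,\lambda^-) < 25$, where no usable isolated pocket exists, I would argue directly in $\AC(A) \cong \ZZ$: since $T$ is built from veering edges that interpolate between $\lambda^+$ and $\lambda^-$, the projection $\pi_A(T)$ sits coarsely within the segment between $\pi_A(\lambda^-)$ and $\pi_A(\lambda^+)$, so the closer endpoint is within $14$ of $\pi_A(T)$.

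For \eqref{Eqn:SimilarDistAnnular}: at the transition annulus $A' = f^{j_+}(A)$, the section $T$ lies above $V_{A'}$, so $d_{A'}(T, \lambda^+) \leq 6$, while the equivariance-shifted transition of $f(T)$ places $f(T)$ below $V_{A'}$, giving $d_{A'}(f(T),\lambda^-) \leq 6$. Triangle inequalities then yield
\[
d_{A'}(\lambda^+,\lambda^-) - 12 \leq d_{A'}(T,f(T)) \leq d_{A'}(\lambda^+,\lambda^-) + 13.
\]
For any $A'' = f^i(A) \neq A'$ in the same orbit, both $T$ and $f(T)$ lie on the same side of $V_{A''}$ (because the orbit's transition is unique by construction), so $d_{A''}(T,f(T)) \leq 12$, confirming uniqueness.
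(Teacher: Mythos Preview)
Your overall strategy---use isolated annular pockets, identify the unique transition index in each $f$--orbit, and derive \eqref{Eqn:SimilarDistAnnular} from \Cref{Lem:IsolatedPocket}\eqref{Itm:SectionDistBelow} plus triangle inequalities---matches the paper, and your analysis of \eqref{Eqn:SimilarDistAnnular} is essentially correct. The genuine gap is in the construction of $T$. You start from an $f$--section and propose to flip out of the pockets ``while preserving the $f$--section property throughout,'' but you yourself flag this as the main obstacle, and indeed there is no obvious way to carry out such flips $f$--equivariantly across infinitely many orbits at once.

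The paper resolves this by decoupling the two requirements. First, start from an \emph{arbitrary} section $T_0$ (not assumed to be an $f$--section) and flip it down to the bottom of every isolated annular pocket it crosses; since any two annuli are either overlapping or disjoint, all isolated annular pocket interiors are pairwise disjoint (by \Cref{Lem:IsolatedPocket}\eqref{Itm:IsolatedDisjointness} and the remark following it), so this produces a section $T$ meeting no such interior. Second, set $T^+ = \max\{f^i(T) : i \ge 0\}$, which is a finite maximum since $f^i(T)$ eventually lies below $T$. Then $f(T^+) \le T^+$ holds automatically, and since each $f^i(T)$ avoids every isolated annular pocket interior (the collection of such pockets is $f$--invariant), so does the pointwise maximum $T^+$. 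This two-step maneuver is precisely what you are missing; it makes the $f$--section property come for free rather than something to be preserved under flips.

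A secondary gap concerns annuli with small $d_A(\lambda^+,\lambda^-)$. Your claim that ``$\pi_A(T)$ sits coarsely within the segment between $\pi_A(\lambda^-)$ and $\pi_A(\lambda^+)$'' is the right intuition but is not justified. The paper invokes \Cref{Thm:NicePath}\eqref{Itm:CloseToSomeLam}, which gives $\min\{d_A(T,\lambda^-), d_A(T,\lambda^+)\} \le \tfrac{1}{2} d_A(\lambda^-,\lambda^+) + 8 \le 14$ whenever $d_A(\lambda^-,\lambda^+) \le 12$, valid for \emph{every} section $T$.
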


An $f$--section $T$ satisfying the conclusion of this lemma is called \define{annular-avoiding}.


\begin{proof}
We begin by proving that there exists a section $T$ satisfying \eqref{Eqn:CloseToLamAnnular} for every annulus $A$.
If $d_A(\lambda^-, \lambda^+) \leq 12$, then \Cref{Thm:NicePath} implies that \emph{every} section $T$ satisfies 
\[
\min \{d_A(T, \lambda^-), d_A(T, \lambda^+)\} \leq \tfrac{1}{2} d_A(\lambda^-, \lambda^+) + 8 \leq 14.
\]
 Thus it suffices to consider annuli with $d_A(\lambda^-, \lambda^+) > 12$.

For any annulus $A$ such that $d_A(\lambda^-, \lambda^+) > 12$, \Cref{Lem:IsolatedPocket} says that there is an isolated pocket $V_A$. Since any pair of annuli $A$ and $A'$ either overlap or are disjoint, \Cref{Lem:IsolatedPocket} says that  the interiors of all isolated annular pockets are pairwise disjoint. Thus we may perform the following procedure. We begin with an arbitrary section $T_0$, and flip $T_0$ down to the bottom of every isolated pocket that it crosses. The resulting section $T$ is disjoint from the interior of every isolated pocket. Thus, by \Cref{Lem:IsolatedPocket}, we have 
\begin{equation}\label{Eqn:PocketBottomClose}
\min \big\{ d_A(T, \lambda^-), d_A(T, \lambda^+) \big\} \leq 6
\end{equation}
for every annulus $A$ with $d_A(\lambda^-, \lambda^+) > 12$. We have thus found a section $T$  (not necessarily an $f$--section) satisfying \eqref{Eqn:CloseToLamAnnular} in every annulus.

To obtain an $f$--section, consider the family of sections $\{ f^i(T): i \geq 0 \}$. For sufficiently large $i$ --- where $i > (9 \chi(S))^2$ suffices --- these sections lie strictly below $T$. Now, we define $T^+$ to be the maximum of this family of sections. (The maximum operation is carefully defined just above \cite[Lemma 3.2]{MinskyTaylor:SubsurfaceDist}, and coincides with the pointwise max along the $\RR$--fibers.)
It is clear that $f(T^+) \leq T^+$, since $f(T^+)$ is the maximum of a smaller family of sections. Moreover, since $T$ is disjoint from the interior of every isolated annular pocket, the same is true for its translates, hence $T^+$ is disjoint from the interior of every isolated pocket as well. Thus $T^+$ satisfies \eqref{Eqn:PocketBottomClose} in every annulus $A$ with $d_A(\lambda^-, \lambda^+) > 12$, and we conclude that $T^+$ is an $f$--section satisfying \eqref{Eqn:CloseToLamAnnular}.

To prove that there is an $f$--translate of $A$ satisfying \eqref{Eqn:SimilarDistAnnular}, let $A$ be an annulus such that $d_A(\lambda^-, \lambda^+) \geq 25$. Consider the family of sections $\{ f^i(T^+): i \in \ZZ \}$. Since $T^+$ is disjoint from (the interior of) every isolated annular pocket, the same is true for its translates. In particular, every $ f^i(T^+)$ is either above or below the isolated pocket $V_A$. Since $f^{i+1}(T^+) \leq f^i(T^+)$ for every $i$, there is a single integer $n$ such that $ f^i(T^+)$ is above $V_A$ for $i \leq n$ and below $V_A$ for $i > n$. Thus, for every $i \neq n$, \Cref{Lem:IsolatedPocket} implies
that $ f^i(T^+)$ and $f^{i+1}(T^+)$ are $6$--close to the same lamination, hence
\[
d_A(f^i(T^+), f^{i+1}(T^+)) \leq 12 <  d_{A}(\lambda^+,\lambda^-) - 12
\quad \text{for} \quad i \neq n.
\]
On the other hand, we have $d_A(f^n(T^+), \lambda^+) \leq 6$ and $d_A(f^{n+1}(T^+), \lambda^-) \leq 6$, hence
\[
d_A(f^n(T^+), f^{n+1}(T^+)) \geq  d_{A}(\lambda^+,\lambda^-) - 12.
\]
Translating everything over by $f^{-n}$, we learn that $A' = f^{-n}(A)$ is the only $f$--translate of $A$ where the projections of $T^+$ and $f(T^+)$ are sufficiently far. Meanwhile, the opposite inequality $ d_{A'}(T, f(T)) \leq d_{A'}(\lambda^+,\lambda^-) +13$, holds for every section by \Cref{Thm:NicePath}.
\end{proof}

Annular-avoiding $f$--sections are needed for two reasons. First, as mentioned above, the edges of an annular-avoiding section $T$
 have bounded degree, enabling us to apply the fundamental lemma. Second, annular projections will be counted with a $\log$ in \Cref{Prop:RafiInterpreted}, so we do not want to break up a big annular projection into two projections. (For non-annular subsurfaces $\psub$, it can indeed happen that a large distance $d_{\psub}(\lambda^+, \lambda^-)$ gets broken up into a large (but bounded) number of large projections; see \Cref{Rem:fSectionSumWithoutOverlap}.)

In the remainder of this section, we prove coarse comparisons similar to \eqref{Eqn:SimilarDistAnnular} that hold for non-annular subsurfaces of $\mr S$.


\begin{lemma}\label{Lem:SectionsAlongGeodesic}
 For any section $T$ of $\mr S \times \RR$, we have
 \[
 d_{\mr S}(T, f(T)) - 4 \: \leq \: \ell_{\mr S}(\mr f) \: \leq \: d_{\mr S}(T, f(T)).
 \]
\end{lemma}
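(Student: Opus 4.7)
My plan is to establish the two inequalities separately, exploiting throughout that since $T$ is an ideal triangulation of $\mr S$ by pairwise disjoint veering edges, its vertex set in $\AC(\mr S)$ has diameter at most $1$; similarly for $f(T)$ and each $f^n(T)$. This diameter-$1$ observation is the main structural input on both sides.

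For the upper bound $\ell_{\mr S}(\mr f) \le d_{\mr S}(T, f(T))$, I will fix any edge $e \in T$. Since $\mr f$ acts by isometries on $\AC(\mr S)$, iterating the triangle inequality along $e, f(e), f^2(e), \ldots, f^n(e)$ yields $d_{\AC(\mr S)}(e, f^n(e)) \le n \cdot d_{\AC(\mr S)}(e, f(e))$. Because $e \in T$ and $f(e) \in f(T)$ both lie in $T \cup f(T)$, which by definition has diameter $d_{\mr S}(T, f(T))$, the single-step bound $d_{\AC(\mr S)}(e, f(e)) \le d_{\mr S}(T, f(T))$ holds. Dividing by $n$ and letting $n \to \infty$ gives the result via the definition \eqref{Eqn:StableLength}.

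For the lower bound $d_{\mr S}(T, f(T)) \le \ell_{\mr S}(\mr f) + 4$, the diameter bound gives the complementary estimate $d_{\mr S}(T, f(T)) \le d_{\AC(\mr S)}(e, f(e)) + 2$ for any $e \in T$, so it suffices to exhibit a single edge $e \in T$ with $d_{\AC(\mr S)}(e, f(e)) \le \ell_{\mr S}(\mr f) + 2$. I plan to locate this $e$ using the axis of $\mr f$ in the hyperbolic graph $\AC(\mr S)$: this axis is a bi-infinite geodesic from $\lambda^-$ to $\lambda^+$, and by the Minsky--Taylor total-geodesic embedding $\A(\tau) \hookrightarrow \AC(\mr S)$ from \cite[Theorem 1.4]{MinskyTaylor:FiberedFaces}, it is tracked up to bounded error by an $\mr f$-invariant chain of veering edges along which $\mr f$ shifts by exactly $\ell_{\mr S}(\mr f)$. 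Since any section $T$ is a triangulation of $\mr S$ and hence a complete cross-section of the fibration $\Pi \colon \mr S \times \RR \to \mr S$ in the veering triangulation, it must contain an edge within $\AC(\mr S)$-distance $1$ of this axis-chain, producing the required edge $e$. The main obstacle will be pinning down the proximity-to-axis with the correct sharp constant; I would attack this by combining the coarse $1$-Lipschitz retraction $\p \colon \AC(\mr S) \to \A(\tau)$ from \Cref{Sec:Compatibility} with the pocket/partial-section machinery of \Cref{Sec:Tools}, which forces every section to meet any $\mr f$-invariant configuration running between $\lambda^\pm$.
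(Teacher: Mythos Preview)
Your upper bound is correct and matches the paper's argument.

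For the lower bound, your approach diverges from the paper's and contains a genuine gap. You aim to find a single edge $e \in T$ with $d_{\AC(\mr S)}(e, f(e)) \le \ell_{\mr S}(\mr f) + 2$, and you propose doing this via an ``$\mr f$-invariant chain of veering edges along which $\mr f$ shifts by exactly $\ell_{\mr S}(\mr f)$.'' No such object is available: in a $\delta$-hyperbolic graph a loxodromic isometry need not have an invariant geodesic axis, and the minimum displacement $\min_x d(x,f(x))$ can exceed the stable translation length by an amount governed by $\delta$, not by $2$. The Minsky--Taylor result you cite gives a bi-infinite geodesic $g \subset \A(\tau)$ from $\lambda^-$ to $\lambda^+$, but it does not give $\mr f$-invariance of $g$, and $f(g)$ is only $2\delta$-close to $g$. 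You yourself flag that pinning down the constant is the ``main obstacle,'' and indeed your outline does not supply the missing idea.

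The paper's argument avoids this entirely by \emph{not} seeking a single nearly-optimal edge. It fixes one (non-$f$-invariant) veering geodesic $g = \{e_i\}$, observes that every section (hence every $f^k(T)$) meets some section $T_i \in T(e_i \cup e_{i+1})$ and therefore lies within distance $2$ of some vertex $e_{i_k}$ of $g$, and then telescopes: the nearest vertices $e_{i_k}$ are monotone along $g$ with gaps $|i_{k+1}-i_k| \ge d_{\mr S}(T,f(T)) - 4$, so $d_{\mr S}(T,f^n(T)) \ge n\bigl(d_{\mr S}(T,f(T)) - 4\bigr) - 4$. Dividing by $n$ and letting $n\to\infty$ gives the bound. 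The key difference is that the paper bounds $\ell_{\mr S}(\mr f)$ from below by summing many one-step lower bounds along the fixed geodesic, rather than trying to realize $\ell_{\mr S}(\mr f)$ as (approximately) the displacement of a single point.
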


Recall from \eqref{Eqn:StableLength} that $\ell_{\mr S}(\mr f) $ denotes the stable translation length of $\mr f$ in $\AC(\mr S)$.

\begin{proof}
For every $n \in \NN$, triangle inequalities imply that $d_{\mr S}(T, f^n(T)) \leq n \cdot d_{\mr S}(T, f(T))$.
Thus 
\[
\ell_{\AC(\mr S)}(\mr f) = \lim_{n \to \infty} \frac{d(T, f^n(T))}{n}  \leq \lim_{n \to \infty} \frac{n \cdot d_{\mr S}(T, f(T))}{n} = d_{\mr S}(T, f(T)).
\]

For the other inequality, we may assume that $ d_{\mr S}(T, f(T)) > 4$ (otherwise, there is nothing to prove).
By \cite[Theorem 1.4]{MinskyTaylor:FiberedFaces}, there is a bi-infinite geodesic axis $g = \{e_i : i \in \ZZ\}$ that runs through $\AC(\mr S)$ from $\lambda^+$ to $\lambda^-$, such that every $e_i$ is an edge of the veering triangulation. 
 As in the proof of \Cref{Thm:NicePath}, let $T_i$ be a section through $e_i \cup e_{i+1}$. Then every section $T'$ intersects some $T_i$ in a veering edge. In particular, every $T'$ lies within distance $2$ of some $e_i$.
 
 For every $k \in \ZZ$, let $i_k \in \ZZ$ be an integer such that $d_{\mr S}(f^k(T), e_{i_k}) \leq 2$. Then triangle inequalities imply
 \[
 i_{k+1} - i_k = d_{\mr S} (e_{i_k}, e_{i_{k+1}}) \geq d_{\mr S}(f^{k+1}(T), f^k(T)) - 4 = d_{\mr S}(T, f(T)) - 4 > 0.
 \]
 In particular, the veering edges $e_{i_k}$ are monotonically ordered along $g$.
 
 Now, fix an $\epsilon > 0$. Then, for all $n > N(\epsilon)$, we have 
 $\ell_{\AC(\mr S)}(\mr f) \geq \frac{d(T, f^n(T))}{n} - \epsilon$. Furthermore,
  \begin{align*}
d(T, f^n(T))
 &\geq d_{\mr S} (e_{i_0}, e_{i_{n}}) - 4 \\
 & = \sum_{j = 0}^{n-1} d_{\mr S} (e_{i_j}, e_{i_{j+1}}) \quad -4 \\
 & \geq \sum_{j = 0}^{n-1} (d_{\mr S}(T, f(T)) - 4) \quad -4 \\
 & = n (d_{\mr S}(T, f(T)) - 4) \quad -4
 \end{align*}
 which implies  $\ell_{\AC(\mr S)}(\mr f) \geq (d_{\mr S}(T, f(T)) - 4) - \frac{4}{n} - \epsilon$.
Letting $n \to \infty$ and then $\epsilon \to 0$ completes the proof.
\end{proof}

\begin{lemma}\label{Lem:fSectionSum}
 Let $T$ be an $f$--section of $\mr S$. Let
 $\psub \subset \mr S$ be a proper non-annular subsurface such that $d_{\psub}(\lambda^+,\lambda^-) > 12$, and such that $f$ overlaps $\psub$.
 Then there is a sub-surface $\psub'  = f^{n_0}(\psub)$ such that
 \[
d_{\psub}(\lambda^+,\lambda^-) \: = \:  d_{\psub'} (T, f(T)) + d_{f(\psub')} (T, f(T))  + O(1).
\]
Furthermore, $d_{f^i(\psub')}(T, f(T)) \leq 12$ for all $i \neq 0,1$. In particular, for every $K \geq 13$ there is hierarchy-like sum of the form 
\[
\big[ d_{\psub}(\lambda^+, \lambda^-) \big]_{K}  \asymp \sum_{i \in \ZZ} \big[ d_{f^i(\psub)}(T, f(T)) \big]_{K}.
\]
\end{lemma}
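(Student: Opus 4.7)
The plan is to rephrase everything in terms of the monotone chain of sections $S_j := f^{-j}(T)$ for $j \in \ZZ$. Since $T$ is an $f$-section, $f(T) \le T$, which forces $S_j \le S_{j+1}$ in $\mr S \times \RR$; moreover $S_j$ approaches $\lambda^-$ as $j \to -\infty$ and $\lambda^+$ as $j \to +\infty$. By $f$-equivariance of subsurface projections,
\[
d_{f^i(\psub)}(T, f(T)) \: = \: d_{\psub}(S_{i}, S_{i-1}).
\]
Since $d_\psub(\lambda^+,\lambda^-) > 12$, \Cref{Lem:IsolatedPocket} produces an isolated pocket $V_\psub$ over $\psub_\tau$, and each section $S_j$ either lies above $V_\psub$, lies below $V_\psub$, or passes through its interior.

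The key observation is that at most one index $j$ can have $S_j \in \intr(V_\psub)$. Indeed, $S_j \in \intr(V_\psub)$ if and only if $T \in \intr(V_{f^j(\psub)})$, so if $S_j$ and $S_{j+1}$ both met $\intr(V_\psub)$, then $T$ would cross the interiors of both $V_{f^j(\psub)}$ and $V_{f^{j+1}(\psub)}$. But $f^j(\psub)$ and $f^{j+1}(\psub)$ overlap, since $f$ overlaps $\psub$ by hypothesis, contradicting \Cref{Lem:IsolatedPocket}(2). Combined with the monotonicity of $\{S_j\}$, this forces uniqueness. Define $n_0$ to be the unique index with $S_{n_0} \in \intr(V_\psub)$ if such an index exists, and otherwise the smallest $j$ for which $S_j$ lies above $V_\psub$. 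Set $\psub' := f^{n_0}(\psub)$.

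By construction, $S_{n_0-1}$ lies below $V_\psub$ and $S_{n_0+1}$ lies above $V_\psub$, so \Cref{Lem:IsolatedPocket}(1) yields $d_\psub(S_{n_0-1}, \lambda^-) \le 6$ and $d_\psub(S_{n_0+1}, \lambda^+) \le 6$. For any $i \notin \{0,1\}$, both $S_{n_0+i}$ and $S_{n_0+i-1}$ lie on the same side of $V_\psub$ and are each within distance $6$ of the same lamination, whence $d_{f^i(\psub')}(T, f(T)) \le 12$, giving the bounded-contribution claim. For the main equality, the lower bound follows from a four-term triangle inequality through $S_{n_0-1}, S_{n_0}, S_{n_0+1}$ together with the above estimates, while the matching upper bound uses the geodesic $g = \{e_0, \dots, e_n\} \subset \AC(\psub)$ from \Cref{Thm:NicePath}, whose endpoints satisfy $d_\psub(e_0, \lambda^-) \le 3$ and $d_\psub(e_n, \lambda^+) \le 3$. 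Choosing $e_k \in g$ with $d_\psub(S_{n_0}, e_k) \le 5$, triangle inequalities routed through $e_k$ give
\[
d_\psub(S_{n_0-1}, S_{n_0}) + d_\psub(S_{n_0}, S_{n_0+1}) \: \le \: k + (n-k) + O(1) \: \le \: d_\psub(\lambda^+, \lambda^-) + O(1),
\]
preventing the two subsurface steps from backtracking along $g$. Combining the bounds yields $d_\psub(\lambda^+,\lambda^-) = d_{\psub'}(T, f(T)) + d_{f(\psub')}(T, f(T)) + O(1)$.

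The hierarchy-like sum is then immediate: for any $K \ge 13$, the bounded-contribution claim kills every summand with $i \notin \{0,1\}$, reducing the right-hand side to the two-term expression just estimated, after which \Cref{Lem:HierarchyLikeSum} absorbs the $O(1)$ discrepancy and allows the threshold to be increased freely. The main obstacle is the "at most one pocket crossing" uniqueness, which is where the overlap hypothesis is essential: without it, consecutive translates $V_{f^j(\psub)}$ could be disjoint, $T$ could slice through many of them, and the projection distance $d_\psub(\lambda^+,\lambda^-)$ could scatter across many non-negligible terms $d_{f^i(\psub)}(T, f(T))$ instead of localizing on a single consecutive pair.
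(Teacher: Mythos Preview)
Your proof is correct and follows essentially the same approach as the paper's: both arguments locate the single step at which the monotone family of sections crosses the isolated pocket $V_\psub$, use the overlap hypothesis via \Cref{Lem:IsolatedPocket}(2) to rule out two crossings, and then invoke \Cref{Thm:NicePath} for the additive non-backtracking estimate. The only difference is a change of frame---you fix $V_\psub$ and move the sections $S_j = f^{-j}(T)$, whereas the paper fixes $T$ and moves the pockets $f^i(V_{\psub'})$---which is purely notational.
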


\begin{proof}
Since $d_{\psub}(\lambda^+,\lambda^-) > 12$, \Cref{Lem:IsolatedPocket} says there is an isolated pocket $V_{\psub}$ over $\psub$. Thus, for every $i \in \ZZ$, the translate $f^i(V_{\psub})$ is an isolated pocket over $f^i(\psub)$, and these pockets have pairwise disjoint interiors.

First, suppose that $T$ intersects the interior of $V_{\psub'}$ for some translate $\psub' = f^{n_0}(\psub)$.
Since $f(\psub')$ overlaps $\psub'$,  \Cref{Lem:IsolatedPocket} implies that  $T$ cannot intersect the interior of $f(V_{\psub'})$. Since $f$ translates downward, it follows that $f^i(V_{\psub'})$ lies below $T$ for every $i > 0$, hence $T$ and $f(T)$ both lie above $f^i(V_{\psub'})$ for every $i > 1$. By an identical argument, $T$ cannot intersect the  interior of $f^{-1}(V_{\psub'})$, hence $T$ and $f(T)$ both lie below $f^i(V_{\psub'})$ for every $i < 0$. 

Now, suppose that $T$ is disjoint from the interior of every pocket in the $\langle f \rangle$--orbit of $V_{\psub}$. 
Then there is exactly one translate $\psub' = f^{n_0}(\psub)$
such that the interior of  $V_{\psub'}$ lies below $T$ and above $f(T)$. Since $f$ translates downward, it once again follows that $f^i(V_{\psub'})$ lies below $T$ for every $i \geq 0$,  hence $T$ and $f(T)$ both lie above $f^i(V_{\psub'})$ for every $i \geq 1$. Similarly,   $T$ and $f(T)$ both lie below $f^i(V_{\psub'})$ for every $i < 0$. 

Combining the cases, we learn that $T$ and  $f(T)$ both lie on the same side of $f^i(V_{\psub'})$ for every $i \neq 0,1$, hence  \Cref{Lem:IsolatedPocket} implies that $d_{f^i(\psub')}(T, f(T)) \leq 12$.

For the distance estimate of the lemma, observe that  $f^{-1}(T)$ lies above $V_{\psub'}$, hence \Cref{Lem:IsolatedPocket} tells us that  $d_{\psub'}(f^{-1}(T), \lambda^+) \leq 6$. Similarly,  $f(T)$ lies below $V_{\psub'}$, hence \Cref{Lem:IsolatedPocket} tells us that   $d_{\psub'}(f(T),  \lambda^-) \leq 6$. 
%
%
Now, recall the geodesic $g = \{e_0, \ldots, e_n \} \subset \AC(\psub')$ guaranteed by \Cref{Thm:NicePath}. 
By that theorem, the projection to $\psub$ of each of $f^{-1}(T), T, f(T)$ lies within distance $5$ of some vertex of $g$. Furthermore, $\pi_{\psub}(f^{-1}(T))$ is close to $e_0$ and $\pi_{\psub}(f(T))$ is close to $e_n$, because they are close to $\lambda^+$ and $\lambda^-$, respectively. Thus
\begin{align*}
d_{f(\psub')}(T, f(T)) + d_{\psub'}(T, f(T)) & = d_{\psub'}(f^{-1}(T), T) + d_{\psub'}(T, f(T)) \\
& = d_{\psub'} (e_0, e_n) + O(1) \\
& = d_{\psub'} (\lambda^+, \lambda^-) + O(1),
\end{align*}
completing the proof.
\end{proof}

\begin{remark}\label{Rem:fSectionSumWithoutOverlap}
There is a more subtle version of 
 \Cref{Lem:fSectionSum} that holds without the hypothesis that $\psub$ is overlapped. In that setting, there might be multiple sections $T, f^{-1}(T), \ldots, f^{-(k-1)}(T)$ that all intersect the same isolated pocket $V_{\psub'} = f^{n_0}(\psub)$. The number $k$ of these sections is bounded by the number of translates of $\psub$ whose interiors are pairwise disjoint, which is itself bounded by $|\chi(\mr S)|$. The nearest-point projections of $T, f^{-1}(T), \ldots, f^{-(k-1)}(T)$ to the geodesic $g = \{e_0, \ldots, e_n \} \subset \AC(\psub')$ guaranteed by \Cref{Thm:NicePath} are coarsely non-backtracking by \Cref{Rem:NonBacktracking}.
By computing as in \Cref{Lem:SectionsAlongGeodesic}, one can prove that
\[
d_{\psub}(\lambda^+,\lambda^-) = \sum_{i=n_0}^{n_0+k} d_{f^i(\psub)}(T, f(T)) + O(k).
\]
Hence there is still a hierarchy-like sum of the form 
\[
d_{\psub}(\lambda^+, \lambda^-)  \asymp \sum_{i \in \ZZ} \left[ d_{f^i(\psub)}(T, f(T)) \right]_{}.
\]

\end{remark}

\section{Distance formulas}\label{Sec:DistanceFormulas}
As part of his 
work on the coarse geometry of the Teichm\"uller metric \cite{Rafi:Combinatorial}, Rafi has proved a hierarchy-like distance formula for Teichm\"uller distance. (See also Durham \cite[Theorem 7.4.7]{Durham:AugmentedMarkingComplex} for an alternate proof.) In this section, we show how Rafi's formula can be used to give a uniform coarse estimate for Teichm\"uller translation length. This result, \Cref{Prop:RafiInterpreted}, appears to be new.

For the statement, recall our conventions for hierarchy-like sums (\Cref{Def:HierarchyLikeSum}).

\begin{proposition}\label{Prop:RafiInterpreted}
 For any pseudo-Anosov $f$ with invariant laminations $\lambda^\pm$, there is a hierarchy-like sum
\[
\ell_{\T(S)}(f) \asymp \ell_{S}(f) + \sum_{\langle f \rangle Y} \big[ d_Y(\lambda^+,\lambda^-) \big] + \sum_{\langle f \rangle A} \big[ \log  d_A(\lambda^+,\lambda^-) \big].
\]
where the first sum is over $f$--orbits of proper non-annular subsurfaces and the second is over $f$--orbits of annuli. 
\end{proposition}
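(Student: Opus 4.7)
The plan is to apply Rafi's combinatorial distance formula \cite[Theorem 6.1]{Rafi:Combinatorial} (which is a hierarchy-like sum, as noted after \Cref{Def:HierarchyLikeSum}) to compute $d_{\T(S)}(X, f^n X)$ for a basepoint $X$, then extract the translation length. Using short markings $\mu$ of $X$ and $f^n \mu$ of $f^n X$, Rafi's formula gives
\[
d_{\T(S)}(X, f^n X) \: \asymp \: d_{\AC(S)}(\mu, f^n \mu) \: + \sum_{Y \subsetneq S,\ \text{non-ann.}} \bigl[ d_Y(\mu, f^n \mu) \bigr] \: + \sum_{A} \bigl[ \log d_A(\mu, f^n \mu) \bigr].
\]
Since $\ell_{\T(S)}(f) = \lim_{n \to \infty} \tfrac{1}{n} d_{\T(S)}(X, f^n X)$ and the coarse constants in $\asymp$ are independent of $n$, the proposition will follow once each term on the right is shown to have the appropriate per-$n$ growth rate.

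The curve-graph term gives $\tfrac{1}{n} d_{\AC(S)}(\mu, f^n \mu) \to \ell_{\AC(S)}(f) = \ell_S(f)$ directly from \eqref{Eqn:StableLength} together with the quasi-isometry $\C(S) \hookrightarrow \AC(S)$. For the non-annular sum, I reindex by $f$-orbits and use $f$-equivariance of subsurface projections:
\[
\sum_{Y} \bigl[ d_Y(\mu, f^n \mu) \bigr] \: = \: \sum_{\langle f \rangle Y} \sum_{i \in \ZZ} \bigl[ d_Y(f^{-i}\mu,\, f^{n-i}\mu) \bigr].
\]
For a fixed orbit $\langle f \rangle Y$ with $d_Y(\lambda^+, \lambda^-)$ above the hierarchy cutoff, the summands with $i$ in the bulk range $C \le i \le n - C$ satisfy $d_Y(f^{-i}\mu, f^{n-i}\mu) = d_Y(\lambda^+, \lambda^-) + O(1)$, because $f^{-i}\mu$ has stabilized near $\lambda^-$ and $f^{n-i}\mu$ near $\lambda^+$ in $\pi_Y$ (controlled via the bounded geodesic image theorem, \Cref{Thm:BGIT}); the remaining $O(1)$ edge indices contribute a total of $O(d_Y(\lambda^+, \lambda^-))$. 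Thus the inner sum equals $n \cdot [d_Y(\lambda^+, \lambda^-)] + O(d_Y(\lambda^+, \lambda^-))$, and dividing by $n$ extracts exactly $[d_Y(\lambda^+, \lambda^-)]$ per orbit in the limit. The annular sum is handled by the same argument applied term-by-term inside the $\log$, yielding $[\log d_A(\lambda^+, \lambda^-)]$ per annular orbit.

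The main obstacle is making the bulk-versus-edge dichotomy uniform over all orbits simultaneously, so that the reindexed sum is a genuine hierarchy-like sum in the sense of \Cref{Def:HierarchyLikeSum} (i.e., so that a single cutoff $K_0$ works for all orbits at once). Here I would invoke the veering-triangulation tools of Section 5: rather than working with short markings $\mu$, I would apply Rafi's formula to the pair $(T, fT)$ for an $f$-section $T$ of the veering triangulation. The curve-graph term is then handled by \Cref{Lem:SectionsAlongGeodesic}, the non-annular sum by \Cref{Lem:fSectionSum} (whose statement already supplies exactly the hierarchy-like reorganization needed), and the annular sum by \Cref{Lem:AnnularAvoiding} via its annular-avoiding section. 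To legitimize the substitution of $T$ for a short marking in Rafi's formula, one checks that veering edges and short markings agree within bounded error in every subsurface projection that survives the cutoff --- a manageable consequence of the compatibility retraction $\s \from \AC(S) \to \A(q)$ of \Cref{Sec:Compatibility} combined with the standard short-saddle-connection dictionary along the Teichm\"uller axis of $f$.
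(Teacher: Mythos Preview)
Your first two paragraphs outline exactly the paper's approach: apply Rafi's formula to $d_{\teich}(f^{-n}x,f^nx)$, divide by $n$, and use BGIT to control the subsurface terms along $f$-orbits. (One ingredient you do not name but the paper uses explicitly is Rafi's non-backtracking theorem \cite[Theorem 6.1]{Rafi:Hyperbolicity}, which gives the upper bound $d_Y(x_n,y_n)\le d_Y(\lambda^+,\lambda^-)+J$ for each summand; your ``edge indices contribute $O(d_Y(\lambda^+,\lambda^-))$'' is this in disguise.) The paper also notes that Rafi's full formula has extra length and twist terms (your version suppresses them); these are dispatched by observing that all $f^n x$ lie in one $\Mod(S)$-orbit, so injectivity radius is bounded below and the short markings eventually share no curves.

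Where you diverge from the paper is in the cure for the uniformity obstacle. The obstacle is real --- your ``$O(1)$ edge indices'' has no a~priori reason to be independent of $Y$ --- but the paper resolves it with pure curve-graph geometry, not with veering triangulations. Once $d_Y(\lambda^+,\lambda^-)>M$, BGIT pins $\bdy Y$ (and all its $f$-translates) to the $2$-neighborhood of the $\AC(S)$-axis of $f$; these translates are then spaced along the axis at rate $\ell_S(f)$, so a simple counting argument (the paper's \Cref{Lem:NSeparatedCount}) shows that the number of contributing translates in $[x_n,y_n]$ is $n+O(1)$ with constants depending only on $S$, \emph{uniformly in $Y$}. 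Combined with \Cref{Lem:NSeparatedDist} this closes the gap without ever leaving $S$.

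Your proposed fix via veering sections runs into trouble. The lemmas you cite from \Cref{sec:aas} concern subsurfaces of the \emph{punctured} surface $\mr S$, not of $S$; carrying them back to a sum over $S$-subsurfaces is precisely the ``distance shuffling'' of \Cref{Prop:Shuffling}, which the paper develops only later and with considerable effort. Moreover \Cref{Lem:fSectionSum} requires $\psub$ to be overlapped by $f$, which is not automatic here. Finally, ``substituting $T$ for a short marking in Rafi's formula'' is not a step one can wave through: Rafi's formula has length and twist terms tied to actual points of $\teich(S)$, and a veering section does not come with such data. The paper's route is both simpler and self-contained.
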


Before proving \Cref{Prop:RafiInterpreted}, we introduce a definition and two lemmas.

\begin{definition}\label{Def:NSeparated}
Let $x \in \AC(S)$. For $n \in \NN$, a proper essential subsurface $Y \subset S$ is called \define{$n$--separated} (with respect to $x$) if no geodesic ray in $\AC(S)$ from $f^{-n}(x)$ to $\lambda^{-}$ or from $f^n(x)$ to $\lambda^{+}$ enters the $1$--neighborhood of $\bdy Y$. Observe that every $Y$ is $n$--separated for sufficiently large $n$.
\end{definition}

\begin{lemma}\label{Lem:NSeparatedDist}
Let $x \in \AC(S)$, and let $Y$ be an $n$--separated subsurface with respect to $x$. Then 
\[
d_Y(\lambda^+,\lambda^-) - 2M \leq d_Y(f^{-n}(x), f^{n}(x)) \leq d_Y(\lambda^+,\lambda^-) + 2M, 
\]
where $M$ is the constant of \Cref{Thm:BGIT}.
\end{lemma}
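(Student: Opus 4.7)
The plan is to apply the bounded geodesic image theorem (\Cref{Thm:BGIT}) to each of the two geodesic rays mentioned in the definition of $n$--separation, and then to combine the resulting projection bounds with two straightforward triangle inequalities in $\AC(Y)$.

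More concretely, by the hypothesis that $Y$ is $n$--separated with respect to $x$, there is a geodesic ray from $f^{-n}(x)$ to $\lambda^{-}$ that avoids the $1$--neighborhood of $\bdy Y$ in $\AC(S)$. Applying \Cref{Thm:BGIT} to this ray, its image under $\pi_Y$ has diameter less than $M$, which yields
\[
d_Y(f^{-n}(x), \lambda^{-}) \leq M.
\]
An identical argument, applied to the geodesic ray from $f^{n}(x)$ to $\lambda^{+}$ supplied by $n$--separation, gives
\[
d_Y(f^{n}(x), \lambda^{+}) \leq M.
\]

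With these two estimates in hand, both desired inequalities follow from the triangle inequality in $\AC(Y)$. For the upper bound,
\[
d_Y(f^{-n}(x), f^{n}(x)) \leq d_Y(f^{-n}(x), \lambda^{-}) + d_Y(\lambda^{-}, \lambda^{+}) + d_Y(\lambda^{+}, f^{n}(x)) \leq d_Y(\lambda^{+}, \lambda^{-}) + 2M,
\]
and for the lower bound, the same three points used in the reverse order yield
\[
d_Y(\lambda^{+}, \lambda^{-}) \leq d_Y(\lambda^{+}, f^{n}(x)) + d_Y(f^{n}(x), f^{-n}(x)) + d_Y(f^{-n}(x), \lambda^{-}) \leq d_Y(f^{-n}(x), f^{n}(x)) + 2M.
\]

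There is no real obstacle here; the only mild subtlety is making sure the projections are nonempty so that the triangle inequalities apply. This is automatic because $Y$ being $n$--separated (and in particular the geodesic rays not entering the $1$--neighborhood of $\bdy Y$) forces each of $\lambda^{\pm}$ and $f^{\pm n}(x)$ to have nonempty projection to $\AC(Y)$; otherwise a geodesic segment connecting them would terminate at (or pass through) $\bdy Y$. Thus the entire proof is essentially a one-line application of \Cref{Thm:BGIT} followed by two triangle inequalities.
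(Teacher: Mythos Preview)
Your proof is correct and follows essentially the same approach as the paper: apply \Cref{Thm:BGIT} to the two rays guaranteed by $n$--separation to obtain $d_Y(f^{\pm n}(x),\lambda^{\pm})\le M$, and then conclude via the triangle inequality in both directions. Your added remark about nonemptiness of the projections is a nice touch that the paper leaves implicit.
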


\begin{proof}
Set $x_n = f^{-n}(x)$ and $y_n = f^n(x)$.
By the bounded geodesic image theorem (\Cref{Thm:BGIT}), any ray from $x_{n}$ to $\lambda^{-}$ (respectively $y_{n}$ to $\lambda^{+}$) projects to a path of diameter at most $M$ in $\mathcal{AC}(Y)$. It follows by the triangle inequality that 
\begin{align*}
d_{Y}(\lambda^{-}, \lambda^{+}) &\leq d_{Y}(\lambda^{-}, x_{n}) + d_{Y}(x_{n}, y_{n}) + d_{Y}(y_{n}, \lambda^{+})\\
&\le d_{Y}(x_{n}, y_{n}) + 2M.
\end{align*}
The other inequality in the lemma is proved in exactly the same way.
\end{proof}

\begin{lemma}\label{Lem:NSeparatedCount}
Let $Y$ be a proper essential subsurface with $d_Y(\lambda^+, \lambda^-) > M$. Then, for all $x \in \AC(S)$ and all sufficiently large $n$ (depending on $x$), there exist $n$ distinct $n$--separated translates $Y_{1},..., Y_{n} \in \langle f \rangle (Y)$.
\end{lemma}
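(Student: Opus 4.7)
The plan is to exploit the $\delta$--hyperbolicity of $\AC(S)$ together with the fact that, under the hypothesis $d_Y(\lambda^+,\lambda^-) > M$, the boundary $\partial Y$ and its translates are pinned to specific positions along the quasi-axis of $f$. This way, only translates $f^i(Y)$ with $|i|$ comparable to $n$ can fail to be $n$--separated.

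First, I would set up a quasi-axis for $f$. Since $f$ is pseudo-Anosov, the orbit $\{f^i(x)\}_{i\in\ZZ}$ is a $(K,K)$-quasi-geodesic in $\AC(S)$ with endpoints $\lambda^-$ and $\lambda^+$ for some $K = K(x,f)$. By Morse stability, any bi-infinite geodesic $g$ from $\lambda^-$ to $\lambda^+$ stays within Hausdorff distance $D = D(K,\delta)$ of this quasi-axis. Because $d_Y(\lambda^+,\lambda^-) > M$, \Cref{Thm:BGIT} forces every such $g$ to enter the $1$--neighborhood of $\partial Y$, so there exists an integer $j = j(Y)$ with $d_{\AC(S)}(f^j(x),\partial Y) \leq D+1$. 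Translating by $f^i$ then gives $d_{\AC(S)}(f^{i+j}(x),\partial f^i(Y)) \leq D+1$ for every $i \in \ZZ$.

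Next, I would control the geodesic rays to $\lambda^\pm$. By standard hyperbolic-geometry facts, any geodesic ray from $f^{-n}(x)$ to $\lambda^-$ fellow-travels the tail $\{f^i(x) : i \leq -n\}$ of the quasi-axis up to a bounded ``overshoot.'' Concretely, there exist constants $D', C_0$ depending only on $K$ and $\delta$ such that any such ray is contained in the $D'$--neighborhood of $\{f^i(x) : i \leq -n+C_0\}$, and symmetrically for rays from $f^n(x)$ to $\lambda^+$. Combining this with the previous step: if some ray from $f^{-n}(x)$ to $\lambda^-$ enters $N_1(\partial f^i(Y))$, then $f^{i+j}(x)$ lies within $D+D'+2$ of $\{f^k(x): k \leq -n+C_0\}$; since the orbit is a quasi-geodesic, this forces $i+j \leq -n + C_1$ for some $C_1 = C_1(x,Y,\delta)$. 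The symmetric analysis forces $i+j \geq n - C_1$ whenever a ray from $f^n(x)$ to $\lambda^+$ enters $N_1(\partial f^i(Y))$. Hence $f^i(Y)$ is $n$--separated for every $i$ in the window $-n+C_1-j < i < n-C_1-j$, which contains more than $2n - 2C_1$ consecutive integers. Since a pseudo-Anosov cannot fix a proper essential subsurface up to isotopy, the translates $f^i(Y)$ are pairwise distinct, so for all $n > 2C_1$ we obtain the desired $n$ distinct $n$--separated translates.

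The main obstacle is the fellow-traveling estimate in the second step: it is standard but must be phrased so that the constants $D', C_0$ are independent of $n$ as the basepoint $f^{-n}(x)$ slides along the quasi-axis. The cleanest route is to observe that $f^{-n}(x)$ itself lies on the quasi-axis, so its nearest-point projection to any bi-infinite geodesic from $\lambda^-$ to $\lambda^+$ is bounded, and then invoke the tripod approximation of geodesic triangles with one ideal vertex in a hyperbolic space. Everything else is bookkeeping with the uniform constants $M$, $\delta$, $K$, and $D$.
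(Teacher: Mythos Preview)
Your proof is correct and follows essentially the same strategy as the paper: use $\delta$--hyperbolicity and \Cref{Thm:BGIT} to pin $\partial Y$ and its translates to definite positions along the $f$--quasi-axis, then count the translates that land in the ``middle'' between $f^{-n}(x)$ and $f^n(x)$. The only cosmetic difference is that the paper projects everything to a fixed bi-infinite geodesic $g$ from $\lambda^-$ to $\lambda^+$ and argues via nearest-point coordinates along $g$, whereas you work directly with indices along the orbit $\{f^i(x)\}$; both packagings yield the same $2n - O(1)$ count.
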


\begin{proof}
Since $d_{Y}(\lambda^{+}, \lambda^{-}) > M$, \Cref{Thm:BGIT} implies that $\bdy Y$ lies in the $2$--neighborhood of every geodesic in $\AC(S)$ connecting $\lambda^{-}$ to $\lambda^{+}$. Let $g$ be one such bi-infinite geodesic. Since $d_{f^iY}(\lambda^{+}, \lambda^{-}) > M$ for every $i$, 
it follows that 
every $\langle f \rangle$--translate of $\bdy Y$
 lies in a $2$--neighborhood of $g$. In particular, for every $i,j$, the distance $d_S(f^i(\bdy Y), f^{i+j}(\bdy Y))$ is within uniformly bounded additive error, say $\kappa_{1}$, from $|j| \cdot \ell_{S}(f)$.

As above, set $x_n = f^{-n}(x)$ and $y_n = f^n(x)$. For every $n$, let $p_n$ be a vertex of $g$ closest  to $x_n$, and let $q_n$ be a vertex of $g$ closest to $y_n$. Since all $x_n, y_n$ lie in a uniformly bounded neighborhood of $g$, it follows that there is a uniform $\kappa_2$ such that
\begin{align}
2n \ell_S(f) - \kappa_2 & \leq d_S(p_n, q_n)  \leq 2n \ell_S(f) + \kappa_2, \label{distpq} \\
2n \ell_S(f) - \kappa_2 & \leq d_S(x_n, y_n)  \leq 2n \ell_S(f) + \kappa_2. \label{distxy}
\end{align}
Combining \eqref{distpq} with the above estimate on $d_S(f^i(\bdy Y), f^{i+2n}(\bdy Y))$, we learn that the number of $f$--translates of $\bdy Y$ whose closest points on $g$ lie between $p_n$ and $q_n$ is within  uniform additive error of $2n$. If $f^j(\bdy Y)$ is such a translate whose closest point on $g$ is more than $\delta+1$ past $p_n$, then no geodesic ray from $x_n$ to $\lambda^-$ can enter the $1$--neighborhood of $f^j(\bdy Y)$. An analogous statement holds near $q_n$. We conclude that for large $n$, there are at least $n$ distinct $n$--separated subsurfaces in the $f$--orbit of $Y$.
\end{proof}

\begin{proof}[Proof of \Cref{Prop:RafiInterpreted}]
We will make use of Rafi's combinatorial model for the Teichm{\"u}ller metric \cite[Theorem 6.1]{Rafi:Combinatorial} and refer the reader to his paper for additional details. A convenient restatement of Rafi's distance estimate
 \cite[Theorem 2.4 and Remark 2.5]{Rafi:Hyperbolicity} says that for every $x,y \in \T(S)$, 
\begin{align}
d_{\teich}(x,y) & \asymp \sum_{Y} \big[ d_{Y}(\mu_{x}, \mu_{y}) \big] + \sum_{\gamma \notin \mu_{x}^{\circ} \cup \mu_{y}^{\circ}} \big[ \log d_{A_\gamma}(\mu_{x}, \mu_{y}) \big] 
\label{subsurface} \\
& + \sum_{\alpha \in \mu_{x}^{\circ} \setminus \mu_{y}^{\circ}} \log \frac{1}{\ell_{x}(\alpha)} + \sum_{\beta \in \mu_{y}^{\circ} \setminus \mu_{x}^{\circ}} \log \frac{1}{\ell_y(\beta)}
\label{annular lengths}  \\
& + \sum_{\gamma \in \mu_{x}^{\circ} \cap \mu_{y}^{\circ}} d_{\mathbb{H}}\left( \Big( \mbox{twist}_{\gamma}(x,y), \frac{1}{\ell_{x}(\gamma)} \Big), \: \Big( 0, \frac{1}{\ell_y(\gamma)} \Big) \right).
\label{hyperbolic plane terms} 
\end{align}
In the above expression, $\mu_z$ is a carefully chosen ``short marking'' on the Riemann surface $z\in \T(S)$, and $\mr \mu_z$ is the set of base curves (not transversals) in the marking $\mu_z$. This marking is chosen in a $\Mod(S)$--equivariant manner. 
The first sum in \eqref{subsurface} runs over all non-annular sub-surfaces $Y \subset S$, including $S$ itself. 
In the second sum of \eqref{subsurface}, $A_\gamma$ is the annulus with core curve $\gamma$. Finally, in  \eqref{annular lengths} and \eqref{hyperbolic plane terms}, $\ell_z(\gamma)$ denotes the length of $\gamma$ in the hyperbolic structure $z \in \T(S)$. 
While the expressions in the last two lines are quite complicated, we will show that they do not contribute to the estimate of translation length $ \ell_{\teich(S)}(f)$.

Now, let $x$ be an arbitrary point on the Teichm{\"u}ller axis for $f$, and set $x_{n} = f^{-n}(x)$  and $y_{n} = f^{n}(x)$ for $n \in \mathbb{N}$.
Then we have 
\begin{equation} \label{asymptotic trans}
 \ell_{\teich(S)}(f) = \lim_{n \rightarrow \infty}\frac{1}{2n} \cdot d_{\teich} (x_{n}, y_{n}). 
\end{equation}
Moreover, Rafi's combinatorial formula gives an expression for the right hand side of \eqref{asymptotic trans} in terms of subsurface projections, lengths, and twist parameters. Indeed, denoting the sums in \eqref{subsurface}, \eqref{annular lengths}, and \eqref{hyperbolic plane terms} by $A(x,y), B(x,y)$, and $C(x,y)$ respectively, one has 
\begin{equation}\label{RafiABC}
 \lim_{n \rightarrow \infty}\frac{1}{2n} \cdot d_{\teich} (x_{n}, y_{n}) 
\asymp \lim_{n\rightarrow \infty} \frac{A(x_{n}, y_{n}) + B(x_{n}, y_{n})+ C(x_{n}, y_{n})}{n}
\end{equation}

We next claim that $B(x_{n}, y_{n})$ and $C(x_{n}, y_{n})$ are both bounded above uniformly in $n$. Indeed, 
the uniform upper bound on $B(x_{n}, y_{n})$ holds because
all points $x_{n}$ and $y_{n}$ lie in the same $\Mod(S)$ orbit, hence there is a uniform lower bound on injectivity radius over all such points.
 As for $C(x_{n}, y_{n})$, we simply note that for $n$ sufficiently large, the sum is empty because $f$ is pseudo-Anosov and so for instance, it follows that the distance in the curve complex between $\mu^{\circ}_{x_{n}}$ and $\mu^{\circ}_{y_{n}}$ goes to infinity. In particular, these pants decompositions share no curves in common. 
Hence, \eqref{asymptotic trans} and \eqref{RafiABC} simplify to
\begin{equation} \label{A to T}
\ell_{\teich}(f) = \lim_{n \rightarrow \infty}\frac{1}{2n} \cdot d_{\teich} (x_{n}, y_{n}) 
\asymp \lim_{n\rightarrow \infty} \frac{A(x_{n}, y_{n})}{n}. 
\end{equation}

\smallskip

In what follows, we fix a short marking $\mu = \mu_x$, and let $\mu_{x_n} = f^{-n}(\mu)$ and $\mu_{y_n} = f^n(\mu)$ be short markings at $x_n$ and $y_n$, respectively. 
For any subsurface $Y$, we use the shorthand notation $\pi_Y(x_n)$ to mean $\pi_Y(\mu_{x_n}) = \pi_Y(f^{-n}(\mu))$, and similarly for $y_n$. In particular, $d_Y(x_n, y_n)$ denotes the diameter in $\AC(Y)$ of the projections of $f^{-n}(\mu) \cup f^n(\mu)$. Similarly, $d_S(x_n, y_n)$ denotes the distance in $\AC(S)$ between the short markings at $x_n$ and $y_n$.

Recall from \Cref{Def:NSeparated} that every proper essential subsurface $Y$ is $n$--separated (with respect to the marking $\mu = \mu_x$) for sufficiently large $n$.
Thus, by \Cref{Lem:NSeparatedDist},  if $d_{Y}(\lambda^{+}, \lambda^{-})$ is sufficiently large, then  $Y$ eventually corresponds to a summand of $A(x_{n}, y_{n})$. By \Cref{Lem:NSeparatedCount}, there are $n$ distinct translates of $Y$ by powers of $f$ that correspond to summands of $A(x_n, y_n)$.

Combining the two lemmas, we conclude that for each proper non-annular subsurface $Y$ such that $d_Y(\lambda^{-}, \lambda^{+})$ is sufficiently large, there are at least $n$ distinct summands of $A(x_{n}, y_{n})$ that are all within $2M$ of $d_{Y}(\lambda^{-}, \lambda^{+})$. For each annular subsurface $A$ such that $d_A(\lambda^{-}, \lambda^{+})$ is sufficiently large, there are at least $n$ distinct summands of $A(x_{n}, y_{n})$ that are all within a bounded distance of $\log d_{A}(\lambda^{-}, \lambda^{+})$. Finally, by \eqref{distxy}, we also have  $d_S(x_{n}, y_{n}) \geq 2n \cdot \ell_{S}(f) - \kappa_{2}$ in the full surface $S$.
Putting it all together gives
\[ A(x_{n}, y_{n})  \geq  
2n \cdot \ell_{S}(f) - \kappa_{2}  +   \sum_{\langle f \rangle Y} \Big( n \,  \big[ d_Y(\lambda^+,\lambda^-) \big]_{}  \Big) +  \sum_{\langle f \rangle A} \Big( n \,  \big[ \log d_A(\lambda^+,\lambda^-) \big]_{} \Big).  \]
Note that the sums on the right hand side range over $f$-orbits of subsurfaces.   
Now, dividing by $n$, letting $n \to \infty$, and applying \eqref{A to T} establishes the lower bound on $\ell_{\teich}(f)$ in Proposition \ref{Prop:RafiInterpreted}.

For the upper bound on $\ell_{\teich}(f)$, suppose $Y \subset S$ is a proper subsurface such that $d_{Y}(x_{n}, y_{n})$ is large enough to survive the cutoff in $A(x,y)$. In particular, we may assume that  $d_{Y}(x_{n}, y_{n})> M$.
By the bounded geodesic image theorem, \Cref{Thm:BGIT}, $\bdy Y$ must intersect the $1$-neighborhood of a given geodesic in $\AC(S)$ from $x_{n}$ to $y_{n}$. By an identical argument, any translate $f^i(Y)$ satisfying $d_{f^i Y}(x_{n}, y_{n})> M$ must also have its boundary intersecting the $1$-neighborhood of the same geodesic from $x_{n}$ to $y_{n}$. Thus, by the argument in the paragraph surrounding \eqref{distxy}, the number of such translates is coarsely bounded by $n$.

 In particular, the number of $\langle f \rangle$--translates of $Y$ that make a nonzero contribution to $A(x,y)$ is coarsely bounded by $n$.

To estimate the summand $[d_{Y}(x_{n}, y_{n})]$, we use 
a theorem of Rafi~\cite[Theorem 6.1]{Rafi:Hyperbolicity}
which states that the projection of a Teichm{\"u}ller geodesic to $\mathcal{AC}(Y)$ does not backtrack, and in particular, the diameter of a projection of a sub-geodesic can be at most the diameter of the projection of an entire geodesic, up to some uniform additive error, $J\ge 0$. Since $x_n$ and $y_n$ lie along the Teichmuller geodesic from $\lambda^-$ to $\lambda^+$, Rafi's result gives
\[ d_{Y}(x_{n}, y_{n}) \leq d_{Y}(\lambda^{+}, \lambda^{-}) + J. \]
Combining this fact with \eqref{distxy} gives
\[ A(x_{n}, y_{n}) \prec 2n \cdot  \ell_{S}(f)  +   \sum_{\langle f \rangle Y} \Big( n \,  \big[ d_Y(\lambda^+,\lambda^-) \big]_{}  \Big) +  \sum_{\langle f \rangle A} \Big( n \,  \big[ \log d_A(\lambda^+,\lambda^-) \big]_{} \Big).  \]
The upper bound on $\ell_{\teich}(f)$ follows from dividing both sides  by $n$ and taking a limit as $n \rightarrow \infty$. 
%
\end{proof}


We will also need the following result of Choi and  Rafi \cite[Corollary D]{choi2007comparison}, which is also an application of Rafi's distance formula \cite[Theorem 2.4]{Rafi:Hyperbolicity}.

\begin{proposition}[Choi--Rafi]\label{Prop:ChoiRafi}
For any pair of markings $\mu_1,\mu_2$ on $S$,
\[
\log i(\mu_1,\mu_2) \asymp d_S (\mu_1, \mu_2) + \sum_Y \big[ d_Y(\mu_1,\mu_2) \big] + \sum_A \big[ \log \left (d_A(\mu_1,\mu_2) \right ) \big],
\]
where the first sum is over proper non-annular subsurfaces of $S$, and the second sum is over annuli. 
\end{proposition}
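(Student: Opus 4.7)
The plan is to reduce the statement to Rafi's combinatorial distance formula for the Teichm\"uller metric, which is already the main input in the proof of \Cref{Prop:RafiInterpreted}. For each marking $\mu_i$, first choose a Riemann surface $X_i \in \teich(S)$ on which $\mu_i$ is a Bers-short marking: the base curves of $\mu_i$ have hyperbolic length bounded above by the Bers constant on $X_i$, and the transversals minimize length among allowable choices. Under this normalization, the hyperbolic-length and hyperbolic-plane twist terms in Rafi's expression (the analogues of lines \eqref{annular lengths} and \eqref{hyperbolic plane terms} in the proof of \Cref{Prop:RafiInterpreted}) contribute only additive error depending on the topology of $S$. Consequently
\[
d_\teich(X_1, X_2) \asymp d_S(\mu_1, \mu_2) + \sum_Y \big[ d_Y(\mu_1, \mu_2) \big] + \sum_A \big[ \log d_A(\mu_1, \mu_2) \big],
\]
where the $d_S$ summand arises from the $Y = S$ term in Rafi's non-annular sum.

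The second step is a two-sided comparison between $\log i(\mu_1, \mu_2)$ and $d_\teich(X_1, X_2)$. The upper bound uses Kerckhoff's extremal-length inequality $i(\alpha, \beta)^2 \leq \Ext_X(\alpha) \cdot \Ext_X(\beta)$ together with the fact that Teichm\"uller distance is one half the supremum of the logarithmic ratios of extremal lengths. Since the extremal lengths of the base curves of $\mu_i$ on $X_i$ are bounded above by a constant depending only on $S$, we obtain $\Ext_{X_1}(\beta) \prec e^{2 d_\teich(X_1, X_2)}$ for each base curve $\beta$ of $\mu_2$; summing $i(\alpha, \beta) \prec e^{d_\teich(X_1, X_2)}$ over the finitely many curves in the two markings yields $\log i(\mu_1, \mu_2) \prec d_\teich(X_1, X_2)$. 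The matching lower bound comes from the same extremal-length comparison applied in reverse, together with the observation that any pair of short markings fills $S$, so any large Teichm\"uller displacement forces the intersection number of some pair of base curves to grow exponentially.

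Combining these two steps yields the claimed coarse equality between $\log i(\mu_1, \mu_2)$ and the right-hand side of the proposition. Promoting the pairwise coarse comparisons to an equivalence of hierarchy-like sums is routine via \Cref{Lem:HierarchyLikeSum}, with cutoff constants enlarged so as to absorb the additive errors introduced by the Bers normalization. The main obstacle is the case where $\mu_1$ and $\mu_2$ share a base curve $\gamma$: Rafi's formula then contains a hyperbolic-plane term that mixes the twist difference with the hyperbolic length of $\gamma$, and one must verify that under the Bers-short normalization this term is coarsely equal to $\log d_{A_\gamma}(\mu_1, \mu_2)$, so that both sides of the formula agree up to bounded additive and multiplicative error.
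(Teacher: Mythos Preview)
The paper does not supply its own proof of this proposition; it simply cites Choi--Rafi \cite[Corollary D]{choi2007comparison} and remarks that the hierarchy-like nature of the sum follows because Rafi's underlying distance formula is hierarchy-like. Your sketch is essentially a reconstruction of the Choi--Rafi strategy, and the overall architecture---pass through $d_\teich(X_1,X_2)$ at Bers-thick points, invoke Rafi's formula for the comparison to the subsurface sum, and then compare $\log i(\mu_1,\mu_2)$ with $d_\teich(X_1,X_2)$---is sound.

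There is, however, a genuine soft spot in your second step. The upper bound $\log i(\mu_1,\mu_2) \prec d_\teich(X_1,X_2)$ via Kerckhoff's inequality is correct and standard. The lower bound is not simply ``the same extremal-length comparison applied in reverse'': the curve realizing the maximal extremal-length ratio between $X_1$ and $X_2$ need not have bounded extremal length on either surface, so Kerckhoff's inequality gives no direct control of its intersection with the marking curves. In Choi--Rafi this is the more substantial direction, handled by a direct combinatorial argument comparing $\log i(\mu_1,\mu_2)$ to the subsurface sum (rather than to $d_\teich$), using hierarchy paths and bounds on how many domains can carry large projections. Since your $X_1,X_2$ lie in the thick part, a shortcut is available---Teichm\"uller distance there is coarsely the marking-graph distance, which in turn is $\prec \log i(\mu_1,\mu_2)$ because elementary moves change intersection number by a bounded factor---but that is a separate argument, not a reversal of the upper bound. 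Your final paragraph's worry about shared base curves is moot once both $X_i$ are chosen thick, since the hyperbolic-plane terms are then uniformly bounded.
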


We remark that the statement of \cite[Corollary D]{choi2007comparison} includes specified cutoffs, and does not assert that the sum is hierarchy-like. However, the essential ingredient in the proof of \cite[Corollary D]{choi2007comparison} is Rafi's distance formula  \cite[Theorem 2.4]{Rafi:Hyperbolicity}, and that distance formula is hierarchy-like by  \cite[Remark 2.5]{Rafi:Hyperbolicity}.

\section{The proof of \Cref{th:intro_main}}\label{Sec:StronglyIrreducibleProof}
In this section, we prove \Cref{th:intro_main}: for a strongly irreducible pseudo-Anosov homeomorphism  $f\colon S \to S$,
\[
\ell_{\teich(S)}(f)  \asymp \log \#\Fix(f),
\]
where $\ell_{\T(S)}(f) = \log \lambda$ is translation length in the Teichm\"uller metric.
This coarse equality follows
by chaining together three top-level coarse comparisons, namely \Cref{Prop:TeichDistTriangIntersection,Prop:FromTriangToEdge,Prop:EdgeToFixedPoints} below.

\begin{proposition}\label{Prop:TeichDistTriangIntersection}
Let $f$ be a pseudo-Anosov and 
let $T$ be an annular-avoiding section. Then 
\[
\log i(T, f(T)) \asymp \ell_{\T(S)}(f).
\]
\end{proposition}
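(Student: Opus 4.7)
The plan is to chain together the Choi--Rafi intersection formula (\Cref{Prop:ChoiRafi}) with Rafi's translation-length formula (\Cref{Prop:RafiInterpreted}), matching the two hierarchy-like sums term-by-term via the tools developed in \Cref{sec:aas}. Concretely, I aim to establish
\begin{equation*}
\log i(T, f(T)) \asymp d_S(T, f(T)) + \sum_Y [d_Y(T, f(T))] + \sum_A [\log d_A(T, f(T))]
\end{equation*}
and then identify each summand on the right with the corresponding piece of $\ell_{\teich}(f) \asymp \ell_S(f) + \sum_{\langle f \rangle Y}[d_Y(\lambda^+, \lambda^-)] + \sum_{\langle f \rangle A}[\log d_A(\lambda^+, \lambda^-)]$.

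To apply \Cref{Prop:ChoiRafi}, I would first replace $T$ and $f(T)$ by short markings $\mu_T$ and $f(\mu_T)$ extracted from the triangulations, with $O(1)$ loss in intersection numbers and in subsurface projections; this is possible because $T$ has uniformly bounded combinatorial complexity and fills $\mr S$. The Choi--Rafi formula then yields the displayed coarse equality above.

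Next I would match the three terms. The curve-graph contribution $d_S(T, f(T)) \asymp \ell_S(f)$ follows from \Cref{Lem:SectionsAlongGeodesic}, combined with the coarsely Lipschitz forgetful map $\AC(\mr S) \to \AC(S)$. For the non-annular sum, I regroup over $f$-orbits: \Cref{Lem:fSectionSum} (when the orbit is overlapped) and \Cref{Rem:fSectionSumWithoutOverlap} (in general) give the orbit-wise identity $\sum_{i \in \ZZ}[d_{f^i(Y)}(T, f(T))] \asymp [d_Y(\lambda^+, \lambda^-)]$, while orbits failing the cutoff contribute $O(1)$ per term by \Cref{Thm:NicePath}(3). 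For the annular sum, the annular-avoiding hypothesis on $T$ is exactly what is needed: \Cref{Lem:AnnularAvoiding} produces, in each orbit $\langle f \rangle A$ with $d_A(\lambda^+, \lambda^-)$ past the cutoff, a unique translate $A' = f^i(A)$ for which $d_{A'}(T, f(T))$ agrees with $d_A(\lambda^+, \lambda^-)$ up to additive constants, and forces $d_{f^j(A)}(T, f(T)) \leq 12$ on every other translate. Taking logs and regrouping, the annular portion of the Choi--Rafi sum collapses to $\sum_{\langle f \rangle A}[\log d_A(\lambda^+, \lambda^-)]$. Combining the three matches with \Cref{Prop:RafiInterpreted} then yields the claimed coarse equality.

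The main obstacle is the annular case. Without annular-avoidance, a large projection $d_A(\lambda^+, \lambda^-)$ could in principle be split between two consecutive translates $d_{A'}(T, f(T))$ and $d_{f(A')}(T, f(T))$; after taking logs, the hierarchy-like sum is not additive under such splittings, since $\log(u+v)$ can be much smaller than $\log u + \log v$. Verifying that \Cref{Lem:AnnularAvoiding} exactly concentrates each large annular projection on a single $f$-translate, so that the collapse above really occurs, is the delicate point. A subsidiary bookkeeping task is controlling cutoffs when regrouping individual subsurfaces into $f$-orbits, which \Cref{Lem:HierarchyLikeSum} handles up to universally bounded constants.
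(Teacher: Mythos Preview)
Your approach is essentially the paper's: apply \Cref{Prop:ChoiRafi}, match terms via \Cref{Lem:SectionsAlongGeodesic}, \Cref{Lem:fSectionSum} (together with \Cref{Rem:fSectionSumWithoutOverlap} for non-overlapped orbits), and \Cref{Lem:AnnularAvoiding}, then invoke \Cref{Prop:RafiInterpreted} and \Cref{Lem:HierarchyLikeSum}.

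The one genuine wrinkle is your handling of $S$ versus $\mr S$. You write the Choi--Rafi sum with leading term $d_S(T,f(T))$ and propose matching it to $\ell_S(f)$ via the forgetful map $\AC(\mr S)\to\AC(S)$; but that map is only Lipschitz in one direction and does not yield a coarse equality, and likewise there is no direct term-by-term identification between subsurfaces of $S$ and subsurfaces of $\mr S$ (that is the content of the distance-shuffling \Cref{Prop:Shuffling}, which is considerably harder). The paper sidesteps this entirely by working on the punctured surface throughout: it applies \Cref{Prop:ChoiRafi} on $\mr S$ (treating the section $T$ directly as a marking by arcs, so no auxiliary $\mu_T$ is needed), sums over subsurfaces $\mr Y\subset\mr S$, applies \Cref{Prop:RafiInterpreted} to $\mr f$ on $\mr S$ to obtain $\ell_{\teich(\mr S)}(\mr f)$, and only then passes to $S$ via the elementary equality $\ell_{\teich(\mr S)}(\mr f)=\ell_{\teich(S)}(f)$. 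Routing the argument through $\mr S$ this way makes all three of your term-matches go through verbatim with the lemmas you cite, and your discussion of why annular-avoidance concentrates each large annular projection on a single translate is exactly right.
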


\begin{proof}
We compute as follows:
\begin{align}
\log i(T, f(T))& \asymp d_{\mr S}(T, f(T)) + \sum_{\mr Y} \, \big[d_{\mr Y}(T, f(T))\big] \: +\sum_{A} \, \big[\log  d_{A}(T, f(T)) \big]
\label{sumoverTs} \\
& \asymp \ell_{\mr S}(\mr f) \qquad \:\: + \sum_{\langle f \rangle \mr Y}\big[d_{\mr Y}(\lambda^+,\lambda^-)\big] \:\: + \sum_{\langle f \rangle A} \big[\log d_A(\lambda^+,\lambda^-)\big] 
\label{sumoverlambdas} \\
& \asymp \ell_{\teich(\mr S)}(\mr f) 
\label{puncturedteichdist} \\
& = \ell_{\teich(S)}(f).
\end{align}

The first coarse equality is \Cref{Prop:ChoiRafi} applied to the punctured surface $\mr S$. Here, the triangulation $T$ is treated as a marking by arcs on $\mr S$. 
The coarse equality in \eqref{sumoverlambdas} uses 
the hypothesis that that $T$ is annular-avoiding
to match up each summand of \eqref{sumoverTs} with the corresponding summand of \eqref{sumoverlambdas} using the lemmas of \Cref{sec:aas}. Indeed, the terms corresponding to $\mr S$ are coarsely equal by \Cref{Lem:SectionsAlongGeodesic}; the summands corresponding to non-annular proper subsurfaces are coarsely equal by \Cref{Lem:fSectionSum}; and the summands corresponding to annuli are coarsely equal by \Cref{Lem:AnnularAvoiding}. After replacing each summand by one that is coarsely equal, the whole sum is coarsely preserved by  \Cref{Lem:HierarchyLikeSum}.
The coarse equality in \eqref{puncturedteichdist} is \Cref{Prop:RafiInterpreted}, applied to $\mr f$, and the last equality is the standard fact that $f$ and $\mr f$ have the same Teichm\"uller translation length.
\end{proof}

For the next proposition, which is the most important and novel portion of the proof of \Cref{th:intro_main}, we need a definition and a lemma.

\begin{definition}\label{Def:SSum}
 Let $x_1, x_2$ be one-dimensional objects in $\mr S$. To ease notation, define the hierarchy--like sum (compare \Cref{Def:HierarchyLikeSum})
\[
\mc S (x_1,x_2) = d_{\mr S}(x_1,x_2) + \sum_{\mr Y} \, \big[ d_{\mr Y}(x_1,x_2) \big] +\sum_{A} \, \big[ \log  d_{A}(x_1,x_2) \big],
\]
where the first sum is over essential, proper non-annular subsurfaces of $\mr S$ and the second is over annuli. 
Recall from \Cref{Sec:Projection} that projection distances are defined as a diameter, and the diameter of the empty set is $0$ by convention. 
We observe that the proof of 
\Cref{Prop:TeichDistTriangIntersection} already involves the sum $\mc S(T, f(T))$.
\end{definition}

\begin{lemma}\label{Lem:SSumSigma}
Let $f$ be a pseudo-Anosov, and let $\sigma$ be any edge of the veering triangulation. Then
\[
\log i(\sigma, f(\sigma)) \asymp \mc S(\sigma, f(\sigma)),
\]
with the convention that $\log(0)=0$.
\end{lemma}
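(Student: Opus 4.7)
The plan is to reduce the claim to the Choi--Rafi intersection formula (\Cref{Prop:ChoiRafi}) applied to a well-chosen pair of full sections of $\mr S \times \RR$, and then to transfer both sides back to $\sigma$ and $f(\sigma)$ using the pocket $P(\sigma, f(\sigma))$ and the comparison lemmas of \Cref{Sec:SectionsIntersections}.

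First, dispense with the degenerate case. If $\sigma$ and $f(\sigma)$ are disjoint in $\mr S$ (in particular when $\sigma = f(\sigma)$), then $\log i(\sigma, f(\sigma)) = 0$ by convention, and for every essential subsurface $W \subset \mr S$ one has $d_W(\sigma, f(\sigma)) \le 1$, since disjoint arcs or curves project to within distance one of one another. After setting any hierarchy cutoff greater than $1$, every summand of $\mc S(\sigma, f(\sigma))$ vanishes, so both sides are $O(1)$. In the remaining case, $\sigma$ and $f(\sigma)$ cross. Because $f$ contracts the vertical and expands the horizontal direction, $f(\sigma)$ is more horizontal than $\sigma$, so $\sigma > f(\sigma)$ in the ordering of \Cref{Rem:AboveBelow}. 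Form the pocket $P = P(\sigma, f(\sigma))$, with partial sections $P^+ = T^-_\sigma \cap B$ and $P^- = T^+_{f(\sigma)} \cap B$ as in \Cref{Def:PartialSections}, and set $T_1 = T^-_\sigma$, $T_2 = T^+_{f(\sigma)}$. These are full sections of $\mr S \times \RR$ that agree on $\mr S \setminus B$ and whose restrictions to $B$ are $P^+$ and $P^-$, respectively.

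Treating $T_1$ and $T_2$ as markings by arcs on the punctured surface $\mr S$ (exactly as in the proof of \Cref{Prop:TeichDistTriangIntersection}), \Cref{Prop:ChoiRafi} gives
\[
\log i(T_1, T_2) \asymp \mc S(T_1, T_2).
\]
I will argue that each side of this coarse equality matches the corresponding side for $\sigma, f(\sigma)$. For the intersection side: edges within $T_1$ (resp.\ $T_2$) are pairwise disjoint, and $T_1 \setminus P^+ = T_2 \setminus P^-$ is a common collection of arcs, so the only transverse intersections of $T_1$ with $T_2$ lie between edges of $P^+$ and edges of $P^-$. Hence $i(T_1, T_2) = i(P^+, P^-)$, and by \Cref{Lem:ClosestPartialSections} one has $i(P^+, P^-) \asymp i(\sigma, f(\sigma))$ up to a multiplicative factor of $(9\chi(S))^2$. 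Taking logarithms yields $\log i(T_1, T_2) \asymp \log i(\sigma, f(\sigma))$ with additive constants depending only on $S$.

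For the subsurface side: fix a subsurface $W$ of $\mr S$. Arcs in $T_1 \setminus P^+ = T_2 \setminus P^-$ are disjoint in $\mr S$ from the arcs of $P^\pm$, so their subsurface projections lie within distance one of $\pi_W(P^\pm)$, giving $d_W(P^+, P^-) \le d_W(T_1, T_2) \le d_W(P^+, P^-) + 2$. Combined with \Cref{Lem:ClosestSecSubsurfaceDist}, this yields $d_W(T_1, T_2) \asymp_C d_W(\sigma, f(\sigma))$ with $C$ depending only on $S$; the same comparison holds for annular $W$ and passes through the logarithm above the cutoff, and the top-level term $d_{\mr S}$ is handled identically. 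By \Cref{Lem:HierarchyLikeSum}, the two hierarchy-like sums match, so $\mc S(T_1, T_2) \asymp \mc S(\sigma, f(\sigma))$. Chaining the three coarse equalities gives the lemma. The main obstacle is the careful bookkeeping when replacing $\sigma, f(\sigma)$ by the full sections $T_1, T_2$: one must check that this substitution changes the intersection number by only a bounded multiplicative factor and each subsurface distance by only a bounded additive amount, since only then does the hierarchy-like machinery of \Cref{Lem:HierarchyLikeSum} apply.
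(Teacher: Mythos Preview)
Your argument follows the paper's approach almost exactly: reduce to Choi--Rafi on a pair of full sections extending $P^\pm$, then transfer using \Cref{Lem:ClosestPartialSections} and \Cref{Lem:ClosestSecSubsurfaceDist}. There is, however, one unjustified step. You take $T_1 = T^-_\sigma$ and $T_2 = T^+_{f(\sigma)}$ and assert that they ``agree on $\mr S \setminus B$.'' This is not immediate: \Cref{Def:PartialSections} explicitly says $T^-_{\sigma_1}$ and $T^+_{\sigma_2}$ cut $\mr S \times \RR$ into two unbounded regions and \emph{a collection of pockets}, and $P(\sigma_1,\sigma_2)$ is singled out as the one with $\sigma_1$ on top and $\sigma_2$ on bottom. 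If there were other pockets over some $B' \subset \mr S \setminus B$, your equalities $i(T_1,T_2)=i(P^+,P^-)$ and $d_W(T_1,T_2) \le d_W(P^+,P^-)+2$ would both fail.

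The paper sidesteps this issue: rather than using $T^-_\sigma$ and $T^+_{f(\sigma)}$ themselves, it extends the partial sections $P^+$ and $P^-$ to full sections $T(\sigma,f(\sigma))^\pm$ that agree off $B$ \emph{by construction} (take the same triangulation of $\mr S \setminus B$ for both, glued along $\partial B$). With that choice, all intersections and all subsurface-distance differences are forced to come from the pocket, and the rest of your chain of coarse equalities goes through verbatim. So the fix is simply to replace $T_1,T_2$ with such extensions; once you do, your proof and the paper's coincide.
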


\begin{proof}
Observe that if $i(\sigma, f(\sigma))=0$, then  $\mc S(\sigma, f(\sigma)) \asymp 0$ as well, and the result follows by convention.

Now, suppose that $i(\sigma, f(\sigma)) > 0$. Then $\sigma > f(\sigma)$ since our convention is that $f$ stretches horizontally. Let $P = P(\sigma, f(\sigma))$ be the unique pocket defined by $\sigma$ and $f(\sigma)$, as in \Cref{Def:PartialSections}. We extend the top and bottom partial sections, namely $P(\sigma, f(\sigma))^+$ and $P(\sigma, f(\sigma))^-$, to full sections $T(\sigma, f(\sigma))^+$ and $T(\sigma, f(\sigma))^-$ that agree away from the pocket $P$. Thus any intersections between the two sections must occur in the base of the pocket.
Now,
we compute as follows:
\begin{align*}
\log i(\sigma, f(\sigma)) &\asymp \log i(P(\sigma, f(\sigma)^+ \!, \, P(\sigma, f(\sigma))^-) \\
&\asymp \mc S (P(\sigma, f(\sigma)^+ \!, \, P(\sigma, f(\sigma))^-) \\
& \asymp  \mc S (\sigma, f(\sigma)). 
\end{align*}
The first coarse equality is \Cref{Lem:ClosestPartialSections}. The second coarse is \Cref{Prop:ChoiRafi}, where we are treating the ideal triangulations $T(\sigma, f(\sigma))^+$ and $T(\sigma, f(\sigma))^-$  as markings; recall that all intersections (hence, all subsurface distances) come from the top and bottom of the pocket $P$.
 The third coarse equality follows from \Cref{Lem:ClosestSecSubsurfaceDist}, together with \Cref{Lem:HierarchyLikeSum}.
\end{proof}

We can now prove the following proposition, which is the one crucial step of the proof that requires $f$ to be strongly irreducible.

\begin{proposition}\label{Prop:FromTriangToEdge}
Let $f$ be strongly irreducible, and let $T$ be a section. 
Let $\sigma$ be an edge of $T$ for which the intersection number $i(\sigma, f(\sigma))$ is largest. Then
\[
\log i(\sigma, f(\sigma)) \asymp \log i(T, f(T)),
\]
with the convention that $\log(0)=0$.
\end{proposition}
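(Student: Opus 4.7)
The direction $\log i(\sigma, f(\sigma)) \leq \log i(T, f(T))$ is immediate from the containment $\sigma \cap f(\sigma) \subseteq T \cap f(T)$. For the reverse coarse inequality, the plan is to express both quantities as hierarchy-like sums and then transfer contributions between them. By \Cref{Prop:ChoiRafi} applied to $T$ and $f(T)$ as markings on $\mr S$, we have $\log i(T, f(T)) \asymp \mc S(T, f(T))$, while \Cref{Lem:SSumSigma} gives $\log i(\tau, f(\tau)) \asymp \mc S(\tau, f(\tau))$ for every edge $\tau \in T$. Since $i(\sigma, f(\sigma)) \geq i(\tau, f(\tau))$ for all $\tau \in T$, it suffices to produce a single edge $\tau^* \in T$ with $\mc S(\tau^*, f(\tau^*)) \succ \mc S(T, f(T))$; the maximizer $\sigma$ then inherits the bound.

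The critical role of strong irreducibility is the observation that every essential subsurface $W \subset S$ is overlapped by $f$: if $W$ and $f(W)$ were disjoint or nested, then $\bdy W$ and $f(\bdy W)$ would be disjoint, so some essential boundary curve $\gamma$ of $W$ would satisfy $\gamma \cap f(\gamma) = \emptyset$, contradicting the hypothesis. For each subsurface $W$ contributing to $\mc S(T, f(T))$ above the cutoff, the inequality $d_W(\lambda^+, \lambda^-) \geq d_W(T, f(T)) - 13 \geq 4$ from \Cref{Thm:NicePath}\eqref{Itm:VeeringEfficiency} (after choosing the cutoff large enough) makes \Cref{Lem:CuttingSubsurfaces} applicable, producing an edge $\tau_W \in T$ such that both $\tau_W$ and $f(\tau_W)$ cut $W$. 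Since $T$ has at most $9|\chi(S)|$ edges and adjacent edges have $\AC(W)$-projections at distance $\leq 1$, the diameters of $\pi_W(T)$ and $\pi_W(f(T))$ are bounded by a constant depending only on $S$, yielding $d_W(\tau_W, f(\tau_W)) \geq d_W(T, f(T)) - O(1)$. The case $W = \mr S$ is handled the same way, with any edge of $T$ serving as $\tau_{\mr S}$.

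The conclusion comes from a pigeonhole argument: after invoking \Cref{Lem:HierarchyLikeSum} to replace each summand $[d_W(T, f(T))]$ with the coarsely equivalent $[d_W(\tau_W, f(\tau_W))]$, group the resulting sum by the assignment $W \mapsto \tau_W$; some edge $\tau^* \in T$ must inherit at least a $1/(9|\chi(S)|)$ fraction of the total, giving $\mc S(\tau^*, f(\tau^*)) \succ \mc S(T, f(T))$ with the bounded multiplicative factor absorbed into the coarse constants. The main obstacle I expect is twofold: first, carefully managing the hierarchy-like cutoffs so that the $O(1)$ additive transfer error per summand is swallowed by choosing a sufficiently large cutoff; and second, treating subsurfaces $W \subset \mr S$ whose boundary fails to be essential in $S$ (for instance, multi-punctured disks bounding clusters of singularities cyclically permuted by $f$), which are not immediately covered by the overlap argument on $S$ and may require a separate argument showing either that such contributions to $\mc S(T, f(T))$ are uniformly bounded or that an analogous overlap property still holds for invariant subsurfaces of $\mr S$.
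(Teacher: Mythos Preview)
Your approach is essentially the paper's: both reduce to the sum $\mc S$, use strong irreducibility plus \Cref{Lem:CuttingSubsurfaces} to assign each contributing subsurface to an edge, and then partition the sum over the boundedly many edges of $T$ (the paper phrases the final step as $\mc S(T,f(T)) \prec \sum_{\sigma'} \mc S(\sigma',f(\sigma')) \prec |T|\cdot \mc S(\sigma,f(\sigma))$, which is your pigeonhole argument read in reverse).

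Your second flagged obstacle is a real subtlety, but it dissolves once you look at the hypotheses of \Cref{Lem:CuttingSubsurfaces}: that lemma only requires $Y=\Fill_S(\psub)$ to be overlapped in $S$, not $\psub$ itself in $\mr S$. By \Cref{Lem:EssentialFill}, whenever $d_{\psub}(\lambda^+,\lambda^-)\ge 4$ the fill $Y$ is an essential (possibly non-proper) subsurface of $S$; if $Y=S$ it is overlapped trivially, and if $Y\subsetneq S$ then strong irreducibility forces $\bdy Y$ to meet $f(\bdy Y)$. So after choosing the cutoff $\ge 17$ (so that \Cref{Thm:NicePath}\eqref{Itm:VeeringEfficiency} yields $d_{\psub}(\lambda^+,\lambda^-)\ge 4$), every subsurface surviving the cutoff is covered, and your first obstacle is absorbed by \Cref{Lem:HierarchyLikeSum} exactly as you anticipated.
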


\begin{proof}
Let $\sigma'$ be an arbitrary edge of $T$. By \Cref{Lem:SSumSigma}, combined with the hypothesis that $\sigma$ is the edge of $T$ that maximizes the intersection number $i(\sigma, f(\sigma))$, we have
\[
\mc S(\sigma', f(\sigma'))  \asymp  \log i(\sigma', f(\sigma')) \leq 
 \log i(\sigma, f(\sigma)) \asymp  \mc S(\sigma, f(\sigma)).
\]
Comparing the first and last terms, we obtain
\begin{equation}\label{Eqn:SigmaLargest}
\mc S(\sigma', f(\sigma')) \prec \mc S(\sigma, f(\sigma))
\end{equation}
for every $\sigma' \subset T$.

Next, we relate $\mc S (\sigma, f(\sigma))$ to $\mc S(T, f(T))$ by a partitioning argument, as follows. 
Since $f$ is strongly irreducible, every subsurface $\mr Y$ is overlapped by $f$.
Thus, by  \Cref{Lem:CuttingSubsurfaces}, 
every subsurface $\mr Y$ that contributes to the sum $\mc S(T, f(T))$ must be cut by both $\sigma'$ and $f(\sigma')$ for \emph{some} edge $\sigma'$ of $T$, depending on $\mr Y$. Hence, we have $d_{\mr Y}(T, f(T)) = d_{\mr Y}(\sigma', f(\sigma')) + O(1)$ for 
this choice of $\sigma'$.
Then we obtain
\[
\mc S(\sigma, f(\sigma)) \leq \mc S(T, f(T)) \prec \sum_{\sigma' \subset T} \mc S(\sigma', f(\sigma'))
\prec \mc S(\sigma, f(\sigma)).
\]
Here, the first inequality is just the fact that $\sigma \subset T$. The next (coarse) inequality comes from decomposing $\mc S(T, f(T))$ into a sum over edges: for each edge $\sigma'$, let $\mc S(\sigma', f(\sigma'))$ be the sub-sum of $\mc S(T, f(T))$ such that $\sigma'$ and $f(\sigma')$ cut the subsurface in question. Since each subsurface with an associated summand in $\mc S(T, f(T))$ contributes to a summand in at least one such $\mc S(\sigma', f(\sigma'))$, the second inequality follows. The final coarse inequality uses the fact that there are at most $6|\chi(S)|$ edges of $T$, 
combined with \eqref{Eqn:SigmaLargest}.

Since the first and last terms of the above equation are equal, we obtain
\[
\mc S(\sigma, f(\sigma)) \asymp \mc S(T, f(T)).
\]
Putting it all together gives
\[
\log i(\sigma, f(\sigma)) \asymp \mc S(\sigma, f(\sigma)) \asymp \mc S(T, f(T)) \asymp \log i(T, f(T)), 
\]
where the last coarse equality is again \Cref{Prop:ChoiRafi}.
\end{proof}

%
%
%
\begin{proposition}\label{Prop:EdgeToFixedPoints}
Let $T$ be an annular avoiding $f$--section. Let $\sigma$ be an edge of $T$ for which the intersection number $i(\sigma, f(\sigma))$ is largest. Then
\[
 \log \# \Fix(f) = \log  i(\sigma, f(\sigma)) + O(\log |\chi(S)|).
\]
\end{proposition}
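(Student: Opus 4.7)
\emph{Proof plan.} I prove both inequalities separately, each via the Fundamental Lemma (\Cref{Lem:FundamentalLemma}) applied to edges of $T$.

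For the upper bound $\log \#\Fix(f) \leq \log i(\sigma, f(\sigma)) + O(1)$, the key geometric claim is that every nonsingular fixed point of $f$ lies in the immersed spanning rectangle $R_{\sigma'}$ of \emph{some} edge $\sigma'$ of $T$. To verify this, lift a triangle $t$ of $T$ to an embedded Euclidean triangle $\wt t \subset \wt S$; a short case analysis on the orderings of the vertex coordinates shows that the axis-aligned bounding box of $\wt t$ coincides with the union of the three spanning rectangles of its edges. Projecting back down, every nonsingular fixed point lies in $R_{\sigma'}$ for some $\sigma' \subset T$. Summing the Fundamental Lemma edge-by-edge (allowing for overcounting), the number of nonsingular fixed points of $f$ is at most
\[
\sum_{\sigma' \subset T} i(\sigma', f(\sigma')) \: \leq \: |T| \cdot i(\sigma, f(\sigma)) \: \leq \: 9|\chi(S)| \cdot i(\sigma, f(\sigma)),
\]
where $|T| \leq 9|\chi(S)|$ was established in the proof of \Cref{Lem:ClosestPartialSections}. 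Adding the at most $|\sing(q)| \leq 2|\chi(S)|$ singular fixed points and taking logs completes the upper bound.

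For the lower bound $\log i(\sigma, f(\sigma)) \leq \log \#\Fix(f) + O(1)$, I apply the Fundamental Lemma directly to $\sigma$: if $k$ denotes the degree of the immersed spanning rectangle $R_\sigma$, then $\#\Fix(f) \geq i(\sigma, f(\sigma))/k$. The crux is a uniform bound $k = O(1)$, and this is precisely where the annular-avoiding hypothesis on $T$ is used. If $k \geq 2$, then \Cref{Lem:LowDegreeAwayFromPocket} produces an essential flat annulus $A$ with $\min\{d_A(\sigma, \lambda^+), d_A(\sigma, \lambda^-)\} \geq k+1$. Since $\sigma \subset T$ gives $\pi_A(\sigma) \subset \pi_A(T)$ and hence $d_A(\sigma, \lambda^\pm) \leq d_A(T, \lambda^\pm)$, combining with the annular-avoiding inequality \eqref{Eqn:CloseToLamAnnular} of \Cref{Lem:AnnularAvoiding} forces $k + 1 \leq 14$, so $k \leq 13$. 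Therefore $\#\Fix(f) \geq i(\sigma, f(\sigma))/13$, which gives the lower bound.

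The main obstacle is the degree bound $k = O(1)$ in the lower bound. Without the annular-avoiding condition on $T$, the edge $\sigma$ could traverse a deep annular pocket, causing $R_\sigma$ to wrap with degree comparable to the annular projection distance $d_A(\lambda^+, \lambda^-)$ by \Cref{Lem:LowDegreeAwayFromPocket}; the Fundamental Lemma would then return only a negligible fraction of $i(\sigma, f(\sigma))$ fixed points, and the lower bound would collapse. The construction carried out in \Cref{Lem:AnnularAvoiding} is designed precisely to give a universal degree bound. The upper bound is by comparison routine, relying only on the purely planar statement that a Euclidean triangle is covered by the spanning rectangles of its three edges.
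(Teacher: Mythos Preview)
Your proof is correct and follows essentially the same approach as the paper's: both directions go through the Fundamental Lemma, with the upper bound summing over all edges of $T$ (using that the triangles, hence all nonsingular points, are covered by the edge rectangles) and the lower bound combining \Cref{Lem:LowDegreeAwayFromPocket} with the annular-avoiding bound \eqref{Eqn:CloseToLamAnnular} to cap the degree of $R_\sigma$. Your bounding-box observation and the explicit inequality chain $k+1 \leq \min\{d_A(\sigma,\lambda^\pm)\} \leq \min\{d_A(T,\lambda^\pm)\} \leq 14$ simply make the paper's one-line citations more explicit; the resulting constant ($13$ versus the paper's $15$) is immaterial to the $O(1)$ conclusion.
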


\begin{proof}
For one direction of the estimate, observe that every nonsingular point 
of $S$ belongs to an immersed rectangle $R_{\sigma'}$ corresponding to some edge $\sigma' \subset T$. By the fundamental lemma (\Cref{Lem:FundamentalLemma}), the number of fixed points in $R_{\sigma'}$ is at most $i(\sigma', f(\sigma'))$. Furthermore, the number of singular points is at most $2| \chi(S) |$; compare \Cref{Lem:ClosestPartialSections}.
Thus
\[
\# \Fix(f) \leq \sum_{\sigma' \subset T} i(\sigma', f(\sigma')) + 2| \chi(S) |
\leq | 9 \chi(S) |  i(\sigma, f(\sigma)) + 2| \chi(S) |. 
\]
The second inequality holds because there are at most $| 9 \chi(S) |$ edges in $T$, and $\sigma$ was  chosen to maximize intersection numbers.

Now, consider the other direction. 
Since $T$ is annular-avoiding, \Cref{Lem:AnnularAvoiding} and \Cref{Lem:LowDegreeAwayFromPocket} imply that the immersed spanning rectangle for $\sigma$ has degree uniformly bounded by $13$.
Thus, by the fundamental lemma, 
\[
 \# \Fix(f) \geq \frac{1}{13}  i(\sigma, f(\sigma)).
 \qedhere
\]
\end{proof}

We can now complete the proof of  \Cref{th:intro_main}.

\begin{proof}[Proof of \Cref{th:intro_main}]
Let $T$ be an annular-avoiding $f$--section, and let $\sigma \subset T$ be be an edge of $T$ for which the intersection number $i(\sigma, f(\sigma))$ is largest. 
Chaining together the coarse equalities of \Cref{Prop:TeichDistTriangIntersection,Prop:FromTriangToEdge,Prop:EdgeToFixedPoints} gives
\[
\log \# \Fix(f) \asymp \log i(\sigma, f(\sigma))  \asymp  \log i(T, f(T))  \asymp \ell_{\teich(S)}(f).
\qedhere
\]
%
\end{proof}

\subsection{Remarks on the argument}
Next, we make two remarks on the proof of  \Cref{th:intro_main}.

\begin{remark}\label{Rem:BacktrackingProof}
The above proof of \Cref{th:intro_main} is slightly inefficient. Our line of argument applies  \Cref{Prop:ChoiRafi} to go from $\log i(T, f(T))$ to $\mc S(T, f(T))$ in the proof of \Cref{Prop:TeichDistTriangIntersection}, and then applies   \Cref{Prop:ChoiRafi} again to go from $\mc S(T, f(T))$ back to $\log i(T, f(T))$ in the proof of \Cref{Prop:FromTriangToEdge}. It would be more efficient to carry the quantity $\mc S(T, f(T))$ all through the proof. However, this would come at the cost of making the statements of \Cref{Prop:TeichDistTriangIntersection,Prop:FromTriangToEdge} potentially harder to read and absorb.
\end{remark}

\begin{remark}\label{Rem:UpperBoundMarkov}
There is a straightforward upper bound on $\log \# \Fix(f)$ in terms of $\ell_{\teich(S)}(f)$ that does not use the methods developed in this paper. To see this, take a Markov partition for $f\colon S \to S$, with $n$ rectangles.  Using the method in the Bestvina--Handel algorithm \cite{besthand}, we may take $n$  bounded by a linear function of $|\chi(S)|$. Let $A$ be the $n \times n$ transition matrix for the partition. Then the Perron--Frobenius eigenvalue of $A$ is the stretch factor $\lambda$, where $ \ell_{\teich(S)}(f) = \log \lambda$. Since $\#\Fix(f)$ is within an additive error of $\operatorname{trace}(A)$, and $\operatorname{trace}(A) \leq n \lambda$, we obtain
\[
\#\Fix(f) \le c \cdot \lambda(f) + c,
\]
where $c$ is linear in $\chi(S)$. It follows that 
\[
\log \#\Fix(f) \leq \log \lambda(f) + O(\log |\chi(S)|) = \ell_{\teich(S)}(f) + O(\log |\chi(S)|).
\]
This estimate is stronger than the upper bound of \Cref{th:intro_main}, because it does not involve any multiplicative error.
\end{remark}

\section{Without strong irreducibility}\label{Sec:WithoutStronglyIrreducible}

When $f$ is not strongly irreducible, it is no longer true that the log of the number of fixed points is comparable to Teichm{\"u}ller translation length. Indeed, explicit counterexamples are produced 
in \Cref{Const:twist}.

Recall from the introduction that $f$ \define{overlaps} an essential subsurface $Y \subset S$ if $f(Y)$ intersects $Y$ essentially. 
Thus, strong irreducibility is equivalent to every subsurface (including $S$ itself) being overlapped. In the absence of strong irreducibility, one might still hope to relate the log of the number of fixed points to the action of the map on the subsurfaces which \emph{are} overlapped. For this, let $\mc D_f$ be the set of (isotopy classes of) subsurfaces that $f$ overlaps:
\[
\mc D_f = \{Y : f \text{ overlaps the subsurface } Y \} .
\]

\begin{theorem}[Fixed points; general case] \label{thm:NSI}
Let $f \colon S \to S$ be a pseudo-Anosov. Then 
\[
\log \# \Fix(f) \asymp  \ell_{S}(f) + \sum_{\langle f \rangle Y \subset \mc D_f \setminus \{ S \}} \big[ d_Y(\lambda^+,\lambda^-) \big] + \sum_{\langle f \rangle A \subset \mc D_f} \big[ \log d_A(\lambda^+,\lambda^-) \big],
\]
where the first sum is over $f$--orbits of proper non-annular subsurfaces and the second is over $f$--orbits of annuli. In each case, we take exactly one subsurface per orbit. 
\end{theorem}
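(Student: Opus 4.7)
The plan is to adapt the three-step chain of coarse equalities from the proof of \Cref{th:intro_main}, restricting subsurface contributions to the overlapped collection $\mc D_f$. The central object remains an annular-avoiding $f$--section $T$ (\Cref{Lem:AnnularAvoiding}). Exactly as in \Cref{Prop:EdgeToFixedPoints}, the Fundamental Lemma combined with the degree bound of \Cref{Lem:LowDegreeAwayFromPocket}, applied to edges of annular-avoiding $T$, gives
\[
\log \#\Fix(f) \asymp \max_{\sigma \subset T} \log i(\sigma, f(\sigma)),
\]
and by \Cref{Lem:SSumSigma} this is comparable to $\max_{\sigma \subset T} \mc S(\sigma, f(\sigma))$. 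Since $T$ has at most $|9\chi(S)|$ edges, the max is further comparable to $\sum_{\sigma \subset T} \mc S(\sigma, f(\sigma))$. The goal is to show this sum is coarsely equal to the RHS of the theorem.

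For the lower bound direction, given a proper non-annular $Y \in \mc D_f$ whose orbit has $d_Y(\lambda^+, \lambda^-)$ past the cutoff, I would promote $Y$ to a $\tau$-compatible subsurface $\psub \subset \mr S$ via \Cref{Lem:EssentialFill}; \Cref{Lem:CuttingSubsurfaces} then produces an edge $\sigma_Y \subset T$ such that both $\sigma_Y$ and $f(\sigma_Y)$ cut $\psub$. The isolated-pocket machinery of \Cref{Lem:IsolatedPocket} and the geodesic construction of \Cref{Thm:NicePath} upgrade this to $d_\psub(\sigma_Y, f(\sigma_Y)) \succ d_\psub(\lambda^+, \lambda^-)$. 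Summing over overlapped orbits and pigeonholing over the boundedly many edges of $T$ singles out one edge $\sigma^*$ whose $\mc S(\sigma^*, f(\sigma^*))$ dominates the overlapped non-annular portion of the RHS. The $\ell_S(f)$ term is captured by $d_{\mr S}(\sigma^*, f(\sigma^*))$ via \Cref{Lem:SectionsAlongGeodesic} and the quasi-isometry $\AC(S) \simeq \AC(\mr S)$, while overlapped annular contributions are recovered from the annular-avoiding property via \Cref{Lem:AnnularAvoiding}.

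For the upper bound direction, the crux is to show that for each edge $\sigma \subset T$, the hierarchy-like sum $\mc S(\sigma, f(\sigma))$ receives only bounded contributions from subsurfaces outside $\mc D_f$. Heuristically, for a single veering edge $\sigma$, whenever $Y \notin \mc D_f$ the subsurfaces $Y$ and $f^{-1}(Y)$ are disjoint or nested, and this constrains how the projections $\pi_Y(\sigma)$ and $\pi_Y(f(\sigma))$ (the latter requiring $\sigma$ to cut $f^{-1}(Y)$) can simultaneously be far apart in $\AC(Y)$. Granting this claim, each $\mc S(\sigma, f(\sigma))$ reduces to contributions from overlapped subsurfaces, and summing over $\sigma$ and applying the orbit reductions of \Cref{Lem:fSectionSum} (for proper non-annular $Y$) and \Cref{Lem:AnnularAvoiding} (for annuli) yields the upper bound by the RHS.

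The main obstacle is precisely this non-overlapped upper bound claim: it is exactly where \Cref{th:intro_main} used strong irreducibility, and replacing that hypothesis requires genuinely new structural input. I expect to invoke the results of Vokes \cite{Vokes} and Kopreski \cite{Kopreski} flagged in the introduction, which encode the rigid orbit behavior of non-overlapped subsurfaces under pseudo-Anosov iteration. A secondary subtlety is executing the lower-bound pigeonhole carefully for annular summands, which live inside a $\log$; but the uniqueness statement in \Cref{Lem:AnnularAvoiding} of the single large $f$-translate per annular orbit makes this step manageable.
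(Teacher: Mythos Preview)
Your overall chain is the right one, and your ``crux'' claim---that non-overlapped subsurfaces contribute only boundedly to $\mc S(\sigma, f(\sigma))$---is exactly the paper's \Cref{Lem:LargeSigmaDistMeansOverlap}. But you have mislocated the hard step and misattributed the role of Vokes--Kopreski, and this hides a genuine gap.

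The non-overlapped upper bound is \emph{not} where Vokes--Kopreski enter. The paper proves it by pure veering geometry: if $d_{\psub}(\sigma, f(\sigma)) \ge 10$ then $\psub$ is $\tau$--compatible (\Cref{Thm:NicePath}), and a case analysis on whether $\sigma, f(\sigma)$ lie inside $\int_\tau \psub$ or cross $\bdy_\tau \psub$ (using \Cref{lem:above/below}) forces $\bdy_\tau \psub$ and $f(\bdy_\tau \psub)$ to intersect. Your heuristic about $Y$ and $f^{-1}(Y)$ being disjoint/nested is not the mechanism.

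The actual gap in your outline is the passage between subsurfaces of $S$ (the theorem's RHS) and subsurfaces of $\mr S$ (where every $\mc S$-sum lives). Two places where this bites:
\begin{itemize}
\item You assert that $\ell_S(f)$ is captured by $d_{\mr S}(\sigma^*, f(\sigma^*))$ via ``the quasi-isometry $\AC(S) \simeq \AC(\mr S)$.'' There is no such quasi-isometry: the puncture-forgetting map is Lipschitz but has unbounded fibers (point-pushes). In fact $\ell_S(f)$ and $\ell_{\mr S}(\mr f)$ differ by correction terms coming from proper $\psub \subset \mr S$ that fill to $S$; this is \Cref{Prop:Shuffling}\eqref{Itm:FullSurfShuffling}.
\item In your lower bound you ``promote $Y$ to a $\tau$-compatible $\psub$ via \Cref{Lem:EssentialFill}.'' That lemma goes the other way ($\psub \mapsto Y = \Fill_S(\psub)$). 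Producing, for a given $Y \in \mc D_f$ with large $d_Y(\lambda^+,\lambda^-)$, some $\psub \in \mc F_Y$ with comparably large $d_{\psub}(\lambda^+,\lambda^-)$ is precisely the nontrivial content of \Cref{Prop:Shuffling}\eqref{Itm:ProperSubsurfShuffling}.
\end{itemize}
\emph{This} shuffling between $\mc D_f$ (subsurfaces of $S$) and $\mr{\mc D}_f$ (subsurfaces of $\mr S$) is where Vokes \cite{Vokes} and Kopreski \cite{Kopreski} are actually invoked: one needs a distance formula in the surviving arc-and-curve graph $\AC^s(\mr Y)$, whose witnesses are exactly the $\psub \in \mc F_Y$ (\Cref{Lem:Witness}), and Vokes's hierarchical-hyperbolicity criterion (with Kopreski's relaxation to the pure mapping class group) supplies it. Once \Cref{Prop:Shuffling} is in hand, the paper's \Cref{Prop:ShufflingTriangIntersection} converts $\mc S_{\mr{\mc D}_f}(T, f(T))$ directly into the theorem's RHS, and your partitioning/pigeonhole step becomes the paper's \Cref{Prop:FromTriangToEdgeWR}.
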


\begin{remark}
We emphasize that this theorem does not follow from the arguments in previous sections. Indeed, 
\Cref{Prop:FromTriangToEdge} uses strong irreducibility in a crucial way to 
 deduce a comparison between $i(T, f(T))$ for a section $T$ to   $i(\sigma, f(\sigma))$ for a single edge $\sigma \subset T$. The proof of this proposition requires a careful consideration of all the subsurfaces at once. In what follows, we will argue more locally, relating the action of the map $f$ on a particular overlapped subsurface $Y \subset S$ to the action of $f$ on all subsurfaces of the \emph{punctured} surface $\mr S$ that fill to $Y$. 

Going in the other direction, the combination of \Cref{thm:NSI} and \Cref{Prop:RafiInterpreted} immediately implies  \Cref{th:intro_main}, because every subsurface of $S$ is overlapped when $f$ is strongly irreducible.
However, the arguments that prove \Cref{thm:NSI}  are far more technical than the proofs in previous sections.
In addition, the arguments that will follow also rely on many of the lemmas in the preceding sections.
\end{remark}

The proof uses the following setup. For any essential subsurface $\psub \subset \mr S$, recall from  \Cref{Sec:FillingSubsurfaces} that we defined $\Fill_S(\psub)$ be the subsurface $Y \subset S$ obtained by capping off any components of $\bdy \psub$ that are inessential in $S$, as well as any punctures of $\psub$ that are filled in $S$. Now, given an essential subsurface $Y \subset S$, we define $\mc F_Y$ to be the collection of (isotopy classes of) subsurfaces of $\mr S$ that fill to $Y$:
\[
 \mc F_Y = \{ \psub \subset \mr S : \Fill_S(\psub) = Y \} / \text{(isotopy)}.
\]

A priori, the set $\mc F_Y$ may be difficult to control: for example, it contains many subsurfaces created by complicated point-pushes around the singularities of $S$ contained in and outside of $Y$. 
However, we primarily care about subsurfaces $\psub \in \mc F_Y$ such that $d_{\psub}(\lambda^-, \lambda^+)$ is large. By \Cref{Lem:CompatibleProj}, any such subsurface $\psub$ is $q$-compatible and has a canonical $q$--geodesic representative. 
In fact, when $Y$ itself is $q$--compatible (the only case of interest here), we can define a canonical element of $\mc F_Y$ using the $q$--geometry:
\[
\mr Y = \int_q Y \setminus \sing(q) \subset \mr S.
\]

The following proposition says that distances in $Y$ can be estimated in terms of  precisely the elements of  $\mc F_Y$ for which $d_{\psub}(\lambda^-, \lambda^+)$ is sufficiently large.

\begin{proposition}[Distance shuffling]
\label{Prop:Shuffling}
Puncturing $S$ to produce $\mr S$ affects subsurface distances as follows:
\begin{enumerate}
\item \label{Itm:FullSurfShuffling}
For the full surface $S$, we have
\[
\ell_{S}(f) \asymp \ell_{\mr S}(\mr f) + \sum_{\langle f \rangle \psub \subset \mc F_S \setminus \{\mr S\}} \big[ d_{\psub}(\lambda^+,\lambda^-) \big],
\]
where the sum is over $f$--orbits of proper subsurfaces that fill to $S$.
\item \label{Itm:ProperSubsurfShuffling}
For any proper non-annular subsurface $Y \subsetneq S$, we have
\[
d_Y(\lambda^+, \lambda^-) \asymp \sum_{\psub \in \mc F_Y} \big[ d_{\psub}(\lambda^+,\lambda^-) \big].
\]
\item \label{Itm:AnnulusShuffling}
For any essential annulus $A \subset S$, any subsurface  $\psub \in \mc F_A$ with $d_{\psub}(\lambda^+,\lambda^-) \geq 4$ must be a single annulus, satisfying
\[
d_A(\lambda^+, \lambda^-) = d_{\psub}(\lambda^+, \lambda^-).
\]
\end{enumerate}
\end{proposition}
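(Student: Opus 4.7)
I would prove the three parts separately, with Part (3) being essentially immediate from \Cref{Lem:EssentialFill}, and Parts (1) and (2) requiring a more substantial distance-formula argument.

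For Part (3), suppose $\psub \in \mc F_A$ has $d_\psub(\lambda^+,\lambda^-) \geq 4$. Then \Cref{Lem:EssentialFill} yields $\int_q \psub \subset \int_q A$, and the final sentence of that lemma (which applies because $A$ is an annulus) improves this to $\int_q \psub = \int_q A$. Hence $\psub$ retracts onto the same flat cylinder as $A$, and by our convention excluding pants from the notion of essential subsurface, $\psub$ is itself an annulus whose core curve is isotopic to that of $A$ in $\mr S$. Annular subsurface projections depend only on the choice of core curve, so the projections of $\lambda^\pm$ coincide and the distances are equal.

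For Part (2), I plan to apply the Choi--Rafi intersection formula (\Cref{Prop:ChoiRafi}) to a well-chosen pair of triangulations. Specifically, after assuming $d_Y(\lambda^+,\lambda^-)$ is large enough that \Cref{Lem:CompatibleProj,Lem:IsolatedPocket} produce a $q$-compatible representative and an isolated pocket $V_Y$, I would consider the restrictions to the $q$-hull $Y_q$ of the top and bottom partial sections of this pocket. These give a pair of triangulations whose intersection number is coarsely controlled by $\exp(d_Y(\lambda^+,\lambda^-))$ via \Cref{Lem:ClosestPartialSections,Lem:ClosestSecSubsurfaceDist}. I would then expand $\log i(P(V_Y)^+,P(V_Y)^-)$ in two ways: once via Choi--Rafi on the closed-up surface $Y$ (which produces $d_Y(\lambda^+,\lambda^-)$ as the dominant term after noting that sub-subsurfaces and annuli of $Y$ have bounded contribution when $Y$ itself dominates the projection), and once via Choi--Rafi in the punctured surface $\mr S$ restricted to subsurfaces filling to $Y$ (which produces the sum $\sum_\psub [d_\psub(\lambda^+,\lambda^-)]$ after identifying each relevant sub-subsurface of $\mr Y$ with a distinct element of $\mc F_Y$). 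Equating the resulting hierarchy-like sums and applying \Cref{Lem:HierarchyLikeSum} yields the stated coarse equality.

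For Part (1), I would apply the same argument to the pair $\alpha$ and $f^n(\alpha) \in \mc{AC}(S)$, divide both sides by $n$, and pass to the limit, following the strategy used in the proof of \Cref{Prop:RafiInterpreted}. The subsurface $\mr S$ itself is a distinguished element of $\mc F_S$, whose contribution to the resulting asymptotic formula is exactly $\ell_{\mr S}(\mr f)$ by \Cref{Lem:SectionsAlongGeodesic}. The remaining proper $\psub \in \mc F_S$ contribute hierarchy-like terms; as in \Cref{Prop:RafiInterpreted}, the bounded geodesic image theorem (\Cref{Thm:BGIT}) combined with a non-backtracking argument analogous to \Cref{Lem:NSeparatedCount} ensures that roughly $n$ translates of each subsurface contribute to the $n$-step estimate, giving the correct per-step average after division.

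The main obstacle is the matching step in Part (2): identifying the subsurfaces appearing in the closed-up Choi--Rafi expansion on $Y$ with elements of $\mc F_Y$, and verifying that the corresponding projection distances agree up to bounded additive error. Elements $\psub \in \mc F_Y$ are parametrized by partitions of the singularities of $q$ lying in $Y$ into those interior to $\psub$, those on $\bdy \psub$, and those exterior to $\psub$; each partition captures a different slice of the twisting and cutting information that $d_Y(\lambda^+,\lambda^-)$ aggregates globally. Making this bijection precise at the level of subsurface projections is the principal technical burden, and I anticipate it requires drawing on the subsurface projection machinery for punctured surfaces developed by Vokes \cite{Vokes} and Kopreski \cite{Kopreski}.
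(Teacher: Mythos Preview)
Your Part~(3) is correct and matches the paper's argument via \Cref{Lem:EssentialFill}.

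For Part~(2), however, there is a genuine gap in your Choi--Rafi approach. Applying \Cref{Prop:ChoiRafi} to $\log i(P^+,P^-)$ on the closed-up surface $Y$ does not produce $d_Y(\lambda^+,\lambda^-)$ alone: it produces a sum $d_Y + \sum_{Z \subsetneq Y}[d_Z]$ ranging over \emph{all} subsurfaces of $Y$, and there is no reason for the proper-subsurface terms to be bounded. Large $d_Y(\lambda^+,\lambda^-)$ does not preclude large $d_Z(\lambda^+,\lambda^-)$ for $Z \subsetneq Y$; these typically coexist. Equating with the $\mr Y$ expansion therefore only relates two aggregated sums, and one cannot cancel coarsely-equal terms across a coarse equality to isolate $d_Y \asymp \sum_{\mc F_Y}[d_{\psub}]$. (There is also a setup issue: the partial sections $P^\pm$ are triangulations of $\mr Y$, not markings of $Y$, so applying Choi--Rafi on $Y$ already requires a translation step.)

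The paper proves the two inequalities separately, with different machinery. For $d_Y \succ \sum_{\mc F_Y}[d_{\psub}]$ it constructs a coarse Lipschitz retraction $\pi^q_{\psub} = \pi_{\psub} \circ \s \circ c \colon \AC(Y) \to \AC(\psub)$ using the $q$-straightening map, and then runs a standard subsurface-projection argument (\`a la Aougab--Taylor--Webb, Sisto--Taylor). For $d_Y \prec \sum_{\mc F_Y}[d_{\psub}]$ it introduces the \emph{surviving arc and curve graph} $\AC^s(\mr Y)$, notes the $1$--Lipschitz map $\AC^s(\mr Y)\to\AC(Y)$, and then invokes Vokes's distance formula for twist-free multicurve graphs (with Kopreski's relaxation to $\textup{P}\Mod$-actions). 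The key observation is that the \emph{witnesses} for $\AC^s(\mr Y)$ are exactly the elements of $\mc F_Y$, so Vokes's formula delivers $d_{\AC^s(\mr Y)} \asymp \sum_{\psub\in\mc F_Y}[d_{\psub}]$ directly. Thus Vokes--Kopreski is not used to match terms between two parallel Choi--Rafi expansions, as you anticipate; it is itself the source of one entire inequality.

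Your Part~(1) sketch has the right shape---the paper does project sections to $\AC(S)$, estimate $d_S(\pi_S(T),\pi_S(f^n(T)))$, divide by $n$, and take the limit, with \Cref{Lem:fSectionSum} controlling the per-orbit count---but this step rests on the Part~(2) mechanism applied to each proper $\psub\in\mc F_S$, so it inherits the same gap.
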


 \Cref{Prop:Shuffling} (distance shuffling) will be proved in \Cref{Sec:ShufflingProof}.

\subsection{Applying distance shuffling}
For now, we will apply \Cref{Prop:Shuffling}  to prove  \Cref{thm:NSI}. 
To begin the argument, recall that $\mc D_f$ is the set of 
subsurfaces that are overlapped by $f$. 
In parallel with this, define $\mr {\mc D}_f$ to be the set of essential subsurfaces $\psub \subset \mr S$ 
that are overlapped by $\mr f$.
The following lemma shows that (under a mild distance hypothesis) the subsurfaces in $\mr {\mc D}_f$ are exactly the ones that fill to elements of $\mc D_f$.

\begin{lemma}\label{Lem:FillingOverlap}
The sets $\mc D_f$ and $\mr {\mc D}_f$ are related as follows:
\begin{itemize}
\item If $Y \in \mc D_f$ and $\psub \in \mc F_Y$, then $\psub \in \mr {\mc D}_f$.
\item If $\psub \in \mr {\mc D}_f$ and  $d_{\psub}(\lambda^+,\lambda^-) \geq 4$ (or $\psub = \mr S$), then $Y = \Fill_S(\psub)  \in \mc D_f $.
\end{itemize}
\end{lemma}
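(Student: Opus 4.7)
The plan is to establish each bullet separately, exploiting the fact that $Y$ and $f(Y)$ (respectively $\psub$ and $\mr f(\psub)$) have the same topological type via $f$ (resp.~$\mr f$), so ``overlap'' reduces to ``not isotopy-disjoint''.

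For the first bullet, I would split into two cases. If $Y \sim_S f(Y)$ (which holds automatically when $Y = S$), then $\mr f(\psub)$ also fills to $Y$, placing it in $\mc F_Y$; I would then show that any two elements of $\mc F_Y$ intersect essentially in $\mr S$. The argument: disjoint representatives would confine one of them to a component of the other's complement within $Y$, but those components are (punctured) disks or peripheral annuli, and no essential subsurface filling to $Y$ fits inside such a region. If instead $Y \not\sim_S f(Y)$, the overlap in $S$ provides a boundary component $\gamma \subset \bdy Y$ with $f(\gamma)$ essentially crossing $Y$ in $S$; since $\psub$ fills to $Y$, there is a matching essential-in-$S$ boundary component $\gamma' \subset \bdy \psub$ satisfying $\gamma' \sim_S \gamma$. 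To show $f(\gamma')$ essentially crosses $\psub$ in $\mr S$ I would proceed by contrapositive: any $\mr S$-homotopy of $f(\gamma')$ to a curve $\beta$ disjoint from $\psub$ places $\beta$ in $\mr S \setminus \psub$, whose components are $S \setminus Y$ together with the (punctured) filling disks and peripheral annuli of $Y \setminus \psub$; each possibility contradicts either the essential crossing of $Y$ by $f(\gamma) \sim_S f(\gamma')$ or the essentiality of $f(\gamma')$ in $S$.

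For the second bullet, the case $\psub = \mr S$ yields $Y = S \in \mc D_f$ trivially. Otherwise $d_{\psub}(\lambda^+,\lambda^-) \geq 4$, so \Cref{Lem:EssentialFill} ensures $Y$ is essential and $q$-compatible, and I may assume $Y \subsetneq S$. First I would reduce to the case $\psub = \mr Y := \int_q Y \setminus \sing(q)$ via monotonicity: the inclusions $\psub \subset \mr Y$ and $\mr f(\psub) \subset f(\mr Y)$ in $\mr S$, together with an ambient-isotopy argument, propagate non-disjointness upward, while same topological type rules out nesting, so $\mr Y$ and $f(\mr Y)$ overlap in $\mr S$. I would then pick $\gamma \in \bdy \mr Y = \bdy Y$ such that $f(\gamma)$ essentially crosses $\mr Y$ in $\mr S$, and argue via transversality that $f(\gamma)$ essentially crosses $Y$ in $S$: any $S$-homotopy of $f(\gamma)$ off $Y$ can be generically perturbed to avoid the discrete singular set, producing an $\mr S$-homotopy of $f(\gamma)$ off $\mr Y \subset Y$, contradicting the $\mr S$-overlap.

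The hard part will be the first bullet's second case, where the $\mr S$-isotopy classes of $\gamma$ and $\gamma'$ may genuinely differ---they may enclose different singularities---so tracking the correspondence requires carefully cataloging the components of $\mr S \setminus \psub$ inherited from $Y \setminus \psub$, in particular the punctured disks and peripheral annuli that appear in $\mr S$ but are filled in $S$.
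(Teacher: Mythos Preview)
Your plan is essentially correct, but it is considerably more elaborate than what the paper does, and the part you flag as ``hard'' dissolves under the paper's approach.

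For the first bullet, the paper argues by contrapositive in one sentence: if $\psub$ is not overlapped by $\mr f$, then $\psub$ and $\mr f(\psub)$ have disjoint representatives in $\mr S$; filling in the singular punctures (and capping disk-bounding boundary components) produces representatives of $Y$ and $f(Y)$ that can be made disjoint in $S$, so $Y \notin \mc D_f$. This entirely bypasses your case split and the boundary-curve bookkeeping you identify as the hard part. Your direct argument works, but note a small overclaim in Case~1: two arbitrary elements of $\mc F_Y$ need not intersect \emph{essentially} (one can be nested in the other); what you actually need, and what your argument gives once combined with the ``same topological type'' observation you invoke only in the second bullet, is that $\psub$ and $\mr f(\psub)$ cannot be disjoint, and being homeomorphic and non-isotopic they cannot nest either.

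For the second bullet, the paper also avoids your reduction to $\mr Y$ and the transversality step. It uses the $q$-geometry directly: \Cref{Lem:EssentialFill} gives $\int_q \psub \subset \int_q Y$ and, since $f$ preserves the flat structure, $f(\int_q \psub) \subset f(\int_q Y)$. The essential intersection of $\psub$ with $\mr f(\psub)$ forces $\int_q Y$ and $f(\int_q Y)$ to meet, and $q$-convexity of these canonical representatives makes that intersection essential in $S$. Your route via $\mr Y$ and a generic-position argument is valid and highlights that the statement is really about the forgetful map $\mr S \to S$, but the paper's use of the flat metric is shorter and needs no intermediate object.
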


\begin{proof}
For the first bullet,  if $\mr W$ is not overlapped, then $\psub$ and $f(\psub)$ are disjoint up to isotopy in $\mr S$. Filling in punctures then gives disjoint representatives of $Y$ and $f(Y)$, implying that $Y$ is also not overlapped.


For the second bullet, suppose that $\psub \in \mr {\mc D}_f$ and  $d_{\psub}(\lambda^+,\lambda^-) \geq 4$. By \Cref{Lem:EssentialFill}, $Y = \Fill_S(\psub)$ is an essential and $q$-compatible subsurface of $S$; furthermore, there are $q$--convex representatives $\int_q \psub$ and $\int_q Y$ such that $\int_q \psub \subset \int_q Y$. Since $\psub$ is overlapped by $\mr f$, it follows that $f(\int_q Y)$ intersects $\int_q(Y)$, and the intersection is essential by $q$--convexity.
\end{proof}

Recall that the proof of \Cref{th:intro_main} used a number of hierarchy-like sums of the form $\mc S(x_1, x_2)$ (see  \Cref{Def:SSum}). For our argument here, we need to define an analogous sum that only uses subsurfaces in $\mr{\mc D}_f$.

\begin{definition}\label{Def:SSumDf}
Let $x_1, x_2$ be one--dimensional objects in $S$, and let $f$ be a pseudo-Anosov. Define
\[
\mc S_{\mr{\mc D}_f}(x_1, x_2) =  d_{\mr S}(x_1, x_2) + \sum_{\psub \in \mr{\mc D}_f \setminus \{ \mr S \}} \big[ d_{\psub}(x_1, x_2) \big] + \sum_{A \in \mr{\mc D}_f} \big[ \log d_A(x_1, x_2) \big],
\]
where the first sum is over non-annular proper subsurfaces in $\mr{\mc D}_f$ and the second is over annuli in $\mr{\mc D}_f$. 
\end{definition}


The next lemma shows that for any veering edge $\sigma$, a large-projection surface $\psub$ that contributes a summand to $\mc S(\sigma, f(\sigma))$ will also contribute a summand to $S_{\mr{\mc D_f}}(\sigma, f(\sigma))$.

\begin{lemma}\label{Lem:LargeSigmaDistMeansOverlap}
Let $\sigma$ be any edge of the veering triangulation. If $\psub$ is a proper subsurface of $\mr S$ such that $d_{\psub}(\sigma, f(\sigma)) \geq 10$, then $\psub$ is overlapped by $f$.
\end{lemma}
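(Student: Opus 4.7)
The plan is to prove the contrapositive: if $\psub$ is not overlapped by $f$, then $d_\psub(\sigma, f(\sigma)) < 10$. Assume that $\psub$ and $f(\psub)$ have disjoint representatives in $\mr S$; then $\partial_\tau \psub$ and $\partial_\tau f(\psub) = f(\partial_\tau \psub)$ are disjoint collections of veering edges, and $\psub$ is also disjoint from $f^{-1}(\psub)$. If either $\pi_\psub(\sigma) = \emptyset$ or $\pi_\psub(f\sigma) = \emptyset$, the diameter convention forces $d_\psub(\sigma, f\sigma) \leq 1$, so I will assume both projections are nonempty throughout.

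The main tool is \Cref{lem:above/below}: for any veering edge $h$ with some edge of $\partial_\tau \psub$ strictly below it, $d_\psub(h, \lambda^+) \leq 3$, realized by an explicit lift of $h$ in $\wt S_\psub$ disjoint from a leaf of $\lambda^+$, and symmetrically for above. I would case-split on the positions of $\sigma$ and $f(\sigma)$ relative to $\partial_\tau \psub$. \textit{Symmetric cases}: if both $\sigma$ and $f\sigma$ have edges of $\partial_\tau \psub$ below them (or both above), the triangle inequality in $\AC(\psub)$ anchored at $\pi_\psub(\lambda^+)$ (respectively $\pi_\psub(\lambda^-)$) yields $d_\psub(\sigma, f\sigma) \leq 6$. \textit{Mixed cases}: if either edge is mixed—having edges of $\partial_\tau \psub$ on both sides—then \Cref{lem:above/below} gives $d_\psub(\lambda^+,\lambda^-) \leq 6$ and again a bound of $6$. \textit{Non-$\tau$-compatible case}: the contrapositive of \Cref{Lem:CompatibleProj} gives $d_\psub(\lambda^+,\lambda^-) \leq 3$, and a triangle inequality $d_\psub(\sigma, f\sigma) \leq d_\psub(\sigma, \lambda^+) + d_\psub(\lambda^+,\lambda^-) + d_\psub(\lambda^-, f\sigma) \leq 9$ suffices.

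\textit{The hard case}, and \textbf{the main obstacle}, is when $\psub$ is $\tau$-compatible, $\sigma$ lies strictly above $\partial_\tau \psub$, and $f\sigma$ lies strictly below $\partial_\tau \psub$. Here the triangle inequality only gives $d_\psub(\sigma, f\sigma) \leq d_\psub(\lambda^+,\lambda^-) + 6$, which can exceed $10$. I would rule out this configuration using non-overlap as follows. Applying $f$-equivariance to the first condition, $\partial_\tau f(\psub) = f(\partial_\tau \psub)$ lies strictly below $f\sigma$; combined with the second condition, $f\sigma$ is sandwiched in the veering order between the two disjoint veering-edge collections $\partial_\tau f(\psub)$ and $\partial_\tau \psub$. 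Using the $\tau$-compatibility of both $\psub$ and $f(\psub)$, \Cref{Lem:IsolatedPocket} produces isolated pockets $V_\psub$ and $V_{f\psub} = f(V_\psub)$; the sandwich together with the downward-translation property of $f$ in $\mr S \times \RR$ should force a section to simultaneously lie above $V_{f\psub}$ (over $f\psub$) and below $V_\psub$ (over $\psub$) in a way inconsistent with the disjointness of the pocket bases. Making this last step rigorous—translating the veering-order sandwich into an essential intersection of $\psub$ with $f(\psub)$—is the technical crux and the expected source of any additive slack in the constant $10$.
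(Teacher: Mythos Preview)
Your case analysis and the reduction to the ``hard case'' are essentially the same as the paper's, and you correctly derive the sandwich $f(\bdy_\tau\psub) < f(\sigma) < \bdy_\tau\psub$. The genuine gap is in what you do next. Invoking \Cref{Lem:IsolatedPocket} requires $d_{\psub}(\lambda^+,\lambda^-)>12$, which you do not have (you only have $d_{\psub}(\sigma,f\sigma)\ge 10$); and even granting isolated pockets, your proposed contradiction is backwards: \Cref{Lem:IsolatedPocket}\eqref{Itm:IsolatedDisjointness} forbids a section from meeting two isolated pockets whose bases \emph{overlap}, so if $\psub$ and $f(\psub)$ are disjoint there is no obstruction at all to a section sitting above one pocket and below the other. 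The paper bypasses pockets entirely in this case with a one-line rectangle argument: since $f(\sigma)$ lies below $\bdy_\tau\psub$, some saddle connection of $\bdy_\tau\psub$ crosses the spanning rectangle $R_{f(\sigma)}$ top-to-bottom; since $f(\sigma)$ lies above $f(\bdy_\tau\psub)$, some saddle connection of $f(\bdy_\tau\psub)$ crosses $R_{f(\sigma)}$ left-to-right. Two such segments in a rectangle must intersect, so $\bdy_\tau\psub$ meets $f(\bdy_\tau\psub)$ essentially and $f$ overlaps $\psub$.

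Two smaller issues. First, your case split omits the possibility that $\sigma$ (or $f(\sigma)$) is \emph{contained} in $\int_\tau\psub$, i.e.\ cuts $\psub$ without crossing $\bdy_\tau\psub$; the paper handles this directly by observing that then $f(\sigma)$ must meet $\int_\tau\psub$ too, forcing $f(\int_\tau\psub)\cap\int_\tau\psub\neq\emptyset$. Second, in the non--$\tau$--compatible case your triangle inequality assumes $d_{\psub}(\sigma,\lambda^+)\le 3$ and $d_{\psub}(f\sigma,\lambda^-)\le 3$, which you have not justified in general; the clean way is to take a section $T_\sigma\ni\sigma$ and use \Cref{Thm:NicePath}\eqref{Itm:VeeringEfficiency}, which gives $d_{\psub}(T_\sigma,f(T_\sigma))\le 9$ whenever $\psub$ is not $\tau$--compatible.
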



\begin{proof}
Let $T_\sigma$ be a section containing $\sigma$. Since $d_{\psub}(T_\sigma, f(T_\sigma)) \geq 10$, 
 \Cref{Thm:NicePath}\eqref{Itm:VeeringEfficiency} implies  $\psub$ is $\tau$--compatible. Thus $\psub$ has an open representative $\int_\tau \psub$ whose boundary $\bdy_\tau \psub$ consists of veering edges.

If $\sigma$ is contained in $\int_\tau \psub$, the hypothesis $d_{\psub}(\sigma, f(\sigma)) \geq 10$ implies $f(\sigma)$ intersects the interior of $\int_\tau \psub$, hence $f(\int_\tau \psub)$ overlaps $\int_\tau \psub$ as claimed. Similarly, if $f(\sigma)$ is contained in  $\int_\tau \psub$, then $\sigma = f^{-1} \circ f(\sigma)$ intersects the interior of $\int_\tau \psub$, hence $f^{-1}(\int_\tau \psub)$ overlaps $\int_\tau \psub$. It follows that $f$ overlaps $\int_\tau \psub$ as well. 

For the rest of the proof, we will assume that neither $\sigma$ nor $f(\sigma)$ is contained in $\int_\tau \psub$. Since $d_{\psub}(\sigma, f(\sigma)) \geq 10$, both $\sigma$ and $f(\sigma)$ intersect $\int_\tau \psub$, hence both edges must lie above or below $\bdy_\tau \psub$. If both $\sigma$ and $f(\sigma)$ lie above $\bdy_\tau \psub$, then \Cref{lem:above/below} implies both edges are $3$--close to $\lambda^+$, hence  $d_{\psub}(\sigma, f(\sigma)) \leq 6$, a contradiction. The same contradiction arises if both $\sigma$ and $f(\sigma)$ lie below $\bdy_\tau \psub$. Since $f$ translates downward, we conclude that $\sigma$ lies above $\bdy_\tau \psub$ and $f(\sigma)$ lies below $\bdy_\tau \psub$.

Applying $f$, we learn that $f(\sigma)$ lies below $\bdy_\tau \psub$ and above $f(\bdy_\tau \psub)$. Thus some saddle connection in $\bdy_\tau \psub$ must cross the spanning rectangle of $f(\sigma)$ from top to bottom, while some saddle connection in $f(\bdy_\tau \psub)$ must cross the spanning rectangle of $f(\sigma)$ from left to right. It follows that  $\bdy_\tau \psub$ intersects $f(\bdy_\tau \psub)$ essentially, hence $f$ overlaps $\psub$.
\end{proof}

\begin{corollary}\label{Cor:DfFullSum}
For any veering edge $\sigma$, we have $\mc S(\sigma, f(\sigma)) \asymp \mc S_{\mr{\mc D_f}}(\sigma, f(\sigma))$.
\end{corollary}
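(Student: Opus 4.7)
The plan is to prove the two inequalities separately, with one direction being trivial and the other following directly from the preceding \Cref{Lem:LargeSigmaDistMeansOverlap}. The underlying point is that both $\mc S(\sigma, f(\sigma))$ and $\mc S_{\mr{\mc D_f}}(\sigma, f(\sigma))$ are hierarchy-like sums, so we are free to compare them at any common sufficiently-large cutoff (cf.\ \Cref{Def:HierarchyLikeSum} and \Cref{Lem:HierarchyLikeSum}). The ``hard'' direction actually reduces to matching up summands one-to-one.

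First I would establish that $\mc S_{\mr{\mc D_f}}(\sigma, f(\sigma)) \leq \mc S(\sigma, f(\sigma))$ trivially: the $d_{\mr S}$ terms are identical, and every other summand of $\mc S_{\mr{\mc D_f}}$ (whether over a proper non-annular subsurface in $\mr{\mc D_f}$ or over an annulus in $\mr{\mc D_f}$) already appears as a summand in $\mc S$, since the latter ranges over \emph{all} essential proper non-annular subsurfaces of $\mr S$ and \emph{all} essential annuli.

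For the reverse coarse inequality, I would fix a cutoff $K$ that is at least $\log(10) + 1$ and at least the threshold needed for $\mc S$ to realize its hierarchy-like structure. With this $K$, I claim that every nonzero summand in $\mc S(\sigma, f(\sigma))$ is in fact already a summand of $\mc S_{\mr{\mc D_f}}(\sigma, f(\sigma))$. Indeed, for a proper non-annular subsurface $\psub$ with $[d_{\psub}(\sigma, f(\sigma))]_K > 0$ one has $d_{\psub}(\sigma, f(\sigma)) \geq K \geq 10$, and \Cref{Lem:LargeSigmaDistMeansOverlap} then forces $\psub$ to be overlapped by $f$, i.e.\ $\psub \in \mr{\mc D_f}$. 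Similarly, for an annulus $A$ with $[\log d_A(\sigma, f(\sigma))]_K > 0$ one gets $d_A(\sigma, f(\sigma)) \geq e^K \geq 10$, and the same lemma gives $A \in \mr{\mc D_f}$. Hence at cutoff $K$ the two sums are termwise identical, and I conclude the coarse equality $\mc S(\sigma, f(\sigma)) \asymp \mc S_{\mr{\mc D_f}}(\sigma, f(\sigma))$.

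The main obstacle, such as it is, is just the bookkeeping involved in choosing a single cutoff $K$ that is simultaneously admissible for the hierarchy-like structure of $\mc S$, admissible for $\mc S_{\mr{\mc D_f}}$, and at least $\log(10) + 1$ so that the threshold from \Cref{Lem:LargeSigmaDistMeansOverlap} can be applied to both non-annular and annular summands. Since $\mc S_{\mr{\mc D_f}}$ is a restriction of $\mc S$ to a sub-collection of subsurfaces, any admissible cutoff for $\mc S$ is admissible for $\mc S_{\mr{\mc D_f}}$, so this is a non-issue; the corollary then follows at once.
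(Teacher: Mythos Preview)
Your proof is correct and follows essentially the same approach as the paper: one direction is trivial because every summand of $\mc S_{\mr{\mc D_f}}$ already appears in $\mc S$, and the other uses \Cref{Lem:LargeSigmaDistMeansOverlap} to show that any subsurface contributing a nonzero summand at a sufficiently large cutoff must be overlapped.

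There is one minor bookkeeping slip worth noting: you stipulate only that $K \geq \log(10)+1$, yet in the non-annular case you assert $d_{\psub}(\sigma,f(\sigma)) \geq K \geq 10$. Since $\log(10)+1 \approx 3.3$, this does not follow. Simply require $K \geq 10$ from the outset; this handles the non-annular case directly and also the annular case, since $\log d_A \geq 10$ certainly gives $d_A \geq 10$.
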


\begin{proof}
By \Cref{Lem:LargeSigmaDistMeansOverlap}, any subsurface $\psub \subset \mr S$ where $\big[ d_{\psub}(\sigma, f(\sigma)) \big]_{10} > 0$ must actually belong to $\mr{\mc D_f}$, hence $\mc S(\sigma, f(\sigma)) \prec \mc S_{\mr{\mc D_f}}(\sigma, f(\sigma))$. The reverse inequality is immediate, because 
every summand of $\mc S_{\mr{\mc D_f}}(\sigma, f(\sigma))$ appears by definition as a summand of $\mc S(\sigma, f(\sigma))$.
\end{proof}

We can now proceed to the proof of \Cref{thm:NSI}. The proof follows the same outline as the argument in \Cref{Sec:StronglyIrreducibleProof}. In particular, the coarse equality of \Cref{thm:NSI} is proved by chaining together three coarse comparisons: \Cref{Prop:ShufflingTriangIntersection} (which is a replacement for \Cref{Prop:TeichDistTriangIntersection}), \Cref{Prop:FromTriangToEdgeWR} (which is a replacement for \Cref{Prop:FromTriangToEdge}), and \Cref{Prop:EdgeToFixedPoints} (used in its original form).

The next result is the main place where we use \Cref{Prop:Shuffling} (distance shuffling). 

\begin{proposition}\label{Prop:ShufflingTriangIntersection}
Let $f$ be a pseudo-Anosov and let $T$ be an annular-avoiding section. 
Then
\[
\mc S_{\mr{\mc D}_f} (T, f(T)) \asymp  \ell_{S}(f) + \sum_{\langle f \rangle Y \subset {\mc D}_f \setminus \{ S \}} [d_Y(\lambda^+,\lambda^-)] + \sum_{\langle f \rangle A \subset {\mc D}_f} [\log d_A(\lambda^+,\lambda^-)].
\]
\end{proposition}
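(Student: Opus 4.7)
The plan is to carry out a two-phase conversion. In the first phase, I would translate the section distances $d_\bullet(T, f(T))$ on the left-hand side into lamination distances $d_\bullet(\lambda^+, \lambda^-)$ summed over $f$-orbits, working inside the punctured surface $\mr S$. In the second phase, I would apply \Cref{Prop:Shuffling} to transport sums over subsurfaces of $\mr S$ into sums over subsurfaces of $S$ lying in $\mc D_f$, and verify that the two expressions agree summand-by-summand.

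For phase one, I would expand
\[
\mc S_{\mr{\mc D}_f}(T, f(T)) = d_{\mr S}(T, f(T)) + \sum_{\psub \in \mr{\mc D}_f \setminus \{\mr S\}} \big[d_{\psub}(T, f(T))\big] + \sum_{A \in \mr{\mc D}_f} \big[\log d_A(T, f(T))\big],
\]
and rewrite each piece. \Cref{Lem:SectionsAlongGeodesic} gives $d_{\mr S}(T, f(T)) \asymp \ell_{\mr S}(\mr f)$. For each overlapped non-annular $\psub$, \Cref{Lem:fSectionSum} supplies $[d_\psub(\lambda^+, \lambda^-)] \asymp \sum_{i \in \ZZ}[d_{f^i \psub}(T, f(T))]$; pooling $f$-translates collapses the middle sum to $\sum_{\langle f \rangle \psub \subset \mr{\mc D}_f \setminus \{\mr S\}}[d_\psub(\lambda^+, \lambda^-)]$. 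For annuli, the annular-avoiding hypothesis together with \Cref{Lem:AnnularAvoiding} guarantees that in each $f$-orbit of overlapped annuli at most one translate has $d_A(T, f(T))$ past the cutoff (coarsely equal to $d_A(\lambda^+, \lambda^-)$), while the universal upper bound $d_A(T, f(T)) \leq d_A(\lambda^+, \lambda^-) + 13$ from \Cref{Thm:NicePath}.\eqref{Itm:VeeringEfficiency}, combined with \Cref{Lem:HierarchyLikeSum}, rewrites the annular term as $\sum_{\langle f \rangle A \subset \mr{\mc D}_f}[\log d_A(\lambda^+, \lambda^-)]$.

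For phase two, I would apply the three parts of \Cref{Prop:Shuffling} to the right-hand side of the proposition. Part \eqref{Itm:FullSurfShuffling} rewrites $\ell_S(f)$ as $\ell_{\mr S}(\mr f) + \sum_{\langle f \rangle \psub \subset \mc F_S \setminus \{\mr S\}}[d_\psub(\lambda^+, \lambda^-)]$; part \eqref{Itm:ProperSubsurfShuffling} rewrites each non-annular $[d_Y(\lambda^+, \lambda^-)]$ as $\sum_{\psub \in \mc F_Y}[d_\psub(\lambda^+, \lambda^-)]$; and part \eqref{Itm:AnnulusShuffling} leaves the annular terms essentially fixed, since any $\psub \in \mc F_A$ past the cutoff is itself an annulus with matching distance. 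The $\ell_{\mr S}(\mr f)$ contribution appears on both sides, and the annular sums agree via the Fill-correspondence $\mr{\mc D}_f \leftrightarrow \mc D_f$ for annuli, supplied by \Cref{Lem:FillingOverlap}. For the remaining non-annular pieces, \Cref{Lem:FillingOverlap} ensures that any $\psub \in \mr{\mc D}_f \setminus \{\mr S\}$ past the cutoff satisfies $\Fill_S(\psub) \in \mc D_f$, while conversely any $\psub \in \mc F_Y$ with $Y \in \mc D_f$ is automatically overlapped.

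The main obstacle I expect is orbit bookkeeping. Because $\Fill_S$ is $f$-equivariant it sends $f$-orbits of $\psub$'s to $f$-orbits of $Y$'s, but a single orbit $\langle f \rangle Y$ can sit over several distinct $\langle f \rangle$-orbits in $\mr{\mc D}_f$ (arising, e.g., from different choices of which singularities inside $\int_q Y$ to treat as interior punctures of $\psub$). To match the two sums cleanly I would fix one representative $Y$ in each orbit of $\mc D_f$ and invoke the fact that no proper $Y \subsetneq S$ is $f$-periodic for pseudo-Anosov $f$: consequently each orbit $\langle f \rangle \psub \subset \mr{\mc D}_f \setminus \{\mr S\}$ with $\Fill_S(\psub) \in \langle f \rangle Y$ contains exactly one $f^i \psub$ satisfying $\Fill_S(f^i \psub) = Y$. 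This sets up a bijection between $\langle f \rangle$-orbits on the left (past the cutoff) and pairs $(Y, \psub \in \mc F_Y)$ on the right, matching the two summations term-by-term. A secondary technical point is compatibility of cutoffs across the invocations of \Cref{Lem:fSectionSum} ($>12$), \Cref{Lem:AnnularAvoiding} ($\geq 25$), \Cref{Lem:FillingOverlap} ($\geq 4$), and \Cref{Prop:Shuffling}; since both sides of the target coarse equality are hierarchy-like, a single sufficiently large cutoff handles this uniformly via \Cref{Lem:HierarchyLikeSum}.
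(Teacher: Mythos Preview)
Your proposal is correct and follows essentially the same two-step route as the paper: first convert $\mc S_{\mr{\mc D}_f}(T,f(T))$ into $\ell_{\mr S}(\mr f)+\sum_{\langle f\rangle \psub}[d_{\psub}(\lambda^+,\lambda^-)]+\sum_{\langle f\rangle A}[\log d_A(\lambda^+,\lambda^-)]$ over $\mr{\mc D}_f$ using \Cref{Lem:SectionsAlongGeodesic}, \Cref{Lem:fSectionSum}, and \Cref{Lem:AnnularAvoiding}, then pass to $S$ via \Cref{Prop:Shuffling} and \Cref{Lem:FillingOverlap}. Your orbit-bookkeeping paragraph (using $f$-equivariance of $\Fill_S$ and aperiodicity of proper subsurfaces under a pseudo-Anosov) makes explicit what the paper's proof leaves implicit in the phrase ``the summand corresponding to $Y$ is coarsely equal to the sum of finitely many terms corresponding to $\psub\in\mc F_Y$,'' but the argument is the same.
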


\begin{proof}
We compute as follows:
\begin{align}
\mc S_{\mr{\mc D}_f} (T, f(T)) &=    d_{\mr S}(T, f(T)) + \sum_{\psub \in \mr{\mc D}_f \setminus \{ \mr S \}} \big[ d_{\psub}(T, f(T)) \big] + \sum_{A \in \mr{\mc D}_f} \big[ \log d_A(T, f(T)) \big] 
\label{sumDf_Ts} \\
& \asymp  \ell_{\mr S}(f) + \sum_{\langle f \rangle \psub \subset \mr{\mc D}_f \setminus \{ \mr S \}} \big[d_{\psub}(\lambda^+, \lambda^-)\big] + \sum_{\langle f \rangle A \subset \mr{\mc D}_f} \big[\log d_A(\lambda^+, \lambda^-)\big] 
\label{sumDf_punctured_lam} \\
& \asymp  \ell_{S}(f) + \sum_{\langle f \rangle Y \subset {\mc D}_f \setminus \{ S \}} \big[d_{Y}(\lambda^+, \lambda^-)\big] + \sum_{\langle f \rangle A \subset {\mc D}_f} \big[\log d_A(\lambda^+, \lambda^-)\big].
\label{sumDF_filled_lam}
\end{align}
Now, we justify the computation. The equality in \eqref{sumDf_Ts} is just \Cref{Def:SSumDf}. The coarse equality in \eqref{sumDf_punctured_lam} is proved in exactly the same way as \eqref{sumoverlambdas}: we use the hypothesis that $T$ is annular-avoiding to match up each summand of \eqref{sumDf_Ts} with the corresponding summand of \eqref{sumDf_punctured_lam}, using \Cref{Lem:AnnularAvoiding,Lem:SectionsAlongGeodesic,Lem:fSectionSum}; after replacing each summand in this manner, the whole sum is coarsely preserved by \Cref{Lem:HierarchyLikeSum}.

Next, we discuss the transition from \eqref{sumDf_punctured_lam} to  \eqref{sumDF_filled_lam}. 
By \Cref{Lem:FillingOverlap}, every subsurface $Y \in \mc D_f$ that contributes a summand to \eqref{sumDF_filled_lam} is the filling of some $\psub \in  \mr{\mc D}_f$, and conversely, every (large-projection) subsurface $\psub \in  \mr{\mc D}_f$ that contributes a summand to  \eqref{sumDf_punctured_lam} fills to some $Y \in \mc D_f$. By \Cref{Prop:Shuffling}, the  summand of \eqref{sumDF_filled_lam} corresponding to $Y$ (or $S$) is coarsely equal to the sum of finitely many terms of \eqref{sumDf_punctured_lam}, corresponding to the subsurfaces $\psub \in \mc F_Y$. 
Thus we may replace each term of \eqref{sumDF_filled_lam} by the corresponding terms of \eqref{sumDf_punctured_lam}, and the coarse equality of the whole sum is again preserved by  \Cref{Lem:HierarchyLikeSum}.
\end{proof}

The next result is a replacement for \Cref{Prop:FromTriangToEdge}, specifically adapted to the scenario where not every subsurface is overlapped.

\begin{proposition}\label{Prop:FromTriangToEdgeWR}
Let $f$ be a pseudo-Anosov and let $T$ be any $f$--section. Let $\sigma$ be an edge of $T$ for which the intersection number $i(\sigma, f(\sigma))$ is largest. Then
\[
\log i(\sigma, f(\sigma)) \asymp \mc S_{\mr {\mc D}_f}(T, f(T)).
\]
\end{proposition}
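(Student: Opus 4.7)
The plan is to adapt the argument of \Cref{Prop:FromTriangToEdge}, replacing the sums $\mc S(\cdot, \cdot)$ by $\mc S_{\mr{\mc D}_f}(\cdot, \cdot)$ throughout. The restriction to overlapped subsurfaces in the definition of $\mc S_{\mr{\mc D}_f}$ will compensate for the lack of strong irreducibility. My starting point is the coarse equality
\[
\log i(\sigma, f(\sigma)) \asymp \mc S(\sigma, f(\sigma)) \asymp \mc S_{\mr{\mc D}_f}(\sigma, f(\sigma)),
\]
which follows by combining \Cref{Lem:SSumSigma} and \Cref{Cor:DfFullSum} applied to the edge $\sigma$, and which I will use at both ends of the argument to translate between $\log i(\sigma, f(\sigma))$ and the hierarchy-like sum.

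For the lower bound $\log i(\sigma, f(\sigma)) \prec \mc S_{\mr{\mc D}_f}(T, f(T))$, I will note that $\sigma \subset T$ and $f(\sigma) \subset f(T)$ force $\pi_{\psub}(\sigma) \cup \pi_{\psub}(f(\sigma)) \subset \pi_{\psub}(T) \cup \pi_{\psub}(f(T))$, hence $d_{\psub}(\sigma, f(\sigma)) \leq d_{\psub}(T, f(T))$ for every $\psub$. Applied summand by summand with the cutoffs in the hierarchy-like sums, this yields $\mc S_{\mr{\mc D}_f}(\sigma, f(\sigma)) \leq \mc S_{\mr{\mc D}_f}(T, f(T))$, finishing the lower bound.

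The main content is the reverse inequality $\mc S_{\mr{\mc D}_f}(T, f(T)) \prec \log i(\sigma, f(\sigma))$. I will partition the summands of $\mc S_{\mr{\mc D}_f}(T, f(T))$ among the edges of $T$, following the strategy of \Cref{Prop:FromTriangToEdge}. For any non-annular subsurface $\psub \in \mr{\mc D}_f$ whose summand survives a cutoff $K \geq 17$, \Cref{Thm:NicePath}.\eqref{Itm:VeeringEfficiency} gives $d_{\psub}(\lambda^+, \lambda^-) \geq d_{\psub}(T, f(T)) - 13 \geq 4$, so \Cref{Lem:FillingOverlap} guarantees $Y = \Fill_S(\psub) \in \mc D_f$. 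Then \Cref{Lem:CuttingSubsurfaces} produces an edge $\sigma'(\psub) \subset T$ with both $\sigma'(\psub)$ and $f(\sigma'(\psub))$ cutting $\psub$; since the projection of any triangulation to $\AC(\psub)$ has bounded diameter, this yields $d_{\psub}(\sigma'(\psub), f(\sigma'(\psub))) = d_{\psub}(T, f(T)) + O(1)$. The annular summands are handled the same way (with $\psub = A$), and the $d_{\mr S}$ term is immediate since any edge $\sigma'$ satisfies $d_{\mr S}(\sigma', f(\sigma')) = d_{\mr S}(T, f(T)) + O(1)$.

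Grouping summands by the chosen edges then gives
\[
\mc S_{\mr{\mc D}_f}(T, f(T)) \prec \sum_{\sigma' \subset T} \mc S_{\mr{\mc D}_f}(\sigma', f(\sigma')) \asymp \sum_{\sigma' \subset T} \log i(\sigma', f(\sigma')),
\]
where the last coarse equality is again \Cref{Lem:SSumSigma} plus \Cref{Cor:DfFullSum}. The maximality of $\sigma$ gives $\log i(\sigma', f(\sigma')) \leq \log i(\sigma, f(\sigma))$, and since $T$ contains at most $|9\chi(S)|$ edges, the right-hand side is bounded by $|9\chi(S)| \cdot \log i(\sigma, f(\sigma))$. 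The hardest technical point will be verifying the hypotheses of \Cref{Lem:CuttingSubsurfaces} in the upper bound; this is precisely where the restriction to $\mr{\mc D}_f$ in the definition of $\mc S_{\mr{\mc D}_f}$ earns its keep, since \Cref{Lem:FillingOverlap} together with \Cref{Thm:NicePath} converts ``$\psub$ overlapped by $\mr f$ with large projection distance'' into ``$\Fill_S(\psub)$ overlapped by $f$,'' which is exactly what \Cref{Lem:CuttingSubsurfaces} requires.
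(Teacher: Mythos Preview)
Your proposal is correct and follows essentially the same approach as the paper: both arguments start from $\log i(\sigma, f(\sigma)) \asymp \mc S_{\mr{\mc D}_f}(\sigma, f(\sigma))$ via \Cref{Lem:SSumSigma} and \Cref{Cor:DfFullSum}, and then run the same partitioning argument as in \Cref{Prop:FromTriangToEdge} restricted to overlapped subsurfaces. If anything, you are slightly more explicit than the paper in verifying the hypothesis of \Cref{Lem:CuttingSubsurfaces}: the sum $\mc S_{\mr{\mc D}_f}$ only guarantees $\psub \in \mr{\mc D}_f$, whereas \Cref{Lem:CuttingSubsurfaces} requires $\Fill_S(\psub) \in \mc D_f$, and you correctly bridge this via \Cref{Thm:NicePath}.\eqref{Itm:VeeringEfficiency} and \Cref{Lem:FillingOverlap}.
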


\begin{proof}
For the edge $\sigma$ as in the statement of the lemma, we have
\[
\log i(\sigma, f(\sigma)) \asymp \mc S(\sigma, f(\sigma)) \asymp \mc S_{\mr{\mc D_f}}(\sigma, f(\sigma)).
\]
Indeed, the first coarse equality is \Cref{Lem:SSumSigma} and the second  is \Cref{Cor:DfFullSum}.

Next, we claim that
\[
\mc S_{\mr{\mc D}_f}(\sigma, f(\sigma)) \asymp \mc S_{\mr {\mc D}_f}(T, f(T)). 
\]
This follows by exactly the same partitioning argument that was used to prove
 \Cref{Prop:FromTriangToEdge}. The key point is that we now restrict ourselves to only those subsurfaces that are overlapped by $f$, so that \Cref{Lem:CuttingSubsurfaces} still applies to every subsurface in the sum. Putting it all together gives
 \[
\log i(\sigma, f(\sigma)) \asymp \mc S(\sigma, f(\sigma)) \asymp \mc S_{\mr{\mc D}_f}(\sigma, f(\sigma)) \asymp \mc S_{\mr {\mc D}_f}(T, f(T)). \qedhere
 \]
\end{proof}

\begin{proof}[Proof of \Cref{thm:NSI}]
Let $T$ be an annular-avoiding $f$--section, and let $\sigma \subset T$ be an edge of $T$ that maximizes $i(\sigma, f(\sigma))$. By \Cref{Prop:EdgeToFixedPoints,Prop:FromTriangToEdgeWR}, we have
\[
\log \# \Fix(f) \asymp \log i(\sigma, f(\sigma))  \asymp \mc S_{\mr {\mc D}_f}(T, f(T)).
\]
Now, by \Cref{Prop:ShufflingTriangIntersection}, the latter sum is coarsely equal to 
\[
\ell_{S}(f) + \sum_{\langle f \rangle Y \subset {\mc D}_f} \big[ d_Y(\lambda^+,\lambda^-) \big] + \sum_{\langle f \rangle A \subset {\mc D}_f} \big[ \log d_A(\lambda^+,\lambda^-) \big],
\]
as desired.
\end{proof}

\subsection{Proving shuffling}\label{Sec:ShufflingProof}

We will prove the statements of \Cref{Prop:Shuffling} in reverse order, starting from the easiest case of annuli.

\begin{proof}[Proof of \Cref{Prop:Shuffling}.\eqref{Itm:AnnulusShuffling}]
Suppose $A \subset S$ is an essential annulus and $\psub  \in \mc F_A$ is a surface filling to $A$, with $d_{\psub}(\lambda^-, \lambda^+) \geq 4$. By  \Cref{Lem:QCompatibleFill}, the interiors  $\int_q \psub$ and $\int_q A$ are non-empty and coincide. In other words, $\int_q \psub$ is a flat annulus whose core curve agrees with that of $\int_q A$, up to isotopy in $\mr S$. Thus $\psub$ is uniquely determined, and
\[
d_A(\lambda^+, \lambda^-) = d_{\psub}(\lambda^+, \lambda^-). \qedhere
\]
\end{proof}

Next, we consider non-annular subsurfaces of $S$.

\subsubsection{Proving \Cref{Prop:Shuffling}.\eqref{Itm:ProperSubsurfShuffling}}
Suppose that $Y \subset S$ is a proper, non-annular essential subsurface. Since we are trying to prove a coarse equality of the form
\[
 d_Y(\lambda^+, \lambda^-) \asymp 
\sum_{\psub \in \mc F_Y} \big[ d_{\psub}(\lambda^+,\lambda^-) \big],
\]
it suffices to consider only those subsurfaces $Y$ and $\psub  \in \mc F_Y$ whose distance between $\lambda^+$ and $\lambda^-$ is sufficiently large. In particular, we may and will assume that $d_{Y}(\lambda^-, \lambda^+) \geq 4$ and $d_{\psub}(\lambda^-, \lambda^+) \geq 4$, which implies that $Y$ and $\psub$ are $q$--compatible by   \Cref{Lem:CompatibleProj}. 
Furthermore, by \Cref{Lem:QCompatibleFill}, we have $\int_q \psub \subset \int_q Y$. Since this implies that $\int_q \psub \subset \int_q \mr Y$, we can naturally consider $\psub $ as a subsurface of $\mr Y \subset \mr S$.

The lower bound on $d_Y(\lambda^+, \lambda^-)$ will need the following definition.

\begin{definition}\label{Def:QProjection}
Let $Y \subset S$ be a $q$--compatible, non-annular subsurface (possibly all of $S$), and let $\psub \in \mc F_Y$ with $d_{\psub}(\lambda^-, \lambda^+) \geq 4$. 
We define a projection  map $\pi^{q}_{\psub} \from \AC(Y) \to \AC(\psub)$, as follows. 

First, we apply the quasi-isometry $c \from \AC(Y) \to \C(Y)$ that is the coarse inverse of the inclusion $\C(Y) \hookrightarrow \AC(Y)$. (For an arc $\beta$,  form the regular neighborhood $N = N(\beta \cup \bdy Y) \subset Y$, let $c(\beta)$ be the union of essential curves in $\bdy N$.)
Then, for an essential curve $\alpha \in \C(Y)$, we apply the map 
$\mathbf{s} \from \C(Y) \to \A(q)$  of  \Cref{Sec:Compatibility} that sends $\alpha$ to the set of (non-crossing) saddle connections that belong to $q$--geodesic representatives of $\alpha$.
 These saddle connections must be contained in $\int_q Y$, because $Y$ is $q$--compatible, so in fact we get $\mathbf{s} \circ c \from \AC(Y) \to \AC(\mr Y)$. Finally, we apply the usual subsurface projection $\pi_{\psub} \from \AC(\mr Y) \to \AC(\psub)$. The combined map 
 \[
\pi^{q}_{\psub} = \pi_{\psub} \circ  \mathbf{s} \circ c \from \AC(Y) \to \A(\psub)
\]
 is the composition of one quasi-isometry and two coarse $1$--Lipschitz retractions, hence $\pi^{q}_{\psub}$ is itself a coarse Lipschitz retraction. 
\end{definition}

Since $\Fill_S(\psub) = Y$, no essential curve or arc in $Y$ can be homotoped off $\psub$. In particular, we have:

\begin{claim} \label{claim:nonempty}
Suppose that $Y \subset S$ is $q$--compatible and that $\psub \in \mc F_Y$ satisfies  $d_{\psub}(\lambda^-, \lambda^+) \geq 4$. Then for each $\alpha$ in $\mc {AC}(Y)$, we have $\pi^q_{\psub}(\alpha) \neq \emptyset$.
\end{claim}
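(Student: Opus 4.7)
The plan is to verify that the composition $\pi^q_{\psub}(\alpha) = \pi_{\psub}\bigl(\mathbf{s}(c(\alpha))\bigr)$ is nonempty by checking nonemptiness at each stage of the three-step factorization from \Cref{Def:QProjection}.

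First I would observe that $c(\alpha)\neq\emptyset$. Since $Y$ is a proper non-annular essential subsurface of $S$, in particular $Y$ is not a pair of pants. If $\alpha$ is a closed curve we simply take $c(\alpha)=\{\alpha\}$; if $\alpha$ is an arc, then at least one boundary component of $N(\alpha\cup\partial Y)$ is essential in $Y$, giving an essential simple closed curve $\gamma\in c(\alpha)\subset\C(Y)$. Next I would note that $\mathbf{s}(\gamma)$ is a nonempty finite set of saddle connections: the curve $\gamma$ is essential in $Y$, so it admits a nontrivial $q$--geodesic representative, and since $Y$ is $q$--compatible by hypothesis, this representative is contained in $\int_q Y$.

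The main step is to show $\pi_{\psub}(\mathbf{s}(\gamma))\neq\emptyset$. By \Cref{Lem:CompatibleProj}, the hypothesis $d_{\psub}(\lambda^-,\lambda^+)\geq 4$ forces $\psub$ to be $q$--compatible, and by \Cref{Lem:QCompatibleFill} we have $\int_q \psub\subset\int_q Y$. I would argue by contrapositive: if $\pi_{\psub}(\mathbf{s}(\gamma))=\emptyset$, then $\mathbf{s}(\gamma)$ is isotopic in $\mr S$ (hence in $S$) to a subset of $\int_q Y\setminus\int_q\psub$, realizing $\gamma$ as a curve in this complement. By the very definition of $\Fill_S(\psub)=Y$, every component of $\int_q Y\setminus\int_q \psub$ is either a disk in $Y$ (possibly containing singularity punctures of $\mr S$ that are filled upon passing to $S$) or a peripheral annulus of $Y$. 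A simple closed curve freely homotopic into such a component is either inessential or peripheral in $Y$, contradicting $\gamma\in\C(Y)$ being essential.

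The main obstacle I anticipate is justifying the structural description of the complement $\int_q Y\setminus\int_q \psub$ rigorously: one must combine the definition of $\Fill_S$, the convex hull construction $Y_q$ and $\psub_q$ from \Cref{Sec:Compatibility}, and the containment from \Cref{Lem:QCompatibleFill} to conclude that every component of the complement is accounted for by capped disks (around filled-in singularities) or boundary collars. Once this is checked, the contradiction with essentiality of $\gamma$ in $Y$ is immediate, and the nonemptiness claim follows.
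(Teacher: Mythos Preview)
Your proof is correct and follows the same line as the paper's. The paper's justification is the single sentence immediately preceding the claim: ``Since $\Fill_S(\psub) = Y$, no essential curve or arc in $Y$ can be homotoped off $\psub$.'' Your three-step argument unpacks this through the factorization $\pi^q_{\psub} = \pi_{\psub} \circ \mathbf{s} \circ c$, and you correctly identify that the only nontrivial step is $\pi_{\psub}(\mathbf{s}(\gamma)) \neq \emptyset$, which reduces to exactly the filling statement above. One minor point: the claim is also invoked when $Y = S$ (see \Cref{Def:QProjection} and \Cref{Lem:ShufflingUpper}), so you should not assume $Y$ is proper; your argument goes through unchanged since the consequence you actually need---that $Y$ is not a pair of pants---already follows from the paper's convention on ``essential'' subsurfaces.
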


The next lemma follows from well-known arguments that can be applied to any subsurface-like projection.
See Aougab--Taylor--Webb \cite[Proposition 4.1]{AougabTaylorWebb}, Patel--Taylor \cite[Proposition 3]{PatelTaylor:EndperiodicLox}, and most generally,  Sisto--Taylor \cite[Proposition 5.1]{SistoTaylor}

\begin{lemma}\label{Lem:ShufflingUpper}
For any $q$--compatible, non-annular subsurface $Y \subset S$ (with $Y = S$ possible) and any $\alpha, \beta \in \AC(Y)$, we have
\[
d_Y(\alpha, \beta) \succ \sum_{\psub \in \mc F_Y} \left[ d_{\psub} \Big( \pi^{q}_{\psub} (\alpha), \pi^{q}_{\psub}(\beta) \Big) \right] .
\]
\end{lemma}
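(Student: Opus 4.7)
The plan is to reduce this to the standard machinery for subsurface-like projection systems, as in the cited works. The key observation is that $\pi^q_{\psub}$ factors through the hyperbolic space $\AC(\mr Y)$, so Behrstock inequalities and the bounded geodesic image theorem for the standard subsurface projection pull back directly to give the sum bound on distance in $\AC(Y)$.

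First, I would verify that $\{\pi^q_{\psub}\}_{\psub \in \mc F_Y}$ is a family of coarse Lipschitz retractions with nonempty images. The Lipschitz and retraction properties follow from \Cref{Def:QProjection} as a composition of a quasi-isometry ($c$) with two coarse $1$-Lipschitz retractions ($\mathbf{s}$ and $\pi_{\psub}$), while nonemptiness is \Cref{claim:nonempty}. Next, I would establish a Behrstock-type inequality: for distinct $\psub, \psub' \in \mc F_Y$ and any $\alpha \in \AC(Y)$, if $d_{\psub}(\pi^q_{\psub}(\alpha), \pi_{\psub}(\partial \psub'))$ exceeds a uniform threshold, then $d_{\psub'}(\pi^q_{\psub'}(\alpha), \pi_{\psub'}(\partial \psub))$ is uniformly bounded. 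Since $\partial \psub, \partial \psub'$ and $\mathbf{s}(c(\alpha))$ all lie in $\AC(\mr Y)$, this is immediate from the classical Behrstock inequality applied in $\AC(\mr Y)$.

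With these hypotheses in hand, the conclusion is then a direct application of the general machinery of \cite[Proposition 5.1]{SistoTaylor} (compare also \cite{AougabTaylorWebb} and \cite{PatelTaylor:EndperiodicLox}). In outline, given a geodesic $\gamma$ in $\AC(Y)$ from $\alpha$ to $\beta$, for each $\psub$ such that $[d_{\psub}(\pi^q_{\psub}(\alpha), \pi^q_{\psub}(\beta))]_K$ is positive with $K$ sufficiently large, \Cref{Thm:BGIT} applied in $\AC(\mr Y)$ forces any geodesic from $\mathbf{s}(c(\alpha))$ to $\mathbf{s}(c(\beta))$ to pass close to $\partial \psub$. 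By hyperbolicity of $\AC(\mr Y)$ and the fact that $(\mathbf{s} \circ c)(\gamma)$ is a Lipschitz path, there is an ``entry time'' $t_{\psub}$ along $\gamma$ at which $\mathbf{s}(c(\alpha_{t_{\psub}}))$ is close to $\partial \psub$. The Behrstock inequality ensures these entry times are coarsely ordered along $\gamma$, and that distinct subsurfaces occupy essentially disjoint intervals whose total length sums to at least a fixed fraction of $\sum_{\psub} [d_{\psub}(\pi^q_{\psub}(\alpha), \pi^q_{\psub}(\beta))]$, giving the desired lower bound on $d_Y(\alpha, \beta)$.

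The main obstacle is the bookkeeping required to handle all types of pairs in $\mc F_Y$ uniformly---in particular, both overlapping and nested pairs of subsurfaces---and to make the ``charging'' of summands to intervals of $\gamma$ disjoint up to a bounded overlap. As this is precisely what \cite[Proposition 5.1]{SistoTaylor} packages once the Lipschitz retraction and Behrstock properties are verified, the bulk of the proof is simply the verification above; no new ideas beyond the cited framework are required.
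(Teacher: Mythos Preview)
Your proposal is correct and takes essentially the same approach as the paper: both reduce to the cited subsurface-like projection machinery (\cite{AougabTaylorWebb,PatelTaylor:EndperiodicLox,SistoTaylor}) by exploiting the factorization $\pi^q_{\psub} = \pi_{\psub} \circ (\mathbf{s} \circ c)$ through $\AC(\mr Y)$, so that the Behrstock inequality and bounded geodesic image theorem are inherited from the standard subsurface projections in $\mr Y$. The paper is slightly terser---it extends $\alpha,\beta$ to markings of $Y$ and notes that $\mathbf{s}(c(\mu))$ is then a marking of $\mr Y$, making \cite[Proposition 4.1]{AougabTaylorWebb} apply verbatim---but your more detailed verification of the axioms and outline of the entry-time argument amounts to the same proof.
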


\begin{proof}
We may regard each of $\alpha$ and $\beta$ as part of a marking on $Y$. Now, 
follow the proof of \cite[Proposition 4.1]{AougabTaylorWebb}, which applies verbatim if one substitutes $\pi_{\psub}^q$ in place of the usual subsurface projection. It is in this proof that the conclusion of Claim \ref{claim:nonempty} is used.
The point is that if $\mu$ is any marking of $Y$, then $\nu = {\bf s}(c (\mu))$ is a marking of $\mr Y$ and $\pi_{\psub}^q (\mu) =  \pi_{\psub}(\nu)$. Hence, the usual properties of subsurface projection apply also to $\pi_{\psub}^q$.
\end{proof}

Now, the lower bound on $d_Y(\lambda^+, \lambda^-)$ in \Cref{Prop:Shuffling}.\eqref{Itm:ProperSubsurfShuffling} will follow by taking $\alpha = \pi_Y(\lambda^-)$ and $\beta = \pi_Y(\lambda^+)$. For the upper bound on $d_Y(\lambda^+, \lambda^-)$, we need to consider the \define{surviving arc and curve graph}, introduced by G\"ultepe, Leininger, and Pho-On \cite{GultepeLeiningerPhoOn}. 

\begin{definition}
Let $Y$ be a $q$--compatible, non-annular subsurface of $S$. Let $\int_q Y$ be the $q$--convex hull of $Y$. Define $\mr Y = \int_q Y \setminus \sing(q)$.

The \define{surviving arc and curve graph} of $\mr Y$, denoted $\AC^s(\mr Y)$, is the graph whose vertices are isotopy classes of essential simple closed curves in $\mr Y$ that remain essential after filling the singularities to recover  in $Y$, as well as essential arcs in $\mr Y$ (with endpoints at punctures of $S$ or boundary components of $Y$) that remain essential in $Y$. As usual, edges of $\AC^s(\mr Y)$ record disjointness. Observe that there is a $1$--Lipschitz map $\AC^s(\mr Y) \to \AC(Y)$, which sends an arc or  curve in $\mr Y$ to its isotopy class in $Y$. 


A \define{witness} for $\AC^s(\mr Y)$ is an essential subsurface $\psub \subset \mr Y$ such that no  representative of a vertex of $\AC^s(\mr Y)$ can be isotoped to be disjoint from $\psub$. The set of (isotopy classes of) witnesses for $\AC^s(\mr Y)$ is denoted $\Omega(\mr Y)$.
\end{definition}

\begin{lemma}\label{Lem:Witness}
Let $Y$ be a $q$--compatible, non-annular subsurface of $S$. 
Let $\psub \subset \mr S$ be a surface with $d_{\psub}(\lambda^{-}, \lambda^{+}) \geq 4$ such that
$\Fill_S(\psub) \subset Y$.
Then $\psub\in \Omega(\mr Y)$ if and only if $\psub \in \mc F_Y$. 
\end{lemma}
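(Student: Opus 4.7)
The plan is to establish the two directions separately, both by arguing about the topology of the complementary components of $\psub$ in $\mr Y$.

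For the implication $(\Leftarrow)$, I would assume $\Fill_S(\psub) = Y$ and show that any vertex $\alpha \in \AC^s(\mr Y)$ which can be isotoped disjoint from $\psub$ leads to a contradiction. Such an $\alpha$ lies in some component $C$ of $\mr Y \setminus \psub$. The key topological observation is that because $\Fill_S(\psub) = Y$, each such $C$ is a (possibly punctured) disk or peripheral (possibly punctured) annulus in $\mr Y$: the boundary components of $\psub$ bounding $C$ are either inessential in $S$ (so they cap to disks) or isotopic to components of $\bdy Y$ (so they produce only peripheral annuli), and any interior punctures of $\mr Y$ lying in $C$ are precisely the singularities that are filled in passing from $\mr Y$ to $Y$. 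An essential simple closed curve in $\mr Y$ contained in such a $C$ must enclose at least two punctures, hence bound a disk after filling; an essential arc in $\mr Y$ contained in such a $C$ must have endpoints at punctures of $\mr Y$ or on $\bdy Y$, and becomes trivial or boundary-parallel once punctures are filled. Either way, $\alpha$ fails to be essential in $Y$, contradicting $\alpha \in \AC^s(\mr Y)$.

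For the implication $(\Rightarrow)$, I would assume $\psub \in \Omega(\mr Y)$ and show $\Fill_S(\psub) = Y$. Set $Z = \Fill_S(\psub) \subseteq Y$. By \Cref{Lem:EssentialFill}, using the hypothesis $d_{\psub}(\lambda^-,\lambda^+) \geq 4$, $Z$ is an essential subsurface of $S$. Suppose for contradiction that $Z \subsetneq Y$ as isotopy classes of essential subsurfaces. Then at least one boundary component $\gamma$ of $Z$ must be essential and non-peripheral in $Y$, since otherwise every boundary component of $Z$ would be peripheral in $Y$, forcing $Z$ and $Y$ to be isotopic. I would then take a small push-off $\gamma' \subset Y \setminus Z$ of $\gamma$, chosen to avoid every singularity of $q$, so that $\gamma' \subset \mr Y$ is disjoint from $\psub$. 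Essentiality of $\gamma'$ in $Y$ upgrades to essentiality in $\mr Y$ precisely because $\gamma'$ misses all singularities: a bounding disk in $\mr Y$ is a bounding disk in $Y$, and being puncture-parallel in $\mr Y$ would mean $\gamma'$ bounds a disk in $Y$. Hence $\gamma' \in \AC^s(\mr Y)$ is disjoint from $\psub$, contradicting the witness property.

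I expect the more delicate direction to be $(\Leftarrow)$: the main obstacle is carefully identifying the complementary components of $\psub$ in $\mr Y$. A priori these could be complicated surfaces with many punctures, but the hypothesis $\Fill_S(\psub) = Y$ precisely encodes that the topological complexity in $\mr Y \setminus \psub$ is exactly what is killed by filling the singularities of $q$ lying inside $Y$ and by capping the $S$-inessential boundaries of $\psub$. Verifying this requires unpacking the definition of $\Fill_S$ and confirming that no component of $\mr Y \setminus \psub$ can support an arc or curve surviving the passage from $\mr Y$ to $Y$, which is exactly the structural difference between $\AC(\mr Y)$ and $\AC^s(\mr Y)$. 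The reverse direction is more standard once one is careful to distinguish between $Z \subsetneq Y$ as sets and as isotopy classes, which is why we insist on a non-peripheral boundary component of $Z$ in $Y$.
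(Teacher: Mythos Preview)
Your proposal is correct and follows essentially the same approach as the paper. The paper's proof compresses both directions into a single biconditional: $\psub$ fails to be a witness if and only if $\mr Y \setminus \psub$ contains an arc or curve remaining essential in $Y$, which happens if and only if $Z = \Fill_S(\psub) \subsetneq Y$. Your argument unpacks each direction with explicit topological analysis of the complementary components, which is exactly the content underlying that biconditional.

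One small point worth noting: the paper first uses the $q$--compatible structure (via \Cref{Lem:EssentialFill}) to verify that $\psub$ actually sits inside $\mr Y$, by observing $\int_q \psub \subset \int_q Z \subset \int_q Y$. You invoke \Cref{Lem:EssentialFill} only in the $(\Rightarrow)$ direction to ensure $Z$ is essential; it would be cleaner to also use it at the outset to justify treating $\psub$ as a subsurface of $\mr Y$, since the hypothesis $\Fill_S(\psub) \subset Y$ by itself does not immediately give a preferred embedding $\psub \hookrightarrow \mr Y$.
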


\begin{proof}
Suppose $d_{\psub}(\lambda^{-}, \lambda^{+}) \geq 4$ and $\psub$ is isotopic into $Y$. Let $Z = \Fill_S(\psub)$. Then $\psub$ is $q$--compatible and $\int_q \psub \subset \int_q Z \subset \int_q Y$, where the first containment holds by \Cref{Lem:QCompatibleFill} and the second containment holds because $Z \subset Y$. Thus, in particular, $\psub$ is a subsurface of $\mr Y$.

Now, observe that $\psub$ fails to be a witness 
 for $\AC^s(\mr Y)$ if and only if the complement $\mr Y \setminus \psub$ contains an essential arc or curve $\alpha \in \AC^s(\mr Y)$ that is disjoint from $\psub$ and remains essential in $Y$. This happens if and only if $Z = \Fill_S(\psub)$ is a proper subsurface of $Y$, meaning $\psub \notin \mc F_Y$.
\end{proof}

For any curve/arc $\alpha$ in $Y$, we denote by $\mr \alpha$ \emph{any} lifts to a curve/arc of $\mr Y$. That is, $\mr \alpha$ projects to $\alpha$ under the forgetful map $\mr Y \to Y$.

\begin{lemma}\label{Lem:ShufflingLower}
Let  $Y \subset S$ be a non-annular, $q$--compatible subsurface. Given essential arcs or curves $\alpha, \beta \in \AC(Y)$, let $\mr \alpha, \mr \beta$ be arbitrary lifts to $\AC^s(\mr Y)$ obtained by isotoping $\alpha, \beta$ off the singularities in the interior of $\int_q Y$.
Then
\[
d_Y(\alpha, \beta) \prec \sum_{\psub \in \mc F_Y} \left[ d_{\psub} (\mr \alpha,  \mr \beta ) \right].
\]
\end{lemma}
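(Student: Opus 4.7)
My plan is to pass through the surviving arc and curve graph $\AC^s(\mr Y)$: first reduce the desired inequality to an upper bound on $d_{\AC^s(\mr Y)}(\mr\alpha, \mr\beta)$ via a $1$--Lipschitz map, then apply a Masur--Minsky style distance formula, and finally identify its witnesses with the subsurfaces in $\mc F_Y$.

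For the first step, observe that the forgetful map $\AC^s(\mr Y) \to \AC(Y)$, sending an arc or curve in $\mr Y$ to its isotopy class in $Y$, is $1$--Lipschitz: two vertices of $\AC^s(\mr Y)$ with disjoint representatives in $\mr Y$ remain disjoint after filling punctures. Since $\mr\alpha$ and $\mr\beta$ map to $\alpha$ and $\beta$ respectively, this yields
\[
d_Y(\alpha, \beta) \leq d_{\AC^s(\mr Y)}(\mr\alpha, \mr\beta).
\]
For the second step, I would invoke the upper-bound direction of a Masur--Minsky style distance formula for $\AC^s(\mr Y)$. The surviving arc and curve graph fits naturally into the framework of graphs of multicurves studied by Vokes \cite{Vokes}; combined with the refinements of Kopreski \cite{Kopreski}, one obtains
\[
d_{\AC^s(\mr Y)}(\mr\alpha, \mr\beta) \prec \sum_{\psub \in \Omega(\mr Y)} \big[d_{\psub}(\mr\alpha, \mr\beta)\big],
\]
where $\Omega(\mr Y)$ is the collection of witnesses for $\AC^s(\mr Y)$. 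Only the upper bound is needed here.

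For the third step, I apply \Cref{Lem:Witness}, whose proof actually establishes the stronger topological statement that any witness $\psub \in \Omega(\mr Y)$ satisfies $\Fill_S(\psub) = Y$, with no distance hypothesis required. Hence every summand above is already a summand of $\sum_{\psub \in \mc F_Y} [d_{\psub}(\mr\alpha, \mr\beta)]$, and chaining the three inequalities gives the conclusion.

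The main obstacle is establishing the upper-bound distance formula for $\AC^s(\mr Y)$ with precisely the witness collection $\Omega(\mr Y)$; this is where the technical input from Vokes and Kopreski is essential. The surrounding reductions--through the $1$--Lipschitz forgetful map, and then through the topological identification of witnesses--are essentially bookkeeping.
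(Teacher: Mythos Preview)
Your proposal is correct and follows essentially the same route as the paper's proof: reduce to $d_{\AC^s(\mr Y)}(\mr\alpha,\mr\beta)$ via the $1$--Lipschitz forgetful map, apply the Vokes--Kopreski distance formula for the surviving arc and curve graph, and identify the witnesses with $\mc F_Y$ via \Cref{Lem:Witness}. The paper additionally spells out the verification of Vokes's five axioms for $\AC^s(\mr Y)$ (connectivity, multicurve vertices, the $\textup{P}\Mod(\mr Y)$--action needed for Kopreski's refinement, bounded intersection on edges, and the no-annular-witnesses condition), but your remark that this is where the real technical input lies is exactly right, and your observation that the containment $\Omega(\mr Y)\subset\mc F_Y$ follows from the purely topological part of the proof of \Cref{Lem:Witness}---without the $d_{\psub}(\lambda^-,\lambda^+)\ge 4$ hypothesis---is a point the paper itself glosses over.
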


\begin{proof}
First, observe that $d_{Y} (\alpha,  \beta) \leq d_{\AC^s(\mr Y)} (\mr \alpha, \mr \beta)$, because the inclusion-induced map $\AC^s(\mr Y) \to \AC(Y)$ is $1$--Lipschitz. Thus the proof will follow from the following distance formula in $\AC^s(\mr Y)$:
\begin{equation}\label{Eqn:SurvivingEstimate}
d_{\AC^s(\mr Y)} (\mr \alpha, \mr \beta) \asymp \sum_{\psub \in \Omega(\mr Y)} \big[ d_{\psub}(\mr \alpha, \mr \beta) \big] 
 = \sum_{\psub \in \mc F_Y} \big[ d_{\psub}(\mr \alpha, \mr \beta) \big] .
\end{equation}
The second equality of \eqref{Eqn:SurvivingEstimate} is just \Cref{Lem:Witness}. However, the first (coarse) equality is nontrivial. We will derive this estimate from a theorem of Vokes \cite[Corollary 1.2]{Vokes} which, for a family of graphs that will cover our situation, simplifies the axioms for hierarchical hyperbolicity and establishes the desired distance formula.

Vokes works in the general setting of \define{twist-free multicurve graphs}; see \cite[Definition 2.1]{Vokes}. 
Here are the five defining properties of a twist-free multicurve graph $\mc G(\mr Y)$, combined with the reasons why $\AC^s(\mr Y)$ or a sufficiently close variant qualifies:
\begin{enumerate}
\item \emph{The graph $\mc G(\mr Y)$ is connected.} In our setting, the connectedness of $\AC^s(\mr Y)$ follows 
from a lemma of Putman \cite[Lemma 2.1]{Putman}, by a standard argument 
using the change-of-coordinates principle.

\smallskip
\item \emph{Each vertex of $\mc G(\mr Y)$ represents a multicurve in $\mr Y$.} In our setting, this hypothesis is satisfied by the induced subgraph $\C^s(Y)$ whose vertices are surviving curves. Since every arc in $\AC^s(\mr Y)$ is distance $1$ from a curve, this is sufficient for the desired distance formula.

\smallskip
\item \emph{The action of $\Mod(\mr Y)$ on the surface $\mr Y$ induces an isometric action on $\mc G(\mr Y)$.} Kopreski \cite{Kopreski} has loosened this hypothesis to only require an action of the pure mapping class group $\textup{P}\Mod(\mr Y)$. In our setting, the pure mapping class group $\textup{P}\Mod(\mr Y)$ fixes every (singular) point of $Y \setminus \mr Y$, and therefore acts isometrically on $\AC^s(\mr Y)$.

\smallskip
\item \emph{There exists a universal bound on the intersection number $i(a,b)$ for any pair of adjacent vertices $a, b$ of $\mc G(\mr Y)$.} In our setting, adjacent vertices of $\AC^s(\mr Y)$ represent disjoint curves/arcs.

\smallskip
\item \emph{The set of witnesses for $\mc G(\mr Y)$ does not contain annuli.} In our setting, this holds by \Cref{Lem:Witness}, because an annulus cannot fill to the non-annular surface $Y$.
\end{enumerate}

Therefore, Vokes's  \cite[Corollary 1.2]{Vokes} implies the distance formula \eqref{Eqn:SurvivingEstimate}, concluding the proof.
\end{proof}

\begin{remark}
 In the special case where $\mr Y = Y \setminus z$ for a single point $z$, the first coarse equality of \eqref{Eqn:SurvivingEstimate} is a theorem of G\"ultepe, Leininger, and Pho-On \cite[Theorem 5.1]{GultepeLeiningerPhoOn}. This theorem is phrased specifically for the surviving curve graph $\C^s(Y)$ and is quite easy to apply. However, the line of argument in \cite{GultepeLeiningerPhoOn} is particular to the situation where only a single point of $Y$ gets punctured.
\end{remark}

\begin{proof}[Proof of \Cref{Prop:Shuffling}.\eqref{Itm:ProperSubsurfShuffling}]
Let $Y \subset S$ be a proper, non-annular subsurface. As noted at the beginning of this subsection, we may assume that $d_Y(\lambda^+, \lambda^-) \geq 4$, which implies that $Y$ is $q$--compatible.

Choose a nonsingular leaf $\ell^\pm$ of each foliation $\lambda^\pm$. Then $\alpha = \pi_Y(\ell^+)$ and $\beta = \pi_Y(\ell^-)$ are essential multi-arcs in $Y$, which can be obtained as the isotopy classes of all components of intersection between $\ell^\pm$ and the $q$--convex hull $\int_q Y$.
%
For any $q$--compatible surface $\psub \in \mc F_Y$, the projections $\pi_{\psub}(\ell^+)$ and $\pi_{\psub}(\ell^-)$ can likewise be obtained as the isotopy classes of all intersections between $\ell^\pm$ and the $q$--convex hull $\int_q \psub$.

Since the leaves $\ell^\pm$ are already $q$--geodesic, the $q$--projection $\pi_{\psub}^q (\alpha) = \pi_{\psub}^q \circ \pi_Y(\ell^+)$ lands within distance $1$ of $\pi_{\psub}(\ell^+)$, and similarly for $\ell^-$. Thus \Cref{Lem:ShufflingUpper} implies
\[
d_Y(\lambda^+, \lambda^-) \asymp
d_Y(\alpha, \beta)  \succ
 \sum_{\psub \in \mc F_Y} \left[ d_{\psub} \Big( \pi^{q}_{\psub} (\alpha), \pi^{q}_{\psub}(\beta) \Big) \right] \asymp
\sum_{\psub \in \mc F_Y} \big[ d_{\psub}(\lambda^+, \lambda^-) \big].
\]

For the other direction, observe that $\ell^+$ is a $q$--geodesic disjoint from $\sing(q)$, hence every component of $\ell^+ \cap \int_q Y$ is already contained in $\mr Y$. It follows that $\alpha = \pi_Y(\ell^+)$ has a canonical lift $\mr \alpha \in \AC^s(\mr Y)$, such that for every $q$--compatible $\psub$, the projection $\pi_{\psub}(\mr \alpha)$ lies within within distance $1$ of $\pi_{\psub}(\lambda^+)$. An identical statement holds for $\beta$, the canonical lift $\mr \beta$, and $\lambda^-$. Thus \Cref{Lem:ShufflingLower} implies
\[
d_Y(\lambda^+, \lambda^-) \asymp d_Y(\alpha, \beta) \prec \sum_{\psub \in \mc F_Y} \left[ d_{\psub} (\mr \alpha,  \mr \beta ) \right]
\asymp \sum_{\psub \in \mc F_Y} \big[ d_{\psub}(\lambda^+, \lambda^-) \big],
\]
completing the proof.
\end{proof}

\subsubsection{Proving \Cref{Prop:Shuffling}.\eqref{Itm:FullSurfShuffling}}
It remains to consider the situation where $Y = S$ is the whole surface. In this setting, we will once again base our proof on \Cref{Lem:ShufflingUpper,Lem:ShufflingLower}. The challenge is that $d_S(\lambda^+, \lambda^-)$ is infinite, so we must instead replace this distance by the stable translation length $\ell_S(f)$, defined via \eqref{Eqn:StableLength}.

Every section $T$ consists of veering edges that represent vertices of $\AC(\mr S)$. We can also
 use a section $T$ to represent arcs and curves in $S$, as follows.

Let $T$ be any section, and consider $T \cup \sing(q)$ as a graph on $S$. Define $\pi_S(T)$ to be the set of all essential arcs and curves in $S$ whose $q$--geodesic representative is contained in $T \cup \sing(q)$, and traverses no edge of $T$ more than twice. 
Then $\pi_S(T) \neq \emptyset$: indeed, any veering edge $\sigma \subset T$ can be extended by concatenation to a $q$--geodesic arc or curve supported in $T \cup \sing(q)$, which traverses any edge of $T$ at most twice.
Furthermore, $\pi_S(T)$
 is a finite set, such that the intersection number of any pair of elements is uniformly bounded by a bound depending only on $S$.
Now, \eqref{Eqn:StableLength} gives
\begin{equation}\label{Eqn:StableLengthRestate}
\ell_S(f) = \lim_{n \to \infty} \frac{ d_S ( \pi_S(T), \pi_S(f^n(T)) }{n}.
\end{equation}

\begin{lemma}\label{Lem:Shuffling2WholeSurf}
Let $T$ be any section. Then, for any $n \in \NN$,
\[
d_S ( \pi_S(T), \pi_S(f^n(T)) \asymp \sum_{\psub \in \mc F_S} \big[ d_{\psub}(T, f^n(T)) \big],
\]
where the implicit constants are independent of $n$.
\end{lemma}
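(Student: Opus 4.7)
The plan is to derive this directly from Lemmas \ref{Lem:ShufflingUpper} and \ref{Lem:ShufflingLower} applied in the case $Y=S$. First I would replace $\pi_S(T)$ and $\pi_S(f^n(T))$ by single representatives. Every element of $\pi_S(T)$ is a $q$-geodesic concatenation of at most $|18 \chi(S)|$ edges of $T$, and edges of $T$ are pairwise disjoint as subsets of $S$, so any two elements of $\pi_S(T)$ have uniformly bounded intersection number. By the standard $\log$-bound for distance in $\AC(S)$ in terms of intersection, the set $\pi_S(T)$ has uniformly bounded diameter in $\AC(S)$, and likewise for $\pi_S(f^n(T))$. Hence, after fixing arbitrary $\alpha \in \pi_S(T)$ and $\beta \in \pi_S(f^n(T))$, it suffices to prove
\[
d_S(\alpha, \beta) \asymp \sum_{\psub \in \mc F_S} \big[ d_\psub(T, f^n(T)) \big].
\]

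For the lower bound on $d_S(\alpha, \beta)$, I would apply \Cref{Lem:ShufflingUpper} with $Y = S$. The key point is that $\pi^q_\psub(\alpha)$ lies at uniformly bounded distance from $\pi_\psub(T)$ in $\AC(\psub)$ for every $\psub \in \mc F_S$ that contributes nontrivially to the sum. Indeed, the map $\s \circ c$ sends $\alpha$ to a collection of saddle connections appearing in the $q$-geodesic of $\alpha$ (or of a bounded number of closed-curve companions of $\alpha$), and by the definition of $\pi_S(T)$ these are all edges of $T$. Since the edges of $T$ are pairwise disjoint, the image $\pi_\psub(\s(c(\alpha)))$ lies in a subset of $\AC(\psub)$ of diameter $O(1)$ that also contains $\pi_\psub(T)$. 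Combined with the analogous statement for $\beta$ and $f^n(T)$, and using \Cref{Lem:HierarchyLikeSum} to replace summands, this gives
\[
d_S(\alpha, \beta) \succ \sum_{\psub \in \mc F_S} \big[ d_\psub(\pi^q_\psub(\alpha), \pi^q_\psub(\beta)) \big] \asymp \sum_{\psub \in \mc F_S} \big[ d_\psub(T, f^n(T)) \big].
\]

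For the upper bound I would apply \Cref{Lem:ShufflingLower}, choosing lifts $\mr\alpha, \mr\beta \in \AC^s(\mr S)$ by perturbing the $q$-geodesic concatenations of saddle connections off the singularities of $q$. Each such $\mr\alpha$ is a genuine essential arc or curve of $\mr S$ that remains essential in $S$, and up to isotopy its intersection with the $q$-convex hull $\psub_q$ consists of concatenations of arcs of intersection between edges of $T$ and $\psub_q$. Because edges of $T$ that meet $\psub$ are pairwise disjoint and hence project within diameter $O(1)$ in $\AC(\psub)$, we obtain $d_\psub(\mr\alpha, T) = O(1)$, and likewise $d_\psub(\mr\beta, f^n(T)) = O(1)$. \Cref{Lem:Witness} identifies $\mc F_S$ with the set of witnesses for $\AC^s(\mr S)$, so \Cref{Lem:ShufflingLower} together with \Cref{Lem:HierarchyLikeSum} yields
\[
d_S(\alpha, \beta) \prec \sum_{\psub \in \mc F_S} \big[ d_\psub(\mr\alpha, \mr\beta) \big] \asymp \sum_{\psub \in \mc F_S} \big[ d_\psub(T, f^n(T)) \big].
\]

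The main obstacle is establishing the coarse equality $d_\psub(\mr\alpha, T) = O(1)$ (and its companion for $\pi^q_\psub$). These require a careful but bounded combinatorial analysis of how the finite concatenation of saddle connections making up the $q$-geodesic of $\alpha$ crosses $\psub_q$; the key leverage is that the number of edges of $T$ involved is bounded in terms of $\chi(S)$ alone. All coarse constants depend only on the underlying surface $S$ and not on $n$, since the reduction to a single pair $(\alpha, \beta)$ and the projection comparisons are independent of the dynamics of $f$; uniformity in $n$ then follows automatically.
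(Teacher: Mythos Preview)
Your outline is essentially the paper's: reduce to fixed representatives $\alpha\in\pi_S(T)$, $\beta\in\pi_S(f^n(T))$, then feed these into \Cref{Lem:ShufflingUpper} and \Cref{Lem:ShufflingLower} with $Y=S$, after checking that the relevant $\psub$--projections of $\alpha$ (respectively $\mr\alpha$) agree with those of $T$ up to $O(1)$.

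The one place worth sharpening is your justification of $d_{\psub}(\mr\alpha,T)=O(1)$. The ``concatenation of arcs'' reasoning as you state it does not by itself give the conclusion: an arc assembled from pieces each individually close to $\pi_{\psub}(T)$ need not project close to $\pi_{\psub}(T)$ (already in an annulus a concatenation can accumulate twist). The paper resolves this more directly by bounding the intersection number $i(\mr\gamma,T)$ for the lift $\mr\gamma$: since $\gamma$ traverses each edge of $T$ at most twice and the push-off at each singularity introduces only boundedly many new crossings with $T$ (valences in $T$ being bounded in terms of $\chi(S)$), one gets $i(\mr\gamma,T)=O(1)$, which immediately controls all subsurface projections simultaneously. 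This is exactly the ``careful but bounded combinatorial analysis'' you flag as the main obstacle, and it is the leverage you name; the point is that intersection number, not arc-by-arc concatenation, is the right invariant to track. The paper also takes both left and right push-offs to make $\mr\gamma$ canonical, but a single choice works just as well for the bound.
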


\begin{proof}
By construction, every curve or arc $\gamma \in \pi_S(T)$ has its $q$--geodesic representative supported in $T$, so $\mathbf{s} \circ c (\gamma) \subset T$. Thus, for every $\psub \in \mc F_S$, \Cref{Def:QProjection} implies that $\pi_{\psub}^q \circ \pi_S(T)$ lies within distance $1$ of $\pi_{\psub}(T)$. Accordingly, applying \Cref{Lem:ShufflingUpper} to $\alpha =  \pi_S(T)$ and 
$\beta =  \pi_S(f^n(T))$
gives
\begin{align*}
d_S ( \pi_S(T), \pi_S(f^n(T))
&\succ \sum_{\psub \in \mc F_S} \left[ d_{\psub} \Big( \pi_{\psub}^q \circ \pi_S(T), \pi_{\psub}^q \circ \pi_S(f^n(T)) \Big) \right] \\
&\asymp
\sum_{\psub \in \mc F_S} \big[ d_{\psub}(T, f^n(T)) \big].
\end{align*}

For the other direction, observe again that any curve or arc $\gamma \in \pi_S(T)$ is a $q$--geodesic consisting of saddle connections in $T$. For any singular point $x \in \gamma \cap \sing(q)$, there are two ways to push $\gamma$ off $x$: one to the left and one to the right (with respect to a transverse orientation of $\gamma$).
Thus $\gamma$ has two canonical lifts to $\AC^s(\mr S)$: one that circumnavigates every singular point on the left, and the other on the right. We define $\mr \gamma$ to be the multi-curve or multi-arc consisting of these two lifts, and observe that $\mr \gamma$ has bounded intersection number with $T$. Then the full lift $\mr \pi_S(T)$ has universally bounded intersection number with $T$, with the bound depending only on $S$. We follow the same process to get a lift $\mr \pi_S(f^n(T))$.
It follows that for every $\psub \in \mc F_Y$, the distance
\[
d_{\psub} (\mr{\pi}_S(T), T) = d_{\psub} \big( \pi_{\psub}(\mr{\pi}_S(T)), \: \pi_{\psub}(T) \big)
\]
is uniformly small, as is $d_{\psub} (\mr{\pi}_S(f^n(T)), f^n(T))$. Applying \Cref{Lem:ShufflingLower} to $\mr \alpha =  \mr \pi_S(T)$ and 
$\mr \beta = \mr \pi_S(f^n(T))$ gives
\[
d_S ( \pi_S(T), \pi_S(f^n(T)) 
\prec \sum_{\psub \in \mc F_Y} \left[ d_{\psub} \big( \mr{\pi}_S(T), \mr{\pi}_S(f^n(T)) \big) \right] \\
\asymp \sum_{\psub \in \mc F_S} \big[ d_{\psub}(T, f^n(T)) \big],
\]
completing the proof.
\end{proof}

To complete the proof of \Cref{Prop:Shuffling}.\eqref{Itm:FullSurfShuffling}, we need to partition the sum $ \sum_{\psub \in \mc F_S} \big[ d_{\psub}(T, f^n(T)) \big]$ into sums over $f$--orbits, and then estimate how many surfaces in the $f$--orbit of $\psub$ survive the cutoff $[ \cdot ]_K$. This is accomplished in the following lemma.

\begin{lemma}\label{Lem:PuncturedSum}
Let $T$ be an $f$--section, and let $\psub \in \mc F_S$ be a proper subsurface of $\mr S$. Then, for all $n \in \NN$ and all $K \geq 25$, we have
\[
n \cdot \big[ d_{\psub}(\lambda^+, \lambda^-) \big]_K \asymp \sum_{i \in \ZZ} 
\big[ d_{f^i(\psub)}(T, f^n(T)) \big]_{K},
\]
where the implicit constants are independent of $\psub$ and $n$.
\end{lemma}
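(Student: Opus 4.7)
The plan is to exploit the isolated-pocket structure from Lemma~\ref{Lem:IsolatedPocket} applied to the entire $f$-orbit of $\psub$. First I would verify that $\psub$ and $f^i(\psub)$ overlap for every nonzero $i \in \ZZ$. Since $\psub \in \mc F_S$ is proper and fills $S$, its complement in $\mr S$ is a union of punctured disks around a subset of singularities; the same holds for each $f^i(\psub)$, so $\psub \cap f^i(\psub)$ is $\mr S$ minus two such collections of punctured disks, which still fills $S$ and is therefore essential in $\mr S$. Thus any two translates $f^i(\psub)$ and $f^j(\psub)$ pairwise overlap. Assuming $d_{\psub}(\lambda^+,\lambda^-)\ge 12$ so that isolated pockets exist, Lemma~\ref{Lem:IsolatedPocket}\eqref{Itm:IsolatedDisjointness} yields pairwise disjoint interiors of the pockets $V_{f^i(\psub)}=f^i(V_\psub)$, and the downward-shifting action of $f$ on $\mr S\times\RR$ forces these pockets to be totally ordered vertically with $V_{f^{i+1}(\psub)}$ strictly below $V_{f^i(\psub)}$.

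For the main estimate, assume $d_\psub(\lambda^+,\lambda^-)\ge K$ and define $h=h(T)$ to be the smallest index $i$ such that $T$ lies above $V_{f^i(\psub)}$; then $f$-equivariance gives $h(f^n(T)) = h(T)+n$. For each $i$ with $h\le i \le h+n-2$, the section $T$ lies above $V_{f^i(\psub)}$ while $f^n(T)$ lies below, so Lemma~\ref{Lem:IsolatedPocket}\eqref{Itm:SectionDistBelow} gives $d_{f^i(\psub)}(T,\lambda^+)\le 6$ and $d_{f^i(\psub)}(f^n(T),\lambda^-)\le 6$; combined with Theorem~\ref{Thm:NicePath}\eqref{Itm:VeeringEfficiency},
\[
d_\psub-12 \;\le\; d_{f^i(\psub)}(T,f^n(T)) \;\le\; d_\psub+13.
\]
For $i\notin [h-1,h+n]$, both $T$ and $f^n(T)$ lie on the same side of $V_{f^i(\psub)}$ and project close to a common lamination, so $d_{f^i(\psub)}(T,f^n(T))\le 12$. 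Summing and applying the $[\cdot]_K$ cutoff, the $n-O(1)$ middle indices each contribute $\asymp d_\psub$, and the boundary and far-away indices are absorbed into the additive constant, giving $\sum_i [d_{f^i(\psub)}(T,f^n(T))]_K \asymp n\cdot d_\psub \asymp n\cdot [d_\psub]_K$.

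The remaining regime $d_\psub(\lambda^+,\lambda^-)<K$ requires showing that the right-hand side is bounded by a constant depending only on $K$ and $S$, uniformly in $n$; this is the main obstacle. I would approach it by iterating Lemma~\ref{Lem:fSectionSum} on the consecutive pairs $(f^j(T),f^{j+1}(T))$ for $j=0,\ldots,n-1$, which after $f$-equivariance confines the significant contributions from each step to at most two translates of $\psub$. Combining this with the non-backtracking behavior of Remark~\ref{Rem:NonBacktracking}---namely that the projections of the entire chain $T,f(T),\dots,f^n(T)$ to $\AC(f^i(\psub))$ progress without backtracking along a geodesic of length at most $d_\psub+13<K+13$---the sharp one-pocket-per-step passage established in the first paragraph limits the number of indices $i$ for which the cutoff can possibly trigger. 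Making this counting rigorous, especially when $d_\psub$ lies just below $K$ so that individual contributions can hover in the transition range $[K,K+13)$, is the technically delicate step; I expect to handle it by showing that the transitional contributions are concentrated at $O(1)$ indices rather than spread across all $n$ pocket passages.
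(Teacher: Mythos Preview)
Your approach is sound but considerably more laborious than the paper's. The paper dispatches the lemma in two moves. First, if $d_{\psub}(\lambda^+,\lambda^-)\le 12$ then \Cref{Thm:NicePath}\eqref{Itm:VeeringEfficiency} gives $d_{f^i(\psub)}(T,f^n(T))\le d_{\psub}+13\le 25\le K$ for every $i$, so both sides vanish. Second, for $d_{\psub}>12$ one simply re-indexes
\[
\sum_{i\in\ZZ}\big[d_{f^i(\psub)}(T,f^n(T))\big]_K
\;=\;\sum_{j=1}^{n}\ \sum_{i\in\ZZ}\big[d_{(f^n)^i\,f^j(\psub)}(T,f^n(T))\big]_K
\]
and applies \Cref{Lem:fSectionSum} with $f^n$ in place of $f$ to each of the $n$ inner sums. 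This is legitimate because $T$ is automatically an $f^n$--section, and because $\psub\in\mc F_S$ is overlapped by \emph{every} nonzero power of $f$ (your first paragraph, or equivalently \Cref{Lem:FillingOverlap}). The constants in \Cref{Lem:fSectionSum} depend only on $S$ and $K$, not on the map, so each inner sum is $\asymp[d_{\psub}(\lambda^+,\lambda^-)]_K$ with uniform constants, and summing over $j$ yields $n\cdot[d_{\psub}]_K$.

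Your direct pocket-by-pocket count with the height index $h(T)$ is correct, but it amounts to re-deriving the content of \Cref{Lem:fSectionSum} in this special setting rather than invoking it as a black box. Your concern about the regime $d_{\psub}<K$ is largely misplaced: the case $d_{\psub}\le 12$ is trivial as above, and for $d_{\psub}>12$ the cutoff is already absorbed into \Cref{Lem:fSectionSum}. No separate non-backtracking argument via \Cref{Rem:NonBacktracking} is needed.
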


\begin{proof}
We may suppose that $d_{\psub}(\lambda^+, \lambda^-) > 12$, as otherwise both sides of the claimed coarse equality are zero by \Cref{Thm:NicePath}. Since $\Fill_S(\psub) = S$,  \Cref{Lem:FillingOverlap} implies  $\psub$ is overlapped by every nonzero power of $f$. Thus \Cref{Lem:fSectionSum} applies to $\psub$. Thus we may compute as follows:
\begin{align*}
\sum_{i \in \ZZ} 
\big[ d_{f^i(\psub)}(T, f^n(T)) \big]_{K} 
&= \sum_{j = 1}^{n} \sum_{i \in \ZZ} 
\big[ d_{f^{ni} (f^j (\psub))}(T, f^{n}(T)) \big]_{K} \\
&\asymp \sum_{j = 1}^{n} \big[ d_{f^j(\psub)}(\lambda^+, \lambda^-) \big]_K \\
& = n \cdot \big[ d_{\psub}(\lambda^+, \lambda^-) \big]_K,
\end{align*}
where the one coarse equality is by \Cref{Lem:fSectionSum}  with $f^{n}$ in place of $f$.
\end{proof}

\begin{proof}[Proof of \Cref{Prop:Shuffling}.\eqref{Itm:FullSurfShuffling}]
Let $T$ be an $f$--section, and let $\pi_S$ be the projection from sections to $\AC(S)$ defined above. We can now compute as follows:
\begin{align*}
\ell_S(f) &= \lim_{n \to \infty} \frac{ d_S ( \pi_S(T), \pi_S(f^n(T)) }{n} \\
&\asymp \lim_{n \to \infty}  \sum_{\psub \in \mc F_S} \frac{1}{n} \big[ d_{\psub}(T, f^n(T)) \big]  \\
& = \lim_{n \to \infty} \frac{1}{n} \big[ d_{\mr S}(T, f^n(T)) \big]  + \lim_{n \to \infty} \sum_{\psub \in \mc F_S \setminus \{ \mr S \}} \frac{1}{n} \big[ d_{\psub}(T, f^n(T)) \big] \\
& \asymp  \lim_{n \to \infty} \frac{1}{n} \big( n \cdot \ell_{\mr S}(\mr f) \big) \quad + \quad
 \lim_{n \to \infty}  \sum_{\langle f \rangle \psub \subset \mc F_S \setminus \{ \mr S \}} \frac{1}{n}
 \Big(  n \cdot \big[ d_{\psub}(\lambda^+, \lambda^-) \big] \Big) \\
 & = \qquad \qquad \ell_{\mr S}(\mr f) \qquad \qquad + \qquad \sum_{\langle f \rangle \psub \subset \mc F_S \setminus \{\mr S\}} \big[ d_{\psub}(\lambda^+,\lambda^-) \big].
\end{align*}
Here, the first equality is \eqref{Eqn:StableLengthRestate}. The next coarse equality is \Cref{Lem:Shuffling2WholeSurf}. The next equality holds by separating out the summand for the whole punctured surface $\mr S$ from the rest of the sum. The next coarse equality holds by applying \Cref{Lem:SectionsAlongGeodesic} to distances in $\mr S$ and \Cref{Lem:PuncturedSum} to every $f$--orbit of proper subsurfaces $\psub$. 
(Here, it is crucial that the coarse constants in  \Cref{Lem:PuncturedSum} are independent of $\psub$ and $n$, enabling us to apply \Cref{Lem:HierarchyLikeSum} to the whole sum over orbits.)
The final equality holds by cancellation and taking limits.
\end{proof}

\section{Examples and constructions} \label{Sec:Examples}\label{sec:FPF}

In this section, we construct various examples that, in spirit, demonstrate the sharpness of our results and the necessity of our hypotheses. 


In the following construction, we describe a family of pseudo-Anosovs whose fixed points do not grow, but whose Teichm{\"u}ller translation lengths become arbitrarily large. Of course, these maps are not strongly irreducible.  

\begin{construction}[Twisting along nonoverlapped annuli] \label{Const:twist}\label{twist}
Let $f \colon S \to S$ be a pseudo-Anosov homeomorphism and let $\alpha$ be a curve in $S$ such that $f(\alpha)$ is disjoint from $\alpha$. Then there is an annulus $A$ about $\alpha$ 
that is disjoint from $f(A)$; in particular, $A$ contains no fixed points of $f$. Therefore, if we denote by $t_\alpha$ a Dehn twist homeomorphism supported on $A$, the map $f \circ t_\alpha^n$ has no more fixed points than $f$ for all $n \in \mathbb{Z}$. Moreover, for all but finitely many $n$, these maps are isotopic to pseudo-Anosov homeomorphisms $f_n$ with the property that $\# \Fix(f_n) \leq \#\Fix(f \circ t_\alpha^n) \leq \#\Fix(f)$. We note that the subsurface distances in the annular complex  $\AC(\alpha)$ between the invariant laminations of $f_n$ go to infinity as $n \to \infty$, hence $\ell_{\T}(f_n) \to \infty$. 
\end{construction}

One can make the discrepancy between fixed points and translation length even more dramatic, as in the following example of pseudo-Anosovs with \textit{no} fixed points:

\begin{construction}[Fibered cones]\label{Const:cone}
Let $M$ be a fibered hyperbolic $3$--manifold such that $b_1(M) \geq 2$. Let $S_1$ and $S_2$ be distinct fibers in the same fibered cone of $H^1(M)$, with associated pseudo-Anosov monodromies $f_1$ and $f_2$. Let $\Phi$ be the pseudo-Anosov flow on $M$ defined by this fibered cone. Then, after an isotopy, $S_1$ and $S_2$ are cross-sections of $\Phi$ (i.e. they transversely meet every orbit) and the first return maps to these surfaces are $f_1$ and $f_2$, respectively.
Observe that any closed orbit of $\Phi$ intersects a fiber $S_i$ at least once, and that fixed points of $f_i$ correspond to closed orbits that intersect $S_i$ exactly once.

Now, let $S_3$ be the cross-section obtained from $S_1 , S_2$ by smoothing their intersections while maintaining transversality to $\Phi$. Thus $S_3$ represents the homology class $[S_1] + [S_2]$. 
Then any closed orbit of $\Phi$ intersects $S_3$ at least twice, hence the first return map $f_3 \from S_3 \to S_3$ is a pseudo-Anosov without fixed points. Note that to make $S_3$ connected, we must choose $S_1$ and $S_2$ so that  $[S_1] + [S_2]$ is not a multiple. 

Moreover, if $\alpha$ is a component of the intersection $S_1\cap S_2$ that is essential in each $S_i$, then the smoothing $S_3$ contains two disjoint preimages of $\alpha$ -- say, $\alpha'$ and $\alpha''$ -- such that the first return map $f_3$ sends $\alpha'$ to $\alpha''$. We conclude that 
 $f_{3}$ is both fixed-point-free and not strongly irreducible. 
%
\end{construction}

\begin{proof}[Proof of \Cref{prop:FPF}]
By plugging the fixed-point-free map $f_3 \from S_3 \to S_3$ of \Cref{Const:cone} into the twisting procedure of \Cref{Const:twist}, we obtain infinitely many conjugacy classes of fixed-point-free pseudo-Anosovs on the same surface $S_3$, with dilatations going to $\infty$. 
\end{proof}


In \Cref{twist}, the growing Teichm{\"u}ller translation length comes from iterating a Dehn twist, which does not arbitrarily increase the volume of the mapping torus. However, it is easy to generalize the construction to produce pseudo-Anosov homeomorphisms whose number of fixed points stays bounded but whose mapping tori volume is unbounded: 

\begin{example}[Unbounded volume] \label{ex:vol}
Let $f \colon S \to S$ be a pseudo-Anosov map such that $Y \subset S$ is a non-annular, non-overlapped essential subsurface. That is, $f(Y) \cap Y = \emptyset$. In particular, since $Y$ is non-annular, the interior of $Y$ itself supports a pseudo-Anosov homeomorphism $g$. Then just as above, for all but finitely many $n$, the map $f \circ g^n$ is isotopic to a pseudo-Anosov homeomorphism whose number of fixed points is no greater than that of $f$. Moreover, it follows from
the work of Brock \cite{Brock:fibered} that the volume of the mapping tori of $f \circ g^n$ go to $\infty$.
\end{example}

\bibliographystyle{amsplain}
\bibliography{biblio.bib}

\providecommand{\bysame}{\leavevmode\hbox to3em{\hrulefill}\thinspace}
\providecommand{\MR}{\relax\ifhmode\unskip\space\fi MR }
\providecommand{\MRhref}[2]{%
  \href{http://www.ams.org/mathscinet-getitem?mr=#1}{#2}
}
\providecommand{\href}[2]{#2}
\begin{thebibliography}{10}

\bibitem{Agol:Veering}
Ian Agol, \emph{Ideal triangulations of pseudo-{A}nosov mapping tori}, Topology
  and geometry in dimension three, Contemp. Math., vol. 560, Amer. Math. Soc.,
  Providence, RI, 2011, pp.~1--17. \MR{2866919}

\bibitem{AgolTsang:Dynamics}
Ian Agol and Chi~Cheuk Tsang, \emph{Dynamics of veering triangulations:
  infinitesimal components of their flow graphs and applications}, Algebr.
  Geom. Topol. \textbf{24} (2024), no.~6, 3401--3453. \MR{4812222}

\bibitem{Aougab:UniformHyperbolicity}
Tarik Aougab, \emph{Uniform hyperbolicity of the graphs of curves}, Geom.
  Topol. \textbf{17} (2013), no.~5, 2855--2875. \MR{3190300}

\bibitem{AougabTaylorWebb}
Tarik Aougab, Samuel~J. Taylor, and Richard C.~H. Webb, \emph{Effective
  {Masur--Minsky} distance formulas and applications to hyperbolic
  3-manifolds}, Preprint, available at
  \url{https://drive.google.com/file/d/1YjPI9WhzaNdnVJDClI5VldfbMkJnuXjH/view},
  2018.

\bibitem{besthand}
Mladen Bestvina and Michael Handel, \emph{Train-tracks for surface
  homeomorphisms}, Topology \textbf{34} (1995), no.~1, 109--140. \MR{1308491}

\bibitem{BirmanKidwell}
Joan~S. Birman and Mark~E. Kidwell, \emph{Fixed points of pseudo-{A}nosov
  diffeomorphisms of surfaces}, Adv. in Math. \textbf{46} (1982), no.~2,
  217--220. \MR{679909}

\bibitem{Bowditch:AtoroidalBundles}
Brian~H Bowditch, \emph{Atoroidal surface bundles over surfaces}, Geometric and
  Functional Analysis \textbf{19} (2009), 943--988.

\bibitem{Bowditch:UniformHyperbolicity}
Brian~H. Bowditch, \emph{Uniform hyperbolicity of the curve graphs}, Pacific J.
  Math. \textbf{269} (2014), no.~2, 269--280. \MR{3238474}

\bibitem{Brock:fibered}
Jeffrey~F. Brock, \emph{Weil-{P}etersson translation distance and volumes of
  mapping tori}, Comm. Anal. Geom. \textbf{11} (2003), no.~5, 987--999.
  \MR{2032506}

\bibitem{choi2007comparison}
Young-Eun Choi and Kasra Rafi, \emph{Comparison between {T}eichm\"uller and
  {L}ipschitz metrics}, J. Lond. Math. Soc. (2) \textbf{76} (2007), no.~3,
  739--756. \MR{2377122}

\bibitem{CRS:UniformHyperbolicity}
Matt Clay, Kasra Rafi, and Saul Schleimer, \emph{Uniform hyperbolicity of the
  curve graph via surgery sequences}, Algebr. Geom. Topol. \textbf{14} (2014),
  no.~6, 3325--3344. \MR{3302964}

\bibitem{CottonClay:Symplectic}
Andrew Cotton-Clay, \emph{Symplectic {F}loer homology of area-preserving
  surface diffeomorphisms}, Geom. Topol. \textbf{13} (2009), no.~5, 2619--2674.
  \MR{2529943}

\bibitem{Durham:AugmentedMarkingComplex}
Matthew~Gentry Durham, \emph{The augmented marking complex of a surface}, J.
  Lond. Math. Soc. (2) \textbf{94} (2016), no.~3, 933--969. \MR{3614935}

\bibitem{Futer:Periodic}
David Futer, \emph{There are no periodic {Wright} maps}, arXiv:2509.05222.

\bibitem{GhigginiSpano}
Paolo Ghiggini and Gilberto Spano, \emph{Knot {Floer} homology of fibred knots
  and {Floer} homology of surface diffeomorphisms}, arXiv:2201.12411.

\bibitem{Gueritaud:Veering}
Fran\c{c}ois Gu\'eritaud, \emph{Veering triangulations and {C}annon-{T}hurston
  maps}, J. Topol. \textbf{9} (2016), no.~3, 957--983. \MR{3551845}

\bibitem{GultepeLeiningerPhoOn}
Funda G\"ultepe, Christopher~J. Leininger, and Witsarut Pho-On, \emph{A
  universal {C}annon-{T}hurston map and the surviving curve complex}, Trans.
  Amer. Math. Soc. Ser. B \textbf{9} (2022), 99--143. \MR{4383231}

\bibitem{HPW:SlimUnicorns}
Sebastian Hensel, Piotr Przytycki, and Richard C.~H. Webb, \emph{1-slim
  triangles and uniform hyperbolicity for arc graphs and curve graphs}, J. Eur.
  Math. Soc. (JEMS) \textbf{17} (2015), no.~4, 755--762. \MR{3336835}

\bibitem{KentLeininger:AtoroidalBundles}
Autumn~E. Kent and Christopher~J. Leininger, \emph{Atoroidal surface bundles},
  arXiv:2405.12067.

\bibitem{KojimaMcShane}
Sadayoshi Kojima and Greg McShane, \emph{Normalized entropy versus volume for
  pseudo-{A}nosovs}, Geom. Topol. \textbf{22} (2018), no.~4, 2403--2426.
  \MR{3784525}

\bibitem{Kopreski}
Michael~C Kopreski, \emph{Multiarc and curve graphs are hierarchically
  hyperbolic}, arXiv:2311.04356.

\bibitem{leininger2020pseudo}
Christopher~J. Leininger and Alan~W. Reid, \emph{Pseudo-{A}nosov homeomorphisms
  not arising from branched covers}, Groups Geom. Dyn. \textbf{14} (2020),
  no.~1, 151--175. \MR{4077659}

\bibitem{LinLipnowski:SWLength}
Francesco Lin and Michael Lipnowski, \emph{The {S}eiberg-{W}itten equations and
  the length spectrum of hyperbolic three-manifolds}, J. Amer. Math. Soc.
  \textbf{35} (2022), no.~1, 233--293. \MR{4322393}

\bibitem{Liu:EntropyVolume}
Yi~Liu, \emph{Entropy versus volume via {H}eegaard diagrams}, arXiv:2312.14255.

\bibitem{Los:Infinite}
J\'er\^ome Los, \emph{Infinite sequence of fixed-point free pseudo-{A}nosov
  homeomorphisms}, Ergodic Theory Dynam. Systems \textbf{30} (2010), no.~6,
  1739--1755. \MR{2736893}

\bibitem{MasurMinsky1}
Howard~A. Masur and Yair~N. Minsky, \emph{Geometry of the complex of curves.
  {I}. {H}yperbolicity}, Invent. Math. \textbf{138} (1999), no.~1, 103--149.
  \MR{1714338}

\bibitem{MasurMinsky2}
\bysame, \emph{Geometry of the complex of curves. {II}. {H}ierarchical
  structure}, Geom. Funct. Anal. \textbf{10} (2000), no.~4, 902--974.
  \MR{1791145}

\bibitem{Minsky:ModelsBounds}
Yair Minsky, \emph{The classification of {K}leinian surface groups. {I}.
  {M}odels and bounds}, Ann. of Math. (2) \textbf{171} (2010), no.~1, 1--107.
  \MR{2630036}

\bibitem{MinskyTaylor:FiberedFaces}
Yair~N. Minsky and Samuel~J. Taylor, \emph{Fibered faces, veering
  triangulations, and the arc complex}, Geom. Funct. Anal. \textbf{27} (2017),
  no.~6, 1450--1496. \MR{3737367}

\bibitem{MinskyTaylor:SubsurfaceDist}
\bysame, \emph{Subsurface distances for hyperbolic 3-manifolds fibering over
  the circle}, Groups Geom. Dyn. \textbf{18} (2024), no.~3, 963--1006.
  \MR{4760268}

\bibitem{Ni:FixedPointsHFK}
Yi~Ni, \emph{Knot {Floer} homology and fixed points}, arXiv:2201.10546.

\bibitem{Ni:FixedPointNote}
\bysame, \emph{A note on knot {F}loer homology and fixed points of monodromy},
  Peking Math. J. \textbf{6} (2023), no.~2, 635--643. \MR{4619603}

\bibitem{Otal:Fibered}
Jean-Pierre Otal, \emph{The hyperbolization theorem for fibered 3-manifolds},
  SMF/AMS Texts and Monographs, vol.~7, American Mathematical Society,
  Providence, RI; Soci\'et\'e{} Math\'ematique de France, Paris, 2001,
  Translated from the 1996 French original by Leslie D. Kay. \MR{1855976}

\bibitem{OS:Knot}
Peter Ozsv\'ath and Zolt\'an Szab\'o, \emph{Holomorphic disks and knot
  invariants}, Adv. Math. \textbf{186} (2004), no.~1, 58--116. \MR{2065507}

\bibitem{OS2}
\bysame, \emph{Holomorphic disks and three-manifold invariants: properties and
  applications}, Ann. of Math. (2) \textbf{159} (2004), no.~3, 1159--1245.
  \MR{2113020}

\bibitem{OS1}
\bysame, \emph{Holomorphic disks and topological invariants for closed
  three-manifolds}, Ann. of Math. (2) \textbf{159} (2004), no.~3, 1027--1158.
  \MR{2113019}

\bibitem{OS:Introduction}
\bysame, \emph{An introduction to {H}eegaard {F}loer homology}, Floer homology,
  gauge theory, and low-dimensional topology, Clay Math. Proc., vol.~5, Amer.
  Math. Soc., Providence, RI, 2006, pp.~3--27. \MR{2249247}

\bibitem{PatelTaylor:EndperiodicLox}
Priyam Patel and Samuel~J. Taylor, \emph{Constructing endperiodic loxodromics
  of infinite-type arc graphs}, Math. Z. \textbf{310} (2025), no.~4, Paper No.
  82, 12. \MR{4917177}

\bibitem{Putman}
Andrew Putman, \emph{A note on the connectivity of certain complexes associated
  to surfaces}, Enseign. Math. (2) \textbf{54} (2008), no.~3-4, 287--301.
  \MR{2478089}

\bibitem{Rafi:ShortCurves}
Kasra Rafi, \emph{A characterization of short curves of a {T}eichm\"uller
  geodesic}, Geom. Topol. \textbf{9} (2005), 179--202. \MR{2115672}

\bibitem{Rafi:Combinatorial}
\bysame, \emph{A combinatorial model for the {T}eichm\"uller metric}, Geom.
  Funct. Anal. \textbf{17} (2007), no.~3, 936--959. \MR{2346280}

\bibitem{Rafi:Hyperbolicity}
\bysame, \emph{Hyperbolicity in {T}eichm\"uller space}, Geom. Topol.
  \textbf{18} (2014), no.~5, 3025--3053. \MR{3285228}

\bibitem{Rasmussen:Thesis}
Jacob~Andrew Rasmussen, \emph{Floer homology and knot complements}, ProQuest
  LLC, Ann Arbor, MI, 2003, Thesis (Ph.D.)--Harvard University. \MR{2704683}

\bibitem{Schleimer:StronglyIrreducible}
Saul Schleimer, \emph{Strongly irreducible surface automorphisms}, Topology and
  geometry of manifolds ({A}thens, {GA}, 2001), Proc. Sympos. Pure Math.,
  vol.~71, Amer. Math. Soc., Providence, RI, 2003, pp.~287--296. \MR{2024639}

\bibitem{SistoTaylor}
Alessandro Sisto and Samuel~J. Taylor, \emph{Largest projections for random
  walks and shortest curves in random mapping tori}, Math. Res. Lett.
  \textbf{26} (2019), no.~1, 293--321. \MR{3963985}

\bibitem{Thurston:GeometryDynamics}
William~P. Thurston, \emph{On the geometry and dynamics of diffeomorphisms of
  surfaces}, Bull. Amer. Math. Soc. (N.S.) \textbf{19} (1988), no.~2, 417--431.
  \MR{956596}

\bibitem{Vokes}
Kate~M. Vokes, \emph{Hierarchical hyperbolicity of graphs of multicurves},
  Algebr. Geom. Topol. \textbf{22} (2022), no.~1, 113--151. \MR{4413817}

\bibitem{Webb:BGIT}
Richard C.~H. Webb, \emph{Uniform bounds for bounded geodesic image theorems},
  J. Reine Angew. Math. \textbf{709} (2015), 219--228. \MR{3430880}

\end{thebibliography}

\end{document}